\newcommand{\ui}{\underline{i}}
\def\JJ{\mathbb J}
\def\ge{\geqslant}
\def\le{\leqslant}
\def\a{\alpha}
\def\g{\gamma}
\def\G{\Gamma}
\def\d{\delta}
\def\o{\omega}
\def\s{\sigma}
\def\t{\tau}
\def\th{\theta}
\def\k{\kappa}
\def\l{\lambda}
\def\i{^{-1}}
\def\<{\langle}
\def\>{\rangle}
\newcommand{\fkP}{\ensuremath{\mathfrak{P}}\xspace}
\newcommand{\bE}{\mathbf E}
\newcommand{\bK}{\mathbf K}
\def\brF{\breve F}
\def\brI{\breve \CI}
\def\brP{\breve \CP}
\def\brK{\breve \CK}
\def\tSS{\tilde{\mathbb S}}
\def\aa{\mathbf a}
\newcommand{\BA}{\ensuremath{\mathbb {A}}\xspace}
\newcommand{\BD}{\ensuremath{\mathbb {D}}\xspace}
\newcommand{\BF}{\ensuremath{\mathbb {F}}\xspace}
\newcommand{{\BG}}{\ensuremath{\mathbb {G}}\xspace}
\newcommand{\BJ}{\ensuremath{\mathbb {J}}\xspace}
\newcommand{{\BK}}{\ensuremath{\mathbb {K}}\xspace}
\newcommand{\BN}{\ensuremath{\mathbb {N}}\xspace}
\newcommand{\BQ}{\ensuremath{\mathbb {Q}}\xspace}
\newcommand{\BR}{\ensuremath{\mathbb {R}}\xspace}
\newcommand{\BS}{\ensuremath{\mathbb {S}}\xspace}
\newcommand{\BZ}{\ensuremath{\mathbb {Z}}\xspace}
\newcommand{\CA}{\ensuremath{\mathcal {A}}\xspace}
\newcommand{\CF}{\ensuremath{\mathcal {F}}\xspace}
\newcommand{\CI}{\ensuremath{\mathcal {I}}\xspace}
\newcommand{\CK}{\ensuremath{\mathcal {K}}\xspace}
\newcommand{\CM}{\ensuremath{\mathcal {M}}\xspace}
\newcommand{\CO}{\ensuremath{\mathcal {O}}\xspace}
\newcommand{\CP}{\ensuremath{\mathcal {P}}\xspace}
\newcommand{\Ad}{{\mathrm{Ad}}}
\newcommand{\ad}{{\mathrm{ad}}}
\DeclareMathOperator{\dist}{dist}
\DeclareMathOperator{\Adm}{Adm}
\newcommand{\EO}[1]{\Adm(\{\mu\})\cap {}^{#1}\tW}
\DeclareMathOperator{\Gal}{Gal}
\newcommand{\Gad}{G^{\ad}}
\newcommand{\id}{\ensuremath{\mathrm{id}}\xspace}
\let\Im\relax
\DeclareMathOperator{\Im}{Im}
\newcommand{\Int}{\ensuremath{\mathrm{Int}}\xspace}
\DeclareMathOperator{\Res}{Res}
\DeclareMathOperator{\Spec}{Spec}
\def\tw{\tilde w}
\def\tW{\tilde W}
\def\kk{\mathbf k}
\def\ua{{\underline{\alpha}}}
\def\uK{{\underline{K}}}
\DeclareMathOperator{\supp}{supp}
\newtheorem{theorem}{Theorem}
\newtheorem{theoremA}{Theorem}
\newtheorem{proposition}[theorem]{Proposition}
\newtheorem{lemma}[theorem]{Lemma}
\newtheorem{corollary}[theorem]{Corollary}
\theoremstyle{definition}
\newtheorem{definition}[theorem]{Definition}
\newtheorem{remark}[theorem]{Remark}
\numberwithin{equation}{section}
\numberwithin{theorem}{section}
\renewcommand{\to}{%
   \ifbool{@display}{\longrightarrow}{\rightarrow}%
   }
\let\shortmapsto\mapsto
\renewcommand{\mapsto}{%
   \ifbool{@display}{\longmapsto}{\shortmapsto}%
   }
\newcommand{\isoarrow}{%
   \ifbool{@display}{\overset{\sim}{\longrightarrow}}{\xrightarrow\sim}%
   }
\begin{document}
\title{Fully Hodge-Newton decomposable Shimura varieties}
\author[U.~G\"{o}rtz]{Ulrich G\"{o}rtz}
\address{Ulrich G\"{o}rtz\\Fakult\"at f\"ur Mathematik/Institut f\"ur Experimentelle Mathematik\\Universit\"at Duisburg-Essen\\45117 Essen\\Germany}
\email{ulrich.goertz@uni-due.de}
\author[X.~He]{Xuhua He}
\address{Xuhua He, Department of Mathematics, University of Maryland, College Park, MD 20742, USA}
\email{xuhuahe@math.umd.edu}
\thanks{U.~G.~was partially supported by DFG Transregio-Sonderforschungsbereich 45. X.~H.~was partially supported by NSF DMS-1463852. S.~N.~was partially supported by NSFC (No. 11501547 and No. 11321101.) and QYZDB-SSW-SYS007.}
\author[S.~Nie]{Sian Nie}
\address{Sian Nie, Institute of Mathematics, Academy of Mathematics and Systems Science, Chinese Academy of Sciences, 100190, Beijing, China}
\email{niesian@amss.ac.cn}

\keywords{Reduction of Shimura varieties; Affine Deligne-Lusztig varieties;
Newton stratification; Hodge-Newton decomposition}
\subjclass[2010]{11G18, 14G35, 20G25}
\begin{abstract}
The motivation for this paper is the study of arithmetic properties of Shimura varieties, in particular the Newton stratification of the special fiber of a suitable integral model at a prime with parahoric level structure. This is closely related to the structure of Rapoport-Zink spaces and of affine Deligne-Lusztig varieties.

We prove a Hodge-Newton decomposition for affine Deligne-Lusztig varieties and for the special fibres of Rapoport-Zink spaces, relating these spaces to analogous ones defined in terms of Levi subgroups, under a certain condition (Hodge-Newton decomposability) which can be phrased in combinatorial terms.

Second, we study the Shimura varieties in which every non-basic $\s$-isogeny class is Hodge-Newton decomposable. We show that (assuming the axioms of \cite{HR}) this condition is equivalent to nice conditions on either the basic locus, or on all the non-basic Newton strata of the Shimura varieties. We also give a complete classification of Shimura varieties satisfying these conditions.

While previous results along these lines often have restrictions to hyperspecial (or at least maximal parahoric) level structure, and/or quasi-split underlying group, we handle the cases of arbitrary parahoric level structure, and of possibly non-quasi-split underlying groups. This results in a large number of new cases of Shimura varieties where a simple description of the basic locus can be expected. As a striking consequence of the results, we obtain that this property is independent of the parahoric subgroup chosen as level structure.

We expect that our conditions are closely related to the question whether the weakly admissible and admissible loci coincide.
\end{abstract}

\maketitle

\section{Introduction}
\subsection{Motivation}
Understanding arithmetic properties of Shimura varieties has been and still is a key problem in number theory and arithmetic geometry. One important tool (for Shimura varieties of PEL type, where points correspond to abelian varieties with additional structure) is to study the Newton stratification of the special fibre of a suitable integral model of the Shimura variety. The Newton strata are the loci where the isogeny class of the $p$-divisible group of the corresponding abelian variety is constant. There is a unique closed Newton stratum, the so-called basic locus. In the Siegel case, this is the supersingular locus. A similar picture is available in the more general case of Shimura varieties of Hodge type.

The starting point for this paper is the observation that in certain cases, the basic locus admits a nice description as a union of classical Deligne-Lusztig varieties, the index set and the closure relations between the strata being encoded in a Bruhat-Tits building attached to the group theoretic data coming with the Shimura variety; see Section~\ref{subsec:interpretation} for references. While a simple description like this is not available for general Shimura varieties, many of the Shimura varieties which have been used, with great success, towards applications in the realm of the Langlands program, for instance those used by Harris and Taylor in their proof of the local Langlands correspondence for the general linear group, belong to this special class. Precise information of this kind about the basic locus has proved to be useful for several purposes: For example, to compute intersection numbers of special cycles, as in the Kudla-Rapoport program (see e.g., \cite{kudla-rapoport}, \cite{kudla-rapoport-2}) or in work towards Zhang's Arithmetic Fundamental Lemma (e.g., \cite{Rapoport-Terstiege-Zhang}). Helm, Tian and Xiao use cycles found in the supersingular locus to prove the Tate conjecture for certain Shimura varieties \cite{Tian-Xiao-2}, \cite{Helm-Tian-Xiao}.

The non-basic strata, on the other hand, can sometimes be related to Levi
subgroups of the underlying algebraic group. Those strata are called
\emph{Hodge-Newton decomposable}, a condition which has been studied before in
the context of unramified (or at least quasi-split) groups. One can hope to
study those strata (and their cohomology, etc.) by a kind of induction process,
see e.g.~the work of Boyer~\cite{Boyer} and of Shen~\cite{Shen}.

It turns out that there is a close relationship between these two phenomena, made precise in Theorems~\ref{thmB} and~\ref{thmC} below. Apart from this connection, these theorems also show that the existence of a ``nice description'' of the basic locus, as made precise below and in Section~\ref{subsec:4-implies-5}, is equivalent to a very simple condition on the coweight $\mu$ coming from the Shimura datum (we call $\mu$ \emph{minute}, Def.~\ref{def:minute}) and is also equivalent to a certain condition which involves only the non-basic Newton strata.

In the theorems below, we always include the case of a general parahoric
subgroup, even though this adds considerably to the difficulty of the proofs.
Since in this way we obtain a large number of cases which have not been treated
so far in the context of Shimura varieties, we think that this additional
effort is justified. Furthermore, and this is quite a striking point which we
would like to emphasize, it turns out that the existence of a simple
description of the basic locus is independent of the parahoric. We do not see
any reason why this independence of the parahoric could be expected a priori,
but it is an interesting parallel with the question when the weakly admissible
and admissible loci in the rigid analytic period domain coincide; compare Theorem~\ref{conj-adm-weaklyadm}, proved by Chen, Fargues and Shen, which states that this is equivalent to the conditions in Theorem~\ref{thmB}.

To give a more precise statement, we introduce some notation. We will start in a purely group theoretic setting which does not have to arise from a Shimura datum. Afterwards, we will state a version of the main result for Shimura varieties. In order not to overcharge the introduction, we will not repeat all definitions of ``standard'' notation; the details can be found in Section~\ref{sec:group-theoretic-setup}.

Let $G$ be a connected reductive group over a non-archimedean local field $F$.
Denote by $\s$ the Frobenius of $\brF$, the completion of the maximal
unramified extension of $F$, over $F$. We fix a conjugacy class $\{\mu\}$ of
cocharacters of $G$ over the separable closure $\overline{F}$, and, as usual,
denote by $B(G, \{\mu\})$ the set of ``neutral acceptable'' $\s$-conjugacy
classes in $G(\brF)$ with respect to $\{\mu\}$, and by $\Adm(\{\mu\})$ the
$\{\mu\}$-admissible set, a finite subset of the Iwahori-Weyl group $\tW$.

Denote by $b_0\in G(\brF)$ a representative of the unique basic $\s$-conjugacy class in $B(G, \{\mu\})$.

For a $\s$-stable standard parahoric subgroup $\brK$ (the ``level structure''), which we encode as a subset $K\subseteq \tSS$ of the set of simple affine reflections in $\tW$, and any $b\in G(\brF)$, we have the generalized affine Deligne-Lusztig variety
\[
X(\mu, b)_K = \{ g\in G(\brF)/\brK;\ g\i b\s(g) \in \bigcup_{w\in\Adm(\{\mu\})} \brK w\brK \},
\]
one of the main players in this article. It was introduced by Rapoport in~\cite{rapoport:guide}. This is a subscheme, locally of finite type, of the partial affine flag variety attached to $\brK$ (in the usual sense in equal characteristic; in the sense of Zhu~\cite{Zhu} in mixed characteristic). We can then characterize $B(G, \{\mu\})$ as the set of those $\s$-conjugacy classes $[b]$ such that $X(\mu, b)_K \ne\emptyset$ (the latter property being independent of the choice of representative inside a $\s$-conjugacy class, and of $K$).

\subsection{Hodge-Newton decomposition}
For non-basic $b$, one can sometimes relate $X(\mu, b)_K$ to a Levi subgroup, namely when the pair $(\mu, b)$ is \emph{Hodge-Newton decomposable (HN-decomposable)} (Def.~\ref{def:HN-decomp}). Previously, this notion has been studied in the case of hyperspecial level structure, or at least assuming that $G$ is quasi-split, see~\cite[Def.~4.28]{RV}, \cite[2.5]{CKV}). Our first main result is
\begin{theoremA} (Theorem~\ref{HN-dec})
Suppose that $(\mu, b)$ is HN-decomposable with respect to the Levi subgroup $M$.
Then
\[
X(\mu, b)_{K} \cong \bigsqcup_{P'=M'N'} X^{M'}(\mu_{P'}, b_{P'})_{K_{M'}},
\]
where $P'$ runs through a certain finite set of semistandard parabolic subgroups, $M'$ is the unique Levi subgroup of $P'$ containing the fixed maximal torus, and $X^{M'}(\mu_{P'}, b_{P'})_{K_{M'}}$ is a generalized affine Deligne-Lusztig variety attached to $M'$.
The subsets in the union are open and closed.
\end{theoremA}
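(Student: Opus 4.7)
The plan is to build the decomposition from an Iwasawa-type stratification of the partial affine flag variety $G(\brF)/\brK$, indexed by the semistandard parabolics $P' = M'N'$ that appear, and then use Hodge--Newton decomposability to show that the stratum indexed by $P'$ meets $X(\mu, b)_K$ in exactly the $M'$-affine Deligne--Lusztig variety $X^{M'}(\mu_{P'}, b_{P'})_{K_{M'}}$. Throughout I would work with representatives $b$ that already lie in $M(\brF)$ (possible since the $\s$-conjugacy class $[b]$ comes from $M$ by HN-decomposability), so that the conjugate $b_{P'}$ under a Weyl group element genuinely lives in $M'(\brF)$.

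First (stratification), I would use the Iwasawa decomposition $G(\brF) = \bigsqcup_{P'} P'(\brF)\,\brK$, taken over the finite set of semistandard parabolics $P'$ in the $W$-orbit of $P$, refined to a locally closed stratification of $G(\brF)/\brK$. On the $P'$-stratum each coset $g\brK$ admits a representative $g = n'm'k$ with $n' \in N'(\brF)$, $m' \in M'(\brF)$, $k \in \brK$; the element $m'\brK_{M'}\in M'(\brF)/\brK_{M'}$ is canonical up to the semistandardness choice. This gives a well-defined retraction $r_{P'}\colon (\text{$P'$-stratum}) \to M'(\brF)/\brK_{M'}$ that is continuous, indeed an iterated affine bundle.

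Second (the key HN lemma), I would prove that if $g\brK$ lies in the $P'$-stratum and $g^{-1}b\s(g) \in \brK\,\Adm(\{\mu\})\,\brK$, then in any decomposition $g = n'm'k$ we in fact have $n' \in N'(\brF) \cap \brK$, so $g\brK = m'\brK$. The content of HN-decomposability is precisely that any element $w \in \Adm(\{\mu\})$ whose retraction to the semistandard parabolic $P'$ had a non-trivial $N'$-component would carry the Newton point of $b$ strictly outside the closed chamber prescribed by $\mu$. Combined with the axioms of \cite{HR} this lets me intersect $\Adm(\{\mu\})$ with the Iwasawa decomposition of $\tW$ along $P'$ and identify the relevant part with $\Adm^{M'}(\{\mu_{P'}\})$. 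Consequently $r_{P'}$ restricts to a bijection from the $P'$-stratum of $X(\mu, b)_K$ onto $X^{M'}(\mu_{P'}, b_{P'})_{K_{M'}}$; the inverse is simply the inclusion $M'\hookrightarrow G$, the HN lemma guaranteeing that its image really stays in the stratum and computes $g^{-1}b\s(g)$ correctly.

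Third (open and closed), the $P'$-stratum of $X(\mu, b)_K$ is cut out by the condition that the image of $g\brK$ in the partial flag variety $G/P'$ be a specific $\s$-fixed point; since the HN lemma forces $g\brK$ to lie in a single $P'$-orbit on $G/\brK$, this image is locally constant in $g$, whence each stratum is simultaneously open and closed. The disjoint union formula follows by ranging over all $P'$.

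The main obstacle will be the key lemma in Step 2: at parahoric level the admissible set $\Adm(\{\mu\})$ does not have the transparent combinatorial description available in the hyperspecial case, and the earlier quasi-split arguments (e.g.\ in \cite{RV}, \cite{CKV}) do not directly apply when $G$ is possibly non-quasi-split. One has to track the interaction between $\Adm(\{\mu\})$, the Iwahori--Weyl group decomposition along the semistandard $P'$, and the $\s$-action on the set of affine simple reflections $\tSS$. That the sum is indexed by \emph{semistandard} rather than standard parabolics is exactly what absorbs the non-split behaviour, and verifying that the resulting $\mu_{P'}$, $b_{P'}$ and $K_{M'}$ fit together compatibly with Frobenius is the delicate bookkeeping step.
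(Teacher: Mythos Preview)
Your Step~1 does not give a valid stratification: for each semistandard $P'$ conjugate to $P$ one has $P'(\brF)\,\brK = G(\brF)$ (this is the Iwasawa decomposition for a \emph{single} parabolic), so the ``disjoint union over $P'$'' collapses. What you presumably need instead is the collection of closed immersions $M'(\brF)/\brK_{M'} \hookrightarrow G(\brF)/\brK$, but these images are not locally closed strata covering $G(\brF)/\brK$, and there is no retraction $r_{P'}$ with the properties you describe. Consequently the openness/closedness argument in Step~3, which relies on the locally constant ``relative position in $G/P'$'', has no footing.

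More seriously, your Step~2 asserts exactly the content of the theorem without proving it. The paper's route is quite different and genuinely combinatorial: it works first at Iwahori level and shows (Theorem~\ref{adm-adm}) that the set $\Adm(\{\mu\}, b)$ of contributing $w$'s decomposes disjointly as $\bigsqcup_{P'} \Adm^{M'}(\{z_{P'}(\mu)\}, b)$, with every element of the $P'$-piece a $(z_{P'}(v^\flat), \sigma)$-alcove element. The engine behind this is the theory of $(v,\sigma)$-alcove elements from \cite{GHKR, GHN}: one first locates, via a closure argument using \cite[Theorem~4.1]{HZ}, a $\sigma$-straight $x \le w$ with $\dot x \in [b]$; for $x$ the alcove property is automatic (Lemma~\ref{str-v}); then a delicate ``going-up'' lemma (Corollary~\ref{going-up}, using that $\Adm(\{\mu\}) \subseteq \mathrm{Perm}(\{\mu\})$) propagates the alcove property and the pairing $\langle \nu_w, v\rangle = \langle \mu, v^\flat\rangle$ from $x$ up to $w$. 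Once this is established, the already-known isomorphism for alcove elements (Theorem~\ref{alcove-adlv}) gives the Iwahori-level decomposition, and the passage to general $\brK$ is handled by tracking $W_K^\sigma$-orbits on $\fkP^\sigma$ (Proposition~\ref{disjoint}, Lemma~\ref{f1}). Your proposal contains none of these ingredients; in particular the step ``HN-decomposability forces the $N'$-component to be trivial'' is precisely what the alcove-element machinery is designed to prove, and it does not follow from a Newton-point inequality alone.
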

See Section~\ref{sec:HN} for the notation used here. The index set of the disjoint union, in the notation of Section~\ref{sec:HN}, is $\mathfrak P^\s/W_K^\s$; see in particular Sections~\ref{HN-setup} and~\ref{HN-setup-2}. In the simple case where the set $\BS_0$ of simple finite reflections is preserved by the $\s$-action on $\tW$ and $K=\BS_0$, this index set has only one element and the isomorphism takes the form $X(\mu, b)_K \cong X^M(\mu_P, b_P)_{K_M}$.

The Hodge-Newton decomposition for $F$-crystals (with additional structures)
originates in Katz's work \cite{Katz}. Its variations for affine
Deligne-Lusztig varieties and Rapoport-Zink spaces with hyperspecial level
structure have been considered by several authors; we mention Kottwitz
\cite{kottwitz:hn}, Mantovan \cite{mantovan}, Shen \cite{Shen},
Chen-Kisin-Viehmann \cite[Prop.~2.5.4, Thm.~2.5.6]{CKV}, and Hong~\cite{hong},
\cite{hong2}. See also \cite[Theorem 2.1.4]{GHKR} and \cite[Proposition 2.5.1
\& Theorem 3.3.1]{GHN}. To the best of our knowledge, the HN decomposition in
the case of a general parahoric subgroup is a new result. If the triple $(G,
\mu, b)$ corresponds to a Rapoport-Zink space, then our result implies an HN
decomposition for the underlying reduced scheme of the RZ space (see
Section~\ref{subsec:HN-for-RZ}). On the other hand, several of the above
mentioned articles establish decompositions for the generic fibre of the RZ
space (in most cases, only hyperspecial level structure is considered,
however). We expect that a HN decomposition of the generic fiber of the RZ
spaces exists, when $(\mu, b)$ is HN decomposable.

\subsection{Fully Hodge-Newton decomposable pairs $(G, \{\mu\})$} We say that a pair $(G, \{\mu\})$ is {\it fully Hodge-Newton decomposable} if every non-basic $\s$-conjugacy class $[b]$ is Hodge-Newton decomposable with respect to some proper standard Levi.

Our second main result is

\begin{theoremA} (Theorem~\ref{main})\label{thmB}
Let $(G, \{\mu\}, \brK)$ be as in~\ref{sec:group-theoretic-setup}. Suppose that $G$ is quasi-simple over $F$. Then the following conditions are equivalent:
\begin{enumerate}[label=(\arabic*)]
\item (HN-decomp)
The pair $(G, \{\mu\})$ is fully Hodge-Newton decomposable;
\item (minute)
The coweight $\mu$ is minute;
\item (non-basic -- leaves)
For any non-basic $[b]$ in $B(G, \{\mu\})$, $\dim X(\mu, b)_K=0$;
\item (EKOR $\subseteq$ Newton)
For any $w \in \Adm(\{\mu\}) \cap {}^K \tW$, there exists a unique $\s$-conjugacy class $[b] \in B(G, \{\mu\})$ such that $X_w(b) \neq \emptyset$;
\item (basic -- EKOR)
The space $X(\mu, b_0)_K$ is naturally a union of classical Deligne-Lusztig varieties.
\end{enumerate}
In particular, the validity of the conditions (3)--(5) only depends on the choice of $(G, \{\mu\})$ and is independent of $K$.
\end{theoremA}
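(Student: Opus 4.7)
The plan is to separate the combinatorial equivalence $(1)\Leftrightarrow(2)$, which is intrinsic to the pair $(G,\{\mu\})$, from the geometric equivalences linking these to $(3)$, $(4)$, $(5)$, which require the HN-decomposition (Theorem A) and the axioms of \cite{HR} (EKOR stratification and virtual dimension formula). For $(2)\Rightarrow(1)$, I would argue directly from the definitions: minuteness is a condition on the pairings of $\mu$ against fundamental coweights, and it forces every non-basic Newton point $\nu_b$ to lie on a wall cut out by a proper $\s$-stable Levi $M$, making $(\mu,b)$ HN-decomposable with respect to $M$. The converse $(1)\Rightarrow(2)$ I would argue contrapositively via a case-by-case inspection of the affine Dynkin types.

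For $(1)\Rightarrow(3)$, apply Theorem A to each non-basic $(\mu,b)$: it identifies $X(\mu,b)_K$ with a finite disjoint union of affine Deligne--Lusztig varieties $X^{M'}(\mu_{P'},b_{P'})_{K_{M'}}$ attached to proper Levi subgroups $M'$. Under full HN-decomposability, each $\mu_{P'}$ becomes central on the derived group of $M'$, so each summand is a discrete set and $\dim X(\mu,b)_K = 0$.

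For the equivalence among $(3)$, $(4)$, $(5)$, I would use the EKOR stratification of $X(\mu,b)_K$ indexed by $w\in\Adm(\{\mu\})\cap{}^K\tW$, where each stratum is a fibre bundle over the classical Deligne--Lusztig variety $X_w(b)$ in a finite-type Levi (a consequence of the \cite{HR} axioms). For $(3)\Rightarrow(4)$: zero-dimensionality of all non-basic $X(\mu,b)_K$ forces each non-basic EKOR stratum to be zero-dimensional, so at most one Newton class can meet each non-basic $w$; combined with the fact that every $w\in\Adm(\{\mu\})\cap{}^K\tW$ meets some $[b]\in B(G,\{\mu\})$, this yields uniqueness. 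For $(4)\Rightarrow(5)$: under this uniqueness, the basic locus $X(\mu,b_0)_K$ is the disjoint union of those EKOR strata indexed by $w$ with $X_w(b_0)\ne\emptyset$, each of which is a classical DL variety by the HR axioms. The implication $(5)\Rightarrow(3)$ follows by comparing the classical DL dimensions with the virtual dimension formula for non-basic strata.

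Closing the loop via $(3)\Rightarrow(1)$ (equivalently $(5)\Rightarrow(2)$) will be the main obstacle. The idea is to use the virtual dimension formula, another consequence of the \cite{HR} axioms, to translate ``$\dim X(\mu,b)_K = 0$ for all non-basic $[b]$'' into a sharp combinatorial inequality involving $\mu$, the Newton point $\nu_b$, and the defect of $b$; one then shows that this inequality is satisfied by all non-basic $[b]\in B(G,\{\mu\})$ only when $\mu$ is minute, by going through the list of quasi-simple affine Dynkin types together with their $\s$-twists (necessary for non-quasi-split $G$) and the possible $\s$-orbits of parahorics. A secondary technical difficulty running throughout is that in the general setting of arbitrary parahoric $\brK$ and non-quasi-split $G$, one must work with \emph{semistandard} (rather than standard) parabolics, as in Theorem A, and track the $\s$-twist on $\tW$ carefully, since the usual reduction to the quasi-split case relies on standard parabolics; this has to be propagated cleanly through each of the implications above.
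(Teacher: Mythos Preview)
Your outline has the right overall architecture---prove $(1)\Leftrightarrow(2)$ combinatorially, then run a cycle through $(3)$, $(4)$, $(5)$---but two of the steps contain genuine gaps.

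First, in $(1)\Rightarrow(3)$ you claim that after applying Theorem~A, ``each $\mu_{P'}$ becomes central on the derived group of $M'$''. This is false. What the paper actually proves (Section~\ref{subsec:2-implies-3}) is that the pair $(M_{P'},\mu)$ is of \emph{Harris--Taylor type}: on each $\sigma_0$-orbit $\mathcal O$ in $\BS_{M}$ one has the strict inequality $\langle\mu_M,\omega^M_{\mathcal O}\rangle+\{\langle\sigma_{P'}(0),\omega^M_{\mathcal O}\rangle\}<1$, so that each $\breve F$-simple factor of $M'_{\mathrm{ad}}$ is either $(\tilde A_{n-1},\omega_1^\vee,\id)$ or has trivial $\mu$. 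In the first case $\mu$ is \emph{not} central, but one checks separately that every element of $\Adm(\{\mu\})$ is $\sigma$-straight, whence all affine Deligne--Lusztig varieties are zero-dimensional. Getting from minuteness on $G$ to the strict Harris--Taylor inequality on $M$ requires Lemma~\ref{non-zero} (the nonzero part of $\mu^\diamond-\mu_M^\diamond$ has strictly positive coefficients on all simple coroots) together with Lemma~\ref{frack} relating the fractional parts $\{\langle\sigma(0),\omega_{\mathcal O}\rangle\}$ and $\{\langle\sigma_{P'}(0),\omega^M_{\mathcal O}\rangle\}$; your sketch does not account for this.

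Second, and more seriously, your proposed closure of the cycle via $(5)\Rightarrow(3)$ and $(3)\Rightarrow(1)$ using the virtual dimension formula is not the paper's route and, as stated, does not work. Your $(5)\Rightarrow(3)$ says you will compare ``classical DL dimensions with the virtual dimension formula for non-basic strata'', but condition $(5)$ is a statement about the \emph{basic} locus only; nothing about classical DL dimensions there constrains non-basic $X(\mu,b)_K$. The paper instead extracts from $(5)$ a purely combinatorial \emph{Fixed Point Condition} (condition~(6) of Theorem~\ref{main}): each $w\in\Adm(\{\mu\})\cap{}^K\tW$ with central Newton vector has $w\sigma$ fixing a point of $\bar\aa$, equivalently $W_{\supp_\sigma(w)}$ is finite (Lemma~\ref{fix}, Proposition~\ref{finite}). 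Showing that (FC) forces $\mu$ minute is the real heart of the argument and occupies all of Section~\ref{sec:fc-implies-class}: one introduces the subset ${}^K\Adm(\{\mu\})_\spadesuit\subseteq\Adm(\{\mu\})$, proves an additivity statement (Proposition~\ref{weak-add}) that allows reduction to the $\breve F$-simple case, and then manufactures explicit elements $t^\lambda c_{\Gamma,\ui}$ from \emph{permissible triples} $(\lambda,\Gamma,\ui)$ (Proposition~\ref{minimal}) whose fixed-point behaviour is controlled by Proposition~\ref{mini-k}. The final step is a substantial case-by-case analysis over all affine types and all maximal $\sigma$-stable $K$. None of this machinery appears in your proposal, and the virtual dimension formula does not by itself yield the required constraint on $\mu$.

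A minor point: conditions $(3)$--$(5)$ in Theorem~\ref{main} are statements about affine Deligne--Lusztig varieties and are proved entirely group-theoretically; the axioms of \cite{HR} are needed only to pass to the Shimura-variety reformulations $(3')$--$(5')$ in Theorem~\ref{main'}.
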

The condition (2) that $\mu$ is minute is an easy to check combinatorial condition, Def.~\ref{def:minute}, even though the statement is a little subtle in the non-quasi-split case. For the meaning of condition (5) (where we should more precisely speak of the perfections of classical Deligne-Lusztig varieties in the case $\mathop{\rm char}(F) =0$) see Sections~\ref{subsec:main-thm} and~\ref{subsec:4-implies-5}.

\subsection{Weakly admissible versus admissible locus}
Now assume that $F$ has characteristic $0$, and that $\mu$ is minuscule. To the triple $(G, b, \{\mu\})$, there is attached a period domain $\CF^{\rm wa}(G, b, \{\mu\})$, the so-called \emph{weakly admissible} locus inside the partial flag variety for $G$ and $\mu$. This is an adic space which contains as an open subspace the \emph{admissible} locus $\CF^{\rm a}$. Weak admissibility is a group theoretic condition which is (in comparison) easy to understand. On the other hand, admissibility is defined in terms of modifications of vector bundles on the Fargues-Fontaine curve and thus intimately linked to Galois representations, and we view it as hard to understand. See~\cite{Rapoport:PD} for a survey and for further references.

In some rare cases, the admissible locus and the weakly admissible locus coincide. Hartl~\cite[Thm.~9.3]{Hartl}, computed all cases where this happens for $G=GL_n$. After learning about Hartl's results, Fargues and Viehmann remarked that for other groups this situation should be characterized by a version of the Hodge-Newton indecomposable condition. (The precise formulation of this condition in the unramified case was certainly known to experts at that time.) The question was further discussed by Rapoport and Fargues. Fargues~\cite{Fargues:private-comm} checked a case for an orthogonal group and conjectured that the two loci coincide if and only if the difference between the Newton polygons of $\mu$ and $b$ is ``very small''. The condition that $(\mu, b)$ is HN-decomposable for all non-basic $b$ is exactly of this type. Equivalently, the condition of $\mu$ being minute makes this precise in a group theoretic way without referring to the set $B(G, \{\mu\})$.

The proofs of the results of Hartl and Fargues show that condition (7) is closely related to the question of HN-decomposability. The idea that the coincidence of admissibility and weak admissibility may be equivalent to the conditions in Theorem~\ref{thmB} was suggested by Rapoport, and can be stated in full generality (without assuming, for instance, that $G$ is unramified or quasi-split) within the setup of the paper at hand.

The following theorem which we stated as a conjecture by Fargues, Rapoport and Viehmann in the first version of this article, was proved by Chen, Fargues and Shen a few months afterwards:

\begin{theorem}\label{conj-adm-weaklyadm} (Chen, Fargues, Shen~\cite{Chen-Fargues-Shen})
If $\mathop{\rm char}(F)=0$ and $\mu$ is minuscule then the conditions in the above theorem are equivalent to
\begin{enumerate}
    \item[(7)] (wa$=$a) The admissible locus $\CF^{\rm a}(G, b_0, \{\mu\})$ is equal to the weakly admissible\\\hspace*{1.2cm} locus $\CF^{\rm wa}(G, b_0, \{\mu\})$.
\end{enumerate}
\end{theorem}

\subsection{Fully Hodge-Newton decomposable Shimura varieties}
If $(G, \{\mu\}, \brK)$ arises from a Shimura datum (assumed to satisfy the axioms of~\cite{HR}), then we may reformulate the above result as follows. These axioms ensure, in particular, that we have a suitable integral model, and good notions of Newton and EKOR strata inside the special fibre. See Section~\ref{shimura-setup} for further details and for the notation used below.

\begin{theoremA} (Theorem~\ref{main'})\label{thmC}
If $(G, \{\mu\}, \brK)$ arises from a Shimura datum as in Section~\ref{shimura-setup}, and $G$ is quasi-simple over $F$, then the conditions (1) to (5) in the previous theorem (more precisely: Theorem~\ref{main}) are equivalent to the following equivalent conditions:
\begin{enumerate}[label=(\arabic*\,$'$), start=3]
\item (non-basic -- leaves\,$'$)
All non-basic Newton strata consist of only finitely many leaves, i.e., in the notation of \cite{HR}, as recalled in Section~\ref{shimura-setup}: For all non-basic $[b]\in B(G, \{\mu\})$,
\[
\Upsilon_K(\delta_K^{-1}([b])) \text{\ is a finite set;}
\]
\item (EKOR $\subseteq$ Newton\,$'$)
Any EKOR stratum of $Sh_K$ is contained in a single Newton stratum;
\item (basic -- EKOR\,$'$)
The basic locus $\mathit S_{K, [b_0]}$ is of EKOR type (Def.~\ref{def:EKORtype}).
\end{enumerate}
In particular, the validity of these conditions only depends on the choice of $(G, \{\mu\})$ and is independent of $K$.
\end{theoremA}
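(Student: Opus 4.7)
The strategy is to bootstrap from Theorem~\ref{main}: given the equivalences (1)--(5) of that theorem, it suffices to establish the three pairwise equivalences (3) $\Leftrightarrow$ (3\,$'$), (4) $\Leftrightarrow$ (4\,$'$), and (5) $\Leftrightarrow$ (5\,$'$). The bridge in each case is supplied by the axioms of \cite{HR}, which furnish the Newton stratification by $\delta_K$, the EKOR stratification of $Sh_K$ indexed by $\Adm(\{\mu\}) \cap {}^K\tW$, a leaf decomposition of each Newton stratum recorded by $\Upsilon_K$, and, most importantly, a Rapoport-Zink type uniformization identifying each Newton stratum with a quotient of $X(\mu,b)_K$ by the action of the $\sigma$-centraliser $J_b(\brF)$ together with prime-to-$p$ adelic data.

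The equivalence (4) $\Leftrightarrow$ (4\,$'$) is the most direct: by the \cite{HR} axioms, the EKOR stratum attached to $w \in \Adm(\{\mu\}) \cap {}^K\tW$ meets the Newton stratum $\delta_K^{-1}([b])$ precisely when $X_w(b) \ne \emptyset$. Hence this EKOR stratum lies inside a single Newton stratum, for every $w$, if and only if, for every $w$, there is a unique $[b] \in B(G,\{\mu\})$ with $X_w(b) \ne \emptyset$, which is exactly (4). The equivalence (5) $\Leftrightarrow$ (5\,$'$) is analogous: being of EKOR type (Def.~\ref{def:EKORtype}) means that $\mathit S_{K,[b_0]}$ decomposes as a union of EKOR strata, and via the uniformization of the basic locus each EKOR stratum inside $\mathit S_{K,[b_0]}$ lifts to a classical Deligne-Lusztig variety $X_w(b_0)$ (or rather its perfection, when $\mathop{\rm char}(F) = 0$) inside $X(\mu,b_0)_K$; summing over $w$ one recovers precisely the decomposition of $X(\mu,b_0)_K$ into classical Deligne-Lusztig varieties required in~(5).

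The equivalence (3) $\Leftrightarrow$ (3\,$'$) is subtler. Under the uniformization, the leaves in $\delta_K^{-1}([b])$ correspond bijectively to $I(\mathbb Q)$-orbits on $\pi_0\bigl(X(\mu,b)_K\bigr) \times G(\mathbb A_f^p)/K^p$, so after absorbing the prime-to-$p$ finiteness, $\Upsilon_K(\delta_K^{-1}([b]))$ is finite if and only if $J_b(\brF)\backslash \pi_0\bigl(X(\mu,b)_K\bigr)$ is finite. When $\dim X(\mu,b)_K = 0$, the variety $X(\mu,b)_K$ consists of isolated points and the action of $J_b(\brF)$ together with the Kottwitz description of $\pi_0$ forces only finitely many orbits. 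Conversely, if $\dim X(\mu,b)_K \ge 1$, a positive-dimensional irreducible component produces infinitely many $J_b(\brF)$-orbits and hence infinitely many leaves, contradicting (3\,$'$).

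The main obstacle is this last implication in full generality: controlling $J_b(\brF)\backslash \pi_0\bigl(X(\mu,b)_K\bigr)$ at arbitrary parahoric level and for possibly non-quasi-split $G$. In the hyperspecial, quasi-split setting the relevant finiteness statements are in the literature, but in the present generality they have to be reassembled from Theorem~\ref{HN-dec} (which for non-basic $[b]$ in a fully HN-decomposable $(G,\{\mu\})$ reduces $X(\mu,b)_K$ to a disjoint union of Levi ADLVs whose zero-dimensionality and component structure are far easier to read off), the \cite{HR} axioms, and the parahoric dimension formula.
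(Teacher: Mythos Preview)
Your overall strategy—reducing to the three pairwise equivalences $(3)\Leftrightarrow(3')$, $(4)\Leftrightarrow(4')$, $(5)\Leftrightarrow(5')$—matches the paper's, and your treatment of $(4)\Leftrightarrow(4')$ is essentially the paper's: both rest on the observation (a direct consequence of the \cite{HR} axioms) that $EKOR_{K,w}$ meets $S_{K,[b]}$ if and only if $X_w(b)\ne\emptyset$. For $(5)\Leftrightarrow(5')$ the paper uses exactly the same lemma, applied with $[b]=[b_0]$, together with the \emph{technical} form of (5) in Theorem~\ref{main} (the fixed-point condition, equivalent via Lemma~\ref{fix} and Proposition~\ref{finite} to $\brI w\brI\subseteq[b_0]$). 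Your route through uniformization and the informal ``union of classical Deligne--Lusztig varieties'' description is unnecessary here and somewhat circuitous; the paper's argument is a one-liner once the lemma is in place.

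There is, however, a genuine gap in your handling of $(3)\Leftrightarrow(3')$. You identify $\Upsilon_K(\delta_K^{-1}([b]))$ with $J_b(\brF)\backslash\pi_0\bigl(X(\mu,b)_K\bigr)$, but this is not correct: by~\eqref{NewtonStratumVsADLV} the image is $\JJ_b\backslash X(\mu,b)_K(\kk)$, the set of $\JJ_b$-orbits on \emph{points}, not on connected components. Your subsequent argument is then incoherent on its own terms—a single positive-dimensional irreducible component contributes one element to $\pi_0$, so it cannot ``produce infinitely many $J_b$-orbits'' there. With the correct identification the equivalence becomes clean: if $\dim X(\mu,b)_K=0$ then each $X(\mu,b)_K\cap\brK w\brK/\brK$ is finite, and the finiteness theorem of Rapoport--Zink \cite{RZ:indag} gives finiteness of $\JJ_b\backslash X(\mu,b)_K(\kk)$; conversely, since $\JJ_b\subset LG$ is zero-dimensional over $\kk$, each $\JJ_b$-orbit meets a fixed double coset in only finitely many points, so finiteness of the orbit set forces each intersection to be finite and hence $\dim=0$ (cf.~Proposition~\ref{zero-diml-adlv}). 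No global uniformization, no prime-to-$p$ bookkeeping, and no appeal to Theorem~\ref{HN-dec} is needed; the difficulty you flag in your final paragraph is an artefact of the wrong identification.
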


To prove this theorem, we prove that each condition of (3), (4), (5) above is equivalent to its variant (3$'$), (4$'$), (5$'$), respectively, without using Theorem~\ref{main}. See Section~\ref{sec:shimura}.

\subsection{Classification}
One can completely classify those pairs $(G, \{\mu\})$ where the above equivalent conditions are satisfied. Here for simplicity we restrict to the case where $G$ is quasi-simple over $\brF$. See Section~\ref{subsec:interpretation} for the general case.

\begin{theoremA} (Theorem~\ref{triple})
Assume that $G$ is quasi-simple over $\brF$ and $\mu \neq 0$. Then $(G, \{\mu\})$ is fully Hodge-Newton decomposable if and only if the associated triple $(W_a, \mu, \s)$ is one of the following:

\renewcommand{\arraystretch}{1.7}
\[\begin{tabular}{|c|c|c|}
\hline
$(\tilde A_{n-1}, \o^\vee_1, \id)$ & $(\tilde A_{n-1}, \o^\vee_1, \t_1^{n-1})$ & $(\tilde A_{n-1}, \o^\vee_1, \varsigma_0)$
\\
\hline
$(\tilde A_{2m-1}, \o^\vee_1, \t_1 \varsigma_0)$ & $(\tilde A_{n-1}, \o^\vee_1+\o^\vee_{n-1}, \id)$ & $(\tilde A_3, \o^\vee_2, \id)$
\\
\hline
$(\tilde A_3, \o^\vee_2, \varsigma_0)$ & $(\tilde A_3, \o^\vee_2, \t_2)$ &
\\
\hline
\hline
$(\tilde B_n, \o^\vee_1, \id)$ & $(\tilde B_n, \o^\vee_1, \t_1)$ &
\\
\hline\hline
$(\tilde C_n, \o^\vee_1, \id)$ & $(\tilde C_2, \o^\vee_2, \id)$ & $(\tilde C_2, \o^\vee_2, \t_2)$
\\
\hline\hline
$(\tilde D_n, \o^\vee_1, \id)$ &
$(\tilde D_n, \o^\vee_1, \varsigma_0)$ & %  $(\tilde D_n, \o^\vee_1, \t_1 \varsigma_0)$
\\
\hline
\end{tabular}
\]
\end{theoremA}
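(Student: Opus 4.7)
The strategy is to reduce the classification to the equivalence established in Theorem~\ref{main}: $(G,\{\mu\})$ is fully Hodge-Newton decomposable if and only if $\mu$ is \emph{minute} in the sense of Def.~\ref{def:minute}. Since $G$ is assumed quasi-simple over $\brF$, the associated affine Weyl group $W_a$ is irreducible, so the theorem reduces to a purely combinatorial enumeration: run through all irreducible affine Dynkin types, together with all conjugacy classes of length-preserving diagram automorphisms $\s$, and determine for which nonzero dominant $\mu$ the minute condition holds.

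The first step is to unwind minute-ness into an explicit numerical inequality on the pairings $\langle\mu,\alpha\rangle$ for positive roots $\alpha$, together with a constraint on how the $\s$-orbits on the affine simple reflections interact with $\mu$. A direct consequence is that $\mu$ must be minuscule in the absolute sense, so in each type only a short list of fundamental coweights (and, in a few cases, sums of a coweight with its $\s$-translate) need be considered. Using this, I eliminate the exceptional types $\tilde E_6$, $\tilde E_7$, $\tilde E_8$, $\tilde F_4$, $\tilde G_2$ at once by exhibiting, for each minuscule $\mu \neq 0$ and each $\s$, a non-basic class $[b]\in B(G,\{\mu\})$ whose Newton point is not separated from $\mu$ by any proper $\s$-stable standard Levi subgroup, contradicting HN-decomposability.

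The bulk of the proof is the type-by-type analysis for the classical series, organized by the diagram automorphism group of the affine Dynkin diagram. In type $\tilde A_{n-1}$ this group is the dihedral group generated by the cyclic rotation $\t_1$ and the outer involution $\varsigma_0$; for each class of $\s$ I run through the candidates $\o^\vee_k$ (and the symmetric sum $\o^\vee_1+\o^\vee_{n-1}$) and check the minute condition, producing exactly the eight entries of the $\tilde A$-block. The types $\tilde B_n$, $\tilde C_n$, $\tilde D_n$ have much smaller automorphism groups and the same procedure gives the remaining rows. In each direction the verification is concrete: necessity is shown by exhibiting a witnessing non-basic class whose Newton polygon has no suitable Levi decomposition, while sufficiency is shown either by an explicit inspection of $B(G,\{\mu\})$ or, in the non-trivial $\s$ cases, by reduction to an already-treated quasi-split case via Theorem~\ref{HN-dec}.

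The principal obstacle is the sheer volume of cases in type $\tilde A$, where the interplay of the rotations $\t_1^k$ with the outer involution $\varsigma_0$ produces many a priori candidates that must each be individually ruled in or out; keeping track of $\s$-orbits on the nodes under these composite automorphisms is delicate. A secondary subtlety is the hypothesis that $G$ is quasi-simple only over $\brF$ and not necessarily over $F$, which allows $\s$ to realize diagram automorphisms that do not come from a quasi-split inner form. Consequently the classification must be carried out for every $\s$ in the affine diagram-automorphism group, not only for those producing a quasi-split $G$, and the resulting list must then be shown to be irredundant by identifying triples $(W_a,\mu,\s)$ that become equivalent after conjugation by a diagram automorphism.
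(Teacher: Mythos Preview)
Your overall strategy—reduce to the minute condition via Theorem~\ref{main} and then classify combinatorially—is the paper's strategy too. But two things go wrong in the execution.

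First, a genuine error: you assert that minuteness forces $\mu$ to be minuscule. This is false; the entry $(\tilde A_{n-1},\,\omega_1^\vee+\omega_{n-1}^\vee,\,\id)$ in the table has $\langle\mu,\theta\rangle=2$ for the highest root $\theta$. Your parenthetical about ``sums of a coweight with its $\sigma$-translate'' does not rescue this, since $\sigma=\id$. Relatedly, the minute condition (Def.~\ref{def:minute}) is a bound on pairings $\langle\mu,\omega_{\mathcal O}\rangle$ with sums of fundamental \emph{weights}, not on pairings $\langle\mu,\alpha\rangle$ with roots, so your ``unwinding'' is aimed at the wrong quantity. The correct first cut, as in \S\ref{subsec:2-implies-class}, is the inequality $\langle\mu,\omega_i\rangle\le 1$ for every $i\in\BS_0$; the Bourbaki tables then immediately restrict $(W_a,\mu)$ to the short list $(\tilde A_{n-1},\omega_1^\vee)$, $(\tilde A_{n-1},\omega_1^\vee+\omega_{n-1}^\vee)$, $(\tilde B_n/\tilde C_n/\tilde D_n,\omega_1^\vee)$, $(\tilde A_3/\tilde C_2,\omega_2^\vee)$, and in particular eliminate all exceptional types at once without ever looking at $B(G,\{\mu\})$.

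Second, a strategic detour that inflates the work. Having reduced to condition~(2), you then propose to verify each case by returning to condition~(1): exhibiting non-HN-decomposable classes in $B(G,\{\mu\})$ for necessity, inspecting $B(G,\{\mu\})$ or invoking Theorem~\ref{HN-dec} for sufficiency. The paper never leaves condition~(2). Once $(W_a,\mu)$ is on the short list above, deciding which $\sigma$ are allowed is a one-line computation of $\{\langle\sigma(0),\omega_{\mathcal O}\rangle\}$ for each $\sigma_0$-orbit $\mathcal O$. For example, in type $\tilde A_{n-1}$ with $\mu=\omega_1^\vee$: if $\sigma_0\ne\id$ every $\sigma$ passes (giving, up to isomorphism, $\varsigma_0$ and $\tau_1\varsigma_0$); if $\sigma_0=\id$, writing $\sigma=\tau_1^k$, the constraint at the orbit $\{n-1\}$ forces $k\in\{0,n-1\}$. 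That is the whole argument. The ``sheer volume of cases'' you anticipate in type $\tilde A$, and the need to track $\sigma$-orbits under composites $\tau_1^k\varsigma_0$, are artifacts of going back to Newton points; if you stay with the minute inequality the case analysis is a few lines per type.
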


In many of the cases of the above list which correspond to Shimura varieties (most of them do), the existence of a decomposition of the basic locus into Deligne-Lusztig varieties has already been proved before for hyperspecial level structure or selected maximal parahoric level structures. Mostly, the proofs rely on a direct analysis of the set of lattices that describes the underlying space via Dieudonn\'e theory, rather than the connection with affine Deligne-Lusztig varieties. It seems that for arbitrary level structure this direct analysis is so complicated that it could be carried out only in small rank cases. A different approach is used by Helm, Tian and Xiao~\cite{Tian-Xiao}, \cite{Tian-Xiao-2}, \cite{Helm-Tian-Xiao}; they work directly with the Shimura varieties, and make use of algebraic correspondences between special fibers of Shimura varieties for different groups (notably unitary groups with different signatures). See Section~\ref{subsec:interpretation} for an overview.

\begin{remark}
Compare the similar classification in~\cite{GH}. The list given there classifies those cases where $K$ is a maximal proper subset of $\tSS$ (with $K=\s(K)$) and where it is required that all $\tw\in \Adm(\{\mu\})\cap {}^K\tW$ with $W_{\supp_\s(\tw)}$ finite are $\s$-Coxeter elements in this finite Weyl group. The case where $G$ is not quasi-simple over $\brF$ was not dealt with.
\end{remark}

\subsection{Leitfaden}\label{leitfaden}

\[
\xymatrix@=2cm{
\text{Classif.} \ar@<.7ex>[r]^{\S~\ref{subsec:class-implies-2}} & \ar@<.7ex>[l]^{\S~\ref{subsec:2-implies-class}} \ar[d]^{\S~\ref{subsec:2-implies-3}} (2) \ar@<.7ex>[dl]^{\S~\ref{subsec:2-implies-1}} &  & \ar@{~>}[ll]^{\S~\ref{sec:fc-implies-class}} \text{(6)} & \\
(1)\ar@<.7ex>[ur]^{\S~\ref{subsec:1-implies-2}} & (3)\ar[d]\ar[r]^{\S~\ref{subsec:3-implies-4}} & (4)\ar[d]\ar[r]^{\S~\ref{subsec:4-implies-5}} & (5)\ar[d] \ar@{~>}[u]^{\S~\ref{5-to-FC}}& \\
& (3')\ar[u]^{\S~\ref{subsec:3-equiv-3S}}  &   (4')\ar[u]^{\S~\ref{subsec:4-equiv-4S}} & (5') \ar[u]^{\S~\ref{subsec:5-equiv-5S}}
}
\]

Here, the equivalence between conditions (1) and (2) and the classification are rather easy to prove, so these three characterizations should be seen as one block in the above diagram. The most difficult part of our proof is to show that the Fix Point Condition (6) implies that $\mu$ is minute, for a general parahoric. The case of a maximal parahoric is considerably simpler. In fact, for a maximal parahoric corresponding to a special vertex, we can essentially give a proof which does not use the classification of Dynkin diagrams (see Cor.~\ref{special}). Several shortcuts could be taken: The implication (5) $\Rightarrow$ (4) can be proved directly. From (5), one can obtain the classification in a way similar to~\cite{GH}.

\subsection{Acknowledgments}
We thank Michael Rapoport for interesting discussions and in particular for
bringing up the question how our results relate to the question when the
admissible and weakly admissible locus coincide. We also thank Laurent Fargues
and Haifeng Wu for fruitful conversations and comments.

\section{Notation and preliminaries}

In this section, we describe our setup and recall several definitions and results from other sources. No claim to originality is made.

\subsection{The group-theoretic set-up}
\label{sec:group-theoretic-setup}

Let $F$ be a non-archimedean local field with valuation ring $\CO_F$ and residue field $\BF_q$. Let $\overline{F}$ be an separable closure of $F$, and let $\brF$ be the completion of the maximal unramified extension of $F$ with valuation ring $\CO_{\brF}$ and residue field $\kk=\overline{\BF}_q$. We denote by $\s$ the Frobenius of $\brF$ over $F$, by $\Gamma = \Gal(\overline{F}/F)$ the absolute Galois group of $F$, and by $\G_0 = \Gal(\overline{F}/F^{\rm un})\subseteq \G$ the inertia subgroup. We identify $\G_0$ with the absolute Galois group of $\brF$.

Let $G$ be a connected reductive group over $F$ and $\s$ be the Frobenius automorphism on $G(\brF)$. Let $A$ be a maximal $F$-split torus of $G$, and let $S$ be a maximal $\brF$-split torus of $G$ over $F$ which contains $A$ (for the existence of such a torus see \cite[5.1.12]{BT2}), and let $T$ be its centralizer. Since $G$ is quasi-split over $\brF$, $T$ is a maximal torus. Denote by $N$ the normalizer of $T$ in $G$.

We have the (relative) finite Weyl group $W_0 = N(\brF)/T(\brF)$ and the Iwahori-Weyl group $\tW = N(\brF) / T(\brF)_1$, where $T(\brF)_1$ is the unique parahoric subgroup of $T(\brF)$. Since $T$ is preserved by $\s$, $\s$ acts on $N(\brF)$, and we obtain $\s$-actions on $W_0$ and $\tW$. The natural projection $\tW\rightarrow W_0$ is $\s$-equivariant.

Set $V=X_*(T)_{\G_0} \otimes_\BZ \BR$. The action of $\s$ on $T_{\brF}$ induces a $\s$-action on $V$ (the ``usual'', linear $\s$-action).

Associated to $G\supseteq S$, there is an apartment $\mathcal A$ in the Bruhat-Tits building of $G$ over $\brF$, an affine space under $V$ on which $\tW$ acts by affine transformations. See~\cite[1.2]{Tits:Corvallis}. We obtain a commutative diagram
\[
\xymatrix{
0 \ar[r] & X_*(T)_{\G_0} \ar[r]\ar[d] & \tW \ar[r]\ar[d] & W_0 \ar[r]\ar[d] & 1\\
0\ar[r] & V\ar[r] & \text{Aff}(\CA) \ar[r] & GL(V)\ar[r] & 1.
}
\]
Here $\text{Aff}(\CA)$ denotes the group of affine transformations of the affine space $\CA$. The Frobenius morphism $\s$ on $G$ induces an action on $\mathcal A$.

We fix a $\s$-invariant alcove $\aa$ of $\mathcal A$ and let $\brI$ be the Iwahori subgroup of $G(\brF)$ corresponding to $\aa$.  We fix a special vertex in the
closure $\bar \aa$ of $\aa$ which is fixed under the Frobenius of the (unique) quasi-split inner form of $G$. Using this vertex as the base point, we identify $\mathcal A$ with the underlying affine space of $V$.

By the identification of $\CA$ with $V$, the action of $\s$ on $\CA$ induces an action on $V$ by an affine transformation on $V$. We obtain a natural map $\tW \rtimes \<\s\> \to \text{Aff}(V)$, where $\text{Aff}(V)=V \rtimes GL(V)$ is the group of affine transformations on $V$. Let $p: \tW \rtimes \<\s\> \to GL(V)$ be the projection map. Then $p(\s)$ is precisely the ``usual action'' mentioned above, i.e., the (linear) action induced from the action of $V$. In the sequel, we will always denote this action by $p(\s)$, and denote by $\s$ the action by an affine transformation.

\subsubsection*{The $L$-action}
The dominant chamber is by definition the unique chamber whose closure contains $\aa$. We denote by $V_+$ the closure of the dominant chamber in $V$. Let $B\supseteq T_{\brF}$ be the corresponding Borel subgroup of $G_{\brF}$.

\begin{definition} ($L$-action)
Let $\s_0 = w\circ p(\s)$, where $w\in W_0$ is the unique element such that $\s_0(V_+)=V_+$. This is the {\it L-action} of $\s$ on $V$.
\end{definition}

A slightly different point of view is the following: Let $\Psi_0(G)$ be \emph{the} based root datum of $G$, i.e., the inverse limit over all pairs $(\breve T, \breve B)$ consisting of a maximal torus $\breve T\subseteq G_{\brF}$ and a Borel subgroup $\breve B\supseteq \breve T$ (over $\brF$), of the based root data attached to the pair $(\breve T, \breve B)$. The transition maps are the identifications of root data between different such pairs, say $(\breve T, \breve B)$, $(\breve T', \breve B')$ which come from a suitable inner automorphism $\a$ of $G_{\brF}$ such that $\breve T'=\a(\breve T)$, $\breve B'=\a(\breve B)$. Then $\s$ acts on $\Psi_0(G)$, and the action, when ``evaluated'' on the pair $(T_{\brF}, B)$, is the $L$-action. See~\cite[1.2]{Borel:Corvallis}, \cite[\S 1]{Kottwitz:cuspidal}.

If $G$ is quasi-split, and hence the special vertex used to identify $\CA$ and $V$ is fixed by $\s$, then $\s$ acts linearly on $V$, i.e., $\s= p(\s)$, and fixes the corresponding Weyl chamber, so $\s_0 = p(\s) = \s$.

Now consider a general $G$ again. The root datum $\Psi_0(G)$ depends only on $G_{\brF}$, and the action of $\s$ remains unchanged when $G$ is replaced by an inner form. This gives a way to interpret the $L$-action in terms of the (unique) quasi-split inner form of $G$.

\smallskip

We fix, once and for all, a $W_0 \rtimes \<\s_0 \>$-invariant inner product on $V$.

\subsubsection*{The extended affine Weyl group}
The Iwahori-Weyl group (sometimes called the extended affine Weyl group) $\tW=N(\brF)/(T(\brF) \cap \brI)$ and the finite Weyl group $W_0=N(\brF)/T(\brF)$ are central objects of this paper. Our choice of a special vertex gives rise to a semi-direct product decomposition
\[
\tW=X_*(T)_{\G_0} \rtimes W_0
\]
Note that the inclusion $W_0 \subseteq \tW$ is not $\s$-equivariant in general.
For any $w \in \tW$, we choose a lifting in $N(\brF)$ and denote it by $\dot w$.

The group $\tW$ acts on the apartment $\mathcal A$. Denote by $\Omega\subseteq\tW$ the stabilizer of $\aa$. Then we have $\tW \cong W_a \rtimes\Omega$, where $W_a$ (the ``affine Weyl group'') is the Iwahori-Weyl group of the simply connected cover of the derived group of $G$. Note that $W_a$ is a Coxeter group and one has the Bruhat order and the length function on $W_a$, corresponding to our choice of base alcove. Although the relative root system of $G$ over $\brF$ need not be reduced, there is a unique reduced root system $\Phi$ having the same hyperplanes as affine root hyperplanes and hence having the same affine Weyl group. In the sequel we always work with this reduced root system $\Phi$, and notions such as fundamental coweights refer to $\Phi$. See~\cite{Tits:Corvallis} Section 1.7, or \cite{GHN} Section 3.1.

We extend the length function and the Bruhat order on $W_a$ to $\tW$ in the usual way: for $w, w'\in W_a$, $\tau,\tau'\in\Omega$ we set $\ell(w\tau)=\ell(w)$, and
\[
w\tau \le w'\tau' \quad \Longleftrightarrow\quad \tau=\tau'\text{\ and } w \le w'.
\]
The Frobenius morphism $\s$ on $G$ induces a length-preserving group automorphism on $\tW$ and on $W_a$, which we still denote by $\s$. We denote by $B(\tW)_\s$ the set of $\s$-conjugacy classes in $\tW$.

For any subset $K$ of $\tSS$, the set of simple affine reflections in $\tW$, we denote by $W_K$ the subgroup of $\tW$ generated by the simple reflections in $K$ and by ${}^K \tW$ the set of minimal length representatives in their cosets in $W_K \backslash \tW$.

\subsubsection*{The Newton vector}
Let us recall the definition of the Newton vector of an element $w\in \tW$: Consider $w\s$ as an element of the semidirect product $\tW\rtimes\<\s\>$, where $\<\s\>$ denotes the finite group generated by $\s$ inside the automorphism group of $\tW$. There exists $n$ such that $(w\s)^n = t^\lambda\in \tW\rtimes\<\s\>$, $\lambda \in X_*(T)_{\G_0}$. The \emph{not necessarily dominant Newton vector} of $w$ is $\nu_w := \l/n\in V$. The \emph{Newton vector} $\overline{\nu}_w\in V_+$ of $w$ is by definition the unique dominant element in the $W_0$-orbit of $\nu_w\in V$. It is easy to see that these elements are independent of the choice of $n$.

We recall the definition of straight elements and straight conjugacy classes \cite{HN}. We regard $\sigma$ as an element in the group $\tW \rtimes \langle \sigma \rangle$. The length function on $\tW$ extends in a natural way to a length function on $\tW \rtimes \langle \sigma \rangle$ by requiring $\ell(\sigma)=0$.

We say that an element $w \in \tW$ is {\it $\sigma$-straight} if $\ell(w \sigma(w) \sigma^2(w) \cdots \sigma^{m-1}(w))=m \ell(w)$ for all $m \in \BN$, i.e., $\ell((w \sigma)^m)=m \ell(w)$ for all $m \in \BN$. Equivalently, $w$ is $\s$-straight if and only if $\< \overline{\nu}_w, 2\rho\> = \ell(w)$, where $\rho$ denotes half the sum of the positive finite roots.

We call a $\sigma$-conjugacy class of $\tW$ straight if it contains a $\sigma$-straight element,  and denote by  $B(\tW)_{\sigma-{\rm str}}$  the set of straight $\sigma$-conjugacy classes of $\tW$.

\subsubsection*{The admissible set}
Let $\{\mu\}$ be a conjugacy class of cocharacters of $G$ over $\overline{F}$. Let $\mu \in X_*(T)$ be a dominant representative of the conjugacy class $\{\mu\}$. Let $\underline \mu$ be the image of $\mu$ in $X_*(T)_{\Gamma_0}$. The {\it $\{\mu\}$-admissible set} (\cite{kottwitz-rapoport}, \cite{rapoport:guide}) is defined by
\begin{equation}
\Adm(\{\mu\})=\{w \in \tW;\ w \le t^{x(\underline \mu)} \text{ for some }x \in W_0\}.
\end{equation}

Let $\brK$ be a standard $\sigma$-invariant parahoric subgroup of $G(\brF)$, i.e., a $\s$-invariant parahoric subgroup that contains $\brI$. We denote by $K \subseteq \tSS$ the corresponding set of simple reflections. Then $\s(K)=K$. We denote by $W_K \subseteq \tW$ the subgroup generated by $K$. We set $\Adm^K(\{\mu\})=W_K \Adm(\{\mu\}) W_K$ and $\Adm(\{\mu\})_K=W_K \backslash \Adm^K(\{\mu\})/W_K$.

\subsection{$\s$-conjugacy classes}

\subsubsection*{The Kottwitz classification}
Let $B(G)$ be the set of $\sigma$-conjugacy classes of $G(\brF)$. We denote by $\kappa$ the Kottwitz map (see \cite[(2.1)]{RV}),
$$ \kappa\colon B(G)\to \pi_1(G)_\Gamma$$
with values in the $\G$-coinvariants of the algebraic fundamental group $\pi_1(G)$ of $G$.
We denote by $\overline{\nu}$ the Newton map (see~\cite{kottwitz-isoI}, or \cite[1.1]{HN2}),
$$ \overline{\nu}\colon B(G)\to V_+^{\s_0}.$$
For $w\in \tW$, all representatives in $N(\brF)$ are $\s$-conjugate and their common image under the Newton map is $\overline{\nu}_w$; cf.~Thm.~\ref{thm:sigma-conj-classes} below. We sometimes write $\overline{\nu}_{[b]}$ or $\overline{\nu}_b$ instead of $\overline{\nu}([b])$.

By \cite{kottwitz-isoII}, the map $(\k, \overline{\nu}): B(G) \to \pi_1(G)_\G \times V_+^{\s_0}$ is injective. In other words, a $\s$-conjugacy class is determined by the two invariants: the images of it under the Kottwitz map and under the Newton map.

\smallskip

We have the dominance order on $V_+$. Here $\nu\leq \nu'$ if $\nu'-\nu$ is a non-negative $\BR$-sum of positive relative coroots. For $[b], [b'] \in B(G)$, we say $[b]\leq [b']$ if and only if $\kappa([b])=\kappa([b'])$ and $\overline{\nu}([b])\leq \overline{\nu}([b'])$.

\subsubsection*{The set $B(G, \{\mu\})$}
We set, cf.~\cite{kottwitz-isoII}, \cite{rapoport:guide}, \cite{RV},
 \begin{equation}
B(G, \{\mu\})=\{ [b]\in B(G)\mid \kappa([b])=\mu^\natural, \overline{\nu}([b])\leq \mu^\diamond \} .
 \end{equation}
Here $\mu^\natural$ denotes the common image of $\mu\in\{\mu\}$ in $\pi_1(G)_\Gamma$, and $\mu^\diamond$ denotes the average of the $\s_0$-orbit of $\underline \mu$.   The set $B(G, \{\mu\})$ inherits a partial order from $B(G)$. It has a unique minimal element $[b_0]$, the $\s$-conjugacy class of $\t_0$, where $\t_0$ is the unique element in $\Omega$ such that $t^{\underline \mu} \in W_a \t_0$. Since the Kottwitz map $\kappa$ is constant on $B(G, \{\mu\})$, we may view it as a subset of $V$ via the Newton map.

Let $\BS_0 = W_0 \cap \tSS$ be the set of simple reflections in $W_0$. For any $i \in \BS_0$, let $\o^\vee_i \in V$ be the corresponding fundamental coweight and $\a_i^\vee \in V$ be the corresponding simple coroot. We denote by $\o_i, \a_i \in V^*$ the corresponding fundamental weight and corresponding simple root, respectively. For each $\s_0$-orbit $\CO$ of $\BS_0$, we set
\[
\o_{\CO}=\sum_{i \in \CO} \o_i.
\]
The following description of $B(G, \{\mu\})$ is obtained in \cite[Theorem 1.1 \& Lemma 2.5]{HN2}.

\begin{theorem}\label{BG-mu}
(1) Let $v \in V$. Then $v \in B(G, \{\mu\})$ if and only if $\s_0(v)=v$ is dominant, and for any $\s_0$-orbit $\CO$ on $\BS_0$ with $\<v, \a_i\> \neq 0$ for each (or equivalently, some) $i \in \CO$, we have that $\<\mu+\s(0)-v, \o_{\CO}\> \in \BZ$ and $\<\mu-v, \o_{\CO}\> \ge 0$.

(2) The set $B(G, \{\mu\})$ contains a unique maximal element.
\end{theorem}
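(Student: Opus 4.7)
The approach I take is based on the Kottwitz injection $(\k, \overline{\nu}) \colon B(G) \hookrightarrow \pi_1(G)_\G \times V_+^{\s_0}$, which identifies each $[b]$ with the pair consisting of its Kottwitz invariant and Newton vector. Via the Newton map, $B(G, \{\mu\})$ becomes a subset of $V$, and (1) amounts to rephrasing the two defining conditions — $\k([b]) = \mu^\natural$ and $\overline{\nu}([b]) \leq \overline{\mu}$ — as conditions on $v = \overline{\nu}([b])$ alone, noting that necessarily $v \in V_+^{\s_0}$.

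First, I would treat the inequality $v \leq \overline{\mu}$. Writing $\overline{\mu} - v = \sum_{\CO} b_\CO\,\bigl(\tfrac{1}{|\CO|}\sum_{i \in \CO} \a_i^\vee\bigr)$ as a $\s_0$-invariant combination of orbit-averaged simple coroots — legitimate since both sides lie in $(X_*(T)_{\G_0} \otimes \BR)^{\s_0}$ — the dominance statement becomes $b_\CO \geq 0$ for every orbit $\CO$. Pairing with $\o_\CO$ isolates exactly $b_\CO$; using that $\overline{\mu}$ is the $\s_0$-average of $\underline{\mu}$, one gets $\<\overline{\mu}, \o_\CO\> = \<\mu, \o_\CO\>$, yielding the desired $\<\mu - v, \o_\CO\> \geq 0$. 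On orbits where $\<v, \a_i\> = 0$ for $i \in \CO$, $\CO$ is absorbed into the Dynkin diagram of $M_v := Z_G(v)$ and the inequality reduces to $\<\mu, \o_\CO\> \geq 0$, which holds automatically — this is why only ``supporting'' orbits must be tested.

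Next I would translate the Kottwitz condition. A class $[b] \in B(G)$ with $\overline{\nu}([b]) = v$ arises uniquely from the basic class in $B(M_v)$, and $\k([b])$ is the image of its $M_v$-Kottwitz invariant under $\pi_1(M_v)_\G \to \pi_1(G)_\G$. Rationally, $\pi_1(G)_\G \otimes \BQ$ is detected by $\s_0$-invariant central characters, which are generated by the $\o_\CO$ for $\s_0$-orbits $\CO$ not contained in the Dynkin diagram of $M_v$ — precisely the supporting orbits. The condition $\k([b]) = \mu^\natural$ then becomes the integrality of the pairings $\<\mu - v, \o_\CO\>$ modulo $\BZ$ along these orbits. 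The translation $\s(0)$ enters because, when $G$ is not quasi-split, the identification of the apartment with $V$ via the chosen special vertex is not $\s$-equivariant and the affine $\s$-action has translation part $\s(0)$; this translation must be absorbed into the coweight before Kottwitz-comparing with $v$.

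For (2), my plan is to show that any two elements $v_1, v_2 \in B(G, \{\mu\})$ admit a common upper bound in $B(G, \{\mu\})$; uniqueness of the maximum then follows from finiteness. The natural candidate is the join $v_1 \vee v_2$ in the dominance order of $V_+^{\s_0}$, for which the inequality condition of (1) is automatic; the bulk of the work is verifying the integrality of the pairings against $\o_\CO$ on the (possibly enlarged) support of $v_1 \vee v_2$. This is the main obstacle: integrality does not formally pass to joins, but since the reference $\mu + \s(0)$ is independent of $v_i$, the comparison reduces to a combinatorial analysis orbit by orbit of the orbit-supports of $v_1$ and $v_2$. An alternative route is an explicit construction: start from $\overline{\mu}$ and truncate it along the orbits where the integrality condition already fails for $\overline{\mu}$ itself, redistributing the mass along the Dynkin diagram so as to preserve dominance.
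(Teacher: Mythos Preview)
The paper does not prove this theorem; it simply records it as \cite[Theorem 1.1 \& Lemma 2.5]{HN2}. So there is no proof in the paper to compare against, and your proposal should be judged on its own.

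Your general strategy for (1) --- reduce to the Kottwitz description of $B(G)$ via basic classes in $B(M_v)$ and translate the two defining conditions into pairings with the $\o_\CO$ --- is the right framework, and it is essentially what \cite{HN2} does. But one step is wrong as written. You claim that on a non-supporting orbit $\CO$ (i.e.\ $\<v,\a_i\>=0$ for $i\in\CO$) the inequality $\<\mu-v,\o_\CO\>\ge 0$ ``reduces to $\<\mu,\o_\CO\>\ge 0$''. That would require $\<v,\o_\CO\>=0$, which is false: vanishing of $\<v,\a_i\>$ for $i\in\CO$ says nothing about $\<v,\o_\CO\>$ (take $G$ of type $A_2$, $v=\o_1^\vee$, $\CO=\{2\}$). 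The reason only supporting orbits must be tested is subtler: it comes from the constraint that $v$ actually arises as the Newton vector of a class with Kottwitz invariant $\mu^\natural$, i.e.\ from the structure of basic classes in $B(M_v)$ and how their Newton point is the $\G$-average of the lift of $\mu^\natural$ to $\pi_1(M_v)$. You gesture at this in the next paragraph but never close the loop back to the inequality on non-supporting orbits. Relatedly, the ``if'' direction --- producing a $[b]$ with the prescribed $v$ --- is only implicit.

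For (2), you correctly identify the obstacle (integrality does not pass to joins) and then do not overcome it; both the join argument and the ``truncate $\overline{\mu}$'' construction remain at the level of a plan. In \cite{HN2} the maximal element is produced explicitly: it is the Newton point of (a suitable conjugate of) $t^{\underline{\mu}}$, and the work is to show every $v\in B(G,\{\mu\})$ lies below it, which uses part (1). Your join approach could be made to work, but not without a real argument on the integrality side.
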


\smallskip

We have the following results by the second-named author.

\begin{theorem}\label{thm:sigma-conj-classes}
(1) (\cite[Theorem 3.3]{He14})
The map $$\Psi: B(\tW)_{\sigma-{\rm str}} \to B(G)$$ induced by the inclusion $N(T)(\brF) \subseteq G(\brF)$ is bijective.

(2) (\cite[Proposition 4.1]{He-KR})
Let $\Adm(\{\mu\})_{\sigma-{\rm str}}$ be the set of $\sigma$-straight elements in the admissible set $\Adm(\{\mu\})$ and $B(\tW, \{\mu\})_{\s-{\rm str}}$ be its image in $B(\tW)_{\s-{\rm str}}$. Then $$\Psi(B(\tW, \{\mu\})_{\s-{\rm str}})=B(G, \{\mu\}).$$
\end{theorem}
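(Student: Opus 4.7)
The plan is to attack the two parts separately, reducing both to combinatorial statements about the Iwahori--Weyl group. For part (1), I first check that $\Psi$ is well defined: two lifts of $w\in\tW$ to $N(\brF)$ differ by an element $t\in T(\brF)_1$, which is a pro-finite group, and by a Lang-type surjectivity of $s\mapsto s\dot w\sigma(s)^{-1}\dot w^{-1}$ on $T(\brF)_1$, such a $t$ can be absorbed into a $\sigma$-conjugation inside $G(\brF)$. The same argument shows that a $\sigma$-conjugation in $\tW$ lifts to a $\sigma$-conjugation in $G(\brF)$, so $\Psi$ is well defined on the whole of $B(\tW)_\sigma$, not only on $B(\tW)_{\sigma-{\rm str}}$.

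For surjectivity of $\Psi$, the Iwahori decomposition $G(\brF)=\bigsqcup_{w\in\tW}\brI\dot w\brI$ reduces any $\sigma$-conjugacy class in $G(\brF)$ to one coming from $\tW$; inside $\tW$ one then performs elementary length reductions (passing from $w$ to $sw\sigma(s)$ for simple reflections $s$ when this shortens $w$), the combinatorial input being that every $\sigma$-conjugacy class in $\tW$ contains a $\sigma$-straight element. For injectivity, one uses that a $\sigma$-straight $w$ satisfies $(w\sigma)^n=t^\lambda$ in $\tW\rtimes\langle\sigma\rangle$ with $\lambda/n=\nu_w$, so the $\sigma$-conjugacy class of $w$ in $\tW$ is essentially pinned down by the pair $(\kappa(w),\overline\nu_w)$; combined with Kottwitz's theorem that $(\kappa,\overline\nu)$ is a complete invariant on $B(G)$, one concludes that two $\sigma$-straight elements of $\tW$ which are $\sigma$-conjugate in $G(\brF)$ are already $\sigma$-conjugate in $\tW$.

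For part (2), the inclusion $\Psi(B(\tW,\{\mu\})_{\sigma-{\rm str}})\subseteq B(G,\{\mu\})$ is direct: every element of $\Adm(\{\mu\})$ lies in the single $W_a$-coset of $\tW$ with Kottwitz invariant $\mu^\natural$, and the Mazur-type inequality $\overline\nu_w\le\overline\mu$ holds for every $w\in\Adm(\{\mu\})$. For the reverse inclusion, given $[b]\in B(G,\{\mu\})$ apply part (1) to obtain a $\sigma$-straight $w\in\tW$ with $\Psi([w])=[b]$; we then need to $\sigma$-conjugate $w$ inside $\tW$ into $\Adm(\{\mu\})$. This reduces to the combinatorial statement that any $\sigma$-straight class in $\tW$ whose invariants $(\kappa,\overline\nu)$ lie in $B(G,\{\mu\})$ admits a representative in $\Adm(\{\mu\})$, which one can show by comparing minimal-length $\sigma$-conjugates of straight classes with the structure of $\Adm(\{\mu\})$ as a union of $\le$-intervals under the translations $t^{x(\underline\mu)}$, $x\in W_0$.

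The principal obstacle lies inside part (1): first, showing that every $\sigma$-conjugacy class in $\tW$ meets the subset of $\sigma$-straight elements, which requires a careful length-reduction algorithm on the extended affine Weyl group; and second, showing that for $\sigma$-straight elements, $\sigma$-conjugacy in $G(\brF)$ does not refine $\sigma$-conjugacy in $\tW$, which requires an analysis of the centralizer of $\dot w$ in $G(\brF)$ (for straight $w$, this centralizer is controlled by the Levi subgroup cut out by the Newton vector $\overline\nu_w$). Once part (1) is established, part (2) becomes essentially a matching of numerical invariants between the combinatorial and group-theoretic sides.
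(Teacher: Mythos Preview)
The paper does not prove this theorem at all: it is quoted verbatim from the references \cite{He14} (for part (1)) and \cite{He-KR} (for part (2)), and no argument is supplied in the present paper. So there is no ``paper's own proof'' to compare your proposal against.

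That said, your outline is broadly in the right spirit for how those references proceed, but several of the steps you describe as routine are in fact the substantive content of those papers. In part (1), the claim that every $\sigma$-conjugacy class in $\tW$ contains a $\sigma$-straight element, and that two $\sigma$-straight elements $\sigma$-conjugate in $G(\brF)$ are already $\sigma$-conjugate in $\tW$, together form the heart of \cite[\S3]{He14}; neither follows from a short length-reduction argument. Likewise, your injectivity sketch assumes that a $\sigma$-straight class in $\tW$ is determined by $(\kappa,\overline\nu)$, which is itself one of the main combinatorial results being cited. In part (2), the reverse inclusion --- producing a $\sigma$-straight representative inside $\Adm(\{\mu\})$ given only the numerical constraints $\kappa=\mu^\natural$ and $\overline\nu\le\overline\mu$ --- is precisely \cite[Proposition~4.1]{He-KR}, and your final sentence (``comparing minimal-length $\sigma$-conjugates \dots with the structure of $\Adm(\{\mu\})$'') names the problem rather than solving it. So your proposal is a correct roadmap to the cited proofs, but you should be aware that the genuine difficulties are exactly the steps you have compressed into single sentences.
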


\begin{theorem}\label{s-newton} (\cite[Theorem 3.5, Theorem 3.7]{He14}, \cite{HR})
Let $w \in \tW$ be a $\s$-straight element. Then all elements of $\brI\dot w\brI$ are $\s$-conjugate to $\dot w$.
\end{theorem}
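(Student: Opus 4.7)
My plan is to attack the statement via a Lang--Steinberg-style surjectivity argument for the twisted conjugation map on the Iwahori subgroup, organized by the Moy--Prasad filtration of $\breve I$; the $\sigma$-straight hypothesis will enter precisely at the level of the graded pieces. Concretely, I will study the map
\[
\Phi : \breve I \longrightarrow \breve I \dot w \breve I, \quad h \longmapsto h \dot w \sigma(h)^{-1},
\]
whose image consists exactly of those $g \in \breve I \dot w \breve I$ that are $\sigma$-conjugate to $\dot w$ via an element of $\breve I$. Since $\Phi(1) = \dot w$, it suffices to show $\Phi$ is surjective. Writing an arbitrary $g \in \breve I \dot w \breve I$ as $g = \dot w u$ after a preliminary $\sigma$-conjugation by an element of $\breve I$ (using the double coset decomposition $\breve I \dot w \breve I = \dot w \cdot (\breve I \cap \dot w^{-1}\breve I \dot w \backslash \breve I)$), the equation $\Phi(h) = g$ becomes the fixed-point equation $\sigma(h) = u^{-1} \mathrm{Ad}(\dot w^{-1})(h)$ in $\breve I$.

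Next, I would use the standard exhaustive filtration $\breve I = \breve I^{(0)} \supseteq \breve I^{(1)} \supseteq \cdots$ of $\breve I$ by normal pro-$p$ subgroups, with abelian quotients $\breve I^{(n)}/\breve I^{(n+1)}$ that are finite-dimensional $\kk$-vector spaces decomposing as direct sums of affine root subgroups of a fixed depth. Solve $\Phi(h) = g$ inductively modulo $\breve I^{(n)}$: a solution $h_n$ modulo $\breve I^{(n)}$ lifts to one modulo $\breve I^{(n+1)}$ provided the induced $\kk$-linear operator $\mathrm{Ad}(\dot w^{-1}) \circ \sigma^{-1} - \mathrm{id}$ is surjective on each graded piece. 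Passing to the limit using completeness of $\breve I$ gives the desired $h \in \breve I$.

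The crucial step is analysing this operator on graded pieces, and this is where $\sigma$-straightness is used. On a graded piece, the operator permutes the one-dimensional affine root subspaces according to the action of $(w\sigma)^{-1}$ on the set of positive affine roots, twisted additively on each subspace by a Frobenius power. The orbits of $w\sigma$ on affine roots either are infinite or finite; along a finite orbit of length $N$ lying entirely in $\Phi_{\rm aff}^+$, the composition of the operator around the cycle reduces to a map of the form $x \mapsto c\, x^{q^N} - x$ on $\kk$, which is surjective because $\kk = \overline{\BF}_q$. The $\sigma$-straight condition $\ell((w\sigma)^m) = m\ell(w)$ for all $m$ translates into the geometric statement that $w\sigma$ has no finite orbit contained entirely in $\Phi_{\rm aff}^+$: a finite orbit entirely in $\Phi_{\rm aff}^+$ would force some power of $w\sigma$ to have length zero, contradicting $\langle \overline{\nu}_w, 2\rho\rangle = \ell(w)$. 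Hence every orbit either is infinite (where surjectivity is automatic by direct iteration) or straddles $\Phi_{\rm aff}^+$ and $\Phi_{\rm aff}^-$ (yielding the finite-cycle Frobenius equation above).

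The main obstacle is Step~3: making the orbit analysis on affine roots rigorous, and in particular the precise equivalence between $\sigma$-straightness and the absence of finite $w\sigma$-orbits inside $\Phi_{\rm aff}^+$. An alternative route, closer in spirit to the cited references, is the inductive ``reduction method'': one shows that whenever $\breve I \dot w \breve I$ contains an element not $\sigma$-conjugate to $\dot w$, there must exist a simple affine reflection $s$ such that conjugation by $s$ yields a strictly shorter element in the $\sigma$-conjugacy class of some representative; for $\sigma$-straight $w$, no such length-reducing $s$ exists, since $\ell((s w \sigma(s)) \sigma \cdots ) \ge \ell(w)$ by the $\sigma$-straight identity for all powers. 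This version avoids Moy--Prasad machinery but requires careful combinatorial bookkeeping of reduction steps.
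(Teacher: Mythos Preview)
The paper itself does not prove this; it is quoted from \cite{He14} and \cite{HR}. Your Lang--Steinberg plan is in the right spirit, but two concrete points fail. First, the Moy--Prasad filtration is not $\Ad(\dot w^{-1})$-stable: the root subgroup $U_\alpha$ at depth $\alpha(x_0)$ is carried to $U_{w^{-1}\alpha}$ at depth $\alpha(wx_0)$, so your operator is not an endomorphism of a fixed graded piece and the inductive step cannot be run as written. Second, the orbit characterization you propose---which you rightly flag as the main obstacle---is not merely in need of rigour but actually false. Take $G=\PGL_3$, $\sigma=\id$, $w=t^{\omega_1^\vee}$: this is $\sigma$-straight (any dominant translation is), yet the affine root $\alpha_2$ is a fixed point of $w\sigma$ in $\Phi_{\rm aff}^+$; nothing about a finite orbit in $\Phi_{\rm aff}^+$ forces a power of $w\sigma$ to have length zero. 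The correct combinatorial translation of straightness is the \emph{no-return} property: whenever $\alpha>0$ and $(w\sigma)\alpha<0$, one has $(w\sigma)^k\alpha<0$ for all $k\ge 1$. It is this that allows one to replace the Moy--Prasad filtration by a $w\sigma$-adapted one on which successive approximation actually converges. Finite orbits inside $\Phi_{\rm aff}^+$ do occur and are handled by Lang exactly as you say; your confusion is in then claiming they cannot occur and invoking Lang for the straddling orbits instead.

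Your alternative route is also misdescribed: Deligne--Lusztig reduction reduces general $w$ \emph{to} the $\sigma$-straight case; it does not prove the result \emph{for} $\sigma$-straight $w$. In \cite{He14} the straight case is handled by combining a reduction via length-preserving cyclic shifts (using \cite{HN}) to particularly tractable straight elements with a direct filtration/Lang argument for those.
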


\subsection{Affine Deligne-Lusztig varieties}

We fix a representative $b\in G(\brF)$ of the $\s$-conjugacy class $[b]$. Fix a level structure $\CK\subseteq G(F)$ corresponding to $K\subseteq \tSS$ as above. Note that $\CK$ is a parahoric subgroup of $G(F)$.

For $w\in \tW_K\backslash \tW /\tW_K$, denote by
\[
X_w(b)_K = \{ g\in G(\brF); g\i b\s(g) \in \brK w\brK \}/\brK
\]
the affine Deligne-Lusztig variety attached to $w$ and $b$. We will omit the subscript $K$ if $K=\emptyset$, i.e., $\brK=\brI$ is the standard Iwahori subgroup.

In the equal characteristic setting, this is the set of $\kk$-valued points of a locally closed subscheme of the partial affine flag variety $G(\brF)/\brK$, equipped with the reduced scheme structure. In the mixed characteristic setting we consider $X_w(b)$ as the $\kk$-valued points of a perfect scheme in the sense of Zhu \cite{Zhu} and Bhatt-Scholze \cite{BS}, a locally closed perfect subscheme of the $p$-adic partial flag variety. We usually denote this (perfect) scheme by the same symbol $X_w(b)_K$, but we are mostly working just with the $\kk$-valued points.

In the theory of local Shimura varieties (or of Rapoport-Zink spaces), it is natural to consider the following union of affine Deligne-Lusztig varieties:
\[
X(\mu, b)_K= \{ g\in G(\brF); g\i b\s(g)\in \bigcup_{w\in\Adm^K(\{\mu\})} \brK w\brK  \}/\brK.
\]

Similarly as before, we can view this as the $\kk$-valued points of a (perfect)
scheme embedded in a partial affine flag variety.  As before, we will omit the
subscript $K$ if $\brK=\brI$.

\subsection{Fine affine Deligne-Lusztig varieties}\label{fine}

Following \cite[3.4]{GH}, we introduce the fine affine Deligne-Lusztig varieties inside the partial affine flag variety $G(\brF)/\brK$.

For $w \in {}^K \tW$ and $b \in G(\brF)$, we set $$X_{K, w}(b)=\{g \brK; g \i b \s(g) \in \brK \cdot_\s \brI w \brI\} / \brK,$$ where $\cdot_\s$ means the $\s$-conjugation. It is the image of $X_w(b)$ under the projection map $G(\brF)/\brI \to G(\brF)/\brK$. It is also proved in \cite[3.4]{GH} that $$X(\mu, b)_K=\sqcup_{w \in \EO{K}} X_{K, w}(b).$$ This decomposition is finer than the decomposition $X(\mu, b)_K=\sqcup_{w \in \Adm(\{\mu\})_K} X_w(b)_K$ and it plays an important role in the study of Shimura varieties with arbitrary parahoric level structure.

This decomposition is analogous, in terms of Shimura varieties, to the decomposition of a Newton stratum into its intersections with the EKOR strata, cf.~\ref{shimura-setup}.

\section{Main results}

\subsection{} We first introduce the notions of Hodge-Newton decomposable pairs $(\mu, b)$, and of minute coweights, both of which play important roles in the main theorem.

\subsubsection{}

For an element $b\in G(\brF)$ with Newton vector $\overline{\nu}_b$, we denote by $M_{\overline{\nu}_b}$ the centralizer of $\overline{\nu}_b$ (cf.~Section~\ref{subsec:defn_Mv}).

\begin{definition}
\begin{enumerate}
\item
Let $M$ be a $\s_0$-stable proper standard Levi subgroup of $G_{\brF}$. We say that $[b] \in B(G, \{\mu\})$ is {\it Hodge-Newton decomposable} with respect to $M$ if $M_{\overline{\nu}_b} \subseteq M$ and $\mu^\diamond-\overline{\nu}_b \in \BR_{\ge 0} \Phi_M^{\vee, +}$. Here $\mu^\diamond=\frac{1}{n_0}\sum_{i=0}^{n_0-1} \s_0^i(\underline{\mu})$ with $n_0 \in \BN$ the order of $\s_0$.
\item
Recall that a pair $(G, \{\mu\})$ is {\it fully Hodge-Newton decomposable} if every non-basic $\s$-conjugacy class $[b]$ is Hodge-Newton decomposable with respect to some proper standard Levi.
\end{enumerate}\label{def:HN-decomp}
\end{definition}

\subsubsection{}

\begin{definition}\label{def:minute}
We say that $\mu$ is {\it minute} (for $G$),  if for any $\s_0$-orbit $\CO$ of $\BS_0$, we have
\[
\<\mu, \o_{\CO}\>+\{\<\s(0), \o_{\CO}\>\} \le 1.
\]
Here $\{\, \}$ is the fractional part of the real number.
\end{definition}

Note that if $G$ is quasi-split, then $\s(0)=0$ and the condition above is that $\<\mu, \o_{\CO}\> \le 1$. However, for non-quasi-split groups, the nontrivial contribution from $\s$ has to be taken into account.

\subsection{Precise version of the main theorem}
\label{subsec:main-thm}

\begin{theorem}\label{main}
Let $(G, \{\mu\}, \brK)$ be as in~\ref{sec:group-theoretic-setup}. Assume that $G$ is quasi-simple over $F$. Then the following conditions are equivalent:
\begin{enumerate}[label=(\arabic*)]
\item (HN-decomp)
The pair $(G, \{\mu\})$ is fully Hodge-Newton decomposable (Def.~\ref{def:HN-decomp});
\item (minute)
The coweight $\mu$ is minute (Def.~\ref{def:minute});
\item (non-basic -- leaves)
For any non-basic $[b]$ in $B(G, \{\mu\})$, $\dim X(\mu, b)_K=0$;
\item (EKOR $\subseteq$ Newton)
For any $w \in \Adm(\{\mu\}) \cap {}^K \tW$, there exists a unique $\s$-conjugacy class $[b] \in B(G, \{\mu\})$ such that $X_w(b) \neq \emptyset$;
\item (basic -- EKOR)
For every $w \in \Adm(\{\mu\}) \cap {}^K \tW$ with $X_w(b_0)\ne \emptyset$, $w\circ \s$ has a fixed point in $\overline{\mathbf a}$.
\item (FC)
For every $w \in \Adm(\{\mu\}) \cap {}^K \tW$ with central Newton vector, $w\circ \s$ has a fixed point in $\overline{\mathbf a}$.
\end{enumerate}
In particular, the validity of the conditions (3)-(6) only depends on the choice of $(G, \{\mu\})$ and is independent of the choice of the parahoric subgroup $\CK$.
\end{theorem}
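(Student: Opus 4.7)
The plan is to prove that all six conditions are equivalent via the cyclic chain
\[
(2) \Rightarrow (1) \Rightarrow (2) \Rightarrow (3) \Rightarrow (4) \Rightarrow (5) \Rightarrow (6) \Rightarrow (2),
\]
where the closing implication proceeds via a complete classification of those quasi-simple $(G, \{\mu\})$ satisfying the Fix Point Condition~(6), followed by a direct verification that $\mu$ is minute in each surviving case. Condition~(2), being a transparent combinatorial condition on $\mu$, serves as the pivot of the whole argument, and quasi-simplicity of $G$ over $F$ enters essentially only at the final step, where it underlies a case analysis on affine Dynkin diagrams together with their $\sigma$-actions.

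The equivalence $(1)\Leftrightarrow(2)$ and the first half of the chain are comparatively soft. For $(2)\Rightarrow(1)$, I would read off both directions from Theorem~\ref{BG-mu}: for non-basic $[b]\in B(G,\{\mu\})$, the standard Levi $M$ supported on the $\sigma_0$-orbits $\CO\subseteq\BS_0$ on which $\overline\nu_b$ vanishes contains $M_{\overline\nu_b}$, and minuteness forces the non-negative integers $\langle\mu^\diamond-\overline\nu_b,\o_\CO\rangle$ to vanish on all orbits outside $M$, placing $\mu^\diamond-\overline\nu_b$ in $\BR_{\ge 0}\Phi_M^\vee$; the reverse direction is the contrapositive, constructing from a failure of minuteness on an orbit $\CO$ a non-basic class in $B(G,\{\mu\})$ (via Theorem~\ref{BG-mu}(2)) that cannot be HN-decomposable. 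For $(2)\Rightarrow(3)$, the HN-decomposition of Theorem~A, applied to non-basic $[b]$, writes $X(\mu,b)_K$ as a finite disjoint union of generalized affine Deligne-Lusztig varieties for proper Levis $M'$; minuteness of $\mu$ on $G$ forces each $\mu_{P'}$ to be essentially central in $M'$, so each summand reduces to a basic stratum of dimension zero. For $(3)\Rightarrow(4)$, the fine decomposition $X(\mu,b)_K = \bigsqcup_{w\in\EO{K}} X_{K,w}(b)$ of Section~\ref{fine}, together with the fact that any non-empty $X_{K,w}(b)$ for non-basic $[b]$ is positive-dimensional, forces the uniqueness of the $\s$-conjugacy class in (4). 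For $(4)\Rightarrow(5)$, any $w\in\EO{K}$ with $X_w(b_0)\neq\emptyset$ has all of $\brI\dot w\brI$ in the basic class by (4), and Theorem~\ref{s-newton} then forces $w$ to be $W_K$-conjugate to a $\sigma$-straight element of length zero, equivalently $w\sigma$ fixes a point of $\overline{\mathbf a}$.

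The step $(5)\Rightarrow(6)$ is immediate from Theorem~\ref{thm:sigma-conj-classes}: an element $w\in\EO{K}$ with central Newton vector lies in the basic $\sigma$-conjugacy class in $\tW$, whence the associated class in $G(\brF)$ is $[b_0]$, so $X_w(b_0)\neq\emptyset$ and (5) applies. The heart of the theorem is the final implication $(6)\Rightarrow(2)$, and this is where the main obstacle lies. The strategy is to suppose (2) fails, i.e.\ there is a $\sigma_0$-orbit $\CO$ with $\langle\mu,\o_\CO\rangle+\{\langle\sigma(0),\o_\CO\rangle\}\ge 2$, and to exhibit an explicit witness $w\in\Adm(\{\mu\})\cap{}^K\tW$ of central Newton vector whose affine action $w\sigma$ has no fixed point in $\overline{\mathbf a}$. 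Such a witness must be produced type by type, running through the affine Dynkin diagrams of quasi-simple groups over $\brF$ together with each admissible $\sigma$-action on the diagram and each choice of $K$, carefully tracking the non-quasi-split twist by $\sigma(0)$ that appears in the definition of minute. The cases for which no such witness exists are exactly those in the table of Theorem~\ref{triple}, in each of which minuteness can then be verified directly. The main obstacles are (a) arbitrary parahoric $\brK$, which substantially enlarges $\EO{K}$ and destroys the clean combinatorics available in the hyperspecial case, and (b) the non-quasi-split setting, where the $\sigma(0)$-correction must be carried through the analysis; for the special-vertex case a conceptual, classification-free argument (Corollary~\ref{special}) is available and will serve as a guide.
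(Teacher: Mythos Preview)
Your overall chain matches the paper's, but three of the intermediate implications rest on incorrect or missing lemmas.

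For $(3)\Rightarrow(4)$, your asserted ``fact that any non-empty $X_{K,w}(b)$ for non-basic $[b]$ is positive-dimensional'' is false, and in any case condition~(3) says precisely the opposite. The real mechanism is: if $X_w(b)\neq\emptyset$ with $[b]$ non-basic, then (3) gives $\dim X_w(b)=0$, and one then needs the characterization (Proposition~\ref{zero-diml-adlv}) that a zero-dimensional $X_w(b)$ forces $w$ to be $\sigma$-straight with $\dot w\in[b]$; only at that point does Theorem~\ref{s-newton} pin down $[b]$ uniquely. Without this zero-dimensional $\Leftrightarrow$ straight step the implication does not close. Similarly, for $(4)\Rightarrow(5)$ your appeal to Theorem~\ref{s-newton} is in the wrong direction: that theorem says straight elements give a single class, not that $\brI w\brI\subseteq[b_0]$ makes $w$ straight (it does not). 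The paper instead proves the equivalence $\brI w\brI\subseteq[b_0]\Longleftrightarrow W_{\supp_\sigma(w)}$ finite (Proposition~\ref{finite}), and separately that finite $\sigma$-support is equivalent to $w\sigma$ fixing a point of $\overline{\mathbf a}$ (Lemma~\ref{fix}); these are the substantive ingredients.

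For $(2)\Rightarrow(3)$, the restriction $\mu_{P'}$ is not ``essentially central'' on $M'$; what one must show is that $(M_{P'},\mu)$ is of \emph{Harris--Taylor type}, i.e.\ satisfies the strict inequality $\langle\mu_M,\o^M_\CO\rangle+\{\langle\sigma_{P'}(0),\o^M_\CO\rangle\}<1$, and that in this regime every element of $\Adm(\{\mu\})$ is $\sigma$-straight, whence dimension zero. The strictness comes from a positivity lemma for $\mu^\diamond-\mu_M^\diamond$ (Lemma~\ref{non-zero}) together with a fractional-part comparison $\{\langle\sigma_{P'}(0),\o^M_\CO\rangle\}=\{\langle\sigma(0),\o_\CO\rangle\}$ (Lemma~\ref{frack}); these are not automatic from minuteness alone. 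Finally, your sketch of $(6)\Rightarrow(2)$ omits a genuine reduction step: the paper does not run the case analysis over all $K$, but introduces a weaker condition $(\mathrm{FC}_\spadesuit)$ on the subset ${}^K\Adm(\{\mu\})_\spadesuit\subseteq\Adm(\{\mu\})\cap{}^K\tW$, shows it passes to the associated $\brF$-simple pair via a weak additivity property (Proposition~\ref{weak-add}), and reduces to maximal $K$; only then does the construction of explicit witnesses via permissible triples and Proposition~\ref{mini-k} become tractable.
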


See Section~\ref{subsec:4-implies-5} for an explanation why condition (5) implies condition (5) in the informal version of the theorem stated in the introduction, namely that $X(\mu, b)_K$ is a union of classical Deligne-Lusztig varieties in a natural way.

\subsection{Passage to adjoint groups}
\label{subsec:passage-to-adjoint}

Let $G$ be a connected reductive group over $F$, and let $\Gad$ be its adjoint group. We denote the automorphism of $\Gad$ induced by $\s$ again by $\s$. Then $\Gad$ decomposes as a product
\[
\Gad \cong G_1 \times \cdots \times G_r,
\]
where each $G_i$ is adjoint and simple over $F$. We can identify
\[
B(G, \{\mu\}) = \prod_i B(G_i, \{\mu_i\}), \quad
\Adm(\{ \mu\}) = \prod_i \Adm(\{ \mu_i\}), \quad
\]
and
\[
\EO{K} = \prod_i \Adm(\{\mu_i\})\cap {}^{K_i}\tW_{G_i}.
\]

It is easy to see that
\[
\dim X(\mu, b)_K = \dim X^{\Gad}(\mu, b)_K = \sum_i \dim X^{G_i}(\mu_i, b_i)_{K_i}.
\]

For all conditions except for condition (3) it is easy to check that they can be verified on the individual factors of this decomposition:

\begin{proposition}\label{adj}
Decompose the adjoint group of $G$ into factors $G_i$ as above. Let (C) be one of the conditions (1), (2), (4), (5), (6) of Theorem~\ref{main}. Then the following are equivalent:
\begin{enumerate}
\item
(C) holds for $(G, \mu, K)$,
\item
(C) holds for all $(G_i, \mu_i, K_i)$.
\end{enumerate}
\end{proposition}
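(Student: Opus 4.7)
The plan is to exploit the product decompositions: all data entering the six conditions --- the set $\tSS$, the real vector space $V$, the Iwahori-Weyl group $\tW$, the alcove $\aa$, the parahoric $\brK$, the Frobenius $\s$ and $\s_0$, together with the derived sets $B(G, \{\mu\})$, $\Adm(\{\mu\})$ and $\EO{K}$ --- decompose compatibly as products indexed by $i = 1, \dots, r$, and affine Deligne-Lusztig varieties factor as $X_w(b) = \prod_i X_{w_i}^{G_i}(b_i)$. Thus each of conditions (2), (4), (5), (6) should reduce to a factorwise statement by an essentially formal argument, while condition (1) requires a small amount of additional bookkeeping. I would handle the two groups separately.

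For conditions (2), (4), (5), (6) the argument runs as follows. Condition (2) is immediate because every $\s_0$-orbit $\CO$ on $\BS_0 = \sqcup_i \BS_{0,i}$ is contained in a single $\BS_{0,i}$; the corresponding $\o_{\CO}$ lives entirely in the $i$-th factor of $V$, and both $\mu$ and $\s(0)$ split componentwise, so the minuteness inequality for $\CO$ is precisely the minuteness inequality in the appropriate factor. For condition (4), $X_w(b) \ne \emptyset$ iff $X_{w_i}^{G_i}(b_i) \ne \emptyset$ for all $i$; combined with $B(G, \{\mu\}) = \prod_i B(G_i, \{\mu_i\})$, uniqueness of $[b]$ given $w$ holds globally iff it holds in each factor. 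For conditions (5) and (6), $\bar\aa = \prod_i \bar\aa_i$ and a fixed point of $w\circ\s$ in $\bar\aa$ is exactly a tuple of fixed points of $w_i\circ\s$ in $\bar\aa_i$; the hypotheses ``central Newton vector'' and ``$X_w(b_0)\ne\emptyset$'' also split factorwise, since the $W_0$-invariants of $V$ and the basic class split as products. To run the forward direction on a single factor $i_0$, I would extend a given $w_{i_0}$ to a global element by taking $w_j = \t_{0,j}$ (the distinguished length-zero representative of the basic class of $B(G_j, \{\mu_j\})$) for $j \ne i_0$: these elements automatically lie in the corresponding fine admissible set, satisfy $X_{\t_{0,j}}(b_{0,j}) \ne \emptyset$, and have central Newton vector in $V_j$.

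Condition (1) requires a little more thought. A proper standard Levi of $\Gad$ has the form $M = M_1 \times \cdots \times M_r$ with at least one $M_i \subsetneq G_i$, and the positive coroot cone decomposes as $\BR_{\ge 0}\Phi_M^\vee = \prod_j \BR_{\ge 0}\Phi_{M_j}^\vee$ in $V = \prod_j V_j$. For the direction from $G$ to the factors, I would apply fully Hodge-Newton decomposability to $[b] = (b_{0,1},\dots,[b_i],\dots,b_{0,r})$ for any non-basic $[b_i] \in B(G_i, \{\mu_i\})$ (all other components basic); any witnessing proper Levi $M$ then forces $M_j = G_j$ for $j \ne i$ (since $\bar\nu_{b_{0,j}}$ is central, its centralizer is all of $G_j$) and hence $M_i \subsetneq G_i$, and the factorwise form of the cone condition exhibits $(G_i, \{\mu_i\})$ as fully Hodge-Newton decomposable. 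For the converse, given a non-basic $[b] = ([b_1], \dots, [b_r])$, I would choose $i_0$ with $[b_{i_0}]$ non-basic, pick a proper $M_{i_0} \subsetneq G_{i_0}$ witnessing HN-decomposability of $[b_{i_0}]$, and set $M_j = G_j$ for $j \ne i_0$. The inclusion $M_{\bar\nu_b} \subseteq M$ is componentwise trivial, and the cone condition also splits: the $i_0$-component lies in $\BR_{\ge 0}\Phi_{M_{i_0}}^\vee$ by hypothesis, while for $j \ne i_0$ the component $\mu_j^\diamond - \bar\nu_{b_j}$ lies in $\BR_{\ge 0}\Phi_{G_j}^\vee$ automatically, thanks to the dominance condition built into the very definition of $B(G_j, \{\mu_j\})$.

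The only step with any real content is this last one --- producing a single proper global Levi whose positive coroot cone covers the discrepancy $\mu^\diamond - \bar\nu_b$ in every factor simultaneously. The key observation is that in any factor where $[b_j]$ is basic and hence admits no nontrivial HN Levi of its own, the discrepancy $\mu_j^\diamond - \bar\nu_{b_j}$ already lies in $\BR_{\ge 0}\Phi_{G_j}^\vee$ by the defining dominance inequality for $B(G_j, \{\mu_j\})$, so choosing $M_j = G_j$ absorbs it without spoiling properness of the global Levi. Beyond this, the proof is a sequence of routine factorwise verifications.
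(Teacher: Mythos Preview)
Your proposal is correct and matches the paper's approach: the paper itself does not give a proof of this proposition, stating only that ``it is easy to check'' that conditions (1), (2), (4), (5), (6) can be verified on individual factors, so you are filling in precisely the routine verifications the authors omitted. Your handling of the one nontrivial point --- producing a proper global Levi for condition~(1) in the direction ``factors $\Rightarrow$ global'' by taking $M_j = G_j$ in all but one factor and invoking the dominance inequality $\bar\nu_{b_j} \le \mu_j^\diamond$ from the definition of $B(G_j,\{\mu_j\})$ --- is exactly the right observation.
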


For condition (3), the situation is somewhat different. Consider $[b] \in B(G, \{\mu\})$ that corresponds to $([b_i]) \in \prod B(G_i, \{\mu_i\})$. Then $[b]$ is basic if and only if all the $[b_i]$ are basic. However, if only some, but not all the $[b_i]$ are basic, $[b]$ is still non basic and $X(\mu, b)_K$ could be positive dimensional.

\subsection{Passage to simple groups over $\brF$}\label{passage-brF-simple} \label{simple}
Suppose that $G$ is adjoint and simple over $F$, i.e., the action of $\s$ on the set of connected components of the associated affine Dynkin diagram is transitive. Then we have $$G_{\brF}=G_1 \times \cdots \times G_k,$$ where $G_i$ are isomorphic simple reductive groups over $\brF$, and $\s(G_i)=G_{i+1}$ for all $i$. Here we set $G_{k+1}=G_1$.

Now we construct a simple reductive group $G'$ over $\brF$ as follows. Let $G'_{\brF}=G_k$ and the Frobenius morphism $\s'$ on $G'$ is defined to be $\s^k \mid_{G_k}$.

Since $G$ is adjoint, we have $\s=\t \s_0$, where $\t \in \Omega$ and $\s_0(\BS_0)=\BS_0$. Note that $\s_0$ preserves dominant cocharacters. We set $\mu'=\sum_{j=0}^{k-1} \s_0^j(\mu_{k-j})$, where $\mu=(\mu_1, \cdots, \mu_k)$.

We say that $(G', \{\mu'\})$ is the $\brF$-simple pair associated to $(G, \{\mu\})$. Then by definition, $\mu$ is minute (for $G$) if and only if $\mu'$ is minute for $G'$.

We consider the map
\[
G(\brF) \to G'(\brF), \quad g=(g_1, \cdots, g_k) \mapsto g':=g_k \s(g_{k-1}) \cdots \s^{k-1}(g_1).
\]
This map induces a map from the set $B(G)$ of $\s$-conjugacy classes on $G(\brF)$ to the set $B(G')$ of the $\s'$-conjugacy classes on $G'(\brF)$. By \cite[\S4]{HN2}, this map induces a bijection from $B(G, \{\mu\})$ to $B(G', \{\mu'\})$.

\subsection{Classification}\label{subsec:classification}
We will first describe the classification in terms of the triple $(W_a, \mu, \s)$, where $W_a$ is an affine Weyl group, $\mu$ is a dominant coweight in the associated root system, and $\s$ is a length-preserving automorphism on $W_a$. (Note that in general the translation $t^\mu\in\tW$ will not be an element of $W_a$.)

We use the same labeling of the Coxeter graph as in \cite{Bour}. If $\o^\vee_i$ is minuscule, we denote the corresponding element in $\Omega$ by $\t_i$; conjugation by $\t_i$ is a diagram automorphism of $\tW$ which we denote by $\t_i$ again. Let $\varsigma_0$ be the unique nontrivial diagram automorphism for the finite Dynkin diagram if $W_0$ is of type $A_n, D_n$ (with $n \ge 5$) or $E_6$. For type $D_4$, we also denote by $\varsigma_0$ the diagram automorphism which interchanges $\a_3$ and $\a_4$.

\begin{theorem}\label{triple}
Assume that $G$ is quasi-simple over $\brF$ and $\mu \neq 0$. Then $(G, \{\mu\})$ is fully Hodge-Newton decomposable if and only if the associated triple $(W_a, \mu, \s)$ is one of the following:

\renewcommand{\arraystretch}{1.7}
\[\begin{tabular}{|c|c|c|}
\hline
$(\tilde A_{n-1}, \o^\vee_1, \id)$ & $(\tilde A_{n-1}, \o^\vee_1, \t_1^{n-1})$ & $(\tilde A_{n-1}, \o^\vee_1, \varsigma_0)$
\\
\hline
$(\tilde A_{2m-1}, \o^\vee_1, \t_1 \varsigma_0)$ & $(\tilde A_{n-1}, \o^\vee_1+\o^\vee_{n-1}, \id)$ & $(\tilde A_3, \o^\vee_2, \id)$
\\
\hline
$(\tilde A_3, \o^\vee_2, \varsigma_0)$ & $(\tilde A_3, \o^\vee_2, \t_2)$ &
\\
\hline
\hline
$(\tilde B_n, \o^\vee_1, \id)$ & $(\tilde B_n, \o^\vee_1, \t_1)$ &
\\
\hline\hline
$(\tilde C_n, \o^\vee_1, \id)$ & $(\tilde C_2, \o^\vee_2, \id)$ & $(\tilde C_2, \o^\vee_2, \t_2)$
\\
\hline\hline
$(\tilde D_n, \o^\vee_1, \id)$ &
$(\tilde D_n, \o^\vee_1, \varsigma_0)$ & % $(\tilde D_n, \o^\vee_1, \t_1 \varsigma_0)$
\\
\hline
\end{tabular}
\]
\end{theorem}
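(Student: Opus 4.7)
The strategy is to invoke the equivalence (1)$\Leftrightarrow$(2) of Theorem~\ref{main}, which reduces the classification of fully Hodge-Newton decomposable pairs to the purely combinatorial problem of determining which nonzero dominant $\mu$ are \emph{minute}. Since $G$ is quasi-simple over $\brF$, the affine Weyl group $W_a$ is irreducible, hence of one of the types $\tilde A$, $\tilde B$, $\tilde C$, $\tilde D$, $\tilde E$, $\tilde F$, $\tilde G$. A length-preserving automorphism $\s$ of $\tW$ can be factored (up to the natural equivalences) as $\s = \t \cdot \s_0$ where $\t \in \Omega$ and $\s_0$ is a diagram automorphism of the finite Dynkin diagram; for each affine type, the list of such pairs modulo those equivalences is short and explicit.

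For each pair $(W_a, \s)$ obtained in this way, the main step is to translate the minute condition into a finite set of linear inequalities. Concretely: (i) list the $\s_0$-orbits $\CO$ on $\BS_0$ and form $\o_\CO = \sum_{i \in \CO} \o_i$; (ii) compute $\s(0) = \t(0) \in V$, which, via the identification of $\Omega$ with a quotient of the coweight lattice by the coroot lattice, is readable directly from the minuscule coweight representing $\t$, and thereby obtain the rational numbers $\<\s(0), \o_\CO\>$ together with their fractional parts; (iii) write $\mu = \sum_i n_i \o^\vee_i$ with $n_i \in \BZ_{\ge 0}$ not all zero, and use the entries of the inverse Cartan matrix to translate
\[
\<\mu, \o_\CO\>+\{\<\s(0), \o_\CO\>\} \le 1 \quad \text{for every orbit } \CO
\]
into a finite system of linear inequalities in the $n_i$. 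The integer solutions giving a dominant, nonzero $\mu$ are exactly the allowed cocharacters.

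The exceptional types $\tilde E_{6,7,8}$, $\tilde F_4$, $\tilde G_2$ can be eliminated by a short direct calculation: in each of these the entries of the inverse Cartan matrix are large enough that $\<\o^\vee_i, \o_\CO\> \ge 1$ already occurs for some orbit $\CO$ and every vertex $i$, so combined with any admissible $\s(0)$-term the minute inequality fails for any nonzero dominant $\mu$. The bulk of the work is then the analysis of the classical types: for each candidate triple appearing on the list, the minute condition is verified by direct evaluation, and in every remaining case the inequality system above rules out the existence of a nonzero $\mu$.

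The expected main obstacle is less any individual computation than the combinatorial bookkeeping. In type $\tilde A_{n-1}$ in particular, $\Omega \cong \BZ/n$ combines with the involution $\varsigma_0$ to yield composite automorphisms (such as $\t_1^{n-1}$ and $\t_1\varsigma_0$), whose genuinely distinct cases must be enumerated without duplication; and the fractional part $\{\<\s(0), \o_\CO\>\}$ is the subtle ingredient distinguishing, for example, the non-quasi-split triple $(\tilde A_{2m-1}, \o^\vee_1, \t_1 \varsigma_0)$ from its near-neighbours. A clean organizational device is to observe that the minute condition and the allowed pairs $(\t, \s_0)$ transform compatibly under the symmetries $\o^\vee_i \leftrightarrow \o^\vee_{n-i}$ (and their analogues in the other classical types), which reduces the casework to a manageable set of canonical representatives and matches it to the list in the theorem.
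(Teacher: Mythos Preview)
Your proposal is correct and follows essentially the same route as the paper: reduce to the minute condition via the equivalence (1)$\Leftrightarrow$(2), then carry out a case-by-case combinatorial check. The only organizational difference is that the paper (in \S\ref{subsec:2-implies-class}) first uses the Bourbaki tables to pin down the short list of pairs $(W_a,\mu)$ satisfying $\langle\mu,\omega_i\rangle\le 1$ for all $i$, and only then determines which $\sigma$ are compatible with the fractional-part term, whereas you propose to loop over $(W_a,\sigma)$ first and solve for $\mu$; both orderings work and the underlying computations are the same.
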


The theorem is proved in Section 4.

\subsection{Interpretation}\label{subsec:interpretation}
Let $G$ be adjoint and simple over $F$ and let $\mu$ be a dominant coweight of $G$.

\textbf{Absolutely simple cases.}

The following table lists all cases where $G$ is absolutely simple, $\mu\ne 0$, and $(G, \{\mu\})$ is fully Hodge-Newton decomposable. We use the notation of Tits's table.

\begin{center}
\renewcommand{\arraystretch}{1.7}
\begin{tabular}{|c|c|c|c|}
\hline
$(A_n, \o^\vee_1)$ & $({}^d A_{d-1}, \o^\vee_1)$ & $({}^2 A'_n, \o^\vee_1)$ & $({}^2 A''_{2m-1}, \o^\vee_1)$
\\
\hline
$(A_{n-1}, \o^\vee_1 + \o^\vee_{n-1})$ & $(A_3, \o^\vee_2)$ & $({}^2 A_3, \o^\vee_2)$ & $({}^2 A'_3, \o^\vee_2)$
\\
\hline\hline
$(B_n, \o^\vee_1)$ & $(B$-$C_n, \o^\vee_1)$ & $({}^2 B_n, \o^\vee_1)$ & $({}^2 B$-$C_n, \o^\vee_1)$
\\
\hline\hline
$(C_n, \o^\vee_1)$ & $(C$-$B_n, \o^\vee_1)$ & $(C$-$BC_n, \o^\vee_1)$ & $(C_2, \o^\vee_2)$
\\
\hline
$(C$-$B_2, \o^\vee_2)$ & $(C$-$BC_2, \o^\vee_2)$ & $({}^2 C_2, \o^\vee_2)$ & $({}^2 C$-$B_2, \o^\vee_2)$
\\
\hline\hline
$(D_n, \o^\vee_1)$ & $({}^2 D_n, \o^\vee_1)$ & $({}^2 D'_n, \o^\vee_1)$ &
\\
\hline
\end{tabular}
\end{center}
\medskip

\textbf{Cases where $G$ is not absolutely simple.}

If we drop the assumption the $G$ be absolutely simple, but require that $\mu$ is nontrivial on each $\overline{F}$-factor of $G$, then the pair $(G, \{\mu\})$ is fully Hodge-Newton decomposable if and only if it is in the list of the previous paragraph, or it is one of the following

\begin{itemize}
\item
$(\tilde A_{n-1} \times \tilde A_{n-1}, (\o^\vee_1, \o^\vee_{n-1}), {}^1 \varsigma_0)$.
By ${}^1 \varsigma_0$ we denote the automorphism of $\tilde A_{n-1}\times \tilde A_{n-1}$ exchanging the two factors. (In this case, the passage to a $\brF$-simple group yields the case $(\tilde A_{n-1}, \o^\vee_1+\o^\vee_{n-1}, \id)$.)
\item
$\Res_{E/F} PGL_n$, $\mu$ is $(\o^\vee_1, \o^\vee_{n-1})$ and $E/F$ is a quadratic totally ramified extension. (In this case, the extended affine Weyl group is of type $\tilde{A}_{n-1}$, and the image of $\mu$ in $X_*(T)_{\G_0}$ is $\o^\vee_1+\o^\vee_{n-1}$, so that we again end up with $(\tilde A_{n-1}, \o^\vee_1+\o^\vee_{n-1}, \id)$.)
\end{itemize}

Note that the difference of the numbers of cases in the list in Theorem~\ref{triple} and the lists given here arises as

\begin{itemize}
\item For the triple $(W_a, \mu, \s)$, we consider the (unoriented) affine Dynkin diagram, while for the pair $(G, \{\mu\})$ we consider the (oriented) local Dynkin diagram.
\item Also for the triple $(W_a, \mu, \s)$, we only consider the connected affine Dynkin diagram, while for the pairs $(G, \{\mu\})$ we consider the cases where $\s$ acts transitively on the set of connected components of affine Dynkin diagrams. This leads to the Hilbert-Blumenthal case which corresponds to the triple $(\tilde A_{n-1}, \o^\vee_1+\o^\vee_{n-1}, \id)$.
\end{itemize}

\subsection{}

We list the cases where the description of the basic locus in a Shimura variety has been proved in previous work.

\subsubsection*{Unitary groups}

\begin{enumerate}
\item
The \emph{fake unitary case}.
The $(A_n, \o^\vee_1)$ case arises from $U(1, n)$, $p$ split, and the basic locus is $0$-dimensional. This is the situation considered by Harris and Taylor in \cite{HT}.
\item
The \emph{Drinfeld case}.
The case $({}^d A_{d-1}, \o^\vee_1)$ is the Drinfeld case and the basic locus is the whole Shimura variety.
\item
$U(1, n)$ of a hermitian form:
\begin{itemize}
\item
The case $({}^2 A'_n, \o^\vee_1)$ is the $U(1, n)$, $p$ inert case. For hyperspecial level structure, the basic locus is studied by Vollaard and Wedhorn in \cite{Vollaard-Wedhorn}.
Compare also the article~\cite{Helm-Tian-Xiao} by Helm, Tian and Xiao about products of the form $U(n,1)\times U(1,n)$.
\item
The case $({}^2 A''_{2m-1}, \o^\vee_1)$ is the unramified $U(1, n)$ with nonsplit hermitian form case.
\item
The cases $(B$-$C_n, \o^\vee_1, \tSS - \{n\})$, $({}^2 B$-$C_n, \o^\vee_1, \tSS - \{n\})$ and $(C$-$BC_n, \o^\vee_1, \tSS - \{n\})$ correspond to $U(1, *)$, $p$ ramified. For certain maximal parahoric level structure the basic loci were studied by Rapoport, Terstiege and Wilson in \cite{Rapoport-Terstiege-Wilson}.
For level structure giving rise to exotic good reduction they were studied by Wu \cite{Wu}.
\end{itemize}
\item
The \emph{Hilbert-Blumenthal case}.
\begin{itemize}
\item
The unramified case
$(\tilde A_{n-1} \times \tilde A_{n-1}, (\o^\vee_1, \o^\vee_{n-1}), {}^1 \varsigma_0)$. Note that this case also arises as the local situation attached to certain unitary Shimura varieties, and the corresponding stratification of the supersingular locus is exploited by Tian and Xiao~\cite{Tian-Xiao}, \cite{Tian-Xiao-2} (in the case of hyperspecial level structure). They show that the irreducible components of the supersingular locus provide sufficiently many cycles to ensure that the Tate conjecture holds for the special fiber of the Shimura variety.
The case of Hilbert fourfolds was earlier done by Yu~\cite{Yu}.
\item
The ramified case $\Res_{E/F} PGL_n$, $\mu$ is
$(\o^\vee_1, \o^\vee_{n-1})$ and $E/F$ is a quadratic ramified extension.
\end{itemize}
\end{enumerate}

\subsubsection*{Orthogonal groups}

$SO(2, n)$ of a quadratic form

The cases $(B_n, \o^\vee_1)$, $({}^2 B_n, \o^\vee_1)$, $(C$-$B_n, \o^\vee_1)$, $(D_n, \o^\vee_1)$, $({}^2 D_n, \o^\vee_1)$ and $({}^2 D'_n, \o^\vee_1)$ are the $SO(2, *)$ cases of quadratic forms.

See the paper~\cite{howard-pappas2} by Howard and Pappas for related results, once again for the case of hyperspecial level structure.

\subsubsection*{Exceptional cases}

Some of the exceptional cases have been studied in the case of hyperspecial level structure, for example:

The case $({}^2 A'_3, \o^\vee_2)$ is the $U(2, 2)$, $p$ inert case and for hyperspecial level structure, the basic locus is studied by Howard and Pappas in \cite{howard-pappas}.

The case $(G, \mu, K)=(C_2, \omega_2^\vee)$ is the Siegel case for $Sp(4)$ and for hyperspecial level structure, the basic locus was studied by Katsura and Oort \cite{katsura-oort} and Kaiser \cite{Kaiser}; see also the paper \cite{kudla-rapoport} by Kudla and Rapoport (where the results are applied to computing intersection numbers of arithmetic cycles).

For the other exceptional cases, we do not know of a reference in the literature, but there is no question that they can be dealt with in a similar way.

\subsubsection*{Cases which do not arise from a Shimura variety}
Not all the cases come from a Shimura variety: $(\tilde C_n, \o^\vee_1, \id)$ does not, and $(\tilde A_{n-1}, \o_1^\vee+\o_{n-1}^\vee, \id)$ does not directly.

\section{Hodge-Newton decomposition}
\label{sec:HN}

\subsection{}\label{subsec:defn_Mv}
In this section, we assume that the group $G$ is adjoint.

Recall that we fix a $W_0 \rtimes \<\s_0\>$-invariant positive definite symmetric bilinear form $\<\, , \>$ on $V$. Let $\Phi$ be the set of roots in the underlying reduced root system of the relative root system of $G$ (over $\brF$). For any $v \in V$, we set $\Phi_{v, 0}=\{a \in \Phi; \<a, v\>=0\}$ and $\Phi_{v, +}=\{a \in \Phi; \<a, v\>>0\}$. Let $M_v \subseteq G(\brF)$ be the Levi subgroup generated by $T$ and $U_a$ for $a \in \Phi_{v, 0}$ and let $N_v \subseteq G(\brF)$ be the unipotent subgroup generated by $U_a$ for $a \in \Phi_{v, +}$. Put $P_v=M_v N_v$, which is a semistandard parabolic subgroup of $G$ with $N_v$ being its unipotent radical. Then $M_v$ and $P_v$ are a semistandard Levi subgroup and a parabolic subgroup respectively. Moreover, $P_v$ is standard (with respect to the fixed Borel $B \subseteq G_{\brF}$ over $\brF$) if and only if $v$ is dominant.

If $v \in X_*(T)_{\G_0}\otimes \BQ \cong X_*(T)^{\G_0}\otimes \BQ$, we can view $v$ as a homomorphism $\BD \rightarrow T_{\brF} \rightarrow G_{\brF}$. Here $\BD$ denotes the pro-torus with character group $\BQ$.  Then $M_v$ is the centralizer of this homomorphism inside $G_{\brF}$. In this sense, our notation is compatible with Kottwitz's notation in~\cite{kottwitz-isoI}.

Let $\tW_v=\tW(M_v)$ be the Iwahori-Weyl group of $M_v$ and $\le_v$ be the Bruhat order on $\tW_v$. It can be described in the following way: for $x, y \in \tW_v$, $x \le_v y$ if and only if there exist affine reflections $r_1, \cdots, r_n$ of $\tW_v$ such that $x \le r_1 x \le \cdots \le r_n \cdots r_1 x=y$. Here $\le$ is the usual Bruhat order on $\tW$.

As the dominant $M_v$-chamber we take the unique $M_v$-chamber which contains the fixed dominant chamber for $G_{\brF}$. Below we say that $v' \in V$ is $P_v$-dominant, if it is $M_v$-dominant and $\a(v') \ge 0$ for all $\a\in \Phi_{v, +}$, i.e., for all $\a$ occurring in $N_v$. This yields one chamber inside $V$, and in particular for every $v' \in V$, the $W_0$-orbit of $v'$ contains a unique $P_v$-dominant element.

By a superscript, we denote conjugation, e.g., ${}^z M = zMz\i$.

\subsection{} We recall the definition of alcove elements introduced in \cite{GHKR} and \cite{GHN} and discuss some properties of them.

\begin{definition}
Let $w \in \tW$ and $v \in V$. We say $w$ is a {\it $(v, \sigma)$-alcove element} if
\begin{itemize}
\item
$p(w \s)(v)=v$;
\item
$N_v(\brF) \cap \dot w \brI \dot w \i \subseteq N_v(\brF) \cap \brI$.
\end{itemize}
\end{definition}

The following result follows directly from the definition.

\begin{lemma}\label{alcove-1}
Let $v, v' \in V$. If $\Phi_{v, 0} \subseteq \Phi_{v', 0}$ and $\Phi_{v', +} \subseteq \Phi_{v, +}$, then any $(v, \s)$-alcove element is automatically a $(v', \s)$-alcove element.
\end{lemma}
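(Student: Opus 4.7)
The lemma reduces to checking both defining conditions for $w$ to be a $(v',\s)$-alcove element: the linear fix-point $p(w\s)(v')=v'$ and the unipotent containment $N_{v'}(\brF) \cap \dot w \brI \dot w\i \subseteq N_{v'}(\brF) \cap \brI$. One hypothesis feeds each.

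The unipotent containment is the straightforward half. The hypothesis $\Phi_{v',+}\subseteq \Phi_{v,+}$ translates directly into the closed subgroup inclusion $N_{v'}\subseteq N_v$, since each $N_\bullet$ is generated by the root subgroups $U_a$ for $a$ in the corresponding positive set. Combining this with the $(v,\s)$-alcove property of $w$ gives
\[
N_{v'}(\brF) \cap \dot w \brI \dot w\i \;\subseteq\; N_v(\brF) \cap \dot w \brI \dot w\i \;\subseteq\; N_v(\brF) \cap \brI,
\]
and since the leftmost set also sits inside $N_{v'}(\brF)$, it lies in $N_{v'}(\brF)\cap \brI$, as required.

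For the fix-point condition, the two hypotheses together are precisely the condition that $v'$ lies in the closure $\overline{F(v)}$ of the open face of the root-hyperplane arrangement containing $v$ in its interior; concretely, $v'$ is orthogonal to every $a\in \Phi_{v,0}$ and lies on the same side as $v$ of every other wall. Now $p(w\s)\in GL(V)$ is an isometry that fixes $v$ and permutes $\Phi$, hence permutes the subsystem $\Phi_{v,0}$. The stabilizer of $v$ inside the finite reflection group acting on $V$ is the classical reflection subgroup $W_{\Phi_{v,0}}$, and each $s_a$ with $a\in \Phi_{v,0}$ fixes every $x$ satisfying $\langle a,x\rangle=0$ pointwise; hence $W_{\Phi_{v,0}}$ fixes the linear subspace $\{x\in V:\langle a,x\rangle=0,\ \forall a\in \Phi_{v,0}\}\supseteq \overline{F(v)}$ pointwise. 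Since $v'\in \overline{F(v)}$, this forces $p(w\s)(v')=v'$.

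\textbf{Main obstacle.} The subtle point is the fix-point condition, where one has to identify the stabilizer of $v$ inside $W_0\rtimes \langle p(\s)\rangle$ (which a priori could contain Frobenius-twisted elements) with the reflection subgroup $W_{\Phi_{v,0}}$; once this identification is in hand, the rest reduces to the classical reflection-group calculation above and the lemma does indeed follow directly from the definitions, as claimed by the authors.
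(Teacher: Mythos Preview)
Your verification of the unipotent containment is correct and is precisely the direct check the paper intends: from $\Phi_{v',+}\subseteq\Phi_{v,+}$ one gets $N_{v'}\subseteq N_v$, and intersecting the $(v,\s)$-alcove inclusion with $N_{v'}(\brF)$ gives the $(v',\s)$-alcove inclusion.

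The fixed-point half, however, cannot be salvaged along the lines you propose, and in fact does not follow from the stated hypotheses at all. The stabilizer identification you single out as the obstacle is genuinely false: whenever $\s_0\neq\id$, elements of the coset $W_0\cdot p(\s)$ that fix $v$ lie in the stabilizer but not in $W_0$, let alone in $W_{\Phi_{v,0}}$. Concretely, take $G$ quasi-split of type $A_2$ with $\s=\s_0$ the diagram automorphism, $v=\rho^\vee$, $v'=\o_1^\vee$, and $w=1$. Then $\Phi_{v,0}=\emptyset\subseteq\{\pm\a_2\}=\Phi_{v',0}$ and $\Phi_{v',+}=\{\a_1,\a_1+\a_2\}\subseteq\Phi^+=\Phi_{v,+}$, and $w=1$ is a $(v,\s)$-alcove element since $p(\s)(\rho^\vee)=\rho^\vee$ and the second condition is trivial; but $p(\s)(\o_1^\vee)=\o_2^\vee\neq\o_1^\vee$, so $w=1$ is \emph{not} a $(v',\s)$-alcove element. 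Thus the lemma as literally stated is not correct; the paper's ``follows from the definition'' really only covers the second condition. In the sole application (the proof of Lemma~\ref{str-v}, with $v=\nu_x=y(\overline\nu_b)$ and $v'=v_x^\flat=y(v^\flat)$) the first condition does hold, but for a reason outside the lemma's hypotheses: writing $y^{-1}p(x\s)y=u\,\s_0$ with $u\in W_0$, the equalities $\s_0(\overline\nu_b)=\overline\nu_b$ and $p(x\s)(\nu_x)=\nu_x$ force $u\in W_{\Phi_{\overline\nu_b,0}}\subseteq W_{\Phi_{v^\flat,0}}$, and then $\s_0(v^\flat)=v^\flat$ gives $p(x\s)(v_x^\flat)=v_x^\flat$. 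Your face-closure idea is morally right once one has this $\s_0$-fixedness, but the lemma alone does not supply it.
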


\begin{lemma}\label{nu-v}
Let $x \in \tW$ and $v \in V$ such that $x$ is a $(v, \s)$-alcove element. Then  $$\<\nu_x, v\>=\<x \s(e)-e, v\>$$ for any $e \in V$.
\end{lemma}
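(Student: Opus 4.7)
The plan is to exploit the affine action of $x\sigma$ on $V$ and the fact that its linear part $p(x\sigma)$ fixes $v$. Write $w = x\sigma$, and let $A = p(x\sigma)$ be its linear part. For any origin, we may decompose $w$ as an affine transformation
\[
w(y) = A(y) + \tau, \qquad \tau := w(0),
\]
so that $x\sigma(e) - e = (A - I)(e) + \tau$ for every $e \in V$.

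First I would check that $A$ preserves the fixed $W_0 \rtimes \langle \sigma_0\rangle$-invariant inner product: the linear part of $x \in \tW$ lies in $W_0$, and $p(\sigma) = w_0^{-1}\sigma_0$ for some $w_0 \in W_0$ by the definition of $\sigma_0$, so $p(\sigma)$ and hence $A$ preserves the form. Combined with the $(v,\sigma)$-alcove hypothesis $A(v) = v$, this gives for any $e\in V$
\[
\langle (A-I)(e), v \rangle \;=\; \langle e, A^{-1}v\rangle - \langle e, v\rangle \;=\; 0,
\]
so the right-hand side of the lemma reduces to $\langle \tau, v\rangle$, independently of $e$.

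Next I would relate $\tau$ to the Newton vector $\nu_x$. Pick $n \in \mathbb{N}$ with $(x\sigma)^n = t^\lambda$ and $\nu_x = \lambda/n$. Iterating the affine action gives
\[
w^n(0) \;=\; \bigl(A^{n-1} + A^{n-2} + \cdots + A + I\bigr)\,\tau \;=\; \lambda.
\]
Pairing with $v$ and using $Av = v$ together with the orthogonality of $A$ yields $\langle A^k \tau, v\rangle = \langle \tau, v\rangle$ for each $k$, hence
\[
\langle \lambda, v\rangle \;=\; n\,\langle \tau, v\rangle,
\]
so $\langle \tau, v\rangle = \langle \nu_x, v\rangle$. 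Combining the two displayed computations gives the lemma.

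There is no real obstacle here; the main point is simply to unwind the semidirect product $\tW \rtimes \langle \sigma\rangle$ as a group of affine transformations and to observe that the second alcove condition (on $N_v \cap \dot{x}\brI\dot{x}^{-1}$) is not needed for this statement — only the condition $p(x\sigma)(v) = v$ is used. Note also that the argument shows, as a byproduct, that $\langle x\sigma(e) - e, v\rangle$ is independent of $e$, which is implicit in the lemma's phrasing.
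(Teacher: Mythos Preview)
Your proof is correct and follows essentially the same approach as the paper: both write $x\sigma = t^{\lambda} z$ with $z = p(x\sigma)$ the linear part (your $A$) and $\lambda = x\sigma(0)$ (your $\tau$), use $z(v)=v$ together with orthogonality of $z \in W_0 \rtimes \langle \sigma_0\rangle$ to get $\langle x\sigma(e)-e, v\rangle = \langle \lambda, v\rangle$ independently of $e$, and then use $\langle z^i(\lambda), v\rangle = \langle \lambda, v\rangle$ to identify this with $\langle \nu_x, v\rangle$. Your write-up is just a bit more explicit about the orthogonality step and about iterating the affine action, and you correctly note that only the condition $p(x\sigma)(v)=v$ is used.
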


\begin{proof}
Assume that $x \s=t^\l z$ for $\l \in X_*(T)_{\G_0}$ and $z \in W_0 \rtimes \<\s_0\>$. Since $x$ is a $(v, \s)$-alcove element, $z(v)=v$. Then $$\<x \s(e)-e, v\>=\<z(e)-e, v\>+\<\l, v\>=\<\l, v\>.$$ We also have $\<z^i(\l), v\>=\<\l, v\>$ for any $i \in \BZ$. Thus $\<\nu_x, v\>=\<\l, v\>$. The lemma is proved.
\end{proof}

\begin{lemma}\label{le-v}
If $v \in V$ and $w$ is a $(v, \s)$-alcove element, then $w'$ is a $(v, \s)$-alcove element for any $w' \le_v w$.
\end{lemma}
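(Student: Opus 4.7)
The plan is to reduce to the case of a single reflection. By the definition of $\le_v$, a relation $w' \le_v w$ is witnessed by a chain $w' = x_0 \le x_1 \le \cdots \le x_n = w$ in the usual Bruhat order on $\tW$, with $x_i = r_i x_{i-1}$ and each $r_i$ an affine reflection in $\tW_v$. An induction on $n$ reduces the lemma to the one-step statement: \emph{if $w$ is a $(v,\s)$-alcove element, $r \in \tW_v$ is an affine reflection, and $w' := rw \le w$ in $\tW$, then $w'$ is again a $(v,\s)$-alcove element}.

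The first defining condition is immediate: the linear part $p(r) \in W(M_v)$ of $r$ stabilizes $v$, so
\[
p(w'\s)(v) \;=\; p(r)\bigl(p(w\s)(v)\bigr) \;=\; p(r)(v) \;=\; v.
\]

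For the second condition I translate into affine-root language. Using $\breve I = \<T(\brF)_0,\, U_\gamma : \gamma > 0\>$, the inclusion $N_v \cap \dot w\breve I \dot w\i \subseteq N_v \cap \breve I$ is equivalent to the implication
\[
\beta|_V \in \Phi_{v,+} \text{ and } \beta < 0 \;\Longrightarrow\; w\i\beta < 0
\]
for every affine root $\beta$. Write $r = s_\alpha$ with $\alpha$ a \emph{positive} affine root; since $r\in\tW_v$, one has $\alpha|_V \in \Phi_{v,0}$, so $s_{\alpha|_V}$ fixes $v$, and hence $r\beta|_V = s_{\alpha|_V}(\beta|_V) \in \Phi_{v,+}$ whenever $\beta|_V \in \Phi_{v,+}$. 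It thus suffices to show $w\i(r\beta) < 0$, and I split on the sign of $r\beta$. If $r\beta < 0$, applying the $(v,\s)$-alcove property of $w$ directly to $r\beta$ gives $w\i r\beta < 0$. If $r\beta > 0$, then from $r\beta = \beta - c\,\alpha$ with $c := \<\beta|_V, \alpha|_V^\vee\>$, combined with $\alpha > 0$, $\beta < 0$ and $r\beta > 0$, one deduces $c < 0$. The hypothesis $rw \le w$ applied to the positive affine root $\alpha$ yields $w\i\alpha < 0$, and the alcove property of $w$ applied to $\beta$ itself gives $w\i\beta < 0$. Since $-c > 0$ and both $w\i\alpha$ and $w\i\beta$ take strictly negative values on $\aa$, the affine root
\[
w\i r\beta \;=\; w\i\beta + (-c)\, w\i\alpha
\]
is strictly negative on $\aa$, i.e., $w\i r\beta < 0$.

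The only mildly delicate point is the case $r\beta > 0$, where the Bruhat-order hypothesis $rw \le w$ (equivalently $w\i\alpha < 0$) must be combined with the alcove property of $w$; the case $r\beta < 0$ already follows from the alcove property of $w$ alone. This verifies both defining conditions for $w'$, completing the induction and hence the lemma.
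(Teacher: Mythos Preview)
Your proof is correct. It differs from the paper's in method: after the same reduction to a single reflection step $w' = rw$, the paper argues geometrically by contradiction in the half-space language of \cite{GHKR}---assuming some hyperplane $H_{\alpha,k}$ violates the alcove condition for $sw$, it analyzes the positions of the alcoves $\aa$, $s\aa$, $w\aa$, $sw\aa$ relative to the fixed hyperplane $V^s$ of $s$ (using $sw < w$) and transports the configuration by $s$ to manufacture a violation of the alcove property for $w$. You instead translate the inclusion $N_v \cap \dot w\brI\dot w^{-1} \subseteq N_v \cap \brI$ into the equivalent affine-root implication and verify it for $w'$ directly via a two-case sign analysis on $r\beta$, invoking the Bruhat-order criterion $rw < w \Leftrightarrow w^{-1}\alpha < 0$ (for $\alpha$ the positive affine root with $r = s_\alpha$) only in the case $r\beta > 0$. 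Your route is more algebraic and self-contained; the paper's keeps the link to the $\ge_\alpha$ partial order on alcoves from \cite{GHKR} more visible.
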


\begin{proof}
We can assume $w'=s w$ for some reflection $s \in \tW_v$. Suppose that there exists $k \in \BZ$ such that $\a(q) > k > \a(sw(q))$ for $\a \in \Phi_{v, +}$ and $q \in \aa$. Since $w$ is a $(v, \s)$-alcove, we have $$\a(w(q)), \a(q)>k>\a(s w(q)).$$ Since $s \in \tW_v$, $p(s)=s_\g$ for some $\g \in \Phi_{v, 0}$. By replacing $\g$ with $-\g$ if necessary, we can assume $w(q)-sw(q) \in \mathbb{R}_{>0} \g^\vee$. Hence $\a(\g^\vee)>0$. Since $s w < w$, $w \aa$ and $s \aa$ are on the same side of the hyperplane $V^s$, and $\aa$ and $s w \aa$ are on the other side of the hyperplane $V^s$. Since $w(q)-sw(q) \in \mathbb{R}_{>0} \g^\vee$, we have $s(q)-q \in \mathbb{R}_{>0} \g^\vee$ and $\a(s(q)) > \a(q) > k > \a(s w(q))$, which means $s \aa >_\a sw \aa$ (with notation as in~\cite[2.1]{GHKR}). Now we have $\aa=s s \aa >_{p(s)(\a)} s sw \aa= w \aa$, contradicting the assumption that $w$ is a $(v, \s)$-alcove element. Therefore $s w \aa \ge_\a \aa$ for all $\a \in \Phi_{v, +}$. Thus $w'$ is a $(v, \s)$-alcove element.
\end{proof}

The following result is proved in \cite[Lemma 2.1]{Ni}.
\begin{lemma}\label{alcove-2}
Let $v \in V$ and $w \in \tW$ with $p(w\s)(v)=v$. Let $s$ be a reflection in $\tW_v$ with $\ell(s w)=\ell(w)+1$. Then $w$ is a $(v, \s)$-alcove element if and only if $s w$ is a $(v, \s)$-alcove element.
\end{lemma}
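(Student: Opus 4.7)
The two conditions in the definition of a $(v,\sigma)$-alcove element can be treated separately.

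For the first condition, since $s \in \tW_v$ acts on $V$ with linear part $p(s)$ lying in the finite Weyl group of $M_v$, and this Weyl group fixes $v$, we have $p(sw\sigma)(v) = p(s)(p(w\sigma)(v)) = p(s)(v) = v$. Hence $p(w\sigma)(v)=v$ if and only if $p(sw\sigma)(v)=v$, so the first condition passes symmetrically between $w$ and $sw$.

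For the second condition, the implication ``$sw$ is $(v,\sigma)$-alcove $\Rightarrow$ $w$ is $(v,\sigma)$-alcove'' will follow directly from Lemma~\ref{le-v}. Indeed, $\ell(sw)=\ell(w)+1$ together with the fact that $s$ is a reflection gives $w < sw$ in the Bruhat order of $\tW$, so the one-step chain $w \le sw$ with the reflection $r_1 = s \in \tW_v$ witnesses $w \le_v sw$. Lemma~\ref{le-v} then yields the claim.

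For the converse ``$w$ is $(v,\sigma)$-alcove $\Rightarrow$ $sw$ is $(v,\sigma)$-alcove'', I plan to argue by contradiction, closely paralleling the geometric argument in the proof of Lemma~\ref{le-v}. Suppose $sw$ is not a $(v,\sigma)$-alcove element; then there exist $\a \in \Phi_{v,+}$, $k \in \BZ$, and $q \in \aa$ with $\a(q) > k > \a(sw(q))$. Since $w$ is a $(v,\sigma)$-alcove element, the condition $\a(q) > k$ forces $\a(w(q)) > k$, so altogether
\[
\a(w(q)) > k > \a(sw(q)).
\]
Writing $p(s) = s_\g$ for some $\g \in \Phi_{v,0}$ and replacing $\g$ by $-\g$ if necessary, I arrange $w(q)-sw(q) \in \BR_{>0}\g^\vee$; combined with the displayed inequality this forces $\<\a,\g^\vee\> > 0$. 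Crucially, the hypothesis $\ell(sw)=\ell(w)+1$ places $\aa$ and $w\aa$ on the same side of the reflection hyperplane of $s$, while $sw\aa$ lies on the opposite side, which is the mirror image of the configuration exploited in Lemma~\ref{le-v}.

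From here, the plan is to reflect the witnessing affine root by $s$, producing an affine root whose linear part $\a' := p(s)(\a) = \a - \<\a,\g^\vee\>\g$ still lies in $\Phi_{v,+}$ (because $\g(v)=0$ and $\a(v)>0$). By tracking signs at the points $q$, $s(q)$, $w(q)$, $sw(q)$ via the reflection formula for $s$ and accounting for the swapped positions of the alcoves relative to the reflection hyperplane, one extracts $\a' \in \Phi_{v,+}$, $k'' \in \BZ$, and $q'' \in \aa$ satisfying $\a'(q'') > k'' > \a'(w(q''))$, contradicting that $w$ is a $(v,\sigma)$-alcove element. The main obstacle is the sign bookkeeping in the reflected inequalities: because $\aa$ and $w\aa$ now sit on the same side of the reflection hyperplane rather than on opposite sides, the intermediate sign computations of Lemma~\ref{le-v} have to be rearranged carefully, and this technical adaptation is precisely the content of the cited reference.
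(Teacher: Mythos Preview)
The paper does not prove this lemma; it simply cites \cite[Lemma~2.1]{Ni}. So there is no ``paper's own proof'' to compare against, only the external reference.

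Your treatment of the easy direction is clean and correct: from $\ell(sw)=\ell(w)+1$ and the standard dichotomy $w<sw$ or $sw<w$ for a reflection $s$, you get $w<sw$, hence $w\le_v sw$ via the single reflection $s\in\tW_v$, and Lemma~\ref{le-v} (which precedes Lemma~\ref{alcove-2} in the paper and does not use it) finishes. This is a genuine simplification over re-doing the alcove geometry.

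For the hard direction, however, your proposal is not a proof but an outline that ultimately defers to the reference you are supposed to be proving: after setting up $\alpha(w(q))>k>\alpha(sw(q))$ and observing that $\aa$ and $w\aa$ lie on the same side of $H_s$ (which is correct, since $w<sw$ means $H_s$ does not separate them), you assert that ``one extracts $\alpha'\in\Phi_{v,+}$, $k''\in\BZ$, $q''\in\aa$ with $\alpha'(q'')>k''>\alpha'(w(q''))$'' and then say the bookkeeping ``is precisely the content of the cited reference.'' That closing sentence gives the game away: the reflected root $\alpha'=s_\gamma(\alpha)$ does lie in $\Phi_{v,+}$, and one does get $\alpha'(w(q))<k'$ for the reflected integer $k'=k-m\langle\alpha,\gamma^\vee\rangle$, but the inequality $\alpha'(q)>k'$ does \emph{not} fall out automatically---indeed $\alpha'(q)<\alpha'(s(q))$ under your sign convention---so the final step requires a genuine additional argument exploiting $\ell(sw)=\ell(w)+1$ (not merely $w<sw$). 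You have correctly located the crux but not supplied it; to make this self-contained you would need to actually carry out the hyperplane-counting or sign analysis from \cite{Ni} rather than cite it.
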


\subsection{}
For any semistandard $\s$-stable Levi subgroup $M$, we set $\brI_M=\brI \cap M(\brF)$, an Iwahori subgroup of $M$, and $\mathcal{FL}(M)=M(\brF)/\brI_M$. For any $b \in M(\brF)$ and $w \in \tW(M) \subseteq \tW$, we define $$X^M_w(b)=\{m \brI_M; m \i b \s(m) \in \brI_M \dot w \brI_M\}.$$ Then we have a natural embedding $$X^M_w(b) \to X^G_w(b).$$

Recall that the notation $X_w(b)$ and $X(\mu, b)$ without subscript ${-}_K$ always refers to the Iwahori case $K=\emptyset$.

The following result is proved in \cite[Theorem 2.1.4]{GHKR} for split groups\footnote{But note that the statement of \cite[Theorem 2.1.4]{GHKR} is not entirely correct. As the proof given there shows, we need to assume that $X^M_x(b)\ne \emptyset$. In the statement below this is reflected by assuming that $\bar \nu_{b}^{M_v}$ is $P_v$-dominant. In particular, the statement right after \cite[Theorem 2.1.4]{GHKR}, claiming that \emph{for $x\mathfrak a$ a $P$-alcove, $X^M_x(b)\ne \emptyset$ if and only if $X^G_x(b)\ne \emptyset$}, is false. This can be seen already for $G=SL_2$, $M$ the diagonal torus, $x$ a regular dominant translation element and $b$ the anti-dominant translation obtained by conjugating $x$ by the non-trivial element of the finite Weyl group.} and in \cite[Proposition 2.5.1 \& Theorem 3.3.1]{GHN} in general. As usual, we denote by $\JJ_b$ the $\s$-centralizer of $b$:
\[
\JJ^G_b = \JJ_b = \{ g\in G(\brF);\ g\i b \s(g) = b \},
\]
and likewise we write $\JJ^{M_v}_b$ for the $\s$-centralizer inside $M_v(\brF)$.

\begin{theorem}\label{alcove-adlv}
Let $v \in V^{p(\s)}$ and let $b \in M_v(\brF)$ such that $\bar \nu_{b}^{M_v}$ is $P_v$-dominant. If $w$ is a $(v, \s)$-alcove element, then the closed immersion $\mathcal{FL}(M_v) \to \mathcal{FL}(G)$ induces a bijection $$\JJ^{M_v}_{b} \backslash X^{M_v}_w(b) \xrightarrow{\sim} \JJ^G_{b'} \backslash X^G_w(b).$$ In particular, if $\JJ^G_{b} \subseteq M_v(\brF)$, then we have a natural isomorphism $X^{M_v}_w(b) \xrightarrow{\sim} X^G_w(b)$. Here $P_v$ denotes the standard parabolic subgroup associated to $v$.
\end{theorem}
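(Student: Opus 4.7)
The plan is to factor the claimed bijection through the Iwasawa-Iwahori geometry of $G$ with respect to the parabolic $P_v = M_v N_v$. The two structural ingredients are the Iwasawa decomposition $G(\brF) = N_v^-(\brF)\, M_v(\brF)\, \brI$ (valid because $v$ is fixed by $p(\s)$, so $P_v$, the opposite $P_v^-$ and $\brI$ fit together), and the Iwahori factorization $\brI = \brI^-\cdot \brI_{M_v}\cdot \brI^+$, where I write $\brI^\pm := \brI \cap N_v^{\pm}(\brF)$. The $(v,\s)$-alcove condition $N_v(\brF)\cap \dot w\brI\dot w\i \subseteq \brI^+$ should translate, by manipulation of these two decompositions, into the ``parabolic cell identity''
\[
\brI\dot w\brI \;\cap\; P_v(\brF)\cdot\brI \;=\; \brI^-\cdot\bigl(\brI_{M_v}\dot w\brI_{M_v}\bigr)\cdot \brI^+,
\]
which is the algebraic heart of the argument.

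Given $g\brI \in X^G_w(b)$, I would write $g = u m k$ with $u \in N_v^-(\brF)$, $m \in M_v(\brF)$, $k \in \brI$, and aim to produce $j \in \JJ^G_b\cap N_v^-(\brF)$ with $ju \in \brI^-$, so that $jg \in M_v(\brF)\brI$. Since $b \in M_v(\brF)$ normalizes $N_v^-$ and $\s$ preserves $N_v^-$, the condition $g\i b\s(g) \in \brI\dot w\brI$, when pushed through the cell identity above, forces $m\i b\s(m) \in \brI_{M_v}\dot w\brI_{M_v}$ and yields a fixed-point equation for the unknown $j$. Because $v$ lies in the closure of the $P_v$-dominant chamber, the map $x \mapsto b\s(x)b\i$ strictly contracts the filtration of $N_v^-(\brF)$ by $v$-weights; this makes the equation solvable by iteration, producing $j$ as a $\varpi$-adic limit. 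The resulting $jg$ then lies in $M_v(\brF)\brI$ and satisfies $(jg)^{-1}b\s(jg) \in \brI_{M_v}\dot w\brI_{M_v}$, i.e., it represents a point of $X^{M_v}_w(b)$.

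For the injectivity of $\JJ^{M_v}_b\backslash X^{M_v}_w(b) \to \JJ^G_b\backslash X^G_w(b)$, suppose $j \in \JJ^G_b$ sends a point $m\brI \in X^{M_v}_w(b)$ to another such point. Factor $j$ through the big cell $N_v^-(\brF) M_v(\brF) N_v(\brF)$, say $j = j_- j_0 j_+$; applying the same contraction to both $j_-$ and $j_+$ yields $j_\pm \in \brI^\pm$, hence $j \in M_v(\brF)\cdot \brI$, and then $j \in \JJ^G_b\cap (M_v(\brF)\cdot \brI) = \JJ^{M_v}_b\cdot \brI_{M_v}$, as required. The ``in particular'' clause follows immediately: if $\JJ^G_b\subseteq M_v(\brF)$, then the surjectivity step holds pointwise rather than only on $\JJ^G_b$-orbits, so $X^{M_v}_w(b) = X^G_w(b)$ inside $\mathcal{FL}(G)$. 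The main obstacle is the contraction/convergence step: one has to set up a $\varpi$-adic filtration on $N_v^-(\brF)$ that is compatible with the Iwahori decomposition and verify that the $(v,\s)$-alcove condition on $w$ is precisely what keeps each iterate of the contraction inside the correct $\brI$-double coset. This is the technical content established in \cite[Thm.~2.1.4]{GHKR} for the split case and in \cite[Prop.~2.5.1, Thm.~3.3.1]{GHN} in general.
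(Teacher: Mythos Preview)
The paper does not give a proof of this theorem; it is quoted from \cite[Theorem~2.1.4]{GHKR} (split case) and \cite[Proposition~2.5.1, Theorem~3.3.1]{GHN} (general case), exactly the references you cite at the end. Your proposal goes further and sketches the argument from those sources, correctly identifying the Iwasawa decomposition, the Iwahori factorization, and a parabolic cell identity as the structural inputs.

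There is, however, a genuine gap in your surjectivity mechanism. You assert that the map $x \mapsto b\s(x)b^{-1}$ ``strictly contracts the filtration of $N_v^-(\brF)$ by $v$-weights'' because ``$v$ lies in the closure of the $P_v$-dominant chamber''. This is false: nothing is assumed about $b$ beyond $b \in M_v(\brF)$, and for $b=1$ the map is simply Frobenius, which preserves every congruence filtration on $N_v^-(\brF)$ rather than contracting it. Correspondingly, $\JJ^G_b \cap N_v^-(\brF)$ can be as small as $N_v^-(F)$, and there is no reason it should contain an element $j$ with $ju \in \brI^-$ for a given $u \in N_v^-(\brF)$. The strategy ``find $j \in \JJ^G_b \cap N_v^-(\brF)$'' is therefore too restrictive from the outset.

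What actually drives convergence in \cite{GHKR} and \cite{GHN} is the $(v,\s)$-alcove condition on $w$ itself. Together with $p(w)(v)=v$ it yields $\dot w^{-1}\brI^-\dot w \subseteq \brI^-$, i.e.\ conjugation by $\dot w$ \emph{expands} the Iwahori filtration on $N_v^-$. The iteration in those papers does not fix the $\brI$-double coset and search for a $j \in \JJ_b$; rather, at each step one uses the cell identity to rewrite $g^{-1}b\s(g)$, extracts $m^{-1}b\s(m) \in \brI_{M_v}\dot w\brI_{M_v}$, and modifies $g$ so that its $N_v^-$-component moves one step deeper into the filtration, the improvement coming from the fact that $m^{-1}b\s(m)$ conjugates like $\dot w$. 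You note that the alcove condition ``keeps each iterate inside the correct $\brI$-double coset'', but in fact it is the \emph{engine} of the contraction, not a side compatibility; the element $b$ plays no role in the convergence.
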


\subsection{}\label{HN-setup}
In the rest of this section, we fix a dominant element $v^\flat$ of $V$ with $\s_0(v^\flat)=v^\flat$ and let $P=P_{v^\flat}$ (resp. $M=M_{v^\flat}$) be the standard parabolic subgroup associated to $v^\flat$. Set $J=J_{v^\flat} = \{ s\in \BS_0;\ s(v^\flat)=v^\flat \}$ and denote by $W_0^J \subseteq W_0$ the set of minimal length representatives in their cosets in $W_0/W_J$. We assume that $(\mu, b)$ is Hodge-Newton decomposable with respect to $M$, which means by Definition~\ref{def:HN-decomp} that $M_{\overline{\nu}_b}\subseteq M$ and $\mu^\diamond-\overline{\nu}_b\in\BR_{\ge 0}\Phi^{\vee, +}_M$. In particular, $\<\mu, v^\flat\>=\<\mu^\diamond, v^\flat\>=\<\overline{\nu}_b, v^\flat\>$.

Let $\fkP^\s$ be the set of $\s$-stable parabolic subgroups of $G$ containing the maximal torus $T$ which are conjugate to $P$. For $P' \in \fkP^\s$, we have $P'=M' N'$, where $N'$ is the unipotent radical of $P'$ and $M' \subseteq P'$ is the unique Levi subgroup containing $T$. Since $P'$ is conjugate to $P$, there exists a unique element $z_{P'} \in W_0^J$ such that ${}^{\dot z_{P'}} P=P'$.

\begin{lemma} \label{para}
Let $P' \in \fkP^\s$. Then we have $p(\s) (z_{P'}(v^\flat))=z_{P'}(v^\flat)$
\end{lemma}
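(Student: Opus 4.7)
\emph{Plan of proof.} The plan is to set $v' := z_{P'}(v^\flat)$, so that $P_{v'} = {}^{\dot z_{P'}} P = P'$ by the definition of $z_{P'}$, and then to show $p(\s)(v') = v'$. I would do so by exhibiting $p(\s)(v')$ as an element of the orbit $W_0 \cdot v^\flat$ that indexes the same semistandard parabolic as $v'$, together with the injectivity of $v \mapsto P_v$ on the orbit $W_0 \cdot v^\flat$.

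First I would check that $p(\s)(v') \in W_0 \cdot v^\flat$. Let $w \in W_0$ be the element satisfying $\s_0 = w \circ p(\s)$ from Section~\ref{sec:group-theoretic-setup}. The hypothesis $\s_0(v^\flat) = v^\flat$ forces $p(\s)(v^\flat) = w^{-1}(v^\flat) \in W_0 \cdot v^\flat$. Because conjugation by $p(\s)$ in $\GL(V)$ preserves the image of $W_0$ (it realises the Frobenius action on $W_0$ inherited from $\tW \rtimes \<\s\> \hookrightarrow \text{Aff}(V)$), one has $p(\s) \circ z_{P'} = (p(\s) z_{P'} p(\s)^{-1}) \circ p(\s)$ with $p(\s) z_{P'} p(\s)^{-1} \in W_0$, and therefore
\[
p(\s)(v') \;=\; \bigl( p(\s)\, z_{P'}\, p(\s)^{-1} \bigr) \bigl( w^{-1}(v^\flat) \bigr) \;\in\; W_0 \cdot v^\flat.
\]

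Next I would verify that $v'$ and $p(\s)(v')$ give the same parabolic. The compatibility needed is $\s(P_v) = P_{p(\s)(v)}$ for all $v \in V$, which holds because $\s$ permutes the root subgroups of $G$ in accordance with its linear action $p(\s)$ on the character lattice. Combined with the $\s$-stability of $P'$, this yields
\[
P_{p(\s)(v')} \;=\; \s(P_{v'}) \;=\; \s(P') \;=\; P' \;=\; P_{v'}.
\]

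The concluding step is the injectivity: for $w_1, w_2 \in W_0$ we have $P_{w_i(v^\flat)} = {}^{w_i} P$, so $P_{w_1(v^\flat)} = P_{w_2(v^\flat)}$ is equivalent to $w_1^{-1} w_2 \in N_{W_0}(P) = W_J$ (the normaliser identity uses $N_G(P) = P$ together with $P \cap N(T) \subseteq M$); since $W_J = \mathrm{Stab}_{W_0}(v^\flat)$, this condition in turn amounts to $w_1(v^\flat) = w_2(v^\flat)$. Applying this observation to $v'$ and $p(\s)(v')$ closes the argument. The proof is essentially formal bookkeeping of the $\s$-action; the only point that deserves a brief verification is the identification $N_{W_0}(P) = W_J$, so there is no substantial obstacle.
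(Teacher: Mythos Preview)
Your proof is correct and follows essentially the same route as the paper's. Both arguments hinge on the three ingredients you isolate: the compatibility $\s(P_v)=P_{p(\s)(v)}$, the fact that $p(\s)(v')$ remains in the orbit $W_0\cdot v^\flat$ (via $p(\s)=w^{-1}\s_0$ and $\s_0(v^\flat)=v^\flat$), and the injectivity of $v\mapsto P_v$ on that orbit. The paper states this injectivity as the bare assertion ``two $W_0$-conjugate vectors in $V$ are the same if they have the same associated parabolic subgroup,'' whereas you unpack it via $N_{W_0}(P)=W_J$; this extra detail is welcome but not a different idea.
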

\begin{proof}
Note that $P'=P_{z_{P'}(v^\flat)}$. Since $\s(P')=P'$, we have  $P_{z_{P'}\i p(\s) (v^\flat)}=P_{v^\flat}$. Since $p(\s)=w \s_0$ for some $w \in W_0$ and $\s_0(v^\flat)=v^\flat$, we deduce that $P_{z_{P'}\i w \s_0(z_{P'})(v^\flat)}= P_{v^\flat}$. Notice that two $W_0$-conjugate vectors in $V$ are the same if they have the same associated parabolic subgroup. Thus we have $z_{P'}\i w \s_0(z_{P'})(v^\flat)=v^\flat$ as desired.
\end{proof}

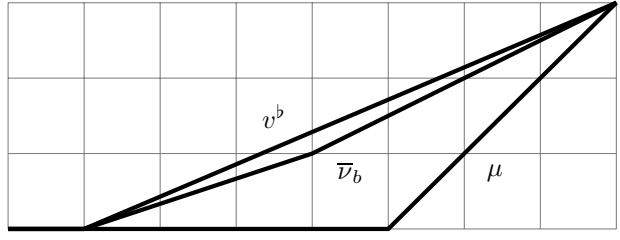
\begin{figure}
\begin{tikzpicture}
\tkzInit[xmax=8,ymax=3,xmin=0,ymin=0]
\tkzGrid
\draw[ultra thick] (0,0) -- (5,0) -- (8,3);
\draw[ultra thick] (0,0) -- (1,0) -- (4,1) -- (8,3);
\draw[ultra thick] (0,0) -- (1,0) -- (8,3);

\node at (6.4, .75) {$\mu$};
\node at (4.5, .75) {$\overline{\nu}_b$};
\node at (3.5, 1.5) {$v^\flat$};

\end{tikzpicture}
\caption{An example for $G = PGL_8$, $\mu = (1,1,1,0,0,0,0,0)$, $\overline{\nu}_b = (\frac 12, \frac 12, \frac 12, \frac 12, \frac 13, \frac 13, \frac 13, 0)$. Then $b$ is HN-decomposable with respect to $M_{v^\flat}$ with $v^\flat = (\frac 37, \frac 37, \frac 37, \frac 37, \frac 37, \frac 37, \frac 37, 0)$ (but not with respect to $M_{\overline{\nu}_b}$). The picture shows the ``Newton polygons'' of $v^\flat$, $\overline{\nu}_b$ and $\mu$, following the usual convention that the slopes of the polygon segments are the entries of the anti-dominant representative.
In terms of the picture, the property that $(\mu, b)$ is HN decomposable corresponds to the fact that the Newton polygon of $\overline{\nu}_b$ has a break point which lies on the Newton polygon of $\mu$.}
\end{figure}

The main purpose of this section is to establish the Hodge-Newton decomposition for $X(\mu, b)_K$, i.e., to show that $X(\mu, b)_K$ is the disjoint union of $X^{M'}(*, *)_*$, where $M'$ are certain semistandard Levi subgroups that are conjugate to the given Levi $M$. In order to do this, we first show that for each such $M'$, there exists a representative of $[b]$ in $M'(\brF)$.

\begin{proposition} \label{choice}
Let $[b] \in B(G)$ and $P'=M' N' \in \fkP^\s$. Then

(1) there exists an element $b_{P'} \in [b] \cap M'(\brF)$, unique up to $M'(\brF)$-$\s$-conjugation, such that $\overline \nu_{b_{P'}}^{M'}$ is $P'$-dominant;

(2) $\JJ_{b_{P'}} \subseteq M'(\brF)$.
\end{proposition}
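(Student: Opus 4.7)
My plan is to prove (2) first and then use it to deduce the uniqueness in (1), since uniqueness rests on having already controlled the $\s$-centralizer. The existence half of (1) is accomplished by transporting a Levi representative of $[b]$ from $M$ to $M'$. The HN-decomposability hypothesis $M_{\overline{\nu}_b}\subseteq M$ enters at exactly two places: producing the initial representative in $M(\brF)$ and forcing the slope centralizer to lie inside $M'$. The main technical obstacle is producing a $\s$-fixed lift $\dot z_{P'}\in N(T)(\brF)$ of the Weyl-group element $z_{P'}$, so that conjugation by $\dot z_{P'}$ realizes an honest $\s$-conjugation, and then arranging the uniqueness argument so that it reduces via (2) to identifying the transporter with an element of a centralizer.

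\textbf{Existence and Newton vector in (1).} Kottwitz's standard reduction produces $b_P\in[b]$ whose slope morphism factors through $T$ with image $\overline{\nu}_b$; the HN-decomp hypothesis $M_{\overline{\nu}_b}\subseteq M$ then guarantees $b_P\in M(\brF)$. To move $b_P$ into $M'$ I use the element $z_{P'}\in W_0^J$ with ${}^{\dot z_{P'}}M=M'$. A key observation is that $z_{P'}$ is $\s$-fixed in $W_0$: the $\s$-stability of both $P$ and $P'$ (the former following from $p(\s)(v^\flat)=v^\flat$, which is the special case $P'=P$ of Lemma~\ref{para}) shows that $z_{P'}^{-1}\s(z_{P'})$ stabilizes $P$ under conjugation and hence lies in $W_J$; since $\s$ preserves lengths and the set $J$, both $z_{P'}$ and $\s(z_{P'})$ are minimal-length representatives of the same coset in $W_0/W_J$, forcing $\s(z_{P'})=z_{P'}$. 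Lang's theorem applied to the torus $T$ over $\brF$ then yields a $\s$-fixed lift $\dot z_{P'}\in N(T)(\brF)$. Setting $b_{P'}:=\dot z_{P'}b_P\dot z_{P'}^{-1}$ gives an element of $M'(\brF)$ which is $\s$-conjugate to $b_P$ via $g=\dot z_{P'}^{-1}$ (using $\s(g)=g$). Because conjugation by the $\s$-fixed $\dot z_{P'}$ preserves decency, the slope morphism of $b_{P'}$ factors through $T$ with image $z_{P'}(\overline{\nu}_b)$, and this vector is $P'$-dominant: pairing it against the simple $M'$-roots and against the roots in $\Phi_{z_{P'}(v^\flat),+}$ reduces, after inverse transport by $z_{P'}$, to the $G$-dominance of $\overline{\nu}_b$.

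\textbf{Part (2) and uniqueness in (1).} The $\s$-centralizer $\JJ_{b_{P'}}$ is contained in the centralizer in $G$ of the slope morphism of $b_{P'}$, which is the Levi $M_{z_{P'}(\overline{\nu}_b)}={}^{\dot z_{P'}}M_{\overline{\nu}_b}$; the HN-decomp hypothesis then gives $\JJ_{b_{P'}}\subseteq{}^{\dot z_{P'}}M(\brF)=M'(\brF)$, proving (2). For uniqueness, let $b'_{P'}\in[b]\cap M'(\brF)$ be another element with $P'$-dominant $M'$-Newton vector. After an $M'(\brF)$-$\s$-conjugation I may assume its slope morphism also factors through $T$ with image $z_{P'}(\overline{\nu}_b)$, so that the two slope morphisms coincide. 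Any $g\in G(\brF)$ realizing $g^{-1}b_{P'}\s(g)=b'_{P'}$ then conjugates these identical slope morphisms, forcing $g$ into the centralizer of the common slope and, by (2), into $M'(\brF)$.
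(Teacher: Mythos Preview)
Your argument for (2), and your uniqueness argument for (1) via centralizers of the slope morphism, are reasonable and close in spirit to what the paper cites from Kottwitz. The genuine problem is in your existence argument for (1): the claim that $z_{P'}$ is $\sigma$-fixed in $W_0$ is not correct when $G$ is not quasi-split.

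You justify $\sigma(z_{P'})=z_{P'}$ by first asserting that $P$ itself is $\sigma$-stable, deducing this from ``the special case $P'=P$ of Lemma~\ref{para}''. But Lemma~\ref{para} has the hypothesis $P'\in\fkP^{\sigma}$, i.e.\ $P'$ is $\sigma$-stable; applying it with $P'=P$ presupposes exactly what you want to prove. In fact $P$ need not be $\sigma$-stable: $M$ is only assumed $\sigma_0$-stable, and $p(\sigma)=w\,\sigma_0$ with $w\in W_0$ possibly nontrivial, so $p(\sigma)(v^\flat)=w(v^\flat)$ is in general different from $v^\flat$. Concretely, for $G$ an inner form of $PGL_4$ with $\sigma=\tau_2$ and $v^\flat=\omega_2^\vee$, one has $p(\sigma)=(13)(24)\in S_4$; the parabolic $P'=P_{(1/2,-1/2,1/2,-1/2)}$ lies in $\fkP^\sigma$ with $z_{P'}=s_2=(23)$, and a direct computation gives $\sigma(z_{P'})=(13)(24)\cdot(23)\cdot(13)(24)=(14)\neq (23)$. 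This also shows that your auxiliary claim ``$\sigma$ preserves lengths and the set $J$'' is false for the $\sigma$-action on $W_0$ (the inclusion $W_0\subseteq\tW$ is not $\sigma$-equivariant, as the paper emphasizes). Consequently a $\sigma$-fixed lift $\dot z_{P'}$ is not available, and your construction $b_{P'}=\dot z_{P'}\,b_P\,\dot z_{P'}^{-1}$ does not produce an element of $[b]$. (Remark~\ref{HN-dec-rmk} in the paper makes the same point from another angle: the twisted Frobenius $\sigma_{P'}=\mathrm{Int}(\dot z_{P'}^{-1})\circ\sigma\circ\mathrm{Int}(\dot z_{P'})$ on $M$ is in general different from $\sigma$.)

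The paper's existence proof avoids this entirely: it starts from any Weyl-group representative $\dot w$ of $[b]$, $\sigma$-conjugates by a lift of some $x\in W_0$ to obtain $w'=x\,w\,\sigma(x)^{-1}$ with $\nu_{w'}=z_{P'}(\overline\nu_b)$, and then uses that $p(w'\sigma)$ fixes $z_{P'}(\overline\nu_b)$ together with $M_{\overline\nu_b}\subseteq M_{v^\flat}$ and Lemma~\ref{para} to conclude $p(w')$ fixes $z_{P'}(v^\flat)$, i.e.\ $\dot w'\in M'(\brF)$. No $\sigma$-fixed lift of $z_{P'}$ is required.
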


\begin{proof}
By \cite[\S3]{He14}, the $\s$-conjugacy class $[b]$ contains a representative $\dot w$ for some $w \in \tW$. Then $\nu_w \in W_0 \cdot \overline{\nu}_b$ (cf.~Section~\ref{sec:group-theoretic-setup}). Let $x \in W_0$ with $x (\nu_w)=z_{P'}(\overline{\nu}_b)$. Set $w'=x w \s(x) \i$. Then $\nu_{w'}=z_{P'} (\overline{\nu}_b)$ is $P'$-dominant and $p(w' \s)(z_{P'}(\overline{\nu}_b))=z_{P'}(\overline{\nu}_b)$. By the choice of $v^\flat$, we deduce that $p(w' \s)(z_{P'}(v^\flat))=z_{P'}(v^\flat)$. On the other hand, by Lemma \ref{para}, we have $p(\s)(z_{P'}(v^\flat))=z_{P'}(v^\flat)$. Thus $p(w')(z_{P'}(v^\flat))=z_{P'}(v^\flat)$, that is, $\dot w' \in [b] \cap M'(\brF)$ as desired. The uniqueness of $b_{P'}$ follows from Kottwitz's classification \cite{kottwitz-isoII} of $\s$-conjugacy classes of $M'(\brF)$. So (1) is proved.

Part (2) follows from results of Kottwitz (who also shows that $\JJ_{b_{P'}}$ is an inner form of $M'$), see \cite[Remark 6.5]{kottwitz-isoI}, \cite[\S 3.3, \S 4.3]{kottwitz-isoII}.
\end{proof}

\subsection{}\label{HN-setup-2} Let $\brK \subseteq G(\brF)$ be a standard $\s$-stable parahoric subgroup, corresponding to $K\subseteq \tSS$. Let $\fkP^\s / W_K^\s$ be the set of $W_K^\s$-conjugacy classes of parabolic subgroups in $\fkP^\s$. For each $P'=M' N' \in \fkP^\s$, we choose a representative $b_{P'} \in M'(\brF) \cap [b]$ such that $\bar \nu_{b_{P'}}^{M'}$ is $P'$-dominant. We have an embedding
\begin{equation}\label{map-from-levi}
\phi_{P', K}: X^{M'}(\mu_{P'}, b_{P'})_{K_{M'}} \to X^G(\mu, b_{P'})_{K} \to X^G(\mu, b)_{K},
\end{equation}
where the first map comes from the closed immersion $\mathcal{FL}(M') \to \mathcal{FL}(G)$ and the second map is given by $g \brK \mapsto h_{P'} g \brK$. Here $\brK_{M'}=M'(\brF) \cap \brK \subseteq M'(\brF)$ is a parahoric subgroup and $K_{M'}$ is the corresponding set of simple reflections in the Iwahori-Weyl group $\tW(M')$ of $M'$; $\mu_{P'}=z_{P'}(\mu)$ is the unique $P'$-dominant conjugate of $\mu$ and $h_{P'}$ is an element in $G(\brF)$ with $b_{P'}=h_{P'}\i b \s(h_{P'})$. Thanks to Proposition \ref{choice}, the image of $\phi_{P', K}$ does not depend on the choices of $b_{P'}$ and $h_{P'}$, but only depends on $P'$. We simply write $\phi_{P'}$ for $\phi_{P', \emptyset}$.

\smallskip

We first discuss the $W_K^\s$-action on $\fkP^\s$.
\begin{lemma} \label{f1}
Let $P'=M' N', P''=M'' N'' \in \fkP^\s$. The following conditions are equivalent:

(a) $P', P''$ are conjugate under $W_K$;

(b) $\brK \dot z_{P'} M(\brF)=\brK \dot z_{P''} M(\brF)$;

(c) $P', P''$ are conjugate under $W_K^\s$.

Moreover, in these cases, there exists $u \in W_K^\s$ such that ${}^{\dot u} (P')=P''$, ${}^{\dot u} \brI_{M'}=\brI_{M''}$ and $$u \Adm^{M'}(\{z_{P'}(\mu)\}) u\i = \Adm^{M''}(\{z_{P''}(\mu)\}).$$
\end{lemma}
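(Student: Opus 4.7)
The plan is to show (c)$\Rightarrow$(a) trivially, reduce (a) and (b) to the same combinatorial condition in $\tW$, then prove (a)$\Rightarrow$(c) by a Coxeter-theoretic minimal-length argument, with the moreover clauses following from the specific choice of $u$.

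For (a)$\Leftrightarrow$(b), I would first observe that the stabilizer of $P$ in $\tW$ (acting by conjugation on algebraic subgroups containing $T$) equals $\tW(M) = X_*(T)_{\G_0} \rtimes W_J$: translations normalize every $T$-stable parabolic, and the $W_0$-stabilizer of $P = P_{v^\flat}$ is $W_J$ by definition. Hence for any $w \in \tW$, the equation ${}^{\dot w}(P') = P''$ is equivalent to $z_{P''}^{-1} w z_{P'} \in \tW(M)$. On the other hand, combining $\brK = \bigsqcup_{w \in W_K} \brI \dot w \brI$ with the decomposition of $\brI M(\brF) \brI$ into $\brI$-double cosets indexed by $\tW(M)$, the product $\brK \dot z_{P'} M(\brF)$ is the union of Bruhat cells $\brI \dot w \dot z_{P'} \dot x \brI$ with $w \in W_K$ and $x \in \tW(M)$. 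Disjointness of these cells in $G(\brF)$ then translates (b) into the very same group-theoretic condition.

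For (a)$\Rightarrow$(c), I would upgrade an arbitrary $W_K$-conjugator to a $\s$-fixed one. Starting from $w \in W_K$ with ${}^{\dot w}(P') = P''$, applying $\s$ and using that $P', P''$ are $\s$-stable by Lemma~\ref{para}, we get ${}^{\s(\dot w)}(P') = P''$, so $w^{-1}\s(w)$ normalizes $P'$ and hence lies in $\tW(M') = {}^{\dot z_{P'}}\tW(M)$. Thus $w^{-1}\s(w) \in W_K \cap \tW(M')$, which is a $\s$-stable Coxeter-type reflection subgroup of $W_K$, its simple reflections being determined by the parahoric $M'(\brF) \cap \brK$. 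Choose $u$ of minimal length in the coset $w(W_K \cap \tW(M'))$; by uniqueness of minimal-length representatives for such parabolic-type subgroups combined with the $\s$-stability of $W_K \cap \tW(M')$, one gets $\s(u) = u$, so $u \in W_K^\s$ still satisfies ${}^{\dot u}(P') = P''$.

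For the moreover statement, the identity ${}^{\dot u}(M') = M''$ is automatic from uniqueness of the $T$-containing Levi of $P''$. The Iwahori equality ${}^{\dot u}\brI_{M'} = \brI_{M''}$ follows from the minimal-length choice: such a $u$ forces $\dot u \aa$ to lie in the same $M''$-alcove as $\aa$, so both sides coincide with the $M''(\brF)$-stabilizer of that common $M''$-alcove. Finally, $u z_{P'} \in z_{P''}\tW(M)$ sends $z_{P'}(\mu)$ into the $\tW(M'')$-orbit of $z_{P''}(\mu)$, and since $\Adm^{M'}(\{z_{P'}(\mu)\})$ depends only on the $M'$-conjugacy class of $z_{P'}(\mu)$, conjugation by $\dot u$ identifies it with $\Adm^{M''}(\{z_{P''}(\mu)\})$. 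The hardest step will be the simultaneous control of the Coxeter-theoretic minimality of $u$ and the alcove-geometric Iwahori equality; both rest on the explicit description of $W_K \cap \tW(M')$ as a parabolic-type reflection subgroup arising from the parahoric $M'(\brF) \cap \brK$.
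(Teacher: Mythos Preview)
Your approach is essentially the same as the paper's. The paper cycles through $(b)\Rightarrow(a)$, $(c)\Rightarrow(b)$, $(a)\Rightarrow(c)$ rather than your $(a)\Leftrightarrow(b)$ plus $(a)\Rightarrow(c)$, but the content is identical: the Bruhat decomposition reduces (b) to $W_K z_{P'}\tW(M)=W_K z_{P''}\tW(M)$, and the upgrade from a $W_K$-conjugator to a $W_K^\s$-conjugator is done by taking the minimal-length element in the coset modulo the reflection subgroup $W_{M',K}=W_K\cap\tW(M')$. The paper cites Dyer's theorem on reflection subgroups of Coxeter groups for the uniqueness of this minimal representative, which is exactly the ``parabolic-type'' property you invoke; you should make that citation explicit. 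Your alcove-geometric justification of ${}^{\dot u}\brI_{M'}=\brI_{M''}$ is a correct rephrasing of the paper's containment argument (any $M''$-wall separating $\aa$ from $u\aa$ would, since $u\in W_K$, be a $K$-wall and produce a shorter coset representative).
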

\begin{proof}
(b) $\Rightarrow$ (a): By definition ${}^{\dot z_{P'}} \brI_M \subseteq \brI$. Then $$\brK \dot z_{P'} M(\brF)=\brK \dot z_{P'} \brI_M \tW(M) \brI_M=\brK \dot z_{P'} \tW(M) \brI_M \subseteq \brI W_K \brI \dot z_{P'} \tW(M) \brI.$$
Note that for any $x, y \in \tW$, $\brI x \brI y \brI \subseteq \cup_{x' \le x} \brI x' y \brI$. We have $$\brK \dot z_{P'} M(\brF) \subseteq \brI W_K \dot z_{P'} \tW(M) \brI.$$ Similarly, $\brK \dot z_{P''} M(\brF) \subseteq \brI W_K \dot z_{P''} \tW(M) \brI$. Thus $W_K z_{P'} \tW(M)=W_K z_{P''} \tW(M)$. In other words, $z_{P''}=u_1 z_{P'} x$ for some $u_1 \in W_K$ and $x \in \tW_M$. Therefore, $P''={}^{\dot z_{P''}} P={}^{\dot u_1 \dot z_{P'} \dot x} P={}^{\dot u_1 \dot z_{P'}} P={}^{\dot u_1} (P')$ and (a) follows.

(c) $\Rightarrow$ (b): Let $w \in W_K^\s$ with ${}^{\dot w} (P')=P''$. It suffices to show \begin{align*}\tag{$\ast$} w z_{P'} \in z_{P''} \tW(M). \end{align*} Indeed, noticing that $P''={}^{\dot w} (P')={}^{\dot w \dot z_{P'}} P=P_{p(w) z_{P'}(v^\flat)}=P_{z_{P''}(v^\flat)}$, we deduce that we have $p(w) z_{P''}(v^\flat)=z_{P''}(v^\flat)$. Hence $p(z_{P''}\i w z_{P'})$ lies in the relative finite Weyl group of $M$ and $z_{P''}\i w z_{P'}$ lies in the Iwahori-Weyl group $\tW(M)$. So ($\ast$) follows as desired.

(a) $\Rightarrow$ (c): Let $\tW(M') \subseteq \tW$ be the Iwahori-Weyl group of $M'$ and let $W_{M', K}=\tW(M') \cap W_K$. Since $W_{M', K} \subseteq W_K$ is a reflection subgroup, each $W_{M', K}$-coset in $W_K$ contains a unique minimal element with respect to the Bruhat order on $W_K$ (see \cite[Corollary 3.4]{Dyer}). We set $W_K^{M'}=\{\min(w W_{M', K}); w \in W_K\}$. Since $\tW(M'), W_K$ are $\s$-stable, $W_K^{M'}$ is also $\s$-stable. By assumption there exists $u \in W_K$ such that ${}^{\dot u} (P')=P''$. Moreover, noticing that $P'$ is normalized by $W_{M', K}$, we can and do assume further that $u \in W_K^{M'}$. Since $P', P''$ are $\s$-stable, we have ${}^{\dot u\i \s(\dot u)} (P')=P'$. This means $u\i \s(u) \in \tW(M')$ and hence $u W_{M', K}=\s(u) W_{M', K}$. Noticing that $u, \s(u) \in W_K^{M'}$, we deduce that $u=\s(u)$ and (c) is proved.

It remains to verify the ``Moreover'' part. Let $u \in W_K^\s \cap W_K^{M'}$ be as in the above paragraph. Then ${}^{\dot u} \brI_{M'} \subseteq M''(\brF) \cap \brI=\brI_{M''}$ and hence ${}^{\dot u} \brI_{M'}=\brI_{M''}$. So the map $w \mapsto u w u\i$ preserves the Bruhat orders on $\tW(M'), \tW(M'')$ associated to the Iwahori subgroups $\brI_{M'}, \brI_{M''}$ respectively. In particular, we have $$u \Adm^{M'}(\{z_{P'}(\mu)\}) u\i =\Adm^{M''}(\{p(u) z_{P'}(\mu)\})=\Adm^{M''}(\{z_{P''}(\mu)\}),$$ where the second equality follows from ($\ast$). The proof is finished.
\end{proof}

\subsection{} Let $P'=M' N' \in \fkP^\s$. Consider the following commutative diagram
\[
\xymatrix{
  X^{M'}(\mu_{P'}, b_{P'}) \ar[d]_-{\pi_{K_{M'}}^{M'}} \ar[r]^-{\phi_{P'}} & X(\mu, b) \ar[d]^{\pi_{K}^G} \\
  X^{M'}(\mu_{P'}, b_{P'})_{K_{M'}} \ar[r]^-{\phi_{P', K}} & X(\mu, b)_{K}, }
\]
where $\pi^G_{K}, \pi^{M'}_{K_{M'}}$ are the natural projections. The following result describes the relation between the images of $\phi_{P'}$ for various $P'$. Here $K_{M'}$ is the subset of simple reflections of $\tW(M')$ corresponding to the standard parahoric subgroup $\brK \cap M'(\brF) \subseteq M'(\brF)$ of $M'(\brF)$.

\begin{proposition} \label{disjoint}
Let $\brK$ be a standard $\s$-stable parahoric subgroup corresponding to $K \subseteq \tilde \BS$. Let $P'=M' N', P''=M'' N'' \in \fkP^\s$. Then $\Im(\pi_K^G \circ \phi_{P'}) \cap \Im(\pi_K^G \circ \phi_{P''}) \neq \emptyset$ only if $P', P''$ are conjugate under $W_K^\s$. Moreover, in the latter case, we have $\Im(\pi_K^G \circ \phi_{P'})=\Im(\pi_K^G \circ \phi_{P''})$.
\end{proposition}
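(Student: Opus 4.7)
My plan is to pin down the image of $\pi_K^G\circ\phi_{P'}$ inside a specific $\brK$--$M'(\brF)$-translate, reduce the intersection condition to an identity of double cosets, and then invoke Lemma~\ref{f1}.

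\textbf{Localizing the image.} By construction, $\phi_{P',K}$ factors as $m \brI_{M'} \mapsto m \brK \mapsto h_{P'} m \brK$, so
\[
\Im(\pi_K^G \circ \phi_{P'}) \subseteq h_{P'} M'(\brF)\, \brK / \brK,
\]
and likewise for $P''$. Since the image does not depend on the choice of $b_{P'}$ and $h_{P'}$, I would fix $b_P \in [b] \cap M(\brF)$ once and for all and normalize $b_{P'} := \dot z_{P'} b_P \sigma(\dot z_{P'})^{-1}$, $h_{P'} := \dot z_{P'}^{-1}$ (and analogously for $P''$). Using $M'(\brF) = {}^{\dot z_{P'}} M(\brF)$ this becomes
\[
\Im(\pi_K^G \circ \phi_{P'}) \subseteq M(\brF)\, \dot z_{P'}^{-1}\, \brK / \brK.
\]

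\textbf{Only if direction.} If the two images intersect, then $M(\brF)\dot z_{P'}^{-1}\brK$ and $M(\brF)\dot z_{P''}^{-1}\brK$ meet in $G(\brF)$. Inverting and rearranging shows that the $\brK$--$M(\brF)$-double cosets $\brK \dot z_{P'} M(\brF)$ and $\brK \dot z_{P''} M(\brF)$ meet, hence coincide. The equivalence (b)$\Leftrightarrow$(c) of Lemma~\ref{f1} then gives that $P'$ and $P''$ are $W_K^\sigma$-conjugate.

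\textbf{Moreover direction.} Assume $P', P''$ are $W_K^\sigma$-conjugate. Pick $u \in W_K^\sigma$ as in the ``Moreover'' clause of Lemma~\ref{f1}, so that ${}^{\dot u}(P') = P''$, ${}^{\dot u}\brI_{M'} = \brI_{M''}$, and $u\,\Adm^{M'}(\{\mu_{P'}\})\,u^{-1} = \Adm^{M''}(\{\mu_{P''}\})$. Exploiting independence from the choice of $b_{P''}, h_{P''}$, I take $b_{P''} := \dot u b_{P'} \sigma(\dot u)^{-1}$ and $h_{P''} := h_{P'} \dot u^{-1}$. Conjugation by $\dot u$ is a bijection $m' \leftrightarrow m'' := \dot u m' \dot u^{-1}$ between $M'(\brF)$ and $M''(\brF)$, and the identity
\[
(m'')^{-1} b_{P''}\, \sigma(m'') \;=\; \dot u\,\bigl((m')^{-1} b_{P'}\, \sigma(m')\bigr)\,\sigma(\dot u)^{-1},
\]
combined with $\sigma(\dot u)\dot u^{-1} \in T(\brF)_1 \subseteq \brI_{M''}$ (as $\sigma(u) = u$) and the compatibilities above, shows that $m' \brI_{M'}$ belongs to $X^{M'}(\mu_{P'}, b_{P'})_{K_{M'}}$ if and only if $m'' \brI_{M''}$ belongs to $X^{M''}(\mu_{P''}, b_{P''})_{K_{M''}}$. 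Finally, since $u \in W_K$ we have $\dot u \in \brK$, whence
\[
h_{P''}\, m''\, \brK \;=\; h_{P'}\, \dot u^{-1}\, \dot u\, m'\, \dot u^{-1}\, \brK \;=\; h_{P'}\, m'\, \dot u^{-1}\, \brK \;=\; h_{P'}\, m'\, \brK,
\]
yielding $\Im(\pi_K^G \circ \phi_{P''}) = \Im(\pi_K^G \circ \phi_{P'})$.

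\textbf{Main obstacle.} The delicate part is the ``moreover'' step: one must keep all the parahoric, admissibility, and $\sigma$-twist data aligned through the conjugation by $\dot u$. The ``Moreover'' clause of Lemma~\ref{f1} is essentially tailored to make this bookkeeping work, and the only nontrivial input beyond it is the observation that two lifts of $u = \sigma(u)$ to $N(\brF)$ differ by an element of $T(\brF)_1$, which is absorbed into $\brI_{M''}$.
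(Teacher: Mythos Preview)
Your ``moreover'' direction is essentially the paper's argument and is fine; the paper likewise chooses $h_{P''}=h_{P'}\dot u^{-1}$, $b_{P''}=\dot u b_{P'}\dot u^{-1}$ and runs the same bijection $m'\mapsto \dot u m'\dot u^{-1}$, using the compatibilities supplied by the ``Moreover'' clause of Lemma~\ref{f1}.

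The ``only if'' direction, however, has a genuine gap. Your normalization $h_{P'}=\dot z_{P'}^{-1}$ forces $b_{P'}=\dot z_{P'}\,b_P\,\sigma(\dot z_{P'})^{-1}$, and you have not checked that this element lies in $M'(\brF)$ (let alone that its $M'$-Newton vector is $P'$-dominant). Writing $b_{P'}=(\dot z_{P'} b_P \dot z_{P'}^{-1})\cdot(\dot z_{P'}\sigma(\dot z_{P'})^{-1})$, the first factor is in $M'(\brF)$, but the second need not be: a short computation with Lemma~\ref{para} shows that the linear part of $z_{P'}^{-1}\sigma(z_{P'})$ sends $v^\flat$ to $p(\sigma)(v^\flat)$, which equals $v^\flat$ only when the standard $P$ is itself $\sigma$-stable. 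Since the paper explicitly allows non-quasi-split $G$ (so that $p(\sigma)\ne\sigma_0$ in general), $P$ is typically \emph{not} $\sigma$-stable, your $b_P$ need not be a legitimate choice in the sense of Proposition~\ref{choice}, and the inclusion $\Im(\pi_K^G\circ\phi_{P'})\subseteq M(\brF)\dot z_{P'}^{-1}\brK/\brK$ is unjustified.

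The paper avoids this by never trying to pass through the standard $P$. It keeps arbitrary valid $(b_{P'},h_{P'})$ and $(b_{P''},h_{P''})$, introduces $b_2=(\dot z_{P'}\dot z_{P''}^{-1})^{-1}b_{P'}\sigma(\dot z_{P'}\dot z_{P''}^{-1})$, checks $b_2\in M''(\brF)$ using that \emph{both} $P',P''$ are $\sigma$-stable (this is where the analogue of your missing verification is done, cf.\ the parenthetical reference to (a)$\Rightarrow$(c) of Lemma~\ref{f1}), and then invokes the uniqueness in Proposition~\ref{choice} to obtain the double-coset equality $M'(\brF)h_{P'}^{-1}h_{P''}M''(\brF)=M'(\brF)\dot z_{P'}\dot z_{P''}^{-1}M''(\brF)$. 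Only this weaker double-coset statement is needed to reach $\brK\dot z_{P'}M(\brF)=\brK\dot z_{P''}M(\brF)$ and apply Lemma~\ref{f1}; your stronger single-coset claim $h_{P'}M'(\brF)=\dot z_{P'}^{-1}M'(\brF)$ is neither needed nor, in general, available.
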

\begin{proof}
Assume $\Im(\pi_K^G \circ \phi_{P'}) \cap \Im(\pi_K^G \circ \phi_{P''}) \neq \emptyset$. Then $$(h_{P'} M'(\brF) \brK) \cap (h_{P''} M''(\brF) \brK) \neq \emptyset.$$ Let $b_1=b_{P''}=(h_{P'}\i h_{P''})\i b_{P'} \s(h_{P'}\i h_{P''})$ and $b_2=(\dot z_{P'} \dot z_{P''}\i)\i b_{P'} \s(\dot z_{P'} \dot z_{P''})$. Then $b_1, b_2 \in M''(\brF) \cap [b]$ (to check that $b_2\in M''(\brF)$, show that $(z_{P'}z_{P''}\i)\i \s(z_{P'}z_{P''}\i) \in W(M'')$, compare the proof of (a)$\Rightarrow$(c) of Lemma~\ref{f1}) and $\bar \nu_{b_1}^{M''}=\bar \nu_{b_2}^{M''}$ is $P''$-dominant. By Proposition \ref{choice}, $b_1, b_2$ are $\s$-conjugate under $M''(\brF)$ and hence $M'(\brF) \dot z_{P'} \dot z_{P''}\i M''(\brF)=M'(\brF) h_{P'}\i h_{P''} M''(\brF)$. Thus we have $$\dot z_{P'}\i M'(\brF) \brK \cap \dot z_{P''}\i M''(\brF) \brK \neq \emptyset.$$ Notice that ${}^{\dot z_{P'}\i} (M')={}^{\dot z_{P''}\i} (M'')=M$. We deduce that $M(\brF) \dot z_{P'}\i \brK=M(\brF) \dot z_{P''}\i \brK$. By Lemma \ref{f1}, $P', P''$ are conjugate under $W_K^\s$.

Assume $P', P''$ are conjugate under $W_K^\s$. Let $u \in W_K^\s$ be as in Lemma \ref{f1}. We may take $h_{P''}=h_{P'} \dot u\i$ and $b_{P''}=\dot u b_{P'} \dot u\i$. Now we have
\begin{align*}
\Im(\pi^G_K \circ \phi_{P'}) &= \{h_{P'} g' \brK; g' \in M'(\brF), (g')\i b_{P'} \s(g') \in \brI_{M'} \Adm^{M'}(\{z_{P'}(\mu)\}) \brI_{M'} \} \\
&=\{h_{P'} \dot u\i \dot u g' \dot u\i \brK;  g' \in M'(\brF), (g')\i b_{P'} \s(g') \in \brI_{M'} \Adm^{M'}(\{z_{P'}(\mu)\}) \brI_{M'} \} \\ &=\{h_{P''} g'' \brK;  g'' \in M''(\brF), (g'')\i b_{P''} \s(g'') \in \dot u \brI_{M'} \Adm^{M'}(\{z_{P'}(\mu)\}) \brI_{M'} \dot u\i \} \\
&=\{h_{P''} g'' \brK;  g'' \in M''(\brF), (g'')\i b_{P''} \s(g'') \in \brI_{M''} u \Adm^{M'}(\{z_{P'}(\mu)\}) u\i \brI_{M''} \} \\
&=\{h_{P''} g'' \brK;  g'' \in M''(\brF), (g'')\i b_{P''} \s(g'') \in \brI_{M''} \Adm^{M''}(\{z_{P''}(\mu)\}) \brI_{M''}\} \\
&=\Im(\pi^G_K \circ \phi_{P''}).
\end{align*}
The proposition is proved.
\end{proof}

\subsection{} We set $$\Adm(\{\mu\}, b)=\{w \in \Adm(\{\mu\}); X_w(b) \neq \emptyset\}.$$ For each $P'=M' N' \in \fkP^\s$, we set $$\Adm^{M'}(\{z_{P'}(\mu)\}, b)=\{w \in \Adm^{M'}(\{z_{P'}(\mu)\}); X_w(b) \neq \emptyset\}.$$ We have the following decomposition of the admissible set, which plays a crucial role in the study of Hodge-Newton decomposition (see Theorem \ref{HN-dec} below).

\begin{theorem}\label{adm-adm}
Under the assumptions in Section~\ref{HN-setup}, we have

(1) $\Adm(\{\mu\}, b)=\bigsqcup_{P'=M' N' \in \fkP^\s} \Adm^{M'}(\{z_{P'}(\mu)\}, b)$.

(2) Let $P'=M' N' \in \fkP^\s$. Then each element of $ \Adm^{M'}(\{z_{P'}(\mu)\}, b)$ is a $(z_{P'}(v^\flat), \s)$-alcove element.
\end{theorem}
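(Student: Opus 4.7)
I would tackle part (2) first and then use it as input for part (1).

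For part (2), fix $P'\in\fkP^\s$, set $v=z_{P'}(v^\flat)$ and $M'=M_v$. The strategy is to verify the alcove condition for the ``top'' translations $t^{xz_{P'}(\mu)}$ with $x\in W_0(M')$ generating $\Adm^{M'}(\{z_{P'}(\mu)\})$, and then propagate downward via Lemma~\ref{le-v}. The linear condition $p(t^{xz_{P'}(\mu)}\s)(v)=p(\s)(v)=v$ reduces to Lemma~\ref{para}. The unipotent condition amounts to $\<\a,xz_{P'}(\mu)\>\ge 0$ for every $\a\in\Phi_{v,+}$. Since $x\in W_0(M')$ fixes $v$, $x\i \a$ again lies in $\Phi_{v,+}$; applying $z_{P'}\i$ puts $z_{P'}\i x\i\a$ in $\Phi_{v^\flat,+}$, and hence in the positive roots (as $v^\flat$ is dominant). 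Dominance of $\mu$ then yields the required inequality. Finally, $\le_v$ on $\tW_v=\tW(M')$ matches the $M'$-Bruhat order used to define $\Adm^{M'}$, so Lemma~\ref{le-v} concludes part (2).

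For part (1), the inclusion $\Adm^{M'}(\{z_{P'}(\mu)\},b)\subseteq\Adm(\{\mu\},b)$ is straightforward: any reflection in $\tW(M')$ is a reflection in $\tW$, so $w\le_{M'}t^{xz_{P'}(\mu)}$ implies $w\le t^{xz_{P'}(\mu)}$; since $xz_{P'}(\mu)\in W_0\cdot\mu$, the target is in $\Adm(\{\mu\})$, and the condition $X_w(b)\neq\emptyset$ is unchanged.

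The real work lies in coverage and disjointness. For coverage, given $w\in\Adm(\{\mu\},b)$, I would show that a suitable $W_0$-conjugate $y(v^\flat)$ is fixed by $p(w\s)$. The two key inputs are (i) the bound relating $\overline{\nu}_b$ to the Newton vector associated to the Iwahori double coset $\brI w\brI$ (standard semicontinuity combined with $[b]\in B(G,\{\mu\})$, giving $\overline\nu_w\le\overline\mu$ and a matching of $\overline\nu_b$ with this data), and (ii) the HN condition $\overline\mu-\overline{\nu}_b\in\BR_{\ge 0}\Phi_M^\vee$, which — since $\BR_{\ge 0}\Phi_M^\vee$ is a face of the positive coroot cone — forces the Newton vector of $w$ to lie on the face cut out by $v^\flat$, i.e.\ $\<\overline\nu_w,v^\flat\>=\<\overline\mu,v^\flat\>$. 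This rigidity pins down $y\in W_0$ such that $p(w\s)$ fixes $y(v^\flat)$; the $\s$-stability forces (by the logic of Lemma~\ref{para}) $y(v^\flat)=z_{P'}(v^\flat)$ for a unique $P'\in\fkP^\s$. By part (2), $w$ is a $(z_{P'}(v^\flat),\s)$-alcove element lying in $\tW(M')$, and a Bruhat comparison between $\tW$ and $\tW(M')$ — exploiting the alcove property (cf.\ Lemmas~\ref{alcove-1} and \ref{alcove-2}) — checks $w\in\Adm^{M'}(\{z_{P'}(\mu)\})$. Disjointness is then automatic: two candidate $P',P''$ produce the same $y(v^\flat)$ by HN-rigidity, hence coincide.

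\textbf{Main obstacle.} The hard step is coverage, specifically turning the combinatorial statement $X_w(b)\neq\emptyset$ into the rigid geometric assertion that $\overline\nu_w$ lies on the face cut out by $v^\flat$. The face-of-cone extraction (step (ii)) is the delicate piece: one must handle both the dominance order and the $W_0$-orbit structure carefully to pick out a specific $\s$-stable parabolic. Once this rigidity is in place, the alcove technology of Lemmas~\ref{nu-v}, \ref{le-v}, and~\ref{alcove-2} together with Lemma~\ref{para} delivers both the Levi reduction and the disjointness essentially for free.
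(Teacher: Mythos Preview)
Your treatment of part (2) and of the easy inclusion $\Adm^{M'}(\{z_{P'}(\mu)\},b)\subseteq\Adm(\{\mu\},b)$ is essentially the paper's argument: verify the alcove condition for the top translations $t^{xz_{P'}(\mu)}$ and propagate down by Lemma~\ref{le-v}.

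The gap is in your coverage argument. From $X_w(b)\ne\emptyset$ and $w\in\Adm(\{\mu\})$ one indeed gets $\overline\nu_b\le\overline\nu_w\le\overline\mu$ in dominance order, and pairing with the dominant $v^\flat$ together with the HN equality $\<\overline\mu,v^\flat\>=\<\overline\nu_b,v^\flat\>$ does force $\<\overline\nu_w,v^\flat\>=\<\mu,v^\flat\>$. But this equality of inner products does \emph{not} produce a $v\in W_0\cdot v^\flat$ for which $w$ is a $(v,\s)$-alcove element, nor does it place $w$ in $\tW(M')$. Knowing that the Newton point lies on the right face is a statement about $\overline\nu_w$ only; the alcove condition is a much finer statement about the position of $w\aa$ relative to $\aa$ with respect to every root in $\Phi_{v,+}$. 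Lemmas~\ref{nu-v}, \ref{le-v}, \ref{alcove-2} let you move the alcove property up and down a $\le_v$-chain, but they require a starting point where the property is already known, and you have not produced one.

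The paper supplies this starting point by descending: by \cite[Theorem~4.1]{HZ} the closure of $X_w(b)$ meets $X_x(b)$ for some $\s$-\emph{straight} element $x\le w$ with $\dot x\in[b]$. Straight elements are automatically $(\nu_x,\s)$-alcove elements (this is \cite[\S4.1]{Ni}), and since $M_{\overline\nu_b}\subseteq M_{v^\flat}$ one promotes this to a $(v^\flat_x,\s)$-alcove property with $\<\nu_x,v^\flat_x\>=\<\mu,v^\flat\>$ (Lemma~\ref{str-v}). The crucial new Lemma~\ref{x-sx} and its Corollary~\ref{going-up} then climb back from $x$ to $w$: they show that if $x$ is a $(v,\s)$-alcove element with $\<\nu_x,v\>=\<\mu,v^\flat\>$ and $x\le w$ in $\tW$, then in fact $x\le_v w$, $w$ is still a $(v,\s)$-alcove element, and the inner-product equality persists. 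This ``descend to a straight element, then climb back up'' maneuver is precisely the missing idea; it is also what gives $w\le_v t^{y(\mu)}$ (rather than merely $w\le t^{y(\mu)}$), which is what you need to land in $\Adm^{M'}(\{z_{P'}(\mu)\})$. Your references to Lemmas~\ref{alcove-1} and~\ref{alcove-2} do not substitute for this step.
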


Before proving this theorem, we first need several technical Lemmas.

\begin{lemma}\label{x-sx}
Let $x \in \Adm(\{\mu\})$ and $v \in W_0 \cdot v^\flat$ such that $x$ is a $(v, \s)$-alcove element and $\<\nu_x, v\>=\<\mu, v^\flat\>$. Let $s$ be an affine reflection in $\tW$ such that $\ell(s x)=\ell(x)+1$ and $s x \in \Adm(\{\mu\})$, then $s \in \tW_v$. Moreover, $sx$ is a $(v, \s)$-alcove element and $\<\nu_{s x}, v\>=\<\mu, v^\flat\>$.
\end{lemma}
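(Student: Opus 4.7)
The plan is to establish the three claims of the lemma in the order $s \in \tW_v$, then $sx$ being a $(v,\s)$-alcove element, and finally $\<\nu_{sx}, v\> = \<\mu, v^\flat\>$; the latter two will follow quickly from the first via Lemmas~\ref{alcove-2} and~\ref{nu-v}.

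For the first and main step, write the affine reflection $s$ as $r_{\alpha, k}$ for some root $\alpha$ and integer $k$; the task is to show $\alpha(v) = 0$. From $sx \in \Adm(\{\mu\})$ we get $\overline{\nu}_{sx} \le \mu$ in dominance order, and writing $v = u v^\flat$ with $u \in W_0$, the $W_0$-invariance of the pairing yields
\[
\<\nu_{sx}, v\> \le \<\overline{\nu}_{sx}, v^\flat\> \le \<\mu, v^\flat\> = \<\nu_x, v\>.
\]
I will argue by contradiction: suppose $\alpha(v) \ne 0$, and (replacing $s = r_{\alpha,k}$ by $r_{-\alpha,-k}$, which is the same reflection, if necessary) take $\alpha \in \Phi_{v, +}$. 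The $(v,\s)$-alcove property of $x$ places $\aa$ and $x\aa$ in the same integer interval for $\alpha$, and the length condition $\ell(sx) = \ell(x)+1$ combined with this forces $H_s = \{\alpha = k\}$ to be a common wall of $x\aa$ and $sx\aa$, with $sx\aa$ on the side of $H_s$ opposite to $\aa$. Writing $x\s$ and $sx\s$ as affine transformations with linear parts $z := p(x\s)$ and $s_\alpha z$ and translation parts $\lambda := x\s(0)$ and $s_\alpha(\lambda) + k\alpha^\vee$, so that $\nu_x = \mathrm{Proj}_{V^z}(\lambda)$ and $\nu_{sx} = \mathrm{Proj}_{V^{s_\alpha z}}(s_\alpha(\lambda) + k\alpha^\vee)$, a direct comparison that exploits the alcove constraint on $\alpha(\lambda)$ versus $k$ produces a strict inequality $\<\nu_{sx}, v\> > \<\nu_x, v\>$. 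This contradicts the displayed bound, so $\alpha(v) = 0$ and $s \in \tW_v$.

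With $s \in \tW_v$ in hand, Lemma~\ref{alcove-2} applied to $w = x$ and the reflection $s$ immediately yields that $sx$ is a $(v,\s)$-alcove element. In particular $p(sx\s)(v) = v$, so Lemma~\ref{nu-v} applied to $sx$ with $e = 0$ gives $\<\nu_{sx}, v\> = \<sx\s(0), v\> = \<s(x\s(0)), v\>$. Since $s = r_{\alpha,k}$ with $\alpha(v) = 0$, the affine reflection $s$ preserves the pairing $\<\cdot, v\>$: writing $s(q) = s_\alpha(q) + k\alpha^\vee$, the linear part $s_\alpha$ fixes $v$ and the translation $k\alpha^\vee$ is orthogonal to $v$, since the $W_0$-invariance of the inner product makes $\<\alpha^\vee, v\>$ proportional to $\alpha(v) = 0$. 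Hence $\<\nu_{sx}, v\> = \<x\s(0), v\>$, and another application of Lemma~\ref{nu-v}, this time to $x$, gives $\<x\s(0), v\> = \<\nu_x, v\> = \<\mu, v^\flat\>$.

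The main obstacle is the strict inequality $\<\nu_{sx}, v\> > \<\nu_x, v\>$ claimed in the second paragraph under the assumption $\alpha(v) > 0$. The two Newton vectors are orthogonal projections onto the fixed subspaces of two different linear operators $z$ and $s_\alpha z$, which generically do not commute, and extracting the sign of the difference requires combining the length relation $\ell(sx) = \ell(x)+1$ (which pins down the position of $H_s$ relative to $x\aa$, and hence controls $\alpha(\lambda) - k$) with the $(v,\s)$-alcove condition on $x$ in a careful way.
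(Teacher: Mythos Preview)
Your overall plan is sound, and the deduction of the second and third claims from $s\in\tW_v$ via Lemmas~\ref{alcove-2} and~\ref{nu-v} is exactly what the paper does. The problem is the main step: you yourself flag that the strict inequality $\<\nu_{sx},v\> > \<\nu_x,v\>$ under the hypothesis $\alpha(v)\neq 0$ is not actually proved, and I do not see how to extract it from your projection description. The difficulty is structural: since $sx$ is not yet known to be a $(v,\s)$-alcove element, Lemma~\ref{nu-v} does not apply to $sx$, so $\<\nu_{sx},v\>$ has no simple expression; you are left comparing projections onto $V^{z}$ and $V^{s_\alpha z}$, two subspaces that do not both contain $v$, and the length condition alone does not obviously determine the sign.

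The paper avoids this trap by never looking at $\nu_{sx}$. It applies Lemma~\ref{nu-v} only to $x$, rewriting $\<\nu_x,v\>=\<x e-e,v\>$ for $e$ the barycenter of $\aa$. With $y=sx$, the three points $xe-e$, $ye-e$, $p(s)(ye-e)$ lie on a line in direction $\alpha^\vee$, and because $\ell(sx)=\ell(x)+1$ the point $xe-e$ is a strict convex combination of the other two. Hence
\[
\<\mu,v^\flat\>=\<x e-e,v\>\le \max\{\<ye-e,v\>,\<p(s)(ye-e),v\>\}\le \<\overline{ye-e},v^\flat\>\le \<\mu,v^\flat\>,
\]
where the last step uses $y\in\Adm(\{\mu\})\subseteq\text{Perm}(\{\mu\})$ (so $\overline{ye-e}\le\mu$), not the Newton inequality $\overline{\nu}_y\le\mu^\diamond$ that you invoke. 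Equality throughout then forces the linear functional $\<\cdot,v\>$ to be constant on a nondegenerate segment in direction $\alpha^\vee$, i.e.\ $\<\alpha^\vee,v\>=0$. The moral: replace the Newton vector of $sx$ by the permissibility bound on $sx(e)-e$, which is available without any alcove-element hypothesis on $sx$.
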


\begin{proof}
Set $y=s x$. Let $\a$ be a finite root with $p(s)=s_\a$. Let $e$ be the barycenter of the fundamental alcove $\aa$. Then $s(e)-e$ and $p(s)(e)-e \in \BR \a^\vee$. Therefore $x e, y e, p(s) y e+e-p(s)(e)$ lie on the same line of $V$.

Since $x < y$, both $e$ and $x e$ are separated from $y \s (e)$ by the affine root hyperplane $H_s$. Then $x e-e$ lies in the open convex hull of $y e-e$ and $p(s)(y e-e)$. Therefore $\<x e-e, v\> \le \max\{\<y e-e, v\>, \<p(s)(y e-e), v\>\}$.

We have
\begin{align*} \<\mu, v^\flat\> &=\<\nu_x, v\>=\<x e-e, v\> \le \max\{\<y e-e, v\>, \<p(s)(y e-e), v\>\} \\ & \le \<\overline{y e-e}, v^\flat\> \le \<\mu, v^\flat\>.\end{align*} Here $\overline{y e-e}$ denotes the unique dominant element in the Weyl group orbit of $ye-e$, the second equality follows from Lemma~\ref{nu-v} and the last inequality follows from  $y \in \Adm(\{\mu\}) \subseteq \text{Perm}(\{\mu\})$, the $\{\mu\}$-permissible set, see~\cite{kottwitz-rapoport}. Therefore, $$\<x e-e, v\>=\max\{\<y e-e, v\>, \<p(s)(y e-e), v\>\}.$$ Since $y e-e, p(s)(y e-e) \in x \s (e)-e+\BR_{\neq 0} \, \a^\vee$, we have $\<\a^\vee, v\>=0$ and $\<y e-e, v\>=\<x e-e, v\>$. By Lemma~\ref{alcove-2}, $y$ is a $(v, \s)$-alcove element. By Lemma~\ref{nu-v}, \[\<\nu_y, v\>=\<y e-e, v\>=\<x e-e, v\>=\<\mu, v^\flat\>.\]
\end{proof}

\begin{corollary}\label{going-up}
Let $x, v, \mu$ be as in Lemma~\ref{x-sx}. Let $w \in \Adm(\{\mu\})$ such that $x \le w$. Then $x \le_v w$, $w$ is a $(v, \s)$-alcove element and $\<\nu_w, v\>=\<\mu, v^\flat\>$.
\end{corollary}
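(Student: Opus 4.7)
The plan is to argue by induction on $n := \ell(w) - \ell(x) \ge 0$, iterating Lemma~\ref{x-sx} along a saturated Bruhat chain from $x$ to $w$. When $n = 0$, $x = w$ and all three conclusions hold by hypothesis.

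For the inductive step, I would invoke the standard lifting property of the Bruhat order on the Coxeter group $\tW$ to produce an affine reflection $s$ with $x < sx \le w$ and $\ell(sx) = \ell(x) + 1$; set $y := sx$. A small but essential preliminary observation is that $y \in \Adm(\{\mu\})$: by definition, $\Adm(\{\mu\}) = \bigcup_{x' \in W_0} [1, t^{x'(\underline{\mu})}]$ is a union of lower Bruhat intervals, hence downward closed under $\le$, so $y \le w \in \Adm(\{\mu\})$ forces $y \in \Adm(\{\mu\})$. Applying Lemma~\ref{x-sx} to the pair $(x, s)$ then yields three facts in one stroke: $s \in \tW_v$, the element $y$ is a $(v, \s)$-alcove element, and $\<\nu_y, v\> = \<\mu, v^\flat\>$.

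Now $y$ satisfies exactly the hypotheses of the corollary with respect to $w$, and $\ell(w) - \ell(y) = n - 1$. The inductive hypothesis applied to the pair $(y, w)$ therefore delivers $y \le_v w$, the $(v, \s)$-alcove property for $w$, and the equality $\<\nu_w, v\> = \<\mu, v^\flat\>$. Since $s \in \tW_v$ certifies that $x \le_v y$ via a single reflection in $\tW_v$, concatenating with the chain of reflections witnessing $y \le_v w$ yields $x \le_v w$, which completes the induction.

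I do not foresee a substantial obstacle: the argument is essentially a careful iteration of Lemma~\ref{x-sx}. The only points that require a moment of thought are the downward closure of $\Adm(\{\mu\})$ under the Bruhat order (immediate from its definition) and the transitivity of $\le_v$ across elements not a priori lying in $\tW_v$, which follows from the chain-of-reflections characterization of $\le_v$ once one interprets $x \le_v w$ as the existence of reflections $r_1, \ldots, r_n \in \tW_v$ with $x < r_1 x < \cdots < r_n \cdots r_1 x = w$ in the usual Bruhat order of $\tW$.
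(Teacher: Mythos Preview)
Your proof is correct and follows essentially the same approach as the paper: both arguments choose a saturated Bruhat chain from $x$ to $w$ and iterate Lemma~\ref{x-sx} along it, the paper by writing out the full chain and inducting on the index, you by inducting on $\ell(w)-\ell(x)$ and building the chain one step at a time. One small terminological remark: $\tW$ is the \emph{extended} affine Weyl group, not a Coxeter group, but since $x\le w$ forces $x$ and $w$ into the same $W_a$-coset, the gradedness of the Bruhat order (which you label ``lifting property'') is available and your argument goes through unchanged.
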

\begin{proof}
Since $x \le w$, there exists a sequence of affine reflections $s_1, s_2, \cdots, s_{\ell(w)-\ell(x)}$ such that $$x<s_1 x<s_2 s_1 x<\cdots<s_{\ell(w)-\ell(x)} \cdots s_1 x=w.$$ By Lemma~\ref{x-sx}, one can prove by induction on $i$ that $s_i \in \tW_v$, $s_i \cdots s_1 x$ is a $(v, \s)$-alcove element and $\<\nu_{s_i \cdots s_1 x}, v\>=\<\mu, v^\flat\>$ for all $i$. In particular, $x \le_v w$, $w$ is a $(v, \s)$-alcove element and $\<\nu_w, v\>=\<\mu, v^\flat\>$.
\end{proof}

\begin{lemma}\label{conn1}
Let $C$ be a connected component of $X(\mu, b)$. Let $x, y \in \Adm(\{\mu\})$ such that $X_x(b) \cap C$ and $X_y(b) \cap C$ are nonempty. If $x$ is a $(v, \s)$-alcove element and $\<\nu_x, v\>=\<\mu, v^\flat\>$ for some $v \in V^{p(\s)} \cap W_0 \cdot v^\flat$ , then $y$ is a $(v, \s)$-alcove element and $\<\nu_y, v\>=\<\mu, v^\flat\>$.
\end{lemma}
\begin{proof}
By assumption (cf.~\cite{EGAInew} Ch.~0, Cor.~2.1.10), there exist irreducible components $Z_1, \dots, Z_m$ of $C$ such that $X_x(b) \cap Z_1 \neq \emptyset$, $X_y(b) \cap Z_m \neq \emptyset$ and $Z_i \cap Z_{i+1} \neq \emptyset$ for $1 \le i \le m-1$. Let $w_i, w_{i, i+1} \in \Adm(\mu)$ such that $X_{w_{i,i+1}}(b) \cap Z_i \cap Z_{i+1} \ne \emptyset$ and $X_{w_i}(b) \cap Z_i$ is dense in $Z_i$. In particular, we have $x \le w_1$, $y \le w_m$ and $w_{i, i+1} \le w_i, w_{i+1}$ for $1 \le i \le m-1$. By Corollary \ref{going-up}, $w_1$ is a $(v, \s)$-alcove element and $\<\nu_{w_1}, v\>=\<\mu, v^\flat\>$.

We claim that $w_{1, 2}$ is a $(v, \s)$-alcove element and $\<\nu_{w_{1, 2}}, v\>=\<\mu, v^\flat\>$. Indeed, since $v=p(\s)(v)$, we have $\s(M_v)=M_v$. We set $X^G_{\le_v w_1}(b_{P_v})=\cup_{w' \le_v w_1} X^G_{w'}(b_{P_v})$. By Lemma~\ref{alcove-2}, if $w' \le_v w$, then $w'$ is also a $(v, \s)$-alcove element. By Theorem~\ref{alcove-adlv} and Proposition \ref{choice}, the closed immersion $\mathcal{FL}(M_v) \to \mathcal{FL}(G)$ induces an isomorphism $X^{M_v}_{\le_v w_1}(b_{P_v}) \xrightarrow{\sim} X^G_{\le_v w_1}(b_{P_v})$. In particular, $X^G_{\le_v w_1}(b_{P_v})$ is locally a projective variety. Since the closure of $X_{w_1}(b)$ intersects $X_{w_{1, 2}}(b)$, we have $X_{\le_v w_1}(b_{P_v}) \cap X_{w_{1, 2}}(b_{P_v}) \neq \emptyset$ and hence $w_{1, 2} \le_v w_1$. Now the claim follows from Lemma \ref{nu-v} and Lemma \ref{le-v}.

Repeating the above argument, we deduce that $y$ is a $(v, \s)$-alcove element and $\<\nu_y, v\>=\<\mu, v^\flat\>$. The proof is finished.
\end{proof}

\begin{lemma}\label{str-v}
Let $x$ be a $\s$-straight element in $\Adm(\{\mu\})$ with $\dot x \in [b]$. Then there exists a unique element $v^\flat_x \in W_0 \cdot v^\flat$ such that $x$ is a $(v^\flat_x, \s)$-alcove element and $\<\nu_x, v^\flat_x\>=\<\mu, v^\flat\>$. Moreover, we have $v_x^\flat \in V^{p(\s)}$.
\end{lemma}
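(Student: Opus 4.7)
The natural candidate is $v^\flat_x := w(v^\flat)$, where $w \in W_0$ is chosen so that $w(\overline{\nu}_b) = \nu_x$ (possible since $\nu_x$ and $\overline{\nu}_x = \overline{\nu}_b$ lie in the same $W_0$-orbit). This is independent of the choice of $w$: any two choices differ by an element of $W_{\overline{\nu}_b}$, and the Hodge-Newton hypothesis $M_{\overline{\nu}_b} \subseteq M = M_{v^\flat}$ gives $W_{\overline{\nu}_b} \subseteq W_{v^\flat} = W_J$, which fixes $v^\flat$ pointwise. The inner product identity follows immediately from $W_0$-invariance of $\<\, , \>$, the HN relation $\mu^\diamond - \overline{\nu}_b \in \BR_{\ge 0}\Phi^\vee_M$ (orthogonal to $v^\flat$ since $v^\flat$ is $W_M$-fixed), and the $\s_0$-invariance of $v^\flat$:
\[
\<\nu_x, v^\flat_x\> = \<\overline{\nu}_b, v^\flat\> = \<\mu^\diamond, v^\flat\> = \<\mu, v^\flat\>.
\]

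For the alcove property, I would first invoke the known fact that a $\s$-straight element $x$ is automatically a $(\nu_x, \s)$-alcove element (see~\cite{He14, GHN}), then apply Lemma~\ref{alcove-1}: both $\overline{\nu}_b$ and $v^\flat$ are dominant with $M_{\overline{\nu}_b} \subseteq M_{v^\flat}$, giving the root inclusions $\Phi_{\overline{\nu}_b, 0} \subseteq \Phi_{v^\flat, 0}$ and $\Phi_{v^\flat, +} \subseteq \Phi_{\overline{\nu}_b, +}$, which conjugate by $w$ to the analogous inclusions between $\Phi_{\nu_x, *}$ and $\Phi_{v^\flat_x, *}$. The first bullet $p(x\s)(v^\flat_x) = v^\flat_x$ of the alcove definition follows from $p(x\s)(\nu_x) = \nu_x$ by a short direct computation: if $\alpha := w^{-1} p(x)\, p(\s)(w) \cdot u^{-1}$ where $u \in W_0$ satisfies $\s_0 = u \circ p(\s)$, then the Newton identity forces $\alpha \in W_{\overline{\nu}_b} \subseteq W_{v^\flat}$, and then $p(x\s)(v^\flat_x) = w\alpha(v^\flat) = w(v^\flat) = v^\flat_x$. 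For uniqueness, if $v' = w'(v^\flat)$ is another candidate, then $\<(w')^{-1}(\nu_x), v^\flat\> = \<\overline{\nu}_b, v^\flat\>$ attains the maximum of $\<\, \cdot\, , v^\flat\>$ on the orbit $W_0 \cdot \overline{\nu}_b$, and the standard maximization principle for pairings of dominant vectors identifies this maximizing set with $W_{v^\flat} \cdot \overline{\nu}_b$; combined with $w(\overline{\nu}_b) = \nu_x$ and $W_{\overline{\nu}_b} \subseteq W_{v^\flat}$, this forces $w^{-1}w' \in W_{v^\flat}$, hence $v' = v^\flat_x$.

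The final claim $v^\flat_x \in V^{p(\s)}$ is the most delicate point and the main obstacle. My plan is to set $v'' := p(\s)(v^\flat_x)$ and verify that $v''$ satisfies the three characterizing properties of $v^\flat_x$, so that the uniqueness just established forces $v'' = v^\flat_x$. Membership $v'' \in W_0 \cdot v^\flat$ is clear since $p(\s) = u^{-1}\s_0$ preserves this orbit. The $(v'', \s)$-alcove property for $x$ and the pairing identity $\<\nu_x, v''\> = \<\mu, v^\flat\>$ are obtained from those of $v^\flat_x$ via $p(\s)$-transport, using the $p(\s)$-invariance of $\<\, , \>$ (since $\<\, ,\>$ is $W_0 \rtimes \<\s_0\>$-invariant and $p(\s) = u^{-1}\s_0$) and the identity $p(\s)^{-1}(\nu_x) = p(x)(\nu_x)$ extracted from $p(x\s)(\nu_x) = \nu_x$. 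The crux is to establish the pairing equality $\<p(x)(\nu_x), v^\flat_x\> = \<\mu, v^\flat\>$, which via the maximization principle reduces to $p(x) \in W_{v^\flat_x}$; this last statement must be deduced from the $\s$-straight structure of $x$ combined with its $(v^\flat_x, \s)$-alcove property. The non-quasi-split case $p(\s) \neq \s_0$ is particularly subtle because the Weyl element $u$ linking them intervenes at every step, and this careful tracking is the principal technical burden of the argument.
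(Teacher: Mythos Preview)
Your construction of $v^\flat_x$ and the verification of the pairing and alcove conditions match the paper's proof essentially verbatim (the paper cites \cite[\S4.1]{Ni} for the fact that a $\s$-straight $x$ is a $(\nu_x,\s)$-alcove element, then invokes Lemma~\ref{alcove-1} exactly as you do). Your uniqueness argument via the maximization principle for $\<\,\cdot\,,v^\flat\>$ on the orbit $W_0\cdot\overline\nu_b$ is different from the paper's norm argument (the paper shows $v-v^\flat_x\in\BR\Phi^\vee_{\nu_x,0}\subseteq\BR\Phi^\vee_{v^\flat_x,0}$, hence $v-v^\flat_x\perp v^\flat_x$, and concludes by comparing $\|v\|$ and $\|v^\flat_x\|$), but both are valid.

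The real issue is the final claim $v^\flat_x\in V^{p(\s)}$. Your plan is to set $v''=p(\s)(v^\flat_x)$, verify the three characterizing properties for $v''$, and invoke uniqueness. But the reduction you propose is circular. To get the pairing identity $\<\nu_x,v''\>=\<\mu,v^\flat\>$ you reduce to $p(x)\in W_{v^\flat_x}$, i.e.\ $p(x)(v^\flat_x)=v^\flat_x$; combined with the already-known $p(x\s)(v^\flat_x)=v^\flat_x$, this is \emph{equivalent} to $p(\s)(v^\flat_x)=v^\flat_x$, which is precisely the goal. No independent argument for $p(x)\in W_{v^\flat_x}$ is given, and ``the $\s$-straight structure of $x$ combined with its $(v^\flat_x,\s)$-alcove property'' does not supply one without further input. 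Similarly, the ``$p(\s)$-transport'' of the alcove condition is problematic: applying $\s$ to the $(v^\flat_x,\s)$-alcove condition for $x$ produces a $(p(\s)(v^\flat_x),\s)$-alcove condition for $\s(x)$, not for $x$.

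The paper's argument for $v^\flat_x\in V^{p(\s)}$ is short and uses a genuinely different idea which you are missing: since $x\in\Adm(\{\mu\})$, there exists $\mu'\in W_0\cdot\mu$ with $x\le t^{\mu'}$. Corollary~\ref{going-up} (the ``going-up'' result, which is the main technical tool developed just before this lemma) then shows that $t^{\mu'}$ is also a $(v^\flat_x,\s)$-alcove element. The first bullet of the alcove definition for $t^{\mu'}$ reads $p(t^{\mu'}\s)(v^\flat_x)=v^\flat_x$; since $p(t^{\mu'})=\id$, this gives $p(\s)(v^\flat_x)=v^\flat_x$ immediately. The point is that passing up the Bruhat order to a pure translation trivializes the finite Weyl part, and this is exactly what the admissibility hypothesis buys you.
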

\begin{proof}
Since $\dot x \in [b]$, $\overline{\nu}_x=\overline{\nu}_b$. Let $y$ be the unique element in $W_0^{I(\overline{\nu}_b)}$ with $\nu_x=y(\overline{\nu}_b)$. Here $I(\overline{\nu}_b) = \{ s\in \BS_0;\ s\overline{\nu}_b = \overline{\nu}_b \}$. Set $v^\flat_x=y(v^\flat)$. Then $\<\nu_x, v^\flat_x\>=\<\overline{\nu}_b, v^\flat\>=\<\mu, v^\flat\>$.

By \cite[\S4.1 $(c) \Rightarrow (b)$]{Ni}, $x$ is a $(\nu_x, \s)$-alcove element. Since $M_{\overline{\nu}_b} \subseteq M_{v^\flat}$, we have $M_{\nu_x} \subseteq M_{v^\flat_x}$. By Lemma~\ref{alcove-1}, $x$ is also a $(v^\flat_x, \s)$-alcove element.

Since $x \in \Adm(\{\mu\})$, $x \leq t^{\mu'}$ for some $\mu' \in W_0 \cdot \mu$. By Corollary~\ref{going-up}, $t^{\mu'}$ is a $(v^\flat_x, \s)$-alcove element. In particular, $p(\s)(v^\flat_x)=v^\flat_x$.

Assume another $v \in W_0 \cdot v^\flat$ satisfies the conditions of the lemma. Then $$\<\nu_x, v\>=\<\mu, v^\flat\>=\<\nu_x, v_x^\flat\>,$$ that is, $\<\nu_x, v-v_x^\flat\>=0$. So $v-v_x^\flat$ is linear combination of coroots in $\Phi_{\nu_x, 0}^\vee$. Since $\Phi_{\nu_x} \subseteq \Phi_{v_x^\flat}$, $\<v, v\> \ge \<v_x^\flat, v_x^\flat\>$ and the equality holds if and only if $v=v_x^\flat$. On the other hand, $v, v_x^\flat$ are conjugate under $W_0$ and hence $\<v, v\>=\<v_x^\flat, v_x^\flat\>$. Therefore, we have $v=v_x^\flat$. This verifies the uniqueness of $v_x^\flat$.
\end{proof}

\begin{proposition}\label{w-alcove}
For $w \in \Adm(\{\mu\}, b)$, there exists a unique $v_w^\flat \in V^{p(\s)} \cap W_0 \cdot v^\flat$ such that $w$ is a $(v_w^\flat, \s)$-alcove element and $\<\nu_w, v^\flat_w\>=\<\mu, v^\flat\>$.
\end{proposition}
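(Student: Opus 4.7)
The plan is to reduce to the $\s$-straight case already handled by Lemma~\ref{str-v}, transporting the alcove and pairing data from a $\s$-straight element to $w$ via Corollary~\ref{going-up}.

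For existence, since $X_w(b)\ne\emptyset$, the reduction method for affine Deligne-Lusztig varieties (see~\cite{He14} and Theorem~\ref{s-newton}) produces a $\s$-straight element $x\in\Adm(\{\mu\})$ with $x\le w$ in the Bruhat order and $\dot x\in[b]$. Applying Lemma~\ref{str-v} to $x$ supplies a unique $v_x^\flat\in V^{p(\s)}\cap W_0\cdot v^\flat$ such that $x$ is a $(v_x^\flat,\s)$-alcove element with $\<\nu_x,v_x^\flat\>=\<\mu,v^\flat\>$. Corollary~\ref{going-up} then propagates both properties upward from $x$ to $w$, so one may set $v_w^\flat:=v_x^\flat$.

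For uniqueness, suppose $v\in V^{p(\s)}\cap W_0\cdot v^\flat$ also satisfies the two conditions for $w$. I would first try to mimic the uniqueness argument at the end of Lemma~\ref{str-v}: the identity $\<\nu_w,v-v_w^\flat\>=0$ together with $\|v\|=\|v_w^\flat\|$ (since both are $W_0$-conjugate to $v^\flat$ and the fixed inner product is $W_0$-invariant) would force $v=v_w^\flat$, provided one has the Levi containment $\Phi_{\nu_w}\subseteq\Phi_{v_w^\flat}$. In Lemma~\ref{str-v} this containment was automatic from $\s$-straightness plus HN-decomposability; for a general $w$ one would instead descend via Lemma~\ref{le-v}, iteratively reducing $w$ in the $v$-Bruhat order $\le_v$ to obtain a $\s$-straight element $x'\le_v w$ that remains a $(v,\s)$-alcove. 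Then Theorem~\ref{s-newton}, Theorem~\ref{alcove-adlv} and Lemma~\ref{nu-v} applied inside $M_v$ should give $\dot{x'}\in[b]$ and $\<\nu_{x'},v\>=\<\mu,v^\flat\>$, and the uniqueness part of Lemma~\ref{str-v} will then force $v=v_{x'}^\flat=v_w^\flat$.

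The main obstacle is the uniqueness step. The direct adaptation of Lemma~\ref{str-v}'s uniqueness argument fails because, for non-straight $w$, the Newton vector $\nu_w$ need not lie in $W_0\cdot\overline{\nu}_b$, so the Levi containment $M_{\nu_w}\subseteq M_{v_w^\flat}$ is not automatic. The descent route via $\le_v$ looks more robust but requires careful bookkeeping: one must show that a $\s$-straight element is actually reachable within the $v$-Bruhat interval below $w$, and that the $\s$-conjugacy class $[b]$ is preserved along the way.
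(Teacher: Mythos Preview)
Your existence argument is the paper's, but the key input --- a $\s$-straight $x\le w$ with $\dot x\in[b]$ --- does not come from the reduction method of \cite{He14}: $\s$-cyclic shifts do not produce Bruhat comparability. The paper instead invokes \cite[Theorem~4.1]{HZ}, which says that the closure of $X_w(b)$ in $\mathcal{FL}(G)$ meets $X_x(b)$ for some $\s$-straight $x$ with $\dot x\in[b]$; then $x\le w$ because $\bigcup_{w'\le w}X_{w'}(b)$ is already closed.

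Your uniqueness sketch has the gap you yourself flag, and the paper circumvents it rather than filling it. Instead of descending to a \emph{new} $\s$-straight $x'$ below $w$ in $\le_v$, the paper shows that the \emph{same} $x$ from the existence step already satisfies $x\le_v w$, via a geometric closure argument. Since every $w'\le_v w$ is a $(v,\s)$-alcove element (Lemma~\ref{le-v}) and $\JJ_b\subseteq M_v(\brF)$ (Proposition~\ref{choice}), Theorem~\ref{alcove-adlv} identifies
\[
X^G_{\le_v w}(b):=\bigcup_{w'\le_v w}X^G_{w'}(b)\ \cong\ X^{M_v}_{\le_v w}(b),
\]
and the right-hand side is closed in $\mathcal{FL}(M_v)$. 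As $\mathcal{FL}(M_v)\to\mathcal{FL}(G)$ is a closed immersion, $X^G_{\le_v w}(b)$ is closed in $\mathcal{FL}(G)$; it contains $X_w(b)$, hence its closure, which by \cite[Theorem~4.1]{HZ} meets $X_x(b)$ --- forcing $x\le_v w$. Now $x$ is a $(v,\s)$-alcove element by Lemma~\ref{le-v}, and $\<\nu_x,v\>=\<\nu_w,v\>=\<\mu,v^\flat\>$ since affine reflections in $\tW_v$ move points only in $\BR\Phi_{v,0}^\vee\perp v$, so Lemma~\ref{nu-v} transports the pairing along $\le_v$. The uniqueness clause of Lemma~\ref{str-v} then gives $v=v^\flat_x=v^\flat_w$. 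Note that the closure result from \cite{HZ} is used \emph{twice}: once to produce $x$, and once as the engine that pins $x$ below $w$ in $\le_v$. Your alternative would need an analogue of that result inside $M_v$ together with a comparison of $M_v$-$\s$-straightness and $G$-$\s$-straightness, which is considerably more work.
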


\begin{proof}
Let $C$ be a connected component of $X(\mu, b)$ such that $X_w(b) \cap C \neq \emptyset$. By \cite[Theorem 4.1]{HZ}, there exists some $\s$-straight element $x$ such that $X_x(b) \cap C \neq \emptyset$. In particular, $\dot x \in [b]$.

The existence of $v_w^\flat$ follows from this: By Lemma~\ref{str-v}, $x$ is a $(v^\flat_x, \s)$-alcove element and $\<\nu_x, v^\flat_x\>=\<\mu, v^\flat\>$. By Lemma~\ref{conn1}, $w$ is a $(v_x^\flat, \s)$-alcove element and $\<\nu_w, v^\flat_x\>=\<\mu, v^\flat\>$. We take $v^\flat_w=v^\flat_x$.

Now we prove the uniqueness.

Suppose that $w$ is a $(v, \s)$-alcove element and $\<\nu_w, v\>=\<\mu, v^\flat\>$ for some $v \in V^{p(\s)} \cap W_0 \cdot v^\flat$. By Lemma~\ref{conn1}, $x$ is a $(v, \s)$-alcove element and $\<\nu_x, v\>=\<\mu, v^\flat\>$. By Lemma~\ref{str-v}, $v=v^\flat_x$ and the uniqueness of $v^\flat_w$ follows.
\end{proof}

\subsection{Proof of Theorem~\ref{adm-adm}}
Let $w \in \Adm(\{\mu\}, b)$. By Proposition~\ref{w-alcove}, there exists $z \in W_0^J$ such that $w$ is a $(z(v^\flat), \s)$-alcove element and $\<\nu_w, z(v^\flat)\>=\<\mu, v^\flat\>$ with $p(\s)z(v^\flat)=z(v^\flat)$. Let $P'={}^z P$. Then $P'=M' N' \in \fkP^\s$. Moreover, $z_{P'}=z$. Since $w \in \Adm(\{\mu\})$, $w \le t^{y(\mu)}$ for some $y \in W_0$. By Lemma~\ref{x-sx}, $w \le_{z(v^\flat)} t^{y(\mu)}$, $t^{y(\mu)}$ is also a $(z(v^\flat), \s)$-alcove element and
$$
\<y(\mu), z(v^\flat)\>=\<\mu, v^\flat\>.
$$
Thus $\<z \i y (\mu), v^\flat\>=\<\mu, v^\flat\>$ and $\mu-z \i y (\mu) \in \BZ \Phi_{v^\flat, 0}^\vee$. Thus $z \i y(\mu)=y'(\mu)$ for some $y' \in W_{v^\flat}$. Set $y''=z y' z \i \in W_{z(v^\flat)}$. Then $y(\mu)=y'' z(\mu)$ and thus $w \le_{z(v^\flat)} t^{y(\mu)}$ implies $w \in \Adm^{M'}(\{z_{P'}(\mu)\})$.

On the other hand, suppose $P'=M' N' \in \fkP^\s$ and $w \in \Adm^{M'}(\{z(\mu)\})$. Then $w \le_{z_{P'}(v^\flat)} t^{z_{P'} u(\mu)}$ for some $u \in W_{v^\flat}$. Since $p(\s)z_{P'}(v^\flat)=z_{P'}(v^\flat)$, one checks that $t^{z_{P'} u(\mu)}$ is a $(z_{P'}(v^\flat), \s)$-alcove element. Hence by Lemma~\ref{le-v}, $w$ is a $(z_{P'}(v^\flat), \s)$-alcove element.

Let $P_i'=M_i' N_i' \in \fkP^\s$ for $i=1, 2$. Notice that $t^{z_{P_i'}(\mu)}$ is a $(z_{P_i'}(v^\flat), \s)$-alcove element and $\<\nu_{t^{z_{P_i'}(\mu)}}, z_{P_i'}(v^\flat)\>=\<\mu, v^\flat\>$. Assume there exists $x \in \Adm^{M_1'}(\{z_{P_1'}(\mu)\}) \cap \Adm^{M_2'}(\{z_{P_2'}(\mu)\})$. Then $x$ is also a $(z_{P_i'}(v^\flat), \s)$-alcove element and $\<\nu_x, z_{P_i'}(v^\flat)\>=\<\nu_{t^{z_{P_i'}(\mu)}}, z_{P_i'}(v^\flat)\>=\<\mu, v^\flat\>$. By Lemma~\ref{str-v}, $z_{P_1'}(v^\flat)=z_{P_2'}(v^\flat)$ and $z_{P_1'}=z_{P_2'}$. So $\Adm(\{\mu\}, b)$ is the disjoint union of $\Adm^{M'}(\{z_{P'}(\mu)\}, b)$ for $P'=M' N' \in \fkP^\s$.

Theorem \ref{adm-adm} is proved.

\

Now we state our main theorem in this section.

\begin{theorem}\label{HN-dec}
Assume that $(\mu, b)$ is Hodge-Newton decomposable with respect to $M$. Let $\brK$ be a standard $\s$-stable parahoric subgroup of $G(\brF)$ corresponding to $K \subseteq \tilde \BS$. Then we have the Hodge-Newton decomposition
\[
X(\mu, b)_{K}=\bigsqcup_{P'=M' N' \in \fkP^\s / W_K^\s} {\rm{Im}}(\phi_{P', K}: X^{M'}(\mu_{P'}, b_{P'})_{K_{M'}} \hookrightarrow X(\mu, b)_{K})
\]
as a disjoint union of open and closed subsets.
\end{theorem}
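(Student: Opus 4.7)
The plan is to first prove the decomposition at the Iwahori level $K = \emptyset$, and then descend to a general parahoric level via the projection $\pi_K^G$. The crux is to identify $\Im(\phi_{P'})$ explicitly inside $X(\mu, b)$ as a union of affine Deligne-Lusztig varieties corresponding to one piece of the admissible-set decomposition in Theorem~\ref{adm-adm}.

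For each $w \in \Adm^{M'}(\{z_{P'}(\mu)\})$, the proof of Theorem~\ref{adm-adm}(2) shows that $w$ is a $(z_{P'}(v^\flat), \s)$-alcove element. Since $b_{P'} \in M'(\brF) = M_{z_{P'}(v^\flat)}(\brF)$ with $\JJ_{b_{P'}} \subseteq M'(\brF)$ by Proposition~\ref{choice}(2), Theorem~\ref{alcove-adlv} provides an isomorphism $X^{M'}_w(b_{P'}) \xrightarrow{\sim} X^G_w(b_{P'})$, and translation by $h_{P'}$ identifies the latter with $X^G_w(b)$. This yields
\[
\Im(\phi_{P'}) = \bigsqcup_{w \in \Adm^{M'}(\{z_{P'}(\mu)\}, b)} X^G_w(b)
\]
inside $X(\mu, b)$, together with the injectivity of $\phi_{P'}$. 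Combined with Theorem~\ref{adm-adm}(1), this gives the set-theoretic decomposition $X(\mu, b) = \bigsqcup_{P' \in \fkP^\s} \Im(\phi_{P'})$ at the Iwahori level.

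The hard part will be showing that each $\Im(\phi_{P'})$ is closed (and hence also open, as the complement of a finite disjoint union of closed subsets) in $X(\mu, b)$. By the standard inclusion $\overline{X_w(b)} \subseteq \bigcup_{w'' \le w} X_{w''}(b)$ in $\mathcal{FL}(G)$, this reduces to a purely combinatorial closure-stability claim: if $w \in \Adm^{M'}(\{z_{P'}(\mu)\}, b)$ and $w'' \le w$ lies in $\Adm(\{\mu\}, b)$, then $w'' \in \Adm^{M'}(\{z_{P'}(\mu)\}, b)$. By Proposition~\ref{w-alcove} (read off the proof of Theorem~\ref{adm-adm}), membership in $\Adm^{M'}(\{z_{P'}(\mu)\}, b)$ is detected by the unique vector $v^\flat_{(-)}$, so the claim becomes $v^\flat_{w''} = v^\flat_w = z_{P'}(v^\flat)$. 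I plan to establish this by applying \cite[Theorem~4.1]{HZ} to $w''$ to obtain a $\s$-straight $y \le w''$ with $\dot y \in [b]$ whose orbit lies in $\overline{X_{w''}(b)}$: Lemma~\ref{str-v} gives $v^\flat_{w''} = v^\flat_y$, and since $y \le w'' \le w$ the same $y$ serves in the existence argument of Proposition~\ref{w-alcove} applied to $w$, forcing $v^\flat_w = v^\flat_y$ by uniqueness.

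Finally, the descent to the parahoric level is essentially immediate. The projection $\pi_K^G$ is the restriction of the fibration $G(\brF)/\brI \to G(\brF)/\brK$ with proper fibers (the partial flag variety $\brK/\brI$ of the reductive quotient of $\brK$); it is open, proper and surjective, so $\Im(\phi_{P', K}) = \pi_K^G(\Im(\phi_{P'}))$ remains open and closed in $X(\mu, b)_K$. Proposition~\ref{disjoint} then controls the indexing: $\Im(\phi_{P', K}) = \Im(\phi_{P'', K})$ when $P', P''$ are $W_K^\s$-conjugate, and the two images are disjoint otherwise, so the indexing passes to $\fkP^\s/W_K^\s$. Injectivity of $\phi_{P', K}$ follows from $M'(\brF) \cap \brK = \brK_{M'}$. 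The only non-routine step is the closure-stability claim of the previous paragraph; the rest is a mechanical assembly of Theorems~\ref{alcove-adlv} and~\ref{adm-adm} with Propositions~\ref{choice} and~\ref{disjoint}.
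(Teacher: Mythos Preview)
Your overall strategy---establish the Iwahori case from Theorem~\ref{alcove-adlv} and Theorem~\ref{adm-adm}, then descend to level $K$ using Proposition~\ref{disjoint} and the surjectivity of the projections---is exactly the paper's approach, and your identification of $\Im(\phi_{P'})$ with $\bigsqcup_{w\in\Adm^{M'}(\{z_{P'}(\mu)\},b)} X^G_w(b)$ is correct.

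The one place where you diverge from the paper is the argument for closedness of $\Im(\phi_{P',K})$. You prove a combinatorial closure-stability statement (if $w\in\Adm^{M'}(\{z_{P'}(\mu)\},b)$ and $w''\le w$ lies in $\Adm(\{\mu\},b)$, then $w''$ lies in the same piece) and then push forward via properness of $\pi_K^G$. Your combinatorial argument is correct: given $\s$-straight $y\le w''$ with $\dot y\in[b]$, Corollary~\ref{going-up} (not merely ``the existence argument'' of Proposition~\ref{w-alcove}) applied to the chain $y\le w''\le w$ shows that both $w''$ and $w$ are $(v^\flat_y,\s)$-alcove elements with the right inner-product condition, and uniqueness in Proposition~\ref{w-alcove} gives $v^\flat_{w''}=v^\flat_y=v^\flat_w$. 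The paper, however, bypasses all of this: it simply observes that $X^{M'}(\mu_{P'},b_{P'})_{K_{M'}}$ is closed in the Levi partial flag variety (the $M'$-admissible set is closed under the $M'$-Bruhat order) and that $\phi_{P',K}$ is a closed immersion, so its image is closed; openness then follows because the finitely many closed pieces exhaust $X(\mu,b)_K$. This is considerably shorter and avoids both the combinatorial detour and any discussion of properness of $\pi_K^G$. Your route does buy an extra combinatorial fact (the pieces of Theorem~\ref{adm-adm}(1) are downward-closed inside $\Adm(\{\mu\},b)$), which may be of independent interest, but it is not needed for the theorem.
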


\begin{remark}\label{HN-dec-rmk}
In the Hodge-Newton decomposition, one may fix the standard Levi subgroup $M$ over $\brF$ and work with various rational structures on it instead of working with various semistandard Levi subgroups. More explicitly, for each $P'=M' N' \in \fkP^\s$, we have \[X^{M'}(\mu_{P'}, b_{P'})_{K_{M'}} \cong X^{M_{P'}}(\mu, \dot z_{P'}\i b_{P'} \dot z_{P'})_{K_{P'}},\] $M_{P'}(\brF)=M(\brF)$ and the Frobenius automorphism on $M_{P'}$ is given by $\s_{P'}=\Int(\dot z_{P'}\i)\circ \s \circ \Int(\dot z_{P'})$, where $K_{P'}$ is the subset of simple reflections of $\tW(M)$ corresponding to the standard parahoric subgroup $\brK_{P'}=M(\brF) \cap {}^{\dot z_{P'}\i} \brK$ of $M_{P'}(\brF)$.
\end{remark}

\begin{proof}[Proof of Theorem \ref{HN-dec}]
By Theorem~\ref{alcove-adlv} and Theorem~\ref{adm-adm}, the theorem holds for $\brK=\brI$, that is, \[ X(\mu, b)=\sqcup_{P'=M' N' \in \fkP^\s} \Im(\phi_{P'}: X^{M'}(\mu_{P'}, b_{P'}) \to X(\mu, b)).\]

By Proposition \ref{disjoint}, we deduce that \begin{align*} X(\mu, b)_{K} &= \Im(\pi_K^G)= \sqcup_{P'=M' N' \in \fkP^\s / W_K^\s} \Im(\pi^G_K \circ \phi_{P'}) \\ &= \sqcup_{P'=M' N' \in \fkP^\s / W_K^\s} \Im(\phi_{P', K} \circ \pi^{M'}_{K_{M'}}) \\ &= \sqcup_{P'=M' N' \in \fkP^\s / W_K^\s} \Im(\phi_{P', K}), \end{align*} where the first and the last equalities follow from the fact that $\pi^G_K$ and $\pi^{M'}_{K_{M'}}$ are surjective respectively (see \cite[Theorem 1.1]{He-KR}).

To show each $\Im(\phi_{P', K})$ in the finite disjoint union on the right hand side of the decomposition, is open and closed, it suffices to show it is closed. But $X^{M'}(\mu_{P'}, b_{P'})$ is closed in $\mathcal{FL}(M')$ and $\phi_{P', K}$ is a closed immersion.
\end{proof}

\section{First part of the proof}

In this section, we verify the $\to$-directions in the blueprint \S\ref{leitfaden}. By assumption, $G$ is quasi-simple over $F$. By Section~\ref{subsec:passage-to-adjoint}, we may assume that $G$ is of adjoint type. We also assume that $\mu\ne 0$.

\subsection{(1 HN-decomp) $\Rightarrow$ (2 minute)}
\label{subsec:1-implies-2}
Suppose that $\<\mu, \o_{\CO}\>+\{\<\s(0), \o_{\CO}\>\}>1$ for some $\s_0$-orbit $\CO$ of $\BS_0$. Let $v \in \bigoplus_{j \in \CO}\BR \o^\vee_j$ such that $\s_0(v)=v$ and that $\<v, \o_{\CO}\>=\<\mu, \o_{\CO}\>+\{\<\s(0), \o_{\CO}\>\}-1 > 0$. By Theorem~\ref{BG-mu}, $v \in B(G, \{\mu\})$. Note that $\Phi_{v, 0}=\Phi_{\BS_0-\CO}$ and $\mu-v \notin \BR \Phi_{\BS_0-\CO}^\vee$. Thus by definition, the $\s$-conjugacy class corresponding to $v$ is not Hodge-Newton decomposable.

\subsection{(2 minute) $\Rightarrow$ (1 HN-decomp)}
\label{subsec:2-implies-1}
Let $v \in B(G, \{\mu\})$. If $v$ is not basic, then there exists $i$ such that $\<v, \a_i\> \neq 0$. Let $\CO$ be the $\s_0$-orbit of $i$. Using Theorem~\ref{BG-mu}, we obtain
\[
\<\mu, \o_{\CO}\> + \{\<\s(0), \o_{\CO}\>\} -\<v, \o_{\CO}\> \in \BZ \quad\text{and}\quad \< \mu, \o_{\CO}\>\ge \< v, \o_{\CO}\> \ge 0.
\]
Since $\mu$ is minute, this implies that either $\<v, \o_{\CO}\>=0$ or $\<v, \o_{\CO}\>=\<\mu, \o_{\CO}\>$. Since $v$ is dominant and $\<v, \a_i\> \ne 0$, we cannot have $\<v, \o_{\CO}\>=0$.
Therefore, $\<\mu-v, \o_{\CO}\>=0$ and the corresponding $\s$-conjugacy class is Hodge-Newton decomposable with respect to $M_{\BS_0-\CO}$.

\subsection{$\text{Classification} \Rightarrow$ (2 minute)}
\label{subsec:class-implies-2}
This is just a routine check. We verify the $(\tilde B_n, \o^\vee_1, \t_1)$ case and leave the other cases to the readers.

Here $\s=\Ad(\t_1)$ and $\s(0)=\o^\vee_1$. We identify $V$ and its dual with $\bigoplus_{i=1}^n \BR e_i$, where $\<e_i, e_j\>=\d_{i j}$. The simple roots of $G$ are $e_1-e_2, e_2-e_3, \cdots, e_{n-1}-e_n, e_n$. The fundamental coweights are $\o^\vee_1=e_1, \o^\vee_2=e^\vee_1+e_2, \cdots, \o^\vee_n=e_1+e_2+\cdots+e_n$. Therefore $\<\o^\vee_1, \o_i\>=1$ and thus $\<\o^\vee_1, \o_i\>+\{\<\s(0), \o_i\>\}=1$.

\subsection{(2 minute) $\Rightarrow\text{Classification}$}
\label{subsec:2-implies-class}

By \cite[Tables]{Bour}, $\<\mu, \o_i\> \le 1$ for every $i$ if and only if the pair $(W_a, \mu)$ equals (up to isomorphism) to $(\tilde A_{n-1}, \o^\vee_1)$, $(\tilde A_{n-1}, \o^\vee_1+\o^\vee_{n-1})$, $(\tilde B/\tilde C/\tilde D, \o^\vee_1)$, $(\tilde A_3/\tilde C_2, \o^\vee_2)$.

In the last three cases, $\<\mu, \o_i\>=1$ for some $i$. By the definition of minute, we must then have $\s_0(i)=i$ and $\{\<\s(0), \o_i\>\}=0$. This leads to the choices of $\s$ for $(W_a, \mu)$ in those cases.

As to the case $(W_a, \mu)=(\tilde A_{n-1}, \o^\vee_1)$, $\<\mu, \o_i\>$ achieves its maximal value $\frac{n-1}{n}$ at $i=1$. If $\s_0$ is nontrivial, then all the possible choices of $\s$ are allowed, i.e. $\s=\t \varsigma_0$, where $\t=\t_1^k$ for
some $k$. Up to isomorphism, this leads to the two cases $(\tilde A_{n-1}, \o^\vee_1, \varsigma_0)$ and $(\tilde A_{2m-1}, \o^\vee_1, \t_1 \varsigma_0)$ in the list.

If $\s_0=\id$, i.e., $\s(0)=\t_1^k$ for $0 \le k \le n-1$, then the condition $\{\<\s(0), \o^\vee_{n-1}\>\} \le \frac{1}{n}$ implies that $k=0$ or $n-1$. This finishes the proof.

\subsection{Harris-Taylor type}
Recall that by definition $\mu$ is minute for $G$ if for any $\s_0$-orbit $\CO$ of $\BS_0$, we have $\<\mu, \o_{\CO}\>+\{\<\s(0), \o_{\CO}\>\} \le 1$.

\begin{definition}
Let $(G, \{\mu\})$ be as in Section~\ref{sec:group-theoretic-setup}; in particular in this definition we do not require $G$ to be adjoint or even semisimple, or quasi-simple.
We say that the pair $(G, \{\mu\})$ is of {\it Harris-Taylor type} if for any $\s_0$-orbit $\CO\subseteq \BS_0$, $\<\mu, \o_{\CO}\>+\{\<\s(0), \o_{\CO}\>\}<1$.
\end{definition}

\begin{lemma}
The pair $(G, \{\mu\})$ is of Harris-Taylor type if and only if $(G^\ad, \mu^\ad)$ is a product of
\begin{itemize}
\item
factors where $(W_a, \mu, \s)$ is $(\tilde A_{n-1}, \o^\vee_1, \id)$ (up to isomorphism), and
\item
factors where $\mu=0$.
\end{itemize}
\end{lemma}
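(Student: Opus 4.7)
The plan is to reduce to the $\brF$-simple case via Sections~\ref{subsec:passage-to-adjoint} and~\ref{passage-brF-simple}, and then compare against the classification of minute triples in Theorem~\ref{triple}. Both the Harris-Taylor condition and the decomposition of $G^\ad$ into simple factors are compatible in the sense of Proposition~\ref{adj}: the $\s_0$-orbits on $\BS_0$ form a disjoint union of the orbit sets on each $F$-simple factor, and the pairings $\<\mu, \o_{\CO}\>$ and $\<\s(0), \o_{\CO}\>$ depend only on the factor containing $\CO$. The same compatibility persists through the passage from $F$-simple to $\brF$-simple pairs in Section~\ref{passage-brF-simple}. So it suffices to treat a single $\brF$-simple factor and show that such a pair is Harris-Taylor if and only if either $\mu=0$ or the associated triple $(W_a, \mu, \s)$ is $(\tilde A_{n-1}, \o^\vee_1, \id)$.

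For an $\brF$-simple factor with $\mu = 0$, the condition is automatic since $\{\<\s(0), \o_{\CO}\>\} < 1$ for any real number. When $\mu \neq 0$, Harris-Taylor implies the (non-strict) minute condition, hence $(W_a, \mu, \s)$ must appear in the list of Theorem~\ref{triple}. It then remains to rule out every entry of that list other than $(\tilde A_{n-1}, \o^\vee_1, \id)$ by exhibiting a single orbit $\CO$ where $\<\mu, \o_{\CO}\> + \{\<\s(0), \o_{\CO}\>\} = 1$ rather than $<1$.

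The key computational input is the identification of $\s(0)$ in the apartment when $\s$ involves an element of $\Omega$. Since any $\t \in \Omega$ stabilizes $\overline{\mathbf a}$ and permutes its vertices, $\t(0)$ is the unique vertex of $\overline{\mathbf a}$ in the coset of $P^\vee/Q^\vee$ corresponding to $\t$; concretely, $\t_1^k(0) = \o^\vee_k$ in type $\tilde A_{n-1}$, and $\t_i(0) = \o^\vee_i$ in types $\tilde B, \tilde C, \tilde D$ whenever $\t_i$ exists. With this in hand, a direct case check goes as follows. For $(\tilde A_{n-1}, \o^\vee_1, \t_1^{n-1})$, the orbits are singletons, $\s(0) = \o^\vee_{n-1}$, and for any $\CO = \{i\}$ the sum equals $\tfrac{n-i}{n} + \{\tfrac{i}{n}\} = 1$. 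For entries involving $\varsigma_0$ with $\mu = \o^\vee_1$, a paired orbit $\{i, n-i\}$ already gives $\<\o^\vee_1, \o_{\CO}\> = \tfrac{n-i}{n} + \tfrac{i}{n} = 1$. For $(\tilde A_{n-1}, \o^\vee_1 + \o^\vee_{n-1}, \id)$, every singleton orbit yields $\<\mu, \o_i\> = 1$. For all entries of types $\tilde B, \tilde C, \tilde D$ with $\mu = \o^\vee_1$, the orbit $\CO = \{1\}$ gives $\<\o^\vee_1, \o_1\> = 1$ since $\o^\vee_1 = e_1 = \o_1$. The remaining small-rank entries $(\tilde A_3, \o^\vee_2, *)$ and $(\tilde C_2, \o^\vee_2, *)$ are handled analogously by a direct computation, using $\<\o^\vee_2, \o_2\> = 1$ in the relevant orbit.

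For the remaining direction, $(\tilde A_{n-1}, \o^\vee_1, \id)$ itself satisfies the strict Harris-Taylor inequality: $\s(0) = 0$, the $\s_0$-orbits are singletons, and $\<\o^\vee_1, \o_i\> = \tfrac{n-i}{n} < 1$ for all $i = 1, \dots, n-1$. The main obstacle is purely the bookkeeping of $\s(0)$ in the twisted $\tilde A$-type cases, which is uniformized by the $\Omega$-vertex observation recalled above; no deeper ingredient beyond Theorem~\ref{triple} is needed.
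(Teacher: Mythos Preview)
Your proof is correct and follows essentially the same route as the paper. The paper's own proof is terser: after reducing to a simple factor it simply invokes ``the argument in \S\ref{subsec:2-implies-class}'' and observes that, when the inequalities there are read as strict, only $(\tilde A_{n-1}, \o^\vee_1, \id)$ survives. Your version unpacks this by first passing through the classification in Theorem~\ref{triple} and then eliminating each remaining entry by exhibiting an orbit $\CO$ with $\<\mu,\o_\CO\>+\{\<\s(0),\o_\CO\>\}=1$; the computational content is the same. One small remark: your appeal to Proposition~\ref{adj} is a slight overreach, since that proposition is stated for the conditions of Theorem~\ref{main} rather than for the Harris--Taylor condition, but the factorwise compatibility you need is immediate from the definition, exactly as you explain.
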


\begin{proof}
We may pass to $G^\ad$ and then, as in Section~\ref{subsec:passage-to-adjoint}, pass to one of its simple factors. If $\mu\ne 0$, then the argument in \S\ref{subsec:2-implies-class} implies that the corresponding triple $(W_a, \mu, \s)$ is isomorphic to $(\tilde A_{n-1}, \o^\vee_1, \id)$.
\end{proof}

The case of $(\tilde A_{n-1}, \o^\vee_1, \id)$ is the case considered by Harris and Taylor in \cite{HT}.

In this case, it is easy to check that every element in $\Adm(\{\mu\})$ is straight. Therefore, for any $[b] \in B(G, \{\mu\})$, and any parahoric subgroup $\brK$, $\dim X(\mu, b)_K=0$.

\subsection{(1 HN-decomp)+(2 minute) $\Rightarrow$ (3 non-basic --leaves)}
Recall that $G$ is assumed to be quasi-simple over $F$. In particular, $\s$ and $\s_0$ act transitively on the connected components of $\tilde \BS$ and $\BS_0$ respectively.

\begin{lemma} \label{non-zero}
Let $\mu$ be a non-zero dominant coweight. Then $\mu^\diamond \in \sum_{i \in \BS_0} \BR_{>0} \a_i^\vee$. Recall that $\mu^\diamond=\frac{1}{n_0}\sum_{i=0}^{n_0-1} \s_0^i(\mu)$ with $n_0$ the order of $\s_0$.
\end{lemma}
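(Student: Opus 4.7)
The plan is to reduce the claim to the case of an irreducible root system on each connected component of $\BS_0$ and then invoke the classical positivity of the inverse Cartan matrix.

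Since $G$ is quasi-simple over $F$ and adjoint, $\s$ permutes the connected components of the affine Dynkin diagram $\tilde \BS$ transitively. Removing affine nodes identifies these components with the connected components of $\BS_0$, and the action of $\s_0$ on $\BS_0$ differs from that of $p(\s)$ by an element of $W_0$ (which preserves each component), so $\s_0$ acts transitively on the set $\{\BS_{0,1}, \dots, \BS_{0,k}\}$ of components of $\BS_0$. I would decompose $V = \bigoplus_{\ell=1}^k V_\ell$ with $V_\ell = \sum_{i \in \BS_{0,\ell}} \BR\,\a_i^\vee$, and write $\mu = \sum_\ell \mu_\ell$ with $\mu_\ell \in V_\ell$; since $\mu \ne 0$, some $\mu_{\ell_0}$ is nonzero. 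Because $\s_0$ preserves the dominant chamber, every $\s_0^j(\mu)$ is dominant, hence so is $\mu^\diamond$. For any fixed $\ell$, transitivity provides $j$ with $\s_0^j(\BS_{0,\ell_0}) = \BS_{0,\ell}$, so that the summand $\s_0^j(\mu)$ contributes a nonzero dominant element to $V_\ell$; all other contributions to $V_\ell$ are dominant (possibly zero). Hence $\mu^\diamond|_{V_\ell}$ is a nonzero dominant element of the irreducible root subsystem supported on $\BS_{0,\ell}$.

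It then remains to invoke the classical fact that in an irreducible finite root system, every nonzero dominant element $\nu$ of the ambient space lies in $\sum_i \BR_{>0}\,\a_i^\vee$. Writing $\nu = \sum_i a_i \o_i^\vee$ with $a_i \ge 0$ not all zero, and using $\o_i^\vee = \sum_j (C^{-1})_{ij}\,\a_j^\vee$, this follows from the strict positivity of every entry of the inverse Cartan matrix $C^{-1}$ in irreducible finite type, which can be read off the tables in~\cite{Bour}. Applied to each $\mu^\diamond|_{V_\ell}$, this yields $\mu^\diamond \in \sum_{i\in\BS_0} \BR_{>0}\,\a_i^\vee$. The only subtle point is arranging the reduction so that no $V_\ell$-component of $\mu^\diamond$ vanishes, which is precisely what transitivity of $\s_0$ on components of $\BS_0$ guarantees.
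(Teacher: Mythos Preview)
Your proof is correct and follows essentially the same approach as the paper's: both reduce to the irreducible case via the transitivity of $\s_0$ on the connected components of $\BS_0$, and then use that a nonzero dominant element of an irreducible root system has strictly positive coefficients in the simple coroot basis. The only difference is that the paper proves this last fact directly via a short connectedness argument (if some coefficient vanished, pick a boundary vertex $i_0$ adjacent to the support and observe $\<\mu,\a_{i_0}\><0$, contradicting dominance), whereas you cite the positivity of the entries of $C^{-1}$ from Bourbaki's tables.
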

\begin{proof}
Let $\tilde \BS'$ be a connected component of $\tilde \BS$ such that $\mu$ is nonzero on the connected Dynkin diagram $\BS_0':=\tilde \BS' \cap \BS_0$.
It is already known that $\mu \in \sum_{j \in J} \BR_{>0} \a_j^\vee$ for some $J \subseteq \BS_0$. Since $\mu$ is nonzero on $\BS_0'$, we have $J \cap \BS_0' \neq \emptyset$. We claim that $\BS_0' \subseteq J$. Otherwise, since $\BS_0'$ is connected, there exists $i_0 \in \BS_0' - J$ such that $\<\a_{j_0}^\vee, \a_{i_0}\> < 0$ for some $j_0 \in J$. Then one checks that $\<\mu, \a_{i_0}\> < 0$, contradicting the assumption $\mu$ is a dominant coweight. So the claim is proved. Now the lemma follows since $\s_0$ acts transitively on the connected components of $\BS_0$.
\end{proof}

\label{subsec:2-implies-3}
Let $[b]$ be a nonbasic $\s$-conjugacy class in $B(G, \{\mu\})$. Then by assumption, $[b]$ is Hodge-Newton decomposable with respect to a proper standard Levi subgroup $M$ with $\s_0(\BS_M)=\BS_M$, where $\BS_M$ denotes the set of simple reflections of $W_0(M)$.

We have a direct sum decomposition $V=\BR \Phi_M^\vee \oplus \bigoplus_{j \notin \BS_M} \BR \o^\vee_j$. Let $\mu_M \in \BR \Phi_M^\vee$ be the first factor of $\mu$ with respect to this decomposition. Since $\mu_M$ is $M$-dominant, $\mu_M \in \sum_{i \in \BS_M} \BR_{\ge 0} \a_i^\vee$ and thus $\<\mu-\mu_M, \a_j\> \ge 0$ for $j \in \BS_0 -\BS_M$, which means $\mu-\mu_M$ is dominant. Since $\mu \neq 0$ and $M \subsetneq G$, applying Lemma \ref{non-zero}, we have $\mu-\mu_M \neq 0$ and hence

($\dagger$) $\mu^\diamond-\mu_M^\diamond \in \sum_{i \in \BS_0} \BR_{>0} \a_i^\vee$.

For $i \in \BS_M$, let $\o^M_i \in \BR \Phi_M$ be the fundamental weight of $M$ corresponding to the simple root $\a_i$. For each $\s_0$-orbit $\CO$ in $\BS_M$ we set $\o_\CO^M=\sum_{i \in \CO} \o_i^M$.

\

Let $P'$, $z_{P'}$ and $\s_{P'}=\Int(\dot z_{P'}\i) \circ\s\circ \Int(\dot z_{P'})$ be as in Remark~\ref{HN-dec-rmk}.
\begin{lemma} \label{frack}
We have $\{\<\s_{P'}(0), \o^M_{\CO}\>\}=\{\<\s(0), \o_{\CO}\>\}$ for each $\s_0$-orbit $\CO$ in $\BS_M$.
\end{lemma}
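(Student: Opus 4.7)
The plan is to reduce the identity to a direct vector-space calculation and then to an integrality statement about a specific weight. Here is the approach in more detail.

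First I would compute $\s_{P'}(0)$ explicitly. Since $\s = \tau \s_0$ with $\tau \in \Omega$ and $\s_0$ a length-preserving linear automorphism fixing the special vertex $0$, we have $\s(0) = \tau(0)$. The element $z_{P'} \in W_0$ acts linearly on $V$ and hence fixes $0$. Unwinding $\s_{P'} = \Int(\dot z_{P'}\i)\circ \s\circ \Int(\dot z_{P'})$ applied to $0$ gives $\s_{P'}(0) = z_{P'}\i(\s(0))$. Using the contragredient action of $z_{P'}$ on $V^*$, this rewrites the left-hand side as
\[
\<\s_{P'}(0), \o^M_\CO\> = \<\s(0), z_{P'}(\o^M_\CO)\>.
\]
Thus the lemma reduces to proving
\[
\<\s(0), \; z_{P'}(\o^M_\CO) - \o_\CO\> \in \BZ.
\]

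For this integrality, I would split the difference into two pieces. Decompose each $\o_i$ for $i\in\BS_M$ as $\o_i = \o^M_i + c_i$, where $c_i \in V^*$ is characterized by $\<\a_j^\vee, c_i\> = 0$ for all $j\in\BS_M$ (i.e.\ the component pairing trivially with $\BR\Phi_M^\vee$) and $\o^M_i \in \BR\Phi_M$ is the fundamental weight of $M$. Summing over the $\s_0$-orbit $\CO \subseteq \BS_M$ gives $\o_\CO = \o^M_\CO + c_\CO$, and crucially $c_\CO$ is both $W_M$-invariant and $\s_0$-invariant (using $\s_0(\CO)=\CO$, which holds because $\s_0(\BS_M)=\BS_M$). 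Then
\[
z_{P'}(\o^M_\CO) - \o_\CO = \bigl(z_{P'}(\o^M_\CO) - \o^M_\CO\bigr) - c_\CO.
\]

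The first piece $\<\s(0), z_{P'}(\o^M_\CO)-\o^M_\CO\>$ is an integer because a reduced expression for $z_{P'}$ exhibits $z_{P'}(\o^M_\CO)-\o^M_\CO$ as a $\BZ$-combination of simple roots of $G$ (each simple reflection on a weight shifts it by an integer multiple of the corresponding simple root), and $\s(0) = \tau(0)$ lies in the coweight lattice $P^\vee = X_*(T)_{\G_0}$ of the adjoint group $G$, whose pairing with the root lattice is integral. The second piece $\<\s(0), c_\CO\>$ is the delicate one: here one uses that $c_\CO$, being $W_M$-invariant and $\s_0$-invariant, can be identified as a specific $\BZ$-combination of the fundamental weights $\{\o_j : j\in\BS_0\setminus \BS_M\}$ together with $W_M$-fixed corrections, which pair integrally with $\tau(0)$ thanks to the $\s$-stability of $P'$ (Lemma~\ref{para}) and the choice of $v^\flat$.

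The main obstacle is precisely this second piece in the non-quasi-split case, where $\tau(0) \ne 0$. The key input that makes it work is the description of $c_\CO$ modulo the root lattice of $G$ via the projection $V^* \twoheadrightarrow V^*/\BR\Phi_M$, combined with the fact that $\tau$ represents an element of $\Omega = P^\vee/Q^\vee$ whose pairing with $\s_0$-invariant weights in the complement of $\BS_M$ is controlled; here I expect Lemma~\ref{para} (giving $p(\s)z_{P'}(v^\flat) = z_{P'}(v^\flat)$) to supply exactly the integrality needed.
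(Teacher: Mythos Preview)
Your reduction to showing $\<\s(0),\, z_{P'}(\o^M_\CO) - \o_\CO\> \in \BZ$ is correct, but your decomposition of this quantity has a genuine error in the \emph{first} piece, not just the second. You claim that $z_{P'}(\o^M_\CO) - \o^M_\CO$ lies in the root lattice $\BZ\Phi$ of $G$ because each simple reflection shifts a weight by an integer multiple of a root. That argument applies only to elements of the $G$-weight lattice, and $\o^M_\CO$ is \emph{not} a $G$-weight: it lies in $\BR\Phi_M$ but need not pair integrally with coroots outside $\Phi_M^\vee$. Concretely, take $G=PGL_4$, $\s=\t_2$ (so $\s_0=\id$, $\s(0)=\o_2^\vee$), $\BS_M=\{1,3\}$, $\CO=\{1\}$, and $z_{P'}=s_2$ (one checks $P'={}^{s_2}P\in\fkP^\s$). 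Then $\o^M_1=(1/2,-1/2,0,0)$ and $s_2(\o^M_1)-\o^M_1=\tfrac12\a_2\notin\BZ\Phi$; your first piece evaluates to $1/2$. Your second piece $\<\s(0),c_\CO\>$ also equals $1/2$ here, so neither term is integral even though their difference is. Note moreover that $\<\s(0),c_\CO\>$ does not involve $P'$ at all, so Lemma~\ref{para} cannot help with it in the way you suggest.

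The paper's proof avoids this by working on the coweight side, where the analogous lattice statement \emph{does} hold: since $\l:=\s(0)$ is a genuine coweight, $z_{P'}\i(\l)-\l\in\BZ\Phi^\vee$, and hence $\<z_{P'}\i(\l)-\l,\o_\CO\>\in\BZ$. The remaining step is to show $\<z_{P'}\i(\l),\,\o_\CO-\o^M_\CO\>=0$ exactly (not merely $\in\BZ$), and this is where the $\s$-stability of $P'$ enters: writing $\s_{P'}=t^{z_{P'}\i(\l)}w\s_0$, one shows $w\in W_{\BS_M}$, and then the finite order of $\s_{P'}$ forces the $\s_0$-average of $z_{P'}\i(\l)$ to lie in $\BR\Phi_M^\vee$, which annihilates $\o_\CO-\o^M_\CO$. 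In your framework, the fix is to decompose via the $G$-weight $\o_\CO$ rather than the $M$-weight $\o^M_\CO$: write $z_{P'}(\o^M_\CO)-\o_\CO = \bigl(z_{P'}(\o_\CO)-\o_\CO\bigr) - z_{P'}(c_\CO)$; then the first term is in $\BZ\Phi$ and the second pairs to zero with $\l$ by the averaging argument just described.
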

\begin{proof}
Assume $\s \in t^\l W_0 \s_0$ for some coweight $\l$. Then $\s_{P'}=t^{z_{P'}\i(\l)} w \s_0$ for some $w \in W_0$. Write $z_{P'}\i(\l)=r+h$ with $r \in \BR \Phi_M^\vee$ and $h \in \sum_{j \in \BS_0-\BS_M} \BR \o_j^\vee$. Let $n \in \BN$ be the order of $\s_{P'}$ as an affine transformation of $V$. In particular, $(\s_0)^n=1$. We show that

($\ddagger$) $\sum_{k=0}^{n-1} \s_0^k(z_{P'}\i(\l)) \in \sum_{j \in \BS_M} \BR \a_j^\vee$.

This is \cite[Lemma 4.2 (1)]{HN2}. We repeat the proof here for convenience. Let $v^\flat$ be as in \S \ref{HN-setup}. Then $P'=P_{z_{P'}(v^\flat)}$. By assumption we have ${}^\s (P')=P'$ and hence $p(\s_{P'})(v^\flat)=v^\flat$. Since $v^\flat$ is dominant and $v^\flat = p(\s_{P'})(v^\flat)=w \s_0(v^\flat)=w(v^\flat)$, we have $w \in W_{\BS_M}$. Then the equality $\sum_{k=0}^{n-1} (w \s_0)^k(z_{P'}\i(\l))=0$ (since $(\s_{P'})^n=1$) implies $\sum_{k=0}^{n-1} (w\s_0)^k(h)=\sum_{k=0}^{n-1} \s_0^k(h)=0$. Thus ($\ddagger$) follows.

By ($\ddagger$) we have $$ \<\s_{P'}(0), \o_\CO^M\> =\<z_{P'}\i(\l), \o_\CO^M\> =\<z_{P'}\i(\l), \o_\CO\> \in \<\l, \o_\CO\> +\BZ=\<\s(0), \o_\CO\>+\BZ,$$ where the inclusion follows from the fact $z_{P'}\i(\l)-\l \in \BZ \Phi^\vee$. The proof is finished.
\end{proof}

Let $\CO$ be a $\s_0$-orbit of $\BS_M$. Combining ($\dagger$) and Lemma \ref{frack} we deduce that $$\<\mu_M, \o^{M}_{\CO}\>+\{\<\s_{P'}(0), \o^M_{\CO}\>\}=\<\mu, \o^M_{\CO}\>+\{\<\s(0), \o_{\CO}\>\} < \<\mu, \o_{\CO}\>+\{\<\s(0), \o_{\CO}\>\} \le 1.$$ Therefore $(M_{P'}, \mu)$ is of Harris-Taylor type and $\dim X^{M_{P'}}(\mu, \dot z_{P'}\i b_{P'} \dot z_{P'})_{K_{P'}}=0$. Now (3 non-basic -- leaves) follows from the Hodge-Newton decomposition Theorem \ref{HN-dec}.

\subsection{}
To prepare for the proof of the implication (3 non-basic -- leaves) $\Rightarrow$ (4 EKOR $\subseteq$ Newton) in Theorem~\ref{main}, we need the following

\begin{proposition}\label{zero-diml-adlv}
Let $w \in \tW$ and $b \in G(\brF)$. Then the following conditions are equivalent:

(1) $\dim X_w(b)=0$;

(2) The $\s$-centralizer $\BJ_b$ of $b$ acts transitively on $X_w(b)$;

(3) $w$ is a $\s$-straight element with $\dot w \in [b]$.
\end{proposition}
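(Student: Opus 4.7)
The plan is the cyclic chain $(3) \Rightarrow (2) \Rightarrow (1) \Rightarrow (3)$, of which the last implication is the substantial one.

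For $(3) \Rightarrow (2)$, I would first reduce to the case $b = \dot w$ by replacing $b$ by a $\s$-conjugate, which affects neither $\dim X_w(b)$ nor the transitivity assertion (up to the natural identification of affine Deligne-Lusztig varieties). For any $g\brI \in X_w(\dot w)$ the element $g^{-1}\dot w\s(g)$ lies in $\brI \dot w \brI$, and Theorem~\ref{s-newton} (more precisely, the effective form of its proof) produces $i \in \brI$ with $i^{-1}\dot w\s(i) = g^{-1}\dot w\s(g)$. Then $gi^{-1}\in \JJ_{\dot w}$, so $g\brI = (gi^{-1})\brI$ lies in the $\JJ_{\dot w}$-orbit of the base point $\brI$, proving transitivity.

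For $(2) \Rightarrow (1)$, fix a base point $g_0\brI \in X_w(b)$. The transitivity hypothesis identifies $X_w(b)$ set-theoretically with $\JJ_b/(\JJ_b \cap g_0\brI g_0^{-1})$. The stabilizer $\JJ_b \cap g_0\brI g_0^{-1}$ is open in $\JJ_b$, since it contains an Iwahori subgroup of the reductive $\brF$-group $\JJ_b$. Consequently each $\JJ_b$-orbit on the affine flag variety is zero-dimensional, and hence so is $X_w(b)$.

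The main obstacle is $(1) \Rightarrow (3)$, for which I would invoke He's reduction method from \cite{He14}. If $w$ is not of minimal length in its $\s$-conjugacy class $[w]_\s \subseteq \tW$, then there is a simple reflection $s \in \tilde\BS$ with $sw\s(s) < w$ (in the sense of length), and the Deligne-Lusztig-type reduction yields a fibration relating $X_w(b)$ either to $X_{sw\s(s)}(b)$ or to $X_{sw}(b)$ whose fibers contribute at least one extra dimension. Iterating, the assumption $\dim X_w(b) = 0$ forces $w$ to already be of minimal length in $[w]_\s$. For such a minimum length $w$, a standard dimension formula (again from \cite[Thm.~3.5]{He14} and the theory of $\s$-straight elements) reads $\dim X_w(b) = \ell(w) - \<\overline{\nu}_w, 2\rho\>$ whenever $\dot w \in [b]$, while $X_w(b)$ is empty otherwise. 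Since $\dim X_w(b) = 0$ implicitly requires non-emptiness, we deduce $\dot w \in [b]$ and $\ell(w) = \<\overline{\nu}_w, 2\rho\>$, which is precisely the definition of $w$ being $\s$-straight.

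The delicate point to verify carefully is the effective version of Theorem~\ref{s-newton} used in step $(3)\Rightarrow(2)$ (the $\s$-conjugator sits in $\brI$, not merely in $G(\brF)$) together with the claim in $(1)\Rightarrow(3)$ that no minimum length element of a non-straight $\s$-conjugacy class admits a zero-dimensional affine Deligne-Lusztig variety; both are established in He's work but merit an explicit citation or short argument.
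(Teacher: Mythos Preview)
Your approach is essentially the same as the paper's: the same cyclic chain $(3)\Rightarrow(2)\Rightarrow(1)\Rightarrow(3)$, with $(3)\Rightarrow(2)$ drawn from \cite[Theorem~4.8]{He14} (which is exactly the effective form of Theorem~\ref{s-newton} you describe), $(2)\Rightarrow(1)$ from the zero-dimensionality of $\JJ_b$ inside the loop group, and $(1)\Rightarrow(3)$ via Deligne--Lusztig reduction followed by the dimension formula for minimal-length elements.

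There is one genuine, if small, gap in your $(1)\Rightarrow(3)$ step. You assert that if $w$ is not of minimal length in its $\s$-conjugacy class then there exists a simple reflection $s$ with $sw\s(s)<w$. This is not true in general: the He--Nie theorem only guarantees a sequence of elementary $\s$-conjugations $w=w_0\to w_1\to\cdots\to w_n$ with $w_n$ of minimal length and each step satisfying $\ell(w_{i+1})\le\ell(w_i)$, but some of these steps may be length-preserving. The paper handles this correctly by first passing to $w'$ with $w\approx_\s w'$ (so $X_w(b)\cong X_{w'}(b)$) and then finding $s$ with $sw'\s(s)<w'$; at that point the reduction gives
\[
\dim X_{w'}(b)=\max\{\dim X_{sw'}(b),\dim X_{sw'\s(s)}(b)\}+1\ge 1,
\]
since nonemptiness of $X_{w'}(b)$ forces at least one of the two pieces to be nonempty. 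Your phrase ``Iterating'' hints at this, but as written the first sentence of your reduction is a false claim. Once you insert the intermediate $w'$, your argument matches the paper's.
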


\begin{proof}
$(2)\Rightarrow(1)$: This follows from the fact that the group $\BJ_b$, i.e., the group of $F$-valued points of the $\s$-centralizer of $b$, considered as a subscheme of the loop group $LG$ over $\kk$ (considered as a perfect scheme as in Zhu's setting~\cite{Zhu}, if $\mathop{\rm char}(F)=0$), has dimension zero.

$(3)\Rightarrow(2)$: This follows from \cite[Theorem 4.8]{He14}.

$(1)\Rightarrow(3)$: Since $X_w(b) \ne \emptyset$, we have $\dot w \in [b]$. If $w$ is not a minimal length element in its $\s$-conjugacy class, then there exists $w' \in \tW$ and $s \in \tSS$ such that $w \approx_\s w'$ and $s w' \s(s)<w'$. Then by Deligne-Lusztig reduction, $$\dim X_w(b)=\dim X_{w'}(b)=\max\{\dim X_{s w'}(b), \dim X_{s w' \s(s)}(b)\}+1.$$ Since $X_w(b) \neq \emptyset$, $X_{s w'}(b) \neq \emptyset$ or $X_{s w' \s(s)}(b) \neq \emptyset$. Hence $\dim X_w(b) \ge 1$.

If $w$ is a minimal length element in its $\s$-conjugacy class, then by \cite{He14}, $f(b)=f(w)$ and so $0 = \dim X_w(b)=\ell(w)-\<\overline{\nu}_b, 2 \rho\>$, hence $w$ is $\s$-straight.
\end{proof}

\subsection{(3 non-basic -- leaves) $\Rightarrow$ (4 EKOR $\subseteq$ Newton)}
\label{subsec:3-implies-4}

By \cite[Proposition 6.20]{HR}, for any $b$ we have $$\dim X(\mu, b)_K=\max_{w \in \EO{K}} \dim X_w(b).$$

Let $w \in \EO{K}$. If $X_w(b) \neq \emptyset$ for some nonbasic element $b$, then by (3 non-basic -- leaves), $\dim X_w(b)=0$. Thus by Proposition \ref{zero-diml-adlv}, $w$ is a $\s$-straight element with $\dot w \in [b]$. By Theorem \ref{s-newton}, $[b]$ is the unique $\s$-conjugacy class such that $X_w(b) \neq \emptyset$.

Otherwise, $X_w(b) \neq \emptyset$ only for basic $b$. Since $[b] \in B(G, \{\mu\})$, it is the unique minimal element $[b_0]$.

\subsection{Support of $w$}
For $w \in W_a$, we denote by $\supp(w)$ the support of $w$, i.e., the set of $i \in \tSS$ such that $s_i$ appears in some (or equivalently, any) reduced expression of $w$. For $\t \in \Omega$, define
\[
\supp_\s(w \t)=\bigcup_{n \in \BZ} (\t \s)^n(\supp(w)).
\]
In other words, $\supp_\s(w \t)$ is the minimal $\t \s$-stable subset $K'\subseteq\tSS$ such that $w \t \s\in W_{K'} \rtimes \<\t \s\>$.

Let $w \in \tW$ and $K \subseteq \tSS$. We write $\Ad(w) \s(K)=K$ if for any $k \in K$, there exists $k' \in K$ with $w s_{\s(k)} w \i=s_{k'}$. In this case, $w \in {}^K \tW^{\s(K)}$.

As in \cite[3.1]{GH}, for any $w \in \tW$ and $K \subseteq \tSS$ with $\s(K)=K$, the set $\{K' \subseteq K; \Ad(w)\s(K')=K'\}$ contains a unique maximal element, which we denote by $I(K, w, \s)$, cf.~also \cite[1.3]{He:Gstable}.

We have the following results on the finiteness of support.

\begin{proposition}\label{finite}
Let $w \in \tW$. Then $\brI w \brI  \subseteq [b_0]$ if and only if $\k(w)=\k(b_0)$ and $W_{\supp_\s(w)}$ is finite.
\end{proposition}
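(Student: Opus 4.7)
The plan is to establish both directions, using a fixed-point argument and Lang's theorem for a parahoric equipped with a twisted Frobenius.

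For the sufficiency direction, I would begin by writing $w = u\tau$ with $u \in W_a$ and $\tau \in \Omega$. By construction $J := \supp_\s(w)$ is a $\tau\s$-stable subset of $\tSS$ containing $\supp(u)$, so $u \in W_J$. Assuming $W_J$ is finite, $J$ is a proper subset of each connected component of $\tSS$, so the face $F \subseteq \overline{\mathbf a}$ fixed pointwise by $W_J$ (spanned by the vertices of $\overline{\mathbf a}$ indexed by $\tSS \setminus J$) is nonempty. Since $\tau\s$ permutes these vertices, averaging over any $\tau\s$-orbit yields a point $y \in F$ with $\tau\s(y) = y$; combined with $u(y) = y$, this gives $w\s(y) = y$. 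Next I would introduce the twisted Frobenius $\s' := \Ad(\dot\tau) \circ \s$, which fixes $y$ as an affine transformation of the apartment, so that the parahoric subgroup $\brK_y$ is $\s'$-stable and $\dot u \in \brK_y$. Lang's theorem applied to $\brK_y$ with Frobenius $\s'$ then implies that every element of $\brI \dot u \brI \subseteq \brK_y$ is $\s'$-conjugate to $1$ inside $\brK_y$. Using the bijection $g \leftrightarrow g\dot\tau$ between $\s'$- and $\s$-conjugacy classes, together with the fact that $\tau \in \Omega$ normalizes $\brI$ and hence $(\brI \dot u \brI)\cdot \dot\tau = \brI \dot w \brI$, every element of $\brI w \brI$ is then $\s$-conjugate to $\dot\tau$, which lies in $[b_0]$ by the assumption $\kappa(w) = \kappa(b_0)$.

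For the necessity direction, $\s$-invariance of $\kappa$ gives $\kappa(w) = \kappa(b_0)$ immediately. For the finiteness of $W_{\supp_\s(w)}$, the key point is that $\brI w \brI \subseteq [b_0]$ is equivalent to $\dim X_w(b_0) = \ell(w)$ attaining its maximum. Combining Proposition \ref{zero-diml-adlv} with He's minimal-length element theory, this maximality forces $w$ to admit a Lang-type description as above: it lies in $W_J \cdot \tau$ for some $\tau \in \Omega$, with $W_J$ finite and $\tau\s(J) = J$. If instead $W_{\supp_\s(w)}$ were infinite, then $\supp_\s(w)$ would contain a full connected component of $\tSS$, and a sequence of elementary Deligne-Lusztig reductions (cyclic shifts and foldings) would produce from $\brI w \brI$ an element with non-central Newton vector, contradicting $\brI w \brI \subseteq [b_0]$.

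The main obstacle is the necessity direction. The weaker statement $\dot w \in [b_0]$, equivalent to $w\s$ having finite order in $\tW \rtimes \langle\s\rangle$, is strictly weaker than $W_{\supp_\s(w)}$ being finite, as illustrated by $w = s_0 s_1 s_0$ in $PGL_2$: there $(w\s)^2 = 1$ so $\dot w \in [b_0]$, yet $\supp_\s(w) = \tSS$ and $W_{\supp_\s(w)}$ is infinite. Exploiting the genuinely stronger hypothesis that the entire double coset lies in $[b_0]$, rather than merely $\dot w$, is where the Deligne-Lusztig / dimension-formula analysis is needed. The sufficiency direction, by contrast, reduces to a clean application of Lang's theorem once the twisted Frobenius $\s' = \Ad(\dot\tau)\circ\s$ has been set up to fix $y$.
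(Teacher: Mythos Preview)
Your sufficiency direction is correct and is essentially the paper's argument (Lang's theorem for the parahoric $P_{\supp_\s(w)}$), with the twist by $\tau$ handled more carefully than the paper's one-line reference.

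The necessity direction, however, has a genuine gap. Your claimed ``key point'' that $\brI w\brI\subseteq[b_0]$ is equivalent to $\dim X_w(b_0)=\ell(w)$ is neither proved nor cited, and even granting it, the sentence ``this maximality forces $w$ to admit a Lang-type description'' is an assertion, not an argument. Deligne--Lusztig reduction goes the wrong way for this purpose: from $\brI w\brI\subseteq[b_0]$ you cannot conclude that the smaller pieces produced by a reduction step also lie in $[b_0]$, since the reduction only tells you that $\brI w\brI$ is contained in the union of $\s$-conjugates of the smaller cells, not conversely.

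The paper's argument is different and concrete. Suppose $\supp_\s(w)$ contains a full connected component of $\tSS$; write $w\in W_a\tau$. From a fixed reduced expression of $w\tau^{-1}$, pick one simple reflection out of each $\tau\s$-orbit meeting that component, in the order of their first appearance, and let $c$ be their product. Then $c\tau\le w$ in Bruhat order by construction, so $\dot c\dot\tau$ lies in the closure of $\brI w\brI$. Two external inputs finish the proof: by \cite[Prop.~3.1]{HN} such a twisted-Coxeter element $c\tau$ is $\s$-straight, hence $\overline\nu_{c\tau}$ is noncentral (it has positive length); and by semicontinuity of the Newton map \cite[Thm.~2.40]{He-CDM}, $\overline\nu_{c\tau}\le\overline\nu_{b_0}$. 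Since $[b_0]$ is basic this forces $\overline\nu_{c\tau}=\overline\nu_{b_0}$, a contradiction. The missing idea in your sketch is precisely this explicit construction of a $\s$-straight subword $c\tau\le w$; neither Proposition~\ref{zero-diml-adlv} nor generic ``reduction'' supplies it.
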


\begin{proof}
Write $w=w_1 \t$ with $w_1 \in W_a$ and $\ell(\t)=0$. As $\k(\t)=\k(w)=\k(b_0)$, we have $[\dot \t] = [b_0]$. So we may assume $b_0 = \dot \t$.

First we show the ``if'' part. Suppose $W_{\supp_\s(w)}$ is finite. Denote by $\brP \subseteq G(\brF)$ the corresponding standard parahoric subgroup. Then $w_1 \in W_{\supp_\s(w)}$ and $\brP$ is stable under the (Frobenius) automorphism given by $\g \mapsto \dot\t \s(g) \dot\t\i$. By Lang's theorem, we have $\brP = \{g\i \dot\t \s(g) \dot\t\i; g \in \brP\}$. In other words, $\brP \dot\t \subseteq [\dot \t]$. This implies $\brI w \brI = \brI w_1 \t \brI = \brI w_1 \brI \t \subseteq \brP \t \subseteq [\dot \t] = [b_0]$ as desired.

Now we prove the ``only if'' part. Since $\brI w \brI  \subseteq [b_0]$, we have  $\k(w)=\k(b_0)$. Suppose that $W_K$ is not finite. Then $K$ contains at least one connected component $X$ of $\tSS$. By definition, a reduced expression of $w \t \i$ contains at least one simple reflection in each $\t \s$-orbit of $\cup_{n \ge 0} (\t \s)^n X$. Let $c$ be the product of these simple reflections in the same order as they first appear in a fixed reduced expression of $w \t \i$. Then $c \t \le w$. Therefore $\dot c \dot \t$ is contained in the closure of $\brI w \brI$. By \cite[Theorem 2.40]{He-CDM}, $\overline{\nu}_{\dot c \dot \t} \le \overline{\nu}_{b_0}$. Since $[b_0]$ is basic, we have $\overline{\nu}_{\dot c \t}=\overline{\nu}_{b_0}$. By \cite[Proposition 3.1]{HN}, $c \t$ is a $\s$-straight element. Therefore $\overline{\nu}_{\dot c \dot \t} \neq \overline{\nu}_{b_0}$. That is a contradiction.

Note that although we need to talk about the closure of an Iwahori double coset, working inside the $p$-adic loop group/affine Grassmannian, this proof does work in mixed characteristic. Alternatively, instead of invoking this theory, one can also apply the above proof in the equal characteristic setting, working with the ``usual'' loop group, and then rephrase the statement of the proposition in a purely combinatorial way, using class polynomials, say, in order to show that the result can be transferred from the equal to the mixed characteristic case.
\end{proof}

\begin{proposition}\label{finite-1}
Let $K \subseteq \tSS$ with $W_K$ finite. Let $w \in {}^K \tW$. If $W_{\supp_\s(w)}$ is finite, then $W_{\supp_\s(w) \cup I(K, w, \s)}$ is also finite.
\end{proposition}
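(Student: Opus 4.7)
The plan is to show that $J := \supp_\s(w)$ and $I \setminus J$, where $I := I(K, w, \s)$, are disjoint subsets of $\tSS$ with no edge between them in the Dynkin diagram. Granting this, $W_{J \cup I}$ decomposes as a direct product $W_J \times W_{I \setminus J}$, where $W_J$ is finite by hypothesis and $W_{I \setminus J} \subseteq W_I \subseteq W_K$ is finite because $K$ corresponds to a parahoric subgroup.

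Write $w = v\t$ with $v \in W_a$ and $\t \in \Omega$, so $\supp(v) \subseteq J$ and $J$ is $\t\s$-stable by construction. The condition $\Ad(w)\s(I) = I$ produces a bijection $\pi \colon I \to I$ with $v\, s_{\t\s(i)} v\i = s_{\pi(i)}$ for each $i \in I$. Since $w \in {}^K\tW$ forces $v \in {}^K W_a$, and $\pi(i) \in K$, one verifies $v(\a_{\t\s(i)}) = +\a_{\pi(i)}$ rather than $-\a_{\pi(i)}$ (a minus sign would give $s_{\pi(i)} v < v$, contradicting minimality).

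The technical heart will be the following combinatorial lemma: for $v \in W_a$ and $j \in \tSS$ with $j \notin \supp(v)$, if $v(\a_j)$ is a simple root $\a_k$, then $k = j$ and $j$ is not adjacent to $\supp(v)$ in the Dynkin diagram. The proof tracks the $\a_j$-coefficient of $v(\a_j)$ along a reduced expression of $v$: no reflection $s_l$ with $l \in \supp(v)$ alters this coefficient (since $l \ne j$), so it stays equal to $1$. Hence $v(\a_j) = \a_j + \sum_{l \in \supp(v)} d_l \a_l$, and requiring this to be a simple root forces $d_l = 0$ for all $l$, i.e.\ $v$ commutes with $s_j$, which is equivalent to $j$ being non-adjacent to $\supp(v)$.

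Apply the lemma to each $i \in I \setminus J$: the $\t\s$-stability of $J$ gives $\t\s(i) \notin J \supseteq \supp(v)$, so the lemma yields $\pi(i) = \t\s(i)$ and $\t\s(i)$ not adjacent to $\supp(v)$. Iterating along the finite-order $\t\s$-orbit of $i$, the entire orbit lies in $I \setminus J$ and avoids adjacency with $\supp(v)$. Since $\t\s$ preserves adjacency in $\tSS$, this propagates to the statement that no element of $I \setminus J$ is adjacent to any element of $J = \bigcup_n (\t\s)^n \supp(v)$, giving the required disconnection. The main obstacle is the combinatorial lemma on simple-reflection conjugation; everything else is bookkeeping exploiting $\t\s$-stability of $J$ and $\Ad(w)\s$-stability of $I$.
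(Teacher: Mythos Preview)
Your proof is correct and takes a genuinely different route from the paper's.

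The paper argues geometrically: since $W_{\supp_\s(w)}$ is finite, Proposition~\ref{finite} gives $\brI w\brI\subseteq[b_0]$; Lang's theorem for the parahoric attached to $I=I(K,w,\s)$ then forces $\brI wx\brI\subseteq[b_0]$ for every $x\in W_I$, so again by Proposition~\ref{finite} each $W_{\supp_\s(wx)}$ is finite, and the conclusion follows. Your argument, by contrast, is purely combinatorial inside the Coxeter group: you show that $J=\supp_\s(w)$ and $I\setminus J$ are disconnected in the affine Dynkin diagram, whence $W_{J\cup I}\cong W_J\times W_{I\setminus J}$ is a product of two finite groups. This avoids Lang's theorem and the $p$-adic group entirely, and in fact yields the sharper structural statement that no edge joins $J$ to $I\setminus J$. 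The paper's route is shorter because it recycles Proposition~\ref{finite}; yours is more elementary and self-contained.

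One step you pass over a bit quickly: from $v(\a_j)=\a_j$ with $j\notin\supp(v)$ you conclude that $j$ is non-adjacent to $\supp(v)$. The implication is true but not quite immediate from the coefficient-tracking alone. A clean justification: since $(\a_j,\a_l)\le 0$ for all $l\in\supp(v)$, the vector $-\a_j$ lies in the closed fundamental chamber for $W_{\supp(v)}$, so by the standard Tits-cone stabilizer result its stabilizer in $W_{\supp(v)}$ is exactly $W_{J'}$ with $J'=\{l\in\supp(v):(\a_j,\a_l)=0\}$; hence $v$ fixing $\a_j$ forces $\supp(v)\subseteq J'$. With this remark added, the argument is complete.
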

\begin{proof}
Let $x \in W_{I(K, w, \s)}$. Similarly to the proof of Proposition \ref{finite}, for the parahoric subgroup $P_{I(K, w, \s)}$ we have $\brI w x \brI \subseteq G(\brF) \cdot_\s \brI  w \brI$. Since $W_{\supp_\s(w)}$ is finite, $\brI  w \brI$ is contained in a basic $\s$-conjugacy class $[b_1]$ for some basic element $b_0 \in G(\brF)$ with $\k(b_0) = \k(w)$. Thus $\brI w x \brI \subseteq [b_0]$. By Proposition~\ref{finite}, $W_{\supp_\s(w x)}$ is finite for any $x \in W_{I(K, w, \s)}$. It follows that $W_{\supp_\s(w) \cup I(K, w, \s)}$ is finite.
\end{proof}

\subsection{(4 EKOR $\subseteq$ Newton) $\Rightarrow$ (5 basic -- EKOR)}
\label{subsec:4-implies-5}
We start with a criterion on the finiteness of support.

\begin{lemma}\label{fix}
Let $w \in \tW$. Then $W_{\supp_\s(w)}$ is finite if and only if the action of $w \circ \s$ fixes a point in the closure $\bar \aa$ of the fundamental alcove $\aa$.
\end{lemma}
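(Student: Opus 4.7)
The plan is to decompose $w = u\tau$ with $u \in W_a$ and $\tau \in \Omega$, so that $w\s = u\tau\s$ acts on the apartment $\CA$. Set $K := \supp_\s(w)$; by construction $\supp(u) \subseteq K$ and $K$ is stable under the action of $\tau\s$ on $\tSS$. Both $\Omega$ and $\s$ preserve $\bar\aa$, hence so does $\tau\s$.

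For the forward direction, suppose $W_K$ is finite, equivalently $K$ omits at least one element from every connected component of $\tSS$. Let $F_K \subseteq \bar\aa$ be the facet of cotype $K$, i.e.\ the intersection of $\bar\aa$ with the walls of the reflections in $K$; it is a nonempty sub-simplex pointwise fixed by $W_K$, and, because $\tau\s(K) = K$, it is stabilized by the affine bijection $\tau\s$. Any affine bijection of a simplex permutes its vertices, so $\tau\s|_{F_K}$ fixes the barycenter $x$ of these vertices. Since $u \in W_K$ fixes $F_K$ pointwise, we conclude $w\s(x) = u(\tau\s(x)) = u(x) = x$.

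For the reverse direction, suppose $w\s(x) = x$ for some $x \in \bar\aa$, and let $J \subseteq \tSS$ be the set of simple affine reflections pointwise fixing $x$, so that $\mathrm{Stab}_{W_a}(x) = W_J$; since $x \in \bar\aa$, $J \subsetneq \tSS$ in each connected component and $W_J$ is automatically finite. The equation $u(\tau\s(x)) = x$, combined with the fact that both $x$ and $\tau\s(x)$ lie in $\bar\aa$, which is a strict fundamental domain for $W_a$ acting on $\CA$, forces $\tau\s(x) = x$ and $u \in W_J$; in particular $\supp(u) \subseteq J$. Because $\tau\s$ fixes $x$ it normalizes $W_J$, hence stabilizes $J$ as a subset of $\tSS$. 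Therefore
\[
\supp_\s(w) \;=\; \bigcup_{n \in \BZ} (\tau\s)^n(\supp u) \;\subseteq\; J,
\]
and $W_{\supp_\s(w)} \subseteq W_J$ is finite.

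The main non-trivial input is the classical fact that $\bar\aa$ is a strict fundamental domain for $W_a$ on $\CA$, which in the reverse direction upgrades $u(\tau\s(x)) = x$ to the two equalities $\tau\s(x) = x$ and $u \in \mathrm{Stab}_{W_a}(x)$. The remainder is bookkeeping around the decomposition $\tW = W_a \rtimes \Omega$ and the $\s$-action on $\tSS$; no serious obstacle is anticipated.
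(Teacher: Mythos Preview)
Your proof is correct and follows essentially the same approach as the paper's. The only cosmetic differences are that in the forward direction you exhibit the barycenter of $F_K$ explicitly while the paper appeals to the finite order of $\tau\sigma$ on $V^K\cap\bar\aa$, and in the reverse direction you invoke the strict fundamental domain property directly whereas the paper phrases it as finding $w'\in W_K$ with $w\sigma(\aa)=w'(\aa)$ and then identifying $w'=w\tau^{-1}$; both routes amount to the same fact.
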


\begin{proof}
Say we have $w \in W_a\t$ for $\t \in \Omega$.

($\Rightarrow$): Let $K=\supp_\s(w)$. Then $w\t\i\in W_K$ and by assumption $W_K$ is finite. We have $V^K \cap \bar \aa \neq \emptyset$, where $V^K=\{v \in V; s(v)=v \text{ for all } s \in K\}$. Since $\t\s(K)=K$, the action of $\t\s$ on $V$, which is of finite order, preserves $V^K \cap \bar \aa$. So there exists $v \in V^K \cap \bar \aa$ such that $\t\s(v)=v$ and hence $w\s(v)=v$ as desired.

($\Leftarrow$): Let $v \in \bar \aa$ such that $w\s(v)=v$ and let $K=\{s \in \tSS; s(v)=v\}$. In particular, $W_K$ is finite. Since $v \in \bar \aa \cap w\s (\bar \aa)$, there exists $w' \in W_K$ such that $w\s(\aa)= w'(\aa)$. So $\t\s = (w')\i w\s$ and hence $\t\s(v)=v$. Therefore, we have $\t\s(K)=K$ and $\supp_\s(w) \subseteq K$ as desired.
\end{proof}
 The implication (4 EKOR $\subseteq$ Newton) $\Rightarrow$ (5 basic -- EKOR) in Theorem~\ref{main} follows immediately from Prop.~\ref{finite} and Lem. \ref{fix}.

Using Prop.~\ref{finite-1}, we also obtain the informal statement we used in Theorem~\ref{thmB} in the introduction: The space $X(\mu, b)_K$ is naturally a union of classical Deligne-Lusztig varieties.

In fact, let $w \in \EO{K}$ with $X_w(b_0) \neq \emptyset$. By (4 EKOR $\subseteq$ Newton) $\brI w \brI \subseteq [b_0]$. By Proposition \ref{finite} and Proposition \ref{finite-1}, $W_{\supp_\s(w) \cup I(K, w, \s)}$ is finite. By \cite[Proposition 4.1.1 \& Theorem 4.1.2]{GH}, $X_{K, w}(b_0)$ is naturally a union of classical Deligne-Lusztig varieties.
The Deligne-Lusztig varieties occurring here arise as translates of Deligne-Lusztig varieties in (the perfection of) the classical partial flag variety, embedded in the $p$-adic partial affine flag variety (the quotient $\brP/\brP'$ of any two parahoric subgroups $\brP'\subset \brP$ can be identified with the perfection of the corresponding quotient (a ``partial flag variety'') of the maximal reductive quotient of the special fiber of $\brP$; this works out in the same way as the special case where $\brP$ is hyperspecial and $\brP'$ corresponds to a maximal parabolic subgroup, see map~(1.4.4) and Cor.~1.24 in \cite{Zhu}). Therefore this decomposition can be viewed as a disjoint union of perfect schemes, where of course we have to pass to the perfection of the Deligne-Lusztig varieties involved. Now the informal version of (5) follows from the decomposition in \S\ref{fine},
\[
X(\mu, b_0)_K=\bigsqcup_{w \in \EO{K}; X_w(b_0) \neq \emptyset} X_{K, w}(b_0).
\]

Conversely, we show the informal version of (5) implies (5 basic -- EKOR). Indeed, the classical Deligne-Lusztig varieties are naturally in the partial flag variety of the reductive quotient of some $\Ad(\t) \circ \s$-stable standard parahoric subgroup of type $P \subseteq \tSS$. In particular, the Weyl group elements $w_1$ associated to these classical Deligne-Lusztig varieties are contained in $W_P$. So each $w \in \EO{K}$ with $X_w(b_0) \neq \emptyset$ is of the form $w_1 \t$ for some $w_1 \in W_P$. This means $\supp_\s(w)=\cup_{i \in \BZ} (\Ad(\t)\circ \s)^i \supp(w_1) \subseteq P$ and hence $W_{\supp_\s(w)}$ is finite. Now (5 basic -- EKOR) follows from Lem. \ref{fix}.

\subsection{Closure relations}

First, note the following fact

\begin{proposition}
The decomposition of $X(\mu, b)_K$ into (perfections of) classical Deligne-Lusztig varieties described in the previous section is a stratification in the sense that the closure of each stratum is a union of strata.
\end{proposition}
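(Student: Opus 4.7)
The plan is to combine two closure structures that both descend from the Bruhat order on $\tW$: the closure relations among the EKOR pieces $X_{K,w}(b_0)$ indexed by $w\in \EO{K}$, and the classical Bruhat-order closure relations for Deligne--Lusztig varieties attached to the finite Weyl groups $W_{\supp_\sigma(w)\cup I(K,w,\sigma)}$ produced by \cite[Prop.~4.1.1, Thm.~4.1.2]{GH}.

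First, I would verify that the coarser decomposition $X(\mu,b_0)_K=\bigsqcup_{w\in \EO{K},\ X_w(b_0)\neq\emptyset} X_{K,w}(b_0)$ is itself a stratification. The closure of $X_{K,w}(b_0)$ in the partial affine flag variety $G(\brF)/\brK$ is the preimage under $g\brK\mapsto g^{-1}b_0\sigma(g)$ of the closure of $\brK\cdot_\sigma\brI w\brI$ in $G(\brF)$. Using $\overline{\brI w\brI}=\bigsqcup_{w'\le w}\brI w'\brI$ and reducing each $w'$ to the minimal-length representative $v\in{}^K\tW$ of the coset $W_K w'$, one obtains
\[
\overline{\brK\cdot_\sigma\brI w\brI}\;=\;\bigcup_{v}\brK\cdot_\sigma\brI v\brI,
\]
where $v$ ranges over those elements of ${}^K\tW$ arising in this reduction. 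Intersecting with $X(\mu,b_0)_K$ expresses the closure of $X_{K,w}(b_0)$ as a union of EKOR pieces.

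Second, inside each $X_{K,w}(b_0)$, the classical Deligne--Lusztig closure relations (via the Bruhat order on the finite Weyl group $W_{\supp_\sigma(w)\cup I(K,w,\sigma)}$) show that the closure of any DL piece inside $X_{K,w}(b_0)$ is a union of DL pieces of $X_{K,w}(b_0)$. Combining this with the stratification from the previous paragraph, the closure of a single DL stratum $Y\subseteq X_{K,w}(b_0)$ in $X(\mu,b_0)_K$ splits as
\[
\overline{Y}\;=\;\bigl(\overline{Y}\cap X_{K,w}(b_0)\bigr)\;\sqcup\;\bigsqcup_{v}\bigl(\overline{Y}\cap X_{K,v}(b_0)\bigr),
\]
with $v$ ranging over the boundary indices. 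The first summand is a union of DL strata by the classical theory; for each boundary term, the intersection $\overline{Y}\cap X_{K,v}(b_0)$ is closed and $\JJ_{b_0}$-stable, and each DL stratum in $X_{K,v}(b_0)$ is itself a $\JJ_{b_0}$-orbit of a classical DL variety inside a single (perfect) closed subscheme of $X_{K,v}(b_0)$, so the intersection is a union of DL strata there.

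The main obstacle is the last compatibility step: one must be sure that the DL closure structure attached to the finite Weyl group for $w$ and the (a priori different) one attached to the finite Weyl group for each boundary $v$ fit together coherently. This is guaranteed because both decompositions are constructed from the single affine Bruhat order on $\tW$ via the same $\JJ_{b_0}$-equivariant reduction recipe of \cite{GH}, so the combinatorial closure relations in both finite groups are restrictions of the affine order; the equivariance then rigidifies the intersections to be unions of DL strata.
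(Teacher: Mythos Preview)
Your approach is considerably more intricate than the paper's, and the boundary step contains a genuine gap. The paper's argument is essentially one sentence: the decomposition at level $K$ is obtained by applying the projection $\pi_K\colon G(\brF)/\brI \to G(\brF)/\brK$ to the decomposition at Iwahori level; since $\pi_K$ is proper (hence closed), for any Iwahori-level stratum $Y$ one has $\overline{\pi_K(Y)}=\pi_K(\overline{Y})$, and $\overline{Y}$ is a union of Iwahori-level strata because at Iwahori level each stratum is a $\JJ_{b_0}$-translate of a classical Deligne--Lusztig variety sitting inside a closedly embedded finite flag variety, where the standard fact that $\overline{X(w)}=\bigcup_{w'\le w}X(w')$ applies. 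No separate analysis of how the DL pieces for different EKOR indices $w$ and $v$ interact is ever needed.

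By contrast, your argument tries to handle the level-$K$ situation directly and splits into the internal closure (inside $X_{K,w}(b_0)$) and the boundary closure (inside $X_{K,v}(b_0)$ for $v\ne w$). The internal part is fine, but the boundary part does not go through as written. You assert that $\overline{Y}\cap X_{K,v}(b_0)$ is closed and $\JJ_{b_0}$-stable and that each DL stratum of $X_{K,v}(b_0)$ is a $\JJ_{b_0}$-orbit of a classical DL variety, and conclude that the intersection is a union of DL strata. This is a non sequitur: a closed $\JJ_{b_0}$-stable subset is a union of $\JJ_{b_0}$-orbits of \emph{points}, not of $\JJ_{b_0}$-translates of positive-dimensional DL varieties. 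Nothing you have said prevents $\overline{Y}$ from slicing through a single translate $j\cdot X(w')$ along a proper closed subvariety. Your final paragraph gestures at the affine Bruhat order as a common refinement, but this is exactly what needs proof: one must identify the Iwahori-level preimages, compute closures there, and push back down---which is precisely the paper's projection argument. So the cleanest fix is to abandon the two-step internal/boundary analysis and instead lift to Iwahori level from the outset.
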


\begin{proof}
In view of the fact that the decomposition arises by projection from the decomposition in the Iwahori case, and since this projection is closed, it is enough to observe that the closure of a classical Deligne-Lusztig variety is a union of classical Deligne-Lusztig varieties.
\end{proof}

A combinatorial description of the closure relations in this decomposition can in principle, at least in each individual case, be worked out along the lines of ~\cite[\S 7]{GH}, cf.~also \cite[Theorem 3.1 (2)]{He:Gstable}, where the closure of a general fine Deligne-Lusztig variety is described. However, the combinatorics involved is in general much more complicated than in the Coxeter cases of \cite{GH}, e.g., the set $\Adm(\{\mu\}) \cap {}^K\tW$ will not in general have a simple description.

Although formally speaking, in \cite{GH} only the case of equal characteristic was considered, the decomposition and the same closure relations hold in the mixed characteristic case, using once again \cite[Cor. 1.24]{Zhu} and the fact that the projections from the affine flag variety to partial affine flag varieties are closed in this setting, too.

\section{The second part of the proof}
\label{sec:fc-implies-class}

\subsection{} Now we introduce the following condition, formulated in terms of the affine Weyl group $\tW$ (together with the action of $\s$ and the choices of $\mu$ and $K$). This is condition (6) in Thm.~\ref{main}.

\textbf{Fixed point condition (FC)}: If $w \in \Adm(\{\mu\}) \cap {}^K \tW$ with central Newton point, then $w \circ \s$ fixes a point in $\bar \aa$.

\subsection{(5 basic -- EKOR) $\Rightarrow$ (6 FC)}\label{5-to-FC}

Let $w \in \EO{K}$. If $\overline{\nu}_w$ is central, then $\dot w$ lies in the basic $\s$-conjugacy class $[b_0]$. By condition (5 basic -- EKOR) in the main theorem Theorem~\ref{main}, $w$ satisfies the fixed point condition.

\smallskip

In the rest of this section, we show that the fixed point condition (FC) implies Theorem \ref{main} (2 minute), or equivalently, the classification.

\subsection{} To study the (FC) condition, it suffices to consider each $F$-factor of the corresponding group of adjoint type, $G^{\ad}$, individually. Our first goal is to reduce the study of the (FC) condition further to groups that are simple over $\brF$.

Assume that $G$ is adjoint and simple over $F$. Let $(G', \{\mu'\})$ be the $\brF$-simple pairs associated to $(G, \{\mu\})$ (Section~\ref{passage-brF-simple}). Let $\brK=(\brK_1, \cdots, \brK_k)$ be a $\s$-stable parahoric subgroup of $G(\brF)$. Then $\brK'=\brK_k$ is a $\s'$-stable parahoric subgroup of $G'(\brF)$. In particular, our choice of a standard Iwahori subgroup of $G$ gives rise to a standard Iwahori subgroup of $G'$.

We have the following additivity on the admissible sets. It is proved in \cite[Theorem 5.1]{He-KR} for Iwahori subgroups and in \cite[Theorem 1.4]{HH} in the general case.

\begin{theorem}\label{thm-add}
Let $\l_1, \l_2$ be two dominant cocharacters. Then $$\Adm^K(\{\l_1\}) \Adm^K(\{\l_2\})=\Adm^K(\{\l_1+\l_2\}).$$
\end{theorem}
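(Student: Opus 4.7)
My plan is to establish the equality by proving both inclusions, after first reducing to the Iwahori case $K = \emptyset$. Since $\Adm^K(\{\mu\}) = W_K \cdot \Adm(\{\mu\}) \cdot W_K$ and $W_K \cdot W_K = W_K$, both inclusions at level $K$ follow from the corresponding statements at the Iwahori level, once one verifies the routine absorption of inner $W_K$-factors under multiplication. The substance of the argument is therefore to prove that $\Adm(\{\lambda_1\}) \cdot \Adm(\{\lambda_2\}) = \Adm(\{\lambda_1 + \lambda_2\})$ inside $\tW$.

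For the inclusion $\Adm(\{\lambda_1 + \lambda_2\}) \subseteq \Adm(\{\lambda_1\}) \cdot \Adm(\{\lambda_2\})$, the key observation is that for each $x \in W_0$, both $x(\underline{\lambda_1})$ and $x(\underline{\lambda_2})$ lie in the same closed Weyl chamber. Consequently the product $t^{x(\underline{\lambda_1})} \cdot t^{x(\underline{\lambda_2})} = t^{x(\underline{\lambda_1 + \lambda_2})}$ is length-additive in $\tW$, as can be seen from the formula $\ell(t^\nu) = \sum_{\alpha > 0} |\langle \nu, \alpha \rangle|$ and the sign compatibility of the pairings when $\nu_1, \nu_2$ lie in the same chamber. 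For $w \leq t^{x(\underline{\lambda_1 + \lambda_2})}$, the standard subword factorization property of the Bruhat order on Coxeter groups (extended to $\tW$ via the decomposition $\tW = W_a \rtimes \Omega$) then produces a factorization $w = u_1 u_2$ with $u_i \leq t^{x(\underline{\lambda_i})}$, placing each $u_i$ in $\Adm(\{\lambda_i\})$.

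For the inclusion $\Adm(\{\lambda_1\}) \cdot \Adm(\{\lambda_2\}) \subseteq \Adm(\{\lambda_1 + \lambda_2\})$, I would first handle the case of pure translations. For any $x_1, x_2 \in W_0$, the product $t^{x_1(\underline{\lambda_1})} \cdot t^{x_2(\underline{\lambda_2})} = t^{x_1(\underline{\lambda_1}) + x_2(\underline{\lambda_2})}$ is a translation whose translation vector is a sum of two elements dominated (in the usual dominance order on $V$) by $\underline{\lambda_1}$ and $\underline{\lambda_2}$ respectively. Hence the sum is dominated by $\underline{\lambda_1 + \lambda_2}$ and lies in the convex hull of $W_0 \cdot \underline{\lambda_1 + \lambda_2}$; for translation elements, this convex-hull condition is precisely equivalent to Bruhat-boundedness by some $t^{y(\underline{\lambda_1 + \lambda_2})}$, i.e.\ admissibility with respect to $\{\lambda_1+\lambda_2\}$. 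For the general case, with $w_i \leq t^{x_i(\underline{\lambda_i})}$, I would induct on $\ell(w_1) + \ell(w_2)$: at each step, multiplying one of the $w_i$ by a well-chosen simple reflection either reduces the length (invoking the inductive hypothesis) or, via the lifting lemma for the Bruhat order, produces an element still bounded by a translation of the desired form, using the downward-closedness of the admissible target set.

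The main obstacle is the second inclusion, because the Bruhat order on $\tW$ is not preserved under arbitrary multiplication, and so one cannot naively conclude $w_1 w_2 \leq t^{x_1(\underline{\lambda_1})} \cdot t^{x_2(\underline{\lambda_2})}$ from $w_i \leq t^{x_i(\underline{\lambda_i})}$. Overcoming this requires a delicate use of the exchange and lifting properties of the Coxeter structure in the inductive step, and this is where the bulk of the work lies. An alternative route would be via the permissible set $\mathrm{Perm}(\{\mu\}) \supseteq \Adm(\{\mu\})$, for which the multiplicative closure $\mathrm{Perm}(\{\lambda_1\}) \cdot \mathrm{Perm}(\{\lambda_2\}) \subseteq \mathrm{Perm}(\{\lambda_1 + \lambda_2\})$ is transparent from the definition via convex hull constraints, followed by a separate verification that the resulting products remain inside $\Adm$ rather than just inside $\mathrm{Perm}$; this may be the cleaner path, at the cost of importing the admissible-versus-permissible comparison.
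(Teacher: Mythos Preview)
The paper does not prove this theorem; it states it as a black box and cites \cite[Theorem 5.1]{He-KR} for the Iwahori case and \cite[Theorem 1.4]{HH} for general $K$. So there is no ``paper's own proof'' to compare against, and the honest answer is that this result is imported.

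That said, your sketch has real gaps that would prevent it from standing on its own. Your reduction from level $K$ to the Iwahori level is not routine: to absorb the inner $W_K$ in $W_K \Adm(\{\lambda_1\}) W_K \Adm(\{\lambda_2\}) W_K$ you need the equality $W_K \Adm(\{\mu\}) = \Adm(\{\mu\}) W_K$, which is itself a theorem (this is essentially the content of \cite[Theorem 6.1]{He-KR}, also used elsewhere in the paper). Your ``easy'' inclusion $\Adm(\{\lambda_1+\lambda_2\}) \subseteq \Adm(\{\lambda_1\})\Adm(\{\lambda_2\})$ via length-additivity and the subword property is correct.

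The hard inclusion is where your proposal breaks down. First, the claim that for a translation $t^\nu$ the convex-hull (permissibility) condition is equivalent to admissibility is a genuine theorem, not a triviality; you would need to cite or prove it. Second, and more seriously, your inductive step for general $w_1 w_2$ is not actually carried out: you correctly identify that Bruhat order is not multiplicative, say ``this is where the bulk of the work lies,'' and stop. The lifting/exchange manoeuvres you allude to do not assemble into a proof without a further organizing idea (in the literature this is handled via the Demazure product and the characterization of $\Adm(\{\mu\})$ in terms of it, which is how \cite{He-KR} proceeds). Finally, your alternative route through $\mathrm{Perm}$ is a dead end in general: it is known that $\Adm(\{\mu\}) \subsetneq \mathrm{Perm}(\{\mu\})$ can occur for non-minuscule $\mu$, so landing in $\mathrm{Perm}(\{\lambda_1+\lambda_2\})$ does not give you what you need.
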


As a consequence, we have

\begin{proposition}
The projection map to the $k$-th factor $G(\brF)/\brK \to G'(\brF)/\brK'$ induces a surjection $X^G(\mu, b)_K \to X^{G'}(\mu', b')_{K'}$.
\end{proposition}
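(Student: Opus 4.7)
The plan is to work entirely inside the $\brF$-decomposition $G_{\brF} = G_1 \times \cdots \times G_k$ and reduce everything to a telescoping computation that can be controlled by Theorem~\ref{thm-add}. The parahoric decomposes as $\brK = \brK_1 \times \cdots \times \brK_k$ with $\s(\brK_i) = \brK_{i+1}$ (indices mod $k$), and similarly $\tW = \tW_1 \times \cdots \times \tW_k$ and $\Adm^K(\{\mu\}) = \prod_j \Adm^{K_j}(\{\mu_j\})$. Writing $g = (g_1,\ldots,g_k)$ and $b = (b_1,\ldots,b_k)$, the condition $g \in X^G(\{\mu\},b)_K$ is equivalent to
\[
h_j := g_j^{-1} b_j \s(g_{j-1}) \in \brK_j \Adm^{K_j}(\{\mu_j\}) \brK_j \quad \text{for all } j=1,\ldots,k,
\]
with the convention $g_0 := g_k$. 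A direct telescoping calculation then yields the key identity
\[
g_k^{-1} b' \s'(g_k) = h_k \cdot \s(h_{k-1}) \cdot \s^2(h_{k-2}) \cdots \s^{k-1}(h_1).
\]

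For well-definedness, I would observe that $\s^i(\brK_{k-i}) = \brK_k$ and that the isomorphism $\s^i \colon \tW_{k-i} \to \tW_k$ sends $\Adm^{K_{k-i}}(\{\mu_{k-i}\})$ to $\Adm^{K_k}(\{\s_0^i(\mu_{k-i})\})$. Thus each factor $\s^i(h_{k-i})$ lies in $\brK_k \Adm^{K_k}(\{\s_0^i(\mu_{k-i})\}) \brK_k$, and Theorem~\ref{thm-add} collapses the product into $\brK' \Adm^{K'}(\{\mu'\}) \brK'$, since $\mu' = \sum_{i=0}^{k-1} \s_0^i(\mu_{k-i})$. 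Hence the projection lands in $X^{G'}(\{\mu'\},b')_{K'}$.

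For surjectivity, I would start with $g_k\brK_k \in X^{G'}(\{\mu'\},b')_{K'}$ and apply Theorem~\ref{thm-add} in the reverse direction to produce a factorization
\[
g_k^{-1} b' \s'(g_k) = \tilde h_k \cdot \s(\tilde h_{k-1}) \cdots \s^{k-1}(\tilde h_1),
\]
with each $\tilde h_j \in \brK_j \Adm^{K_j}(\{\mu_j\}) \brK_j$. Then I would define $g_1,\ldots,g_{k-1}$ inductively by $g_j := b_j \s(g_{j-1}) \tilde h_j^{-1}$, starting from $g_0 := g_k$. By construction $g_j^{-1} b_j \s(g_{j-1}) = \tilde h_j$ for $j < k$, and substituting back into the telescoping identity above forces $g_k^{-1} b_k \s(g_{k-1}) = \tilde h_k$ as well (the intermediate terms cancel). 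Thus $(g_1,\ldots,g_k) \in X^G(\{\mu\},b)_K$ projects to $g_k\brK_k$.

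The main technical obstacle is verifying that $\s^i$ identifies $\Adm^{K_{k-i}}(\{\mu_{k-i}\})$ with $\Adm^{K_k}(\{\s_0^i(\mu_{k-i})\})$: this requires tracking carefully how the Frobenius of $G$ interacts with the canonical isomorphisms between the $\brF$-simple factors $G_i$, and in particular how the discrepancy between $\s$ and the cyclic permutation of factors is absorbed into the $L$-action $\s_0$ on $G' = G_k$. Once this compatibility is in place, both the well-definedness and surjectivity reduce to a mechanical application of Theorem~\ref{thm-add}; the surjectivity step additionally uses that the additivity theorem not only computes the product of admissible sets but, by induction, produces explicit factorizations of every element in $\Adm^{K'}(\{\mu'\})$.
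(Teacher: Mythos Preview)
Your proposal is correct and follows essentially the same approach as the paper: both arguments unwind the condition $g^{-1}b\s(g)\in \brK\Adm(\{\mu\})\brK$ componentwise, use the telescoping identity $g_k^{-1}b'\s'(g_k)=h_k\,\s(h_{k-1})\cdots\s^{k-1}(h_1)$, and then apply Theorem~\ref{thm-add} (together with the observation $\s(\Adm^{K'}(\{\l\}))=\Adm^{K'}(\{\s_0(\l)\})$, which is exactly the compatibility you flag) to collapse or expand the product; your inductive construction of $g_1,\ldots,g_{k-1}$ for surjectivity is just an explicit form of what the paper leaves implicit.
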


\begin{proof}
We have
\begin{align*} & \brK_k \Adm(\{\mu_k\}) \brK_k \s (\Adm(\{\mu_{k-1}\})) \brK_k \cdots \brK_k \s^{k-1} (\Adm(\{\mu_1\})) \brK_k \\ &=\brI' \Adm^{K'}(\{\mu_k\}) \brI' \s(\Adm^{K'}(\{\mu_{k-1}\})) \brI' \cdots \brI' \s^{k-1} (\Adm^{K'}(\{\mu_1\})) \brI' \\ &=\brI' \Adm^{K'}(\{\mu_k\})  \s(\Adm^{K'}(\{\mu_{k-1}\}))  \cdots \s^{k-1} (\Adm^{K'}(\{\mu_1\})) \brI'  \\ &=\brI' \Adm^{K'}(\{\mu_k\}) \Adm^{K'}(\{\s_0(\mu_{k-1})\}) \cdots \Adm^{K'}(\{\s_0^{k-1}(\mu_1)\}) \brI' \\ &=\brI' \Adm^{K'}(\{\mu'\}) \brI'=\brK' \Adm(\{\mu'\}) \brK'.
\end{align*}
Here $\brI'$ is the standard Iwahori subgroup of $G'$, and the second equality follows from the fact that the admissible sets are closed under the Bruhat order; the third one follows from the fact that $\{p(\s)(\l)\}=\{\s_0(\l)\}$ for any cocharacter $\l$; the fourth one follows from Theorem \ref{thm-add}.

Let $b=(b_1, \cdots, b_k)$ and $(g_1, \cdots, g_k) \brK \in X^G(\mu, b)_K$. Then \begin{gather*} g_k \i b_k \s(g_{k-1}) \in \brK_k \Adm(\{\mu_k\}) \brK_k,\\ g_{k-1} \i b_{k-1} \s(g_{k-2}) \in \brK_{k-1} \Adm(\{\mu_{k-1}\}) \brK_{k-1}, \\ \cdots, \\ g_1 \i b_1 \s(g_k) \in \brK_1 \Adm(\{\mu_1\}) \brK_1.\end{gather*} Therefore for $b'=b_k \s(b_{k-1}) \cdots \s^{k-1}(b_1)$,
\begin{align*} g_k \i b' \s'(g_k) & \in \brK_k \Adm(\{\mu_k\}) \brK_k \s (\Adm(\{\mu_{k-1}\})) \brK_k \cdots \brK_k \s^{k-1} (\Adm(\{\mu_1\})) \brK_k \\ &=\brK' \Adm(\{\mu'\}) \brK'.\end{align*} Hence $g_k \brK' \in X^{G'}(\mu', b')_{K'}$.

On the other hand, if $g_k \brK' \in  X^{G'}(\{\mu'\}, b')_{K'}$, then \begin{align*} g_k \i b' \s'(g_k) & \in \brK' \Adm(\{\mu'\}) \brK' \\ &=\brK_k \Adm(\{\mu_k\}) \brK_k \s (\Adm(\{\mu_{k-1}\})) \brK_k \cdots \brK_k \s^{k-1} (\Adm(\{\mu_1\})) \brK_k.\end{align*} Therefore, there exist $g_1, g_2, \cdots, g_{k-1}$ such that $(g_1, \cdots, g_k) \brK \in X^G(\mu, b)_K$.
\end{proof}

\subsection{}
It is easy to see that $\mu$ is minute for $G$ if and only if $\mu'$ is minute for $G'$. On the other hand, let
$$m: \Adm^K(\{\mu\}) \to \Adm^{K'}(\{\mu'\}),\quad (w_1, \cdots, w_k) \mapsto w_k \s(w_{k-1}) \cdots \s^{k-1}(w_1)$$ be the multiplication map. There is no obvious relation between the image of $\Adm(\{\mu\}) \cap {}^K \tW$ (under $m$) and $\Adm(\{\mu'\}) \cap {}^{K'} \tW'$, and it is difficult to relate the (FC) conditions of $G$ and of $G'$ directly.

To overcome this difficulty, we introduce the subset $${}^K \Adm(\{\mu\})_\spadesuit=\bigcup_{\l \in W_0 \cdot \mu} ({}^K \tW \cap t^\l W_K).$$

By \cite[Theorem 6.1]{He-KR} (see also \cite[Proposition 5.1]{HH} for a different proof), we have that ${}^K \Adm(\{\mu\})_\spadesuit \subseteq \Adm(\{\mu\})$.

\begin{figure}

\begin{tikzpicture}[scale=.8]

\draw[fill=lightgray] (-2, -2) -- (4, 4) -- (4, -2) -- cycle;
\draw[fill=gray] (-2, -2) -- (0, 0) -- (0, -2) -- cycle;
\draw[fill=gray] ( 2, 0) -- (4,0) -- (4, -2) -- (2, -2) -- cycle;
\draw[fill=gray] ( 2,  2) -- (4, 4) -- (4, 2) -- cycle;

\draw[line width=1mm] (-2, 2) -- (0, 4) -- (2, 2) -- (4, 4) -- (4, -2) -- (-2, -2) -- (0, 0) -- cycle;

\draw (-5,-4) -- (5,-4);
\draw (-5,-2) -- (5,-2);
\draw (-5,0) -- (5,0);
\draw (-5,2) -- (5,2);
\draw (-5,4) -- (5,4);

\draw (-4,-5) -- (-4,5);
\draw (-2,-5) -- (-2,5);
\draw (0,-5) -- (0,5);
\draw (2,-5) -- (2,5);
\draw (4,-5) -- (4,5);

\draw (-4.7, -4.7) -- (4.7, 4.7);
\draw (-4.7, 4.7) -- (4.7, -4.7);
\draw (-4.7, -0.7) -- (0.7, 4.7);
\draw (-0.7, -4.7) -- (4.7, 0.7);
\draw (-0.7, 4.7) -- (4.7, -0.7);
\draw (-4.7, 0.7) -- (0.7, -4.7);
\draw (-4.7, -3.3) -- (-3.3, -4.7);
\draw (4.7, 3.3) -- (3.3, 4.7);
\draw (-4.7, 3.3) -- (-3.3, 4.7);
\draw (4.7, -3.3) -- (3.3, -4.7);
\draw[fill, black] (0,0) circle [radius=.17cm];
\end{tikzpicture}

\caption{An example for $G=PSp_4$, $\mu = (\frac 12, \frac 12)$, $K= \{s_1 \}$. The origin is at the thick dot. The set $\Adm(\{\mu\})$ consists of the thirteen alcoves inside the thick line. The intersection ${}^K \tW \cap \Adm(\{\mu\})$ consists of the $9$ alcoves shaded in (light or dark) gray. The set ${}^K\Adm(\{\mu\})_\spadesuit$ consists of the $4$ alcoves shaded in dark gray.}
\end{figure}
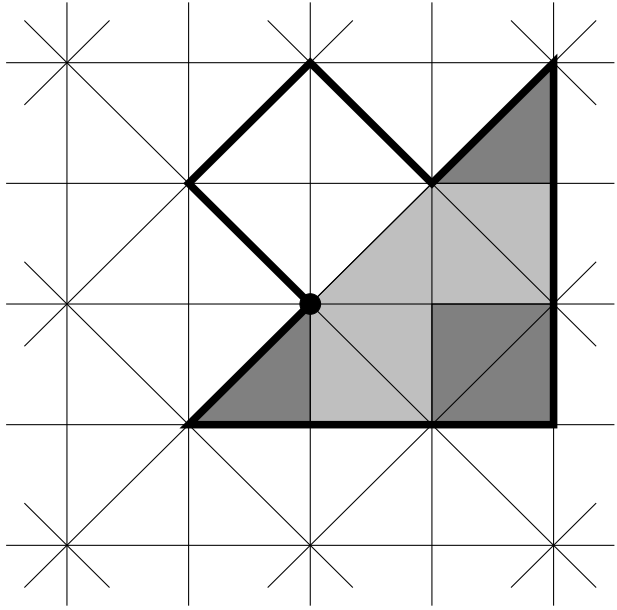

\begin{proposition}\label{weak-add}
Suppose $W_K$ is finite. Let $\mu_1, \mu_2 \in X_*(T)_{\G_0}$ be dominant cocharacters. Then
$${}^K \Adm(\{\mu_1+\mu_2\})_\spadesuit \subseteq {}^K \Adm(\{\mu_1\})_\spadesuit \ {}^K \Adm(\{\mu_2\})_\spadesuit.$$
\end{proposition}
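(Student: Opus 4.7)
Let $w \in {}^K\Adm(\{\mu_1+\mu_2\})_\spadesuit$; by definition $w \in {}^K\tW$ and $w = t^\l u$ with $\l = x(\mu_1+\mu_2)$ for some $x \in W_0$ and $u \in W_K$. Setting $\l_i = x\mu_i$, so that $t^\l = t^{\l_1} t^{\l_2}$, the plan is to massage the tautological factorization $w = t^{\l_1}\cdot (t^{\l_2} u)$ into a factorization $w = w_1 w_2$ with $w_i \in {}^K\Adm(\{\mu_i\})_\spadesuit$.

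The first ingredient is a coset computation in the Iwahori--Weyl group. Letting $p_0 \colon \tW \to W_0$ denote the natural projection coming from $\tW = X_*(T)_{\G_0}\rtimes W_0$, the identity $y\,t^\nu = t^{p_0(y)\nu}\,y$ for $y \in W_K$ yields
\[
W_K\, t^\nu\, W_K \;=\; \bigsqcup_{\nu' \in p_0(W_K)\nu} t^{\nu'} W_K.
\]
Applied to $\nu=\l_2$, the unique minimum-length representative $w_2$ of the left coset $W_K(t^{\l_2}u)$ inside ${}^K\tW$ automatically lies in $t^{\l_2'} W_K$ for some $\l_2' \in p_0(W_K)\l_2 \subseteq W_0\mu_2$. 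Hence $w_2 \in {}^K\Adm(\{\mu_2\})_\spadesuit$. Writing $t^{\l_2} u = y_2 w_2$ with $y_2 \in W_K$ and $\ell(t^{\l_2}u) = \ell(y_2) + \ell(w_2)$, I define $w_1 := t^{\l_1} y_2$; then $w = w_1 w_2$ and $w_1 \in t^{\l_1} W_K \subseteq \bigcup_{\l'\in W_0\mu_1} t^{\l'} W_K$ for free.

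The main obstacle is to show that $w_1$ itself lies in ${}^K\tW$; this is the heart of the argument, since in general a factorization $w = w_1 w_2$ with $w, w_2 \in {}^K\tW$ does \emph{not} force $w_1 \in {}^K\tW$. The plan is a contradiction argument: assuming $sw_1 < w_1$ for some $s \in K$, derive $sw < w$ (contradicting $w \in {}^K\tW$) by right-multiplying by $w_2$. This requires a descent-preservation lemma stating that right multiplication by $w_2 \in {}^K\tW$ preserves left $K$-descents of elements of the particular form $t^{\l_1} y$ with $y \in W_K$. I expect this lemma to follow from length additivity $\ell(w) = \ell(w_1) + \ell(w_2) - 2\ell(y_2')$ (for a suitable correction term), which in turn rests on the two minimality properties already built in: $w_2$ is minimal in its left $W_K$-coset, and the tail $y_2$ of $w_1$ is precisely what is needed to bring $t^{\l_2} u$ to its minimal-length representative.

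If this direct length/descent argument proves too delicate, the fallback is to induct on $\ell(t^{\l_2})$, splitting $\mu_2$ as a sum of smaller dominant cocharacters and reducing to the base case in which $\mu_2$ is a fundamental coweight, where ${}^K\Adm(\{\mu_2\})_\spadesuit$ is small enough for a direct case analysis. Once $w_1 \in {}^K\tW$ is established, combining with $w_1 \in t^{\l_1} W_K$ and $\l_1 \in W_0\mu_1$ gives $w_1 \in {}^K\Adm(\{\mu_1\})_\spadesuit$, completing the proof.
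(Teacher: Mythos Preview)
Your overall strategy---factor $w = w_1 w_2$ with $w_2 \in {}^K\tW$ the minimal representative of $W_K(t^{\l_2}u)$, then argue $w_1 \in {}^K\tW$ by contradiction---is the same as the paper's. But there is a genuine gap: the ``descent-preservation lemma'' you propose is left unproved, and in the generality you state it, it will not go through. The paper's argument hinges on a choice you do not make. Namely, one first observes that since $\l$ is $K$-dominant (which follows from $t^\l u \in {}^K\tW$ once one has reduced to $K \subseteq \BS_0$), one may take $x$ so that $\l_1 = x\mu_1$ and $\l_2 = x\mu_2$ are themselves $K$-dominant. With this choice, your $y_2$ is forced to lie in $W_{K_1}$, where $K_1 = \{i \in K : \<\l_2,\ua_i\> = 0\}$, and your $w_2$ equals $t^{\l_2} u_2$ with $u_2 \in {}^{K_1}W_K$. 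Now the contradiction is immediate: if $s_i w_1 < w_1$ for some $i \in K$, then $K$-dominance of $\l_1$ forces $\<\l_1,\ua_i\>=0$ and $y_2^{-1}(\ua_i)<0$; since $y_2 \in W_{K_1}$ this gives $i \in K_1$, hence also $\<\l_2,\ua_i\>=0$, and since $u_2 \in {}^{K_1}W_K$ one gets $u^{-1}(\ua_i)<0$ and $\<\l,\ua_i\>=0$, contradicting $w \in {}^K\tW$. Without the $K$-dominance of the $\l_i$, there is no reason for a left descent of $w_1$ to propagate to $w$, and your vague length-additivity argument does not substitute for this.

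There is a second gap. Your coset computation and the whole argument implicitly use that simple reflections in $K$ act on $t^{\l_1}$ in a controlled way, which is clean only when $K \subseteq \BS_0$. For general $K$ (in particular when $s_0 \in K$), the paper does not attempt a direct argument but instead performs a change of origin: conjugating by $t^{e^K}$ (the barycenter of the facet for $\tSS - K$) identifies $({}^K\tW \cap X_*(T)_{\G_0}W_K, \le)$ with the analogous object for the extended affine Weyl group $\tW_{\uK}$ of the root system $\Phi_{\uK}$, where now $\uK$ plays the role of the finite simple reflections. This reduces the general case to the case already handled. Your proposal does not address this reduction, and the fallback induction on $\ell(t^{\l_2})$ you mention would run into the same issue.
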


\subsection{}\label{add} We first consider the case where $K \subseteq \BS_0$. In this case, it is easy to see that if $t^\l w \in {}^K \tW$ with $w \in W_K$, then $\l$ is $K$-dominant, i.e., $t^\l \in {}^K \tW$.

Let $y \in {}^K \Adm(\{\mu_1+\mu_2\})_\spadesuit$. We may write $y$ as $y=t^\l u$, where $\l \in \{\mu_1+\mu_2\}$, the $W_0$-conjugacy class of $\mu_1+\mu_2$, and $u \in W_K$. Since $\l$ is $K$-dominant, $\l=\l_1+\l_2$, where $\l_i \in \{\mu_i\}$ for $i=1, 2$ are $K$-dominant. We set $K_1=\{i \in K; \<\l_2, \a_i\>=0\}$. Write $u=u_1 u_2$ with $u_1 \in W_{K_1}$ and $u_2 \in {}^{K_1} W_K$. Then $t^{\l_2} u_2 \in {}^K \tW$ and $y=t^{\l_1+\l_2} u=(t^{\l_1} u_1) (t^{\l_2} u_2)$. It remains to show that
\[
t^{\l_1} u_1 \in {}^K \tW.
\]
We prove this by contradiction. Suppose that $t^{\l_1} u_1 \notin {}^K \tW$. Then there exists $\a \in \Phi^+_{K_1}$ such that $u_1 \i (\a)<0$ and $\<\l_1, \a\>=0$. Note that $\<\l_2, \a\>=0$. Since $-u_1 \i (\a) \in \Phi^+_{K_1}$ and $u_2 \in {}^{K_1} W_K$, we have $u_2 \i u_1 \i(\a)<0$. Thus $u(\a)<0$ and $\<\l, \a\>=0$. Hence $t^\l u \notin {}^K \tW$. That is a contradiction.

This finishes the proof for the case where $K \subseteq \BS_0$.

\subsection{} \label{reduce} The general case involves the comparison between the original root system and the one defined by a ``change of origin''. To explain this, we introduce the following notation.

Let $p: \text{Aff}(V)=V \rtimes GL(V) \to GL(V)$ be the projection map. We set $W_{\uK}=p(W_K) \subseteq GL(V)$. This is the Weyl group associated to the root system $\Phi_{\uK}$ defined as follows:
Let $i \in K$. If $i \in \BS_0$, we set $\ua_i=\a_i$. Otherwise, we set $\ua_i=-\th$, where $\th \in \Phi^+$ is the highest root such that $s_i=t^{\th^\vee} s_\th$. Denote by $\Phi_{\uK}$ the (finite) root system spanned by $\ua_i$ for $i \in K$. This is a root system of Dynkin type $K$. We see that $\Phi_{\uK}^+:=\Phi \cap (\sum_{i \in K} \BN \ua_i)$ is a system of positive roots of $\Phi_{\uK}$. Let $\tW_{\uK}=X_*(T)_{\G_0} \rtimes W_{\uK}$. We may regard $\tW_{\uK}$ as the subset $X_*(T)_{\G_0} W_{\uK}=X_*(T)_{\G_0} W_K$ of $\tW$ in the natural way. The length function $\ell$ and the Bruhat order $\le$ on $\tW$ restrict to a $\BN$-valued map and a partial order on $\tW_{\uK}$.

On the other hand, $\tW_{\uK}$ is an extended affine Weyl group. Let $\ell_{\uK}$ (resp. $\le_{\uK}$) be the length function (resp. Bruhat order) determined by the base alcove $$\aa_{\uK}:=\{v \in V;\ 0 < \<v, \ua\> < 1,\ \ua \in \Phi_{\uK}^+\}$$ in the usual way. Following our usual conventions, we write ${}^{\uK} \tW_{\uK} := {}^{\uK}(\tW_{\uK})$ for the system of minimal length representatives in $\tW_{\uK}$ for the cosets in $W_{\uK}\backslash \tW_{\uK}$.

To compare the two functions $\ell$ and $\ell_{\uK}$, and the two partial orders $\le$ and $\le_{\uK}$ on $\tW_{\uK}$, we need to change the origins between $0$ in $V$ and the barycenter $e^K$ of the facet corresponding to $\tSS-K$.
Let $t^{e^K}: V \to V$ be the affine transformation given by translation by $e^K$. Then $t^{-e^K} \tW_{\uK} t^{e^K}=\tW_{\uK}$ and $t^{-e^K}(\aa) \subseteq \aa_{\uK}$. Therefore, for $\tw, \tw' \in \tW_{\uK}$, we have

(a) $\tw \leq \tw'$ if $t^{-e^K} \tw t^{e^K} \leq_{\uK} t^{-e^K} \tw' t^{e^K}$.

(b) $\tw \in {}^K \tW$ if and only if $t^{-e^K} \tw t^{e^K} \in {}^{\uK} \tW_{\uK}$.

Note that if $K \subseteq \BS_0$, then the conjugation action of $t^{-e^K}$ on $\tW_{\uK}$ is trivial, but if $K$ contains $s_0$, then the conjugation action of $t^{-e^K}$ on $\tW_{\uK}$ is nontrivial.

\subsection{Proof of Proposition \ref{weak-add}}
Let $w \in {}^K \Adm(\{\mu_1+\mu_2\})_\spadesuit$. Set $y=t^{-e^K} w t^{e^K} $. By \ref{reduce}~(b), $y \in {}^{\uK} \tW_{\uK}$. So there exists $x \in W_0$ such that $x(\mu_1+\mu_2)$ is $K$-dominant and $y \in {}^{\uK} \tW_{\uK} \cap t^{x(\mu_1+\mu_2)} W_K$. Since $\mu_1, \mu_2$ are dominant, we see that $x(\l_1), x(\l_2)$ are also $K$-dominant. By \S\ref{add}, there exist $y_i \in ({}^{\uK} \tW_{\uK} \cap t^{x(\mu_i)}  W_{\uK})$ for $i \in \{1, 2\}$ such that $y=y_1y_2$. By \ref{reduce}~(b), the element $w_i:=t^{e^K} y_i t^{-e^K}$ lies in ${}^K \Adm(\{\mu_i\})_\spadesuit$ for $i \in \{1, 2\}$. The statement now follows from the equality $w=w_1 w_2$. This concludes the proof of Proposition~\ref{weak-add} in the general case.

\subsection{} As a consequence of Proposition \ref{weak-add}, ${}^{K'} \Adm(\{\mu'\})_\spadesuit$ is contained in the image of ${}^K \Adm(\{\mu\})_\spadesuit$ under the multiplication map $m: \Adm^K(\{\mu\}) \to \Adm^{K'}(\{\mu'\})$.

We introduce the following weaker version of the condition (FC):

\textbf{(FC$_\spadesuit$)}: If $w \in {}^K \Adm(\{\mu\})_\spadesuit$ has central Newton point, then $w \circ \s$ fixes a point in $\bar \aa$.

This new condition behaves better than (FC) with respect to passing from $G$ to $G'$: If (FC$_\spadesuit$) holds for $(G, \mu, \brK)$, then (FC$_\spadesuit$) holds for $(G', \mu', \brK')$. To show that (6 FC) implies (2 minute), we may therefore assume that $G$ is $\brF$-simple.

When we enlarge $K$, the condition (FC$_\spadesuit$) becomes weaker, so it is enough to check the implication that (FC$_\spadesuit$) implies $\mu$ is minute when $K$ is maximal, i.e., $\tSS - K$ is a single $\s$-orbit. In other words, $\brK$ is a maximal $\s$-stable parahoric subgroup of $G(\brF)$. Note that the parabolic subgroup $W_K$ is finite.

To complete the proof that (FC$_\spadesuit$) implies $\mu$ is minute, and thus the proof of Theorem~\ref{main}, we are now reduced to proving the following statement:
\begin{quotation}
For an $\brF$-simple and adjoint group $G$ over $F$ and maximal $\s$-stable $K\subsetneq \tSS$, (FC$_\spadesuit$) implies that $\mu$ is minute.
\end{quotation}
For the rest of this section, we assume that these assumptions are satisfied.

\subsection{} Let $K'=\s(K')$ be a union of some connected components of $K$ (view $K$ as a Dynkin diagram). We denote by $\th_{\uK'}$ the sum of all highest roots in $\Phi_{\uK'}^+$.
Let $V_{\uK'}=\Phi_{\uK'}^\vee \otimes \BR \subseteq V$ and $U_{K'} \subseteq V$ be the intersection of the affine root hyperplanes $H_i$ for $i \in K'$. Let $e^K$ be the barycenter of the facet corresponding to $\tSS-K$. Since $K$ is $\s$-stable, $\s(e^K)=e^K$.

We are interested in certain $\s$-Coxeter elements in $W_{K'}$. By definition, an element $c$ in $W_{K'}$ is a $\s$-Coxeter element if in some (equivalently, any) reduced expression of $c$ exactly one simple reflection out of each $\s$-orbit occurs.

\begin{lemma}\label{cox-ell}
Let $c$ be a $\s$-Coxeter element of $W_{K'}$. If $\l \in X_*(T)_{\G_0}$ is central on $K-K'$, i.e., $\< \l, \underline\a_i \> = 0$ for all $i\in K-K'$, then the Newton point of $t^\l c$ is central.
\end{lemma}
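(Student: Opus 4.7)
The plan is to compute the Newton vector of $t^\l c$ explicitly and verify centrality by exploiting two structural features: the ``elliptic'' behavior of $\s$-Coxeter elements on their reflection representation, and the hypothesis that $\l$ is central on $K-K'$.

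First, I would express $c = t^{\mu_c} c_0$ with $c_0 = p(c) \in W_{\uK'}$ and $\mu_c = (1-c_0)e^K$ (using that $c$ fixes $e^K$, because $W_{K'} \subseteq W_K$ stabilizes the facet $V^{W_K}$). Then $w = t^\l c = t^{\l + \mu_c} c_0 \in \tW$, and a direct calculation in $\tW \rtimes \<\s\>$ shows that for sufficiently divisible $n$ (so that $(c_0 \tau)^n = 1$ with $\tau = p(\s)$), one has
\[
(w\s)^n = t^{n \cdot \Pi_{c_0\tau}(\l + \mu_c + c_0 \s(0))},
\]
where $\Pi_{c_0\tau}$ denotes the orthogonal projection onto $V^{c_0\tau}$. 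Thus the Newton vector of $t^\l c$ equals $\Pi_{c_0\tau}(\l + \mu_c + c_0\s(0))$. Since $G$ is adjoint and $\brF$-simple we have $V^{W_0} = 0$, so ``central'' means ``equal to zero''.

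The second step is to simplify this projection using the orthogonal decomposition $V = V_{\uK'} \oplus V_{\uK'}^\perp$ (with respect to the chosen $W_0 \rtimes \<\s_0\>$-invariant form). Both summands are $c_0\tau$-stable: $V_{\uK'}$ by definition of $W_{\uK'}$, and $V_{\uK'}^\perp$ because $K'$ is $\s$-stable and $c_0 \in W_{\uK'}$ acts trivially on $V_{\uK'}^\perp$. The key ingredient is the classical theorem on twisted Coxeter elements (Coxeter--Springer), which tells us that since $c$ is a $\s$-Coxeter element of $W_{K'}$, the operator $c_0 \tau$ acts on $V_{\uK'}$ without nonzero fixed vectors. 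Therefore $\Pi_{c_0\tau}$ vanishes on $V_{\uK'}$, while $\Pi_{c_0\tau}|_{V_{\uK'}^\perp} = \Pi_\tau|_{V_{\uK'}^\perp}$ since $c_0\tau = \tau$ there. Writing $\l = \l' + \l''$ and $\s(0) = \s(0)' + \s(0)''$ according to the decomposition, and noting $\mu_c, c_0\s(0)' \in V_{\uK'}$ and $c_0\s(0)'' = \s(0)''$, the formula reduces to
\[
\text{Newton vector} = \Pi_\tau(\l'' + \s(0)'').
\]

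Third, I would handle the ``constant term'' $\Pi_\tau(\s(0)'')$. This vanishes because $c$ itself lies in the finite group $W_K \subseteq \tW$, so $c\s$ has finite order in $W_K \rtimes \<\s\>$, and hence the Newton vector of $c$ (i.e., of $t^0 c$) is zero. Applying the formula with $\l=0$ gives $\Pi_\tau(\s(0)'') = 0$, so the Newton vector simplifies further to $\Pi_\tau(\l'')$.

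Finally, I would translate the hypothesis. The condition $\<\l, \ua_i\> = 0$ for $i \in K - K'$ means $\l \in V_{\uK-K'}^\perp$. Because $K'$ and $K - K'$ are unions of different connected components of the Dynkin diagram $K$, the subspaces $V_{\uK'}$ and $V_{\uK-K'}$ are mutually orthogonal; consequently $V_{\uK-K'}^\perp = V_{\uK'} \oplus V_{\uK}^\perp$, so that $\l'' \in V_{\uK}^\perp$. The remaining task is to show $\Pi_\tau(\l'') = 0$ for every $\l'' \in V_{\uK}^\perp$, equivalently that $V^\tau \subseteq V_{\uK}$. \emph{This is the main obstacle}: it is a structural fact about maximal $\s$-stable subsets $K$ of $\tSS$ under the standing assumption that $G$ is $\brF$-simple and adjoint. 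The cleanest way I see is to split into cases according to whether the (single) $\s$-orbit $\tSS - K$ contains the affine node $0$ or not: when $0 \in K$ or $K = \BS_0$ one checks directly that $V_{\uK} = V$, while in the remaining case $\tSS-K$ is the $\s$-orbit of $0$, and one verifies $V^\tau \subseteq V_{\uK}$ by showing that every $\tau$-average of the affine simple coroots lies in $V_{\uK}$, using that $\sum_{i \in \tSS} \ua_i^\vee$ can be expressed through the comarks in a way compatible with the $\s$-action.
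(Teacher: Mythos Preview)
Your overall strategy is sound and parallels the paper's: both use that a $\s$-Coxeter element has no nonzero fixed vectors on $V_{\uK'}$, and both must then handle the orthogonal complement $V_{\uK}^\perp$. You correctly reduce to the assertion $V^{p(\s)}\subseteq V_{\uK}$, i.e., that $p(\s)-\id$ is invertible on $V_{\uK}^\perp$. The gap is in your proposed verification of this last point: the claim that $V_{\uK}=V$ whenever $0\in K$ is false. For example, take $G$ of type $\tilde D_4$ with $\s$ the order-$3$ automorphism permuting $\{1,3,4\}$; then $K=\{0,2\}$ is a maximal $\s$-stable proper subset with $0\in K$, yet $V_{\uK}=\mathrm{span}(\ua_0^\vee,\ua_2^\vee)$ is only $2$-dimensional inside the $4$-dimensional $V$. (The inclusion $V^{p(\s)}\subseteq V_{\uK}$ does hold here, since $V^{p(\s)}=\mathrm{span}(\a_2^\vee,\a_1^\vee+\a_3^\vee+\a_4^\vee)$ happens to coincide with $V_{\uK}$, but not for the reason you give.) Your case split therefore does not cover the ground it claims to.

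The paper's argument for this step is shorter and uniform. Rather than separating off the translation $\s(0)$ and arguing linearly, one works directly with the \emph{affine} action of $c\s$ on the affine subspace $U_K=\bigcap_{i\in K}H_i$. Since $c\in W_K$ fixes $U_K$ pointwise, the action of $c\s$ on $U_K$ is just that of $\s$. Now $U_K$ is the affine span of the vertices $\{v^j:j\in\tSS-K\}$ of $\bar\aa$, which $\s$ permutes transitively because $\tSS-K$ is a single $\s$-orbit (this is exactly where maximality of $K$ enters). An affine map permuting an affine basis transitively has the barycenter $e^K$ as its unique fixed point, so $p(c\s)-\id$ is invertible on $U_K-e^K=V_{\uK}^\perp$. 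Together with the $\s$-Coxeter argument on $V_{\uK'}$ this gives that $p(c\s)-\id$ is invertible on $V_{\uK'}\oplus(U_K-e^K)$, and the Newton vector is then central because $\l$ is central on $K-K'$. This bypasses both your case analysis and the detour through the Newton vector of $c$ alone.
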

\begin{proof}
Set $K''=K-K'$. We have $c\s(U_K)=U_K$, $p(c \s)(V_{\uK})=V_{\uK}$ and $\BR \Phi^\vee=V_{\uK''} \oplus V_{\uK'} \oplus (U_K-e^K)$ is an orthogonal decomposition. Therefore, to compute the Newton vector, we can consider the three summands individually. Since $\l$ is central on $K''$, it is enough to look at the second and third summands.

We start by proving that $p(c \s)-\id$ is invertible on $V_{\uK'} \oplus (U_K-e^K)$. Note that $U_K=\{\sum_{j \in \tSS-K} a_j v^j; \sum_{j \in \tSS-K} a_j=1\}$, where $v^j \in \bar \aa$ is the vertex corresponding to $j$. Note that $\tSS-K$ is a single $\s$-orbit in $\tSS$ since $K$ is a maximal $\s$-stable subset of $\tSS$. Thus $e^K$ is the unique $c \s$-fixed point in $U_K$, that is, $p(c \s)-\id$ is invertible on the linear subspace $U_K-e^K$. On the other hand, since $c$ is a $\s$-Coxeter element for $K'$, $0$ is the unique $p(c \s)$-fixed point in $V_{\uK'}$, which means $p(c \s)-\id$ is also invertible on $V_{\uK'}$.

Therefore $e^K$ is the unique $c \s$-fixed point in $V_{\uK'} \oplus (U_K-e^K)$. The statement of the lemma follows; cf.~the argument in~\cite[Lemma 9.4.3]{GHKR}.
\end{proof}

\subsection{} Next we discuss the construction of certain $\s$-Coxeter elements in a finite Weyl group.

Let $(W, \BS)$ be a finite Weyl group with a diagram automorphism $\s$. We say that $(\G, {\ui})$ is a based sub-diagram if $\G$ is a sub-diagram of the Dynkin diagram of $W$ and $\ui \subseteq \G$ is a subset that contains exactly one element in each connected component of $\G$. We define
\[
c_{\G, \ui; n}=\prod_{\substack{j \in \G\\ \dist(i, j)=n\\ \text{ for some } i \in \ui}} s_j,
\]
where $\dist(i, j) \in \BN \cup \{\infty\}$ denotes the distance between two vertices $i, j$ in the graph $\G$. Since the Dynkin diagram of any finite Weyl group does not contain a circle, any two vertices with the same finite distance to a given one are not adjacent to each other. Thus the above element does not depend on the choice of the ordering. We define $$c_{\G, \ui}=c_{\G, \ui; 0} c_{\G, \ui; 1} \cdots c_{\G, \ui; |\G|-1}.$$

We say that a sub-diagram of $\BS$ is a $\s$-component of $\BS$ if it is a union of some connected components of $\BS$ such that $\s$ acts transitively on these connected components.
\begin{lemma}\label{existence-Gamma-ui}
Let $(W, \BS)$ be a finite Coxeter group and $\s$ be a diagram automorphism of $W$. Let $\ui \subseteq \BS$ such that each $\s$-component of $\BS$ contains precisely one element of $\ui$. Then there exists a based sub-diagram $(\G, \ui)$ such that $\G$ contains exactly one element in each $\s$-orbit of $\BS$. In particular, the element $c_{\G, \ui}$ is a $\s$-Coxeter element of $W$.
\end{lemma}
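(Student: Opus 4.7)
The plan is to build $\G$ one $\s$-component at a time, reducing quickly to the case of a single connected Dynkin diagram carrying a diagram automorphism. Writing $\BS = \bigsqcup_\alpha C_\alpha$ as a disjoint union of $\s$-components, each $C_\alpha$ meets $\ui$ in exactly one vertex by hypothesis, so it suffices to produce, in each $C_\alpha$ separately, a connected sub-diagram $\G_\alpha$ containing $\ui \cap C_\alpha$ and meeting each $\s$-orbit of $C_\alpha$ in exactly one vertex. Fixing such a component $C$ and decomposing $C = C_1 \sqcup \cdots \sqcup C_k$ where $\s$ cyclically permutes the connected components, with (WLOG) $i \in C_1$: the case $k \ge 2$ is trivial. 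I simply take $\G_\alpha := C_1$, which is connected, contains $i$, and meets each $\s$-orbit of $C$ in exactly one vertex (since $\s$ permutes the $C_j$ transitively).

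The substance is the case $k = 1$, where $C$ is a connected Dynkin diagram of a finite Coxeter group, and hence a finite tree, with a diagram automorphism $\s$. Here I will construct $\G_\alpha$ by lifting a BFS-spanning tree of the quotient graph $\bar C := C/\s$ back into $C$. Let $\bar i$ denote the image of $i$ in $\bar C$, fix any spanning tree of $\bar C$, and order its vertices $\bar v_1 = \bar i, \bar v_2, \ldots, \bar v_m$ so that each $\bar v_j$ with $j \ge 2$ is adjacent to an earlier ``parent'' $\bar v_{j_0}$. Setting $i_1 := i$, and having chosen representatives $i_\ell \in \bar v_\ell$ for $\ell < j$, I will lift $\bar v_j$ as follows: the edge between $\bar v_j$ and $\bar v_{j_0}$ in $\bar C$ comes from some edge $(u, v)$ of $C$ with $u \in \bar v_{j_0}$ and $v \in \bar v_j$; writing $i_{j_0} = \s^r(u)$ and applying $\s^r$ (which preserves edges of $C$ since $\s$ is a diagram automorphism), the pair $(i_{j_0}, \s^r(v))$ is an edge of $C$, so I set $i_j := \s^r(v) \in \bar v_j$. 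The resulting $\G_\alpha := \{i_1, \ldots, i_m\}$ is connected (each $i_j$ is joined in $\G_\alpha$ to some $i_{j_0}$ with $j_0 < j$), contains $i$, and meets each $\s$-orbit of $C$ in exactly one vertex.

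For the final assertion that $c_{\G, \ui}$ is a $\s$-Coxeter element: within each connected component of $\G$, vertices at equal BFS-distance from their root in $\ui$ cannot be adjacent (this is automatic in a tree, since adjacent vertices have distances to a fixed root differing by exactly one), so each factor $c_{\G, \ui; n}$ is a well-defined product of pairwise commuting simple reflections; across the levels each $s_j$ with $j \in \G$ is used exactly once, so $c_{\G, \ui}$ is a Coxeter element of $W_\G$. Since $\G$ meets each $\s$-orbit of $\BS$ in exactly one vertex, this product uses exactly one simple reflection from each $\s$-orbit, which is the defining property of a $\s$-Coxeter element of $W$. The main delicate point is the lifting step in the connected case; it rests only on $C$ being a tree (true for every finite Coxeter diagram) and on $\s$ sending edges to edges, both of which are automatic here.
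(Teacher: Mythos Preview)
Your reduction to $\s$-components is fine, but the step handling the case $k \ge 2$ contains a genuine gap. You claim that taking $\G_\alpha := C_1$ meets each $\s$-orbit of $C$ in exactly one vertex ``since $\s$ permutes the $C_j$ transitively''. Transitivity on the set of components does \emph{not} imply that the action is free: the return map $\s^k|_{C_1}$ can be a nontrivial automorphism of $C_1$, in which case distinct vertices of $C_1$ may lie in the same $\s$-orbit. Concretely, take $C = A_2 \sqcup A_2$ with vertices $\{1,2\}\sqcup\{1',2'\}$ and let $\s$ be the order-$4$ diagram automorphism $1\mapsto 2'\mapsto 2\mapsto 1'\mapsto 1$. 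Then $k=2$, $\s^2$ swaps the two vertices in each copy of $A_2$, and there is a \emph{single} $\s$-orbit of size $4$; your choice $\G_\alpha = C_1 = \{1,2\}$ has two vertices in this one orbit, so it fails the requirement. The fix is immediate: rather than declaring the case $k\ge 2$ trivial, reduce it to the connected case by replacing $(C,\s)$ with $(C_1,\s^k)$ and apply your lifting argument there; a subset of $C_1$ meeting each $\s^k$-orbit once is exactly a set of representatives for the $\s$-orbits of $C$.

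With that correction in place, your argument is correct and in fact more conceptual than the paper's. The paper simply reduces to connected $\BS$ and then says the result ``follows easily by a case-by-case analysis'' of the finite Dynkin diagrams with nontrivial automorphism ($A_n$, $D_n$, $E_6$, $D_4$ with triality). Your BFS-lifting from a spanning tree of the quotient $C/\s$ gives a uniform construction that uses only that $C$ is a tree and that $\s$ preserves adjacency, and would apply verbatim to any finite tree with an automorphism. The trade-off is that case-by-case is shorter to state (and entirely routine given how few cases there are), while your approach explains \emph{why} the lemma holds and avoids any appeal to the classification.
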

\begin{proof}
It suffices to consider the case that $\BS$ is connected, which follows easily by a case-by-case analysis.
\end{proof}

The following result describes the action of $c_{\G, \ui}$ on $V$.

\begin{lemma} \label{expr}
Let $W$ be a finite Weyl group and $V$ be a geometric representation of $W$. Let $(\G, \ui)$ be a based sub-diagram such that each $\s$-component of $\BS$ contains precisely one element of $\ui$. Then for any $v \in V$, $$c_{\G, \ui}(v)=v-\sum_{j \in \G} \<v, \g_j\> \a^\vee_j,$$ where $\g_j=c_{\G, \ui; |\G|-1} c_{\G, \ui; |\G|-2} \cdots c_{\G, \ui; \dist(\ui, j)+1}(\a_j)$ is a root in $\Phi(W)$.
\end{lemma}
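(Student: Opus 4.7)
The plan is to compute the action of $c_{\G,\ui}$ on $v$ by applying the factors $c_{\G,\ui;n}$ one layer at a time, from $n=|\G|-1$ down to $n=0$, then use duality to rewrite the coefficients.

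First I would observe that each $c_{\G,\ui;n}$ is a product of \emph{pairwise commuting} simple reflections. Indeed, if $j,k\in\G$ satisfy $\dist(\ui,j)=\dist(\ui,k)=n$, they cannot be adjacent: the Dynkin diagram of a finite Weyl group is a forest, so adjacent vertices in a connected component of $\G$ have distances to $\ui$ differing by exactly one, and vertices in different components of $\G$ are trivially non-adjacent. Non-adjacent simple roots are orthogonal, so the reflections commute and satisfy $\langle \a_j,\a_k^\vee\rangle=0$. Consequently, applied to any $w\in V$,
\[
c_{\G,\ui;n}(w) = w - \sum_{\substack{j\in\G\\ \dist(\ui,j)=n}} \langle w,\a_j\rangle \a_j^\vee,
\]
since the cross terms $\langle w,\a_k\rangle\langle \a_k^\vee,\a_j\rangle\a_j^\vee$ that would otherwise appear all vanish.

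Next I would iterate. Set $v_{|\G|}=v$ and $v_n := c_{\G,\ui;n}(v_{n+1})$, so that $c_{\G,\ui}(v)=v_0$. Telescoping the formula above gives
\[
c_{\G,\ui}(v) = v - \sum_{n=0}^{|\G|-1}\sum_{\substack{j\in\G\\ \dist(\ui,j)=n}} \langle v_{n+1},\a_j\rangle \a_j^\vee
\;=\; v - \sum_{j\in\G} \langle v_{\dist(\ui,j)+1},\a_j\rangle \a_j^\vee.
\]
By definition, $v_{\dist(\ui,j)+1} = u_j(v)$ where $u_j := c_{\G,\ui;\dist(\ui,j)+1}\circ\cdots\circ c_{\G,\ui;|\G|-1}\in W$.

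Finally, using $W$-invariance of the natural pairing, $\langle u_j(v),\a_j\rangle = \langle v, u_j^{-1}(\a_j)\rangle$. Since each $c_{\G,\ui;m}$ is an involution (a product of commuting simple reflections), the inverse of a composition reverses the order of the factors, giving
\[
u_j^{-1} = c_{\G,\ui;|\G|-1}\,c_{\G,\ui;|\G|-2}\cdots c_{\G,\ui;\dist(\ui,j)+1} =: \g_j \text{ applied to } \a_j.
\]
This is precisely the $\g_j$ in the statement, and $\g_j\in\Phi(W)$ because it is the image of the simple root $\a_j$ under an element of $W$.

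The argument is essentially bookkeeping; the only step that requires a genuine observation rather than direct computation is the commutativity within each layer $c_{\G,\ui;n}$, which I expect to be the main (minor) obstacle—specifically the reduction to the forest property of the Dynkin diagram to rule out adjacent vertices at equal distance from $\ui$.
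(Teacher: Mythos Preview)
Your proof is correct and rests on the same key observation as the paper's: simple reflections at equal distance from $\ui$ commute because the Dynkin diagram is a forest. The organization differs, however. The paper proves the identity by downward induction on $n$, establishing along the way an explicit recursion for the $\g_j$ (namely $\g_j=\a_j-\sum_{j'\in\G(\ge n+1)}\langle\a_{j'}^\vee,\a_j\rangle\g_{j'}$) and then verifying the inductive step by hand. You instead telescope the layer-by-layer action and then invoke $W$-invariance of the pairing together with the fact that each $c_{\G,\ui;m}$ is an involution to identify the coefficient of $\a_j^\vee$ directly as $\langle v,\g_j\rangle$. Your route is slightly cleaner, since it avoids the explicit recursion; the paper's route, on the other hand, yields the recursion for $\g_j$ as a byproduct, which can be useful elsewhere.
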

\begin{proof}
It suffices to consider the case where $\s$ acts transitively on the set of connected components of the Dynkin diagram of $W$. In this case, $\ui=\{i\}$ for some $i \in \G$. We simply write $c_i$ for $c_{\G, \ui}$ and $c_{i, n}$ for $c_{\G, \ui; n}$. We show by induction on $n$ that $$c_{i, n} \cdots c_{i, |\G|-1}(v)=v-\sum_{j \in \G, \dist(i,j) \ge n} \<v, \g_j\> \a_j^\vee.$$ Note that
\begin{equation*}
\tag{a}
j, j' \in \G\text{ are not adjacent if } \dist(i, j)=\dist(i, j').
\end{equation*}

If $n=|\G|-1$, the statement follows from (a). Assume it holds for $n+1$. We show it holds for $n$. Set $\G(m)=\{j \in \G; \dist(i, j)=m\}$ and $\G(\ge m)=\cup_{m' \ge m} \G(m')$. For each $j \in \G(n)$ we have
\begin{align*}
\g_j &=c_{i, |\G|-1} \cdots c_{i, n+1}(\a_j)=(\prod_{j' \in \G(n+1)} s_{\g_{j'}}) (c_{i, |\G|-1} \cdots c_{i, n+2}(\a_s)) \\
\tag{b}
&=(\prod_{j' \in \G(n+1)} s_{\g_{j'}})(\a_j)=\a_j-\sum_{j' \in \G(n+1)} \<\g_{j'}^\vee, \a_j\> \g_{j'} \\ &=\a_j-\sum_{j' \in \G(\ge n+1)} \<\a_{j'}^\vee, \a_j\> \g_{j'},
\end{align*}
where the second and fifth equalities hold because $j, j' \in \G$ are not adjacent if $\dist(i, j') \ge n+2$; the third equality follows from (a). Then by induction hypothesis we have \begin{align*} & c_{i, n} \cdots c_{|\G|-1}(v)=c_{i, n} (v-\sum_{j' \in \G(\ge n+1)} \<v, \g_{j'}\> \a_{j'}^\vee) \\ &=v-\sum_{j' \in \G(\ge n+1)} \<v, \g_{j'}\> \a_{j'}^\vee-\sum_{j \in \G(n), \ j' \in \G(\ge n+1)}(\<v, \a_j\> \a_j^\vee -\<v, \g_{j'}\> \<\a_{j'}^\vee, \a_j\> \a_j^\vee) \\ &=v-\sum_{j'' \in \G(\ge n)} \<v, \g_{j''}^\vee\> \a_{j''}^\vee, \end{align*} where the first equality follows from induction hypothesis, the last one follows from (b). The lemma is proved.
\end{proof}

We now return to our usual setting.
The following result gives particular elements in ${}^K \Adm(\{\mu\})_\spadesuit$.
\begin{proposition} \label{minimal}
Let $\l \in \{\mu\}$ be a $K$-dominant cocharacter. Let $K'$ be the union of $\s$-components $C$ of $K$ such that $\l$ is noncentral on $C$. Let $(\G, \ui)$ be a based sub-diagram of $K'$ such that $\G$ contains exactly one element in each $\s$-orbit on $K'$ and that $\<\l, \ua_i\> >0$ for any $i \in \ui$. Then $t^\l c_{\ui} \in {}^K \Adm(\{\mu\})_\spadesuit$. Moreover, the Newton point of $t^\l c_{\ui}$ is central.
\end{proposition}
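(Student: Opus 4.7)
The plan is to prove the proposition in two pieces: first the centrality of the Newton point (which is essentially a direct consequence of the preceding lemmas), and then the membership $t^\l c_\ui \in {}^K\tW \cap t^\l W_K$, which is the computational heart.

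\smallskip

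For the Newton point, I will invoke Lemma~\ref{existence-Gamma-ui} applied to $W_{K'}$ with the restriction of $\s$: the hypotheses on $(\G, \ui)$ (namely that $\G$ meets each $\s$-orbit of $K'$ in exactly one vertex and $\ui \subseteq \G$ has one element per connected component of $\G$) are arranged precisely so that $c_\ui = c_{\G, \ui}$ is a $\s$-Coxeter element of $W_{K'}$. By the very definition of $K'$, $\l$ is central on $K - K'$. Lemma~\ref{cox-ell} then delivers the central Newton point claim.

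\smallskip

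For the admissibility statement, $t^\l c_\ui \in t^\l W_K$ is immediate from $c_\ui \in W_{K'} \subseteq W_K$. To show $t^\l c_\ui \in {}^K\tW$, I will use the change-of-origin from \S\ref{reduce}: set $\uw = t^{-e^K}(t^\l c_\ui)t^{e^K}$, which lies in $\tW_{\uK}$. By \ref{reduce}(b) it suffices to verify $\uw \in {}^{\uK}\tW_{\uK}$. A direct computation gives $\uw = t^\nu c_\ui$ with
\[
\nu = \l + c_\ui(e^K) - e^K = \l - \sum_{j \in \G} \<e^K, \g_j\>\, \a_j^\vee,
\]
where the second equality uses Lemma~\ref{expr} with $v = e^K$. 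The condition $\uw \in {}^{\uK}\tW_{\uK}$ is the standard criterion: for each $i \in K$ (indexing a simple root $\ua_i$ of $\Phi_{\uK}$), the affine root $c_\ui^{-1}(\ua_i) + \<\nu, \ua_i\>\,\delta$ must be positive, i.e.\ $\<\nu, \ua_i\> \ge \chi(c_\ui^{-1}(\ua_i) < 0)$. I expect to verify this by a case analysis on the position of $i$ relative to the layered description of $\G$ through $\dist(\ui, i)$: the hypothesis $\<\l, \ua_i\> > 0$ for $i \in \ui$ gives the base case; for $i \in \G \setminus \ui$ the recursive description of $\g_j$ in Lemma~\ref{expr} combined with $K$-dominance of $\l$ handles the step; for $i \in K' \setminus \G$ one has $c_\ui^{-1}(\ua_i) > 0$ and the requirement reduces to $\<\nu, \ua_i\> \ge 0$, which again follows from $K$-dominance of $\l$ and the observation that the correction $\sum_j \<e^K, \g_j\>\a_j^\vee$ lies in the coroot lattice of $\Phi_{\uK'}$; finally, for $i \in K - K'$, the centrality of $\l$ on $K - K'$ combined with orthogonality of the different connected components of $K$ yields the claim.

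\smallskip

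The principal obstacle will be the bookkeeping in the case that $K$ contains the affine simple reflection $s_0$: for the connected component $K''$ of $K$ containing $s_0$, one has $\ua_{s_0} = -\th_{\uK''}$ and $\<e^K, \ua_{s_0}\> \ne 0$, so the translation-correction $c_\ui(e^K) - e^K$ contributes nontrivially to $\<\nu, \ua_{s_0}\>$. Verifying the cancellations requires using both the description of $e^K$ as the barycenter of the facet dual to $\tSS - K$ and the explicit shape of the roots $\g_j$ from Lemma~\ref{expr}, and amounts to a finite combinatorial check that a signed-path sum inside the Dynkin diagram of $\Phi_{\uK}$ comes out with the right sign. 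This is exactly the feature that makes the general parahoric case genuinely different from the easy $K \subseteq \BS_0$ case treated in \S\ref{add}.
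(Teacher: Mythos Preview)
Your treatment of the Newton point is correct and coincides with the paper's: Lemma~\ref{cox-ell} applies because $c_{\G,\ui}$ is a $\s$-Coxeter element of $W_{K'}$ and $\l$ is central on $K-K'$.

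For the admissibility part, however, there is a genuine slip in your change-of-origin computation. You write $\uw = t^\nu c_\ui$ with $\nu = \l + c_\ui(e^K) - e^K$ and then invoke Lemma~\ref{expr} to expand $c_\ui(e^K)$. But $c_\ui$ here is an element of the \emph{affine} group $W_K$, not of the linear group $W_{\uK}$; Lemma~\ref{expr} computes $p(c_\ui)(v)$, not $c_\ui(v)$. Since $e^K$ is by definition a point of the facet whose stabiliser is $W_K$, one has $c_\ui(e^K)=e^K$, and hence $\nu=\l$ with no correction term at all. In other words
\[
t^{-e^K}(t^\l c_\ui)t^{e^K}=t^\l\,p(c_\ui)\in \tW_{\uK},
\]
and the ``principal obstacle'' you anticipate when $s_0\in K$ simply does not arise: there is no signed-path sum to control, and no bookkeeping with $\<e^K,\g_j\>$ is needed.

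With this correction the task is to show $t^\l p(c_\ui)\in {}^{\uK}\tW_{\uK}$, and the paper's route is cleaner than your proposed case split over all $i\in K$. The paper first reduces to $\G$ connected with $\ui=\{i\}$ by factoring $c_\ui$ along the (pairwise commuting) connected components of $\G$; this is harmless since $t^\l c\in {}^K\tW$ for each factor $c$ forces $t^\l c_\ui\in {}^K\tW$. Then, with $c_i=s_{i_1}\cdots s_{i_r}$ reduced and $i_1=i$, it suffices to check $\<\l,\b_j\>\ge 1$ for each inversion root $\b_j=p(s_{i_1})\cdots p(s_{i_{j-1}})(\ua_{i_j})$ of $p(c_i)$. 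Because the $i_k$ are distinct vertices of a tree listed in order of distance from $i_1$, one has $\b_j\in \ua_{i_1}+\sum_{k}\BN\,\ua_{i_k}$; combined with $K$-dominance of $\l$ this gives $\<\l,\b_j\>\ge\<\l,\ua_{i_1}\>\ge 1$. No separate discussion of $i\in K\setminus\G$ or of $i\in K\setminus K'$ is needed.
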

\begin{proof}
The "Moreover'' part follows from Lemma \ref{cox-ell}. To prove the main part, write $c_{\ui}$ as a product according to the connected components of $\G$. Since the factors commute with each other, it is enough to show that $t^\l c\in {}^K \Adm(\{\mu\})_\spadesuit$ for each factor $c$. This means that without loss of generality, we can assume $\ui=\{i\}$ for some $i \in K'$ and hence that $\G$ is connected (we will not require the condition that $\G$ contains exactly one element in each $\s$-orbit anymore).

Let $y=t^{-e^K} t^\l c_i t^{e^K}$. By \S\ref{reduce} (b), it remains to show $y \in {}^{\uK} \tW_{\uK}$. Let $c_i=s_{i_1} \cdots s_{i_r}$ be a reduced expression with each $i_j \in \G$ and $i_1=i$. Then $y=t^\l p(c_i)$. Since each $p(s_{i_j})$ is a simple reflection of $W_{\uK}$ and $p(c_i)=p(s_{i_1}) \cdots p(s_{i_r})$ is a reduced expression, to show $y \in {}^{\uK} \tW_{\uK}$, it suffices to show $\<\l, p(s_{i_1}) \cdots p(s_{i_{j-1}})(\ua_{i_j})\> \ge 1$ for $1 \le j \le r$. Notice that $i_1, \dots, i_r$ are distinct vertices and they form a connected subdiagram of $\tSS$. Thus, by the construction of $c_i$, one deduces that $$p(s_{i_1}) \cdots p(s_{i_{j-1}})(\ua_{i_j}) \in \ua_{i_1} + \BN \{\ua_{i_j}; 1 \le j \le r\}.$$ So we have $\<\l, p(s_1) \cdots p(s_{j-1})(\ua_j)\> \ge \<\l, \ua_{i_1}\> \ge 1$ as desired.
\end{proof}

\subsection{} \label{teq}
We say a triple $(\l, \G, \ui)$ is {\it permissible} (with respect to $(G, \mu, K)$) if it satisfies the assumption in Proposition \ref{minimal}. Denote by $K_{\ui}$ the union of connected components of $K$ that intersect with $\ui$. For any $j \in K$, we denote by $\o_{j, K}$ the fundamental weight associated to the root system $\Phi_{\uK}$ and we set $\o_{\CO, K}=\sum_{j \in \CO} \o_{j, K}$ for any $\s$-orbit $\CO$ in $K$. Let $p_{\uK}: V \to V_{\uK}$ be the orthogonal projection (with respect to the inner product on $V$ fixed in the beginning, Section~\ref{sec:group-theoretic-setup}). Let $v \in V$. Set $p_{\uK}(v)^\bot=v-p_{\uK}(v)$, $v_K=p_{\uK}(v-e^K)$ and $v^K=v-v_K \in U_K$. Then we have $v=p_{\uK}(v)+p_{\uK}(v)^\bot=v_K+v^K$. For $c=c_{\G, \ui}$ we have that $$t^\l c \s (v)=t^\l c \s(v^K)+p(c) p(\s)(v_K).$$ Since $c \in W_K$, we have $c(\s(v^K))=\s(v^K)$.

Let $v$ be a fixed point of $t^\l c \s$. Then $$t^{p_{\uK}(\l)} p(c) p(\s) (v_K)=v_K, \qquad t^{p_{\uK}(\l)^\bot} \s(v^K)=v^K.$$

\begin{lemma} \label{formula}
Let $\CO$ be a $\s$-orbit in $K$ and let $j$ be the unique element in $\G \cap \CO$. Then $$\<\l, \o_{\CO, K}\>=\<v_K, p(\s)\i(\g_j)\>.$$
\end{lemma}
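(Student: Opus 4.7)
The plan is to unpack the fixed-point equation recorded in \S\ref{teq}, expand $p(c)$ using the explicit formula from Lemma~\ref{expr}, and then pair the resulting identity against $\o_{\CO, K}$. The $\s$-invariance of the chosen inner product on $V$ will cancel the unwanted cross terms involving $v_K$ and $p(\s)(v_K)$.

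Concretely, I would begin with the identity
$$p(c)\,p(\s)(v_K) = v_K - p_{\uK}(\l),$$
which is the $V_{\uK}$-component of the fixed-point condition, as derived in \S\ref{teq}. Applying Lemma~\ref{expr} to the finite Weyl group $W_{\uK}$ (with $c=c_{\G,\ui}$ acting via $p(c)$) and to the vector $w = p(\s)(v_K)$ yields
$$p(c)\,p(\s)(v_K) = p(\s)(v_K) - \sum_{j' \in \G}\<p(\s)(v_K),\,\g_{j'}\>\,\ua_{j'}^\vee.$$
Combining these two identities gives
$$p_{\uK}(\l) + \bigl(p(\s)(v_K) - v_K\bigr) \;=\; \sum_{j' \in \G}\<p(\s)(v_K),\,\g_{j'}\>\,\ua_{j'}^\vee. \qquad (\star)$$

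The next step is to pair $(\star)$ against $\o_{j', K}$ for each $j' \in \CO$ and sum over the orbit $\CO$. On the right, the duality relation $\<\ua_{j''}^\vee,\o_{j',K}\> = \d_{j''j'}$ for $j',j''\in K$ combined with the hypothesis $\G \cap \CO = \{j\}$ collapses the double sum to $\<p(\s)(v_K),\,\g_j\>$. On the left, the contribution of $p_{\uK}(\l)$ is $\<\l, \o_{\CO, K}\>$, since $\o_{\CO, K}$ lies in $V_{\uK}$ and $p_{\uK}$ is the orthogonal projection.

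The only step requiring any care is showing that the cross-term $\sum_{j' \in \CO}\<p(\s)(v_K) - v_K,\,\o_{j', K}\>$ vanishes. This rests on two facts: first, since $\s$ fixes $e^K$ and permutes $\{s_i\}_{i\in K}$ as a diagram automorphism, $p(\s)$ permutes the dual basis $\{\o_{j', K}\}_{j' \in \CO}$ within the single $\s$-orbit $\CO$; second, $p(\s) \in W_0 \rtimes \<\s_0\>$ is an isometry of $V$. Consequently,
$$\sum_{j' \in \CO}\<p(\s)(v_K),\,\o_{j', K}\> \;=\; \sum_{j' \in \CO}\<v_K,\,p(\s)\i(\o_{j', K})\> \;=\; \sum_{j' \in \CO}\<v_K,\,\o_{j', K}\>,$$
and the difference vanishes. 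Putting everything together produces $\<\l, \o_{\CO, K}\> = \<p(\s)(v_K),\,\g_j\> = \<v_K,\,p(\s)\i(\g_j)\>$ by one further application of the orthogonality of $p(\s)$, which is exactly the desired formula.
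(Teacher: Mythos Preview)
Your proof is correct and follows essentially the same route as the paper's: start from the fixed-point identity $p_{\uK}(\l)=v_K-p(c)p(\s)(v_K)$, expand $p(c)$ via Lemma~\ref{expr}, pair against $\o_{\CO,K}$, and use that $\<v_K-p(\s)(v_K),\o_{\CO,K}\>=0$ together with $\G\cap\CO=\{j\}$. The only difference is that you spell out the vanishing of the cross term (via $p(\s)$ permuting the $\o_{j',K}$ over the orbit $\CO$), whereas the paper passes over this step silently.
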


\begin{proof}
Since $p_{\uK}(\l) + p(c) p(\s)(v_K)=v_K$, we have \begin{align*} \<\l, \o_{\CO, K}\>&=\<v_K-p(c) p(\s)(v_K), \o_{\CO, K}\> \\ &=\<v_K-p(\s)(v_K), \o_{\CO, K}\> + \sum_{\iota \in \G} \<p(\s)(v_K), \g_{\iota}\> \<\a_{\iota}^\vee, \o_{\CO, K}\> \\ &=\<p(\s)(v_K), \g_j\> = \<v_K, p(\s)\i(\g_j)\>,\end{align*} where the second equality follows from Lemma \ref{expr}.
\end{proof}

The following proposition puts strong restrictions on elements arising from a permissible triple and at the same time satisfying the fixed point condition. It will be the key to the proof that  (FC$_\spadesuit$) implies that $\mu$ is minute.

\begin{proposition}\label{mini-k}
Let $(\l, \G, \ui)$ be a permissible triple such that $t^\l c_{\G, \ui} \s$ has a fixed point $v\in\bar \aa$, then

(1) $\<v_K, \th_{\uK}\> \le 1$. Moreover, if equality holds, then we have $\<v, \ua_j\>=0$ for all $j \in \BS_0 \cap K$ such that the multiplicity of $\ua_j$ in $\th_{\uK} \in \BN \Phi_K^+$ is strictly smaller than the multiplicity of $\a_j$ in $\th$. In particular, $v^K=-\sum_{j \in \BS_0 - K} \<v_K, \a_j\> \o_j^\vee$;

(2) $\l$ is minute for $(\Phi_{\uK}, p(\s)|_{\Phi_{\uK}})$;

(3) Either $K_{\ui}$ is connected, or $K_{\ui}$ is a disjoint union of two copies of $A_1$ and $(p_{K_{\ui}}(\l), \s |_{K_{\ui}})$ equals (up to automorphism of $K$) $((\o_1^\vee, 0), {}^1 \varsigma_0)$ or $((\o_1^\vee, \o_1^\vee), {}^1 \varsigma_0)$ or $((\o_1^\vee, \o_1^\vee), \id)$, where ${}^1 \varsigma_0$ is the automorphism exchanging the two connected components of $K_{\ui}$.
\end{proposition}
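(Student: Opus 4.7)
The proof will be driven by the fixed-point equation $v = t^\l c_{\G,\ui}\,\s(v)$, which splits according to $v = v_K + v^K$ (with $v_K \in V_{\uK}$ and $v^K \in U_K$) into $p_{\uK}(\l) = v_K - p(c_{\G,\ui}\,\s)(v_K)$ and $p_{\uK}(\l)^\bot = v^K - \s(v^K)$. Combined with the alcove inequalities $\<v, \a_i\> \ge 0$ (for $i\in\BS_0$) and $\<v, \th\>\le 1$ coming from $v \in \bar\aa$, these are the sole ingredients.

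For (1), I would first use that $\th_{\uK} \in V_{\uK}$ while $v^K - e^K$ lies in the orthogonal complement of $V_{\uK}$ to reduce $\<v_K, \th_{\uK}\>$ to $\<v, \th_{\uK}\> - \<e^K, \th_{\uK}\>$. When $K\subseteq \BS_0$ the combination $\th_{\uK}$ is dominated by $\th$ in the root order of $\Phi$, so the affine inequality $\<v,\th\>\le 1$ gives the result at once. When $K$ contains $s_0$, one must split $\th_{\uK}$ according to the connected components of $K$: the component containing $s_0$ contributes a term involving $-\th$, for which $\<v, -\th\>\ge -1$ is invoked, while the remaining components are handled by the finite inequalities $\<v,\a_i\>\ge 0$. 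The equality case then pins down exactly which affine hyperplanes $v$ lies on, which yields the vanishing statement for $\<v, \ua_j\>$ and the explicit form of $v^K$ in the basis of fundamental coweights.

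For (2), apply Lemma~\ref{formula} to each $\s$-orbit $\CO$ on $K'$ with unique representative $j=\CO\cap\G$, obtaining $\<\l, \o_{\CO,K}\> = \<v_K, p(\s)\i(\g_j)\>$. By its construction via a product of reflections from $\a_j$, the root $\g_j$ lies in the root sub-system of $\Phi_{\uK}$ generated by the connected component of $\G$ containing $j$, hence $\g_j\le\th_{\uK}$ in the root order of $\Phi_{\uK}$; $\s$-invariance of $\th_{\uK}$ then gives $p(\s)\i(\g_j)\le\th_{\uK}$. A short check using $v\in\bar\aa$ and the fact that $v^K$ lies on every hyperplane $H_i$, $i\in K$ (so that $\<v^K,\a_i\>=0$ for $i\in \BS_0\cap K$ and $\<v^K,\th\>=1$ when $0\in K$) shows that $v_K$ is $K$-dominant, so $\<v_K, p(\s)\i(\g_j)\> \le \<v_K, \th_{\uK}\>\le 1$ by (1), which is the minute condition for $\CO$. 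Orbits outside $K'$ are trivial since $\<\l, \o_{\CO,K}\> = 0$.

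For (3), I would run a case analysis on the components of $K_{\ui}$. Each $i\in\ui$ satisfies $\<\l,\ua_i\>\ge 1$ by integrality and permissibility, and through Lemma~\ref{formula} the contributions coming from elements of $\ui$ in distinct components of $\G$ are supported on disjoint sub-diagrams, so they add in the pairing $\<v_K,\th_{\uK}\>$. The sharp bound $\<v_K,\th_{\uK}\>\le 1$ from (1) then forces $|\ui|\le 2$; when $|\ui|=2$ it forces both components of $K_{\ui}$ to be $A_1$, since any larger type would make the $\g_j$-contribution strictly exceed $\<\l,\ua_i\>$. Finally, examining how $\s$ acts on the pair of $A_1$'s and the resulting $K$-dominance constraints on $p_{K_{\ui}}(\l)$ recovers exactly the three listed sub-cases. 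The main difficulty will be the detailed combinatorial verification in (3) that nothing beyond these three $A_1\sqcup A_1$ configurations survives the sharp bound, and a secondary subtlety is the treatment of $0\in K$ in (1) and (2), where the translate by $e^K$ and the identification $\ua_0 = -\th$ must be tracked carefully.
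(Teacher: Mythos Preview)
Your proposal is correct and follows essentially the same route as the paper's proof. The paper first isolates the inequality $0\le\<v_K,\g\>$ for all $\g\in\Phi_{\uK}^+$ (your ``$v_K$ is $K$-dominant''), then proves (1) by the same case split on whether $0\in K$ (writing $\th_{\uK}=-\th+\d$ with $\d\in\sum_{i\in K\cap\BS_0}\BN\a_i$ in the second case, which is your component-wise splitting), and derives (2) exactly as you do from Lemma~\ref{formula} together with $0\le\<v_K,\g\>\le 1$ for all $\g\in\Phi_{\uK}^+$; for (3) the paper simply records ``case-by-case computation'', so your sketch is already more explicit than what the paper writes. Two small points to keep in mind when filling in the details: in (2) you should note that $(\Phi_{\uK},p(\s)|_{\Phi_{\uK}})$ is quasi-split (so the minute condition reduces to $\<\l,\o_{\CO,K}\>\le 1$), and in (3) the ``contributions add'' step uses that the roots $p(\s)^{-1}(\g_i)$ for $i\in\ui$ lie in pairwise distinct components of $K$, so that $\th_{\uK}-\sum_i p(\s)^{-1}(\g_i)$ remains a nonnegative combination of simple roots; with this the lower bounds $\<\l,\o_{\CO_i,K}\>\ge\tfrac12$ (with equality only for $A_1$) combine with (1) to force the listed configurations.
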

\begin{proof}
We first show that
\begin{equation*}
\tag{*} \text{for any } \g \in \Phi^+_{\uK}:\ 0 \le \<v_K, \g\>.
\end{equation*}

By definition, $\<v_K, \g\>=\<v-e^K, \g\>$. To prove that $\<v_K, \g\> \ge 0$, it suffices to consider the case where $\g=\ua_i$ is a simple root in $\Phi_{\uK}$. If $i \in \BS_0$, then $\<e^K, \a_i\>=0$ and $\<v, \a_i\> \ge 0$. Otherwise, $\g=-\th$, where $\th$ is the highest root in $\Phi^+$. In this case, $\<v-e^K, -\th\>=\<e^K, \th\>-\<v, \th\>=1-\<v, \th\> \ge 0$.
(*) is proved.

Now we prove (1). If $K \subseteq \BS_0$, then $\th-\th_{\uK} \in \Phi^+$ and $\<e^K, \th_{\uK}\>=0$, where $\th$ is the highest root in $\Phi^+$. So $\<v_K, \th_{\uK}\>=\<v-e^K, \th_{\uK}\> \le 1$ since $v \in \bar \aa$. Otherwise, we have $\th_{\uK}=-\th+\d$, where $\d \in \sum_{i \in K \cap \BS_0} \BN \ua_i$. Then $\<-e^K, \th_{\uK}\>=1$ and $\<v_K, \th_{\uK}\>=1+\<v, \th_{\uK}\> \le 1$ The ``Moreover" part now follows easily.

(2) Using (*) and (1), we have $\< v_K, \g\> \le \< v_k, \th_{\uK}\>\le 1$, so
\[
0 \le \<v_K, \g\> \le 1\text{ for any } \g \in \Phi_{\uK}^+.
\]
By Lemma \ref{formula}, we have $\<\l, \o_{\CO, K}\>=\<v_K, \g\> $ for some $\g \in \Phi_{\uK}^+$. Hence $0 \le \<\l, \o_{\CO, K}\> \le 1$. Now part (2) follows from the fact that $\Phi_{\uK}$ is quasi-split with respect to $\s_0$.

(3) This follows from (1) via a case-by-case computation.
\end{proof}

Let us explain how this proposition is the key tool to prove that (FC$_\spadesuit$) implies that $\mu$ is minute. Say for simplicity that $K$ is connected (the general case will be dealt with below). Assuming that (FC$_\spadesuit$) holds in a case where $\mu$ is not minute, we have to find a $K$-dominant $\l\in\{\mu\}$ which is not central on $K$. Then there exists $i\in K$ with $\<\l, \ua_i\> >0$. By Lemma~\ref{existence-Gamma-ui}, there is a based sub-diagram $(\G, \{ i\})$ of $K$ such that $\G$ contains exactly one element in each $\s$-orbit of $K$. By Prop.~\ref{minimal}, the element $t^\l c_{\{i\}}$ lies in ${}^K\Adm(\{\mu\})_\spadesuit$ and hence $t^\l c_{\{i\}}\s$ has a fixed point in $\bar\aa$. If we can arrange things such that one of the properties (1), (2), (3) of Prop.~\ref{mini-k} fails, then we have obtained the desired contradiction. A particularly simple case is given as the following corollary.

\begin{corollary} \label{special}
Suppose that $\tSS-K$ consists of a single $\s$-fixed special vertex, then (FC$_\spadesuit$) $\Rightarrow$ $\mu$ is minute.
\end{corollary}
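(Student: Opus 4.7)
The plan is a contradiction argument: I will produce an explicit element of ${}^K\Adm(\{\mu\})_\spadesuit$ with central Newton point via Proposition~\ref{minimal}, invoke (FC$_\spadesuit$) to obtain a fixed point in $\bar\aa$, and then read off minuteness from Proposition~\ref{mini-k}(2).

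First I would shift the origin so that the $\s$-fixed special vertex $v_0 \in \tSS - K$ plays the role of the base point. After this change, $\s$ acts linearly on $V$ (so $\s(0) = 0$), the change-of-origin vector $e^K$ vanishes, and the finite root system $\Phi_{\uK}$ is identified with the full finite root system $\Phi_0$ of $G$, with $p(\s)|_{\Phi_{\uK}} = \s_0$ and $\o_{\CO, K} = \o_{\CO}$ for every $\s_0$-orbit $\CO$ of $\BS_0$. Under these identifications, the minute condition of Definition~\ref{def:minute} reads simply $\<\mu, \o_{\CO}\> \le 1$ for all such $\CO$.

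Now suppose for contradiction that $\mu$ is not minute, so $\<\mu, \o_{\CO_0}\> > 1$ for some $\s_0$-orbit $\CO_0$. Because $G$ is $\brF$-simple, $\BS_0$ (and hence $K$) is a single connected Dynkin diagram, so $K$ has a unique $\s$-component, on which the nonzero dominant $\mu$ is noncentral. I would choose $i \in K$ with $\<\mu, \ua_i\> > 0$ and apply Lemma~\ref{existence-Gamma-ui} to produce a based sub-diagram $(\G, \{i\})$ of $K$ whose $\G$ meets every $\s$-orbit of $K$ exactly once; the triple $(\mu, \G, \{i\})$ is then permissible (in the sense of \S\ref{teq}) with $K' = K$. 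Proposition~\ref{minimal} yields $w := t^\mu c_{\G, \{i\}} \in {}^K\Adm(\{\mu\})_\spadesuit$ with central Newton point, so (FC$_\spadesuit$) furnishes a fixed point $v \in \bar\aa$ of $w \circ \s$. Proposition~\ref{mini-k}(2), applied to $v$, then forces $\<\mu, \o_{\CO, K}\> \le 1$ for every $p(\s)$-orbit $\CO$ of $K$; via the identifications of the previous paragraph this is exactly $\<\mu, \o_{\CO}\> \le 1$ for every $\s_0$-orbit $\CO$ of $\BS_0$, contradicting $\<\mu, \o_{\CO_0}\> > 1$.

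The only subtle point I foresee is ensuring that Proposition~\ref{mini-k}(2) supplies the desired inequality for \emph{every} $\s_0$-orbit rather than only for the one containing $i$. This is guaranteed by arranging that $\G$ hit every $\s$-orbit of $K$, which makes Lemma~\ref{formula} applicable orbit by orbit and is the key simplification afforded by the special-vertex hypothesis compared with the general case treated in the rest of Section~\ref{sec:fc-implies-class}.
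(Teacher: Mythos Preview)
Your proposal is correct and follows essentially the same approach as the paper's proof: after shifting the origin to the $\s$-fixed special vertex so that $K=\BS_0$, construct a permissible triple $(\mu,\Gamma,\{i\})$ via Lemma~\ref{existence-Gamma-ui} and invoke Proposition~\ref{mini-k}(2). The paper's version is more terse (it omits the explicit contradiction framing and the verification of the identifications $\Phi_{\uK}=\Phi_0$, $\o_{\CO,K}=\o_{\CO}$), but the logical content is identical.
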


\begin{proof}
Under this assumption, we may assume that $K = \BS_0$. By part (2) of Prop.~\ref{mini-k} it is then enough to show that there exists a permissible triple of the form $(\mu, \Gamma, \ui)$. We choose $i$ with $\<\mu, \a_i\> > 0$, set $\ui = \{i\}$ and apply Lemma~\ref{existence-Gamma-ui} to find a suitable $\Gamma$.
\end{proof}

Finally, we prove that

\begin{proposition}\label{conn}
Assume $\tSS$ is connected. If $\mu$ is not minute, then there exists some permissible triple $(\l, \G_{\ui}, \ui)$ such that (FC)$_\spadesuit$ fails for $t^\l c_{\ui}$.
\end{proposition}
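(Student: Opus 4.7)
The plan is to argue by contradiction via Proposition~\ref{mini-k}. Assume (FC)$_\spadesuit$ holds and that $\mu$ is not minute, so some $\s_0$-orbit $\CO_0\subseteq\BS_0$ satisfies $\<\mu, \o_{\CO_0}\>+\{\<\s(0), \o_{\CO_0}\>\}>1$. For any permissible triple $(\l,\G,\ui)$, Propositions~\ref{minimal} and~\ref{cox-ell} put $t^\l c_{\G,\ui}\in {}^K\Adm(\{\mu\})_\spadesuit$ with central Newton point, so (FC)$_\spadesuit$ would force $t^\l c_{\G,\ui}\s$ to fix a point in $\bar\aa$, hence all three conclusions (1)--(3) of Proposition~\ref{mini-k} would have to hold simultaneously. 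I will therefore produce a permissible triple $(\l,\G,\ui)$ that contradicts at least one of these three conclusions.

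The construction goes as follows. Choose a $K$-dominant $\l\in W_0\cdot\mu$ and let $\mathcal C$ denote the set of $\s$-components of $K$ on which $\l$ is non-central. For each $C\in\mathcal C$ pick an index $i_C\in C$ with $\<\l, \ua_{i_C}\>>0$, set $\ui=\{i_C\mid C\in\mathcal C\}$, and extend to a based sub-diagram $(\G,\ui)$ via Lemma~\ref{existence-Gamma-ui}. Since then $K_\ui=\bigsqcup_{C\in\mathcal C}C$, conclusion~(3) of Proposition~\ref{mini-k} already rules out any $\l$ that is non-central on three or more $\s$-components, and restricts the two-component case to the very specific $A_1\times A_1$ list given there; so after harvesting these easy subcases one may assume $\mathcal C$ has one of those limited shapes. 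In the remaining subcases, conclusion~(2) says $\l$ must be minute for $(\Phi_{\uK}, p(\s)|_{\Phi_{\uK}})$, and conclusion~(1) gives $\<v_K,\th_{\uK}\>\le 1$ with the sharp vanishing condition $\<v, \ua_j\>=0$ at each $j\in\BS_0\cap K$ whose multiplicity in $\th_{\uK}$ is strictly smaller than in $\th$, together with the explicit description $v^K=-\sum_{j\in\BS_0-K}\<v_K,\a_j\>\o_j^\vee$.

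The main obstacle is that ``minute for $(\Phi, \s)$'' and ``minute for $(\Phi_{\uK}, p(\s)|_{\Phi_{\uK}})$'' differ by the translation term $\{\<\s(0), \o_{\CO_0}\>\}$ and, when $s_0\in K$, by the change-of-origin of~\S\ref{reduce} (which alters the highest root from $\th$ to $\th_{\uK}$), so the desired contradiction with the non-minuteness of $\mu$ is not automatic. The proof therefore has to proceed by case-by-case inspection on the affine Dynkin type of $\tSS$ (connected, by assumption) and on the $\s$-orbit $\tSS\setminus K$. The subcase in which $\tSS\setminus K$ is a $\s$-fixed special vertex is already Corollary~\ref{special}. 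In every other subcase, I would select $\l$ and $\ui$ so that the non-minute orbit $\CO_0$ is captured by $K$; then use Lemma~\ref{formula} (together with the ``Moreover'' clause of Proposition~\ref{mini-k}(1)) to translate the fixed-point equation $t^\l c_{\G,\ui}\s(v)=v$ into explicit linear relations among the $\<v_K, p(\s)^{-1}(\g_j)\>$; and finally show that these relations, combined with the non-minuteness inequality at $\CO_0$, force either $\<v_K,\th_{\uK}\>>1$, or the failure of minuteness for $\l$ at some $p(\s)$-orbit $\CO\subseteq K$, or a violation of the structural dichotomy of~(3). This diagrammatic chase, guided by the Bourbaki list of affine types and their diagram automorphisms, is the heart of the argument and where most of the actual work resides; the payoff is a violation of (1), (2), or (3) in every case, producing the desired contradiction.
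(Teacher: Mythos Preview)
Your outline is correct and matches the paper's approach exactly: argue by contradiction, use Propositions~\ref{minimal} and~\ref{mini-k} to force the constraints (1)--(3) on any permissible triple, then run a case-by-case analysis over the affine types and the possible $\s$-stable maximal $K\subsetneq\tSS$ to exhibit a triple violating one of them, invoking Corollary~\ref{special} for the special-vertex case and Lemma~\ref{formula} together with the ``Moreover'' clause of Proposition~\ref{mini-k}(1) in the remaining cases. The paper carries out precisely this program, giving the details for the classical types (with small-rank cases $\tilde A_2,\tilde A_3,\tilde B_3,\tilde C_2,\tilde C_3,\tilde D_4$ treated separately) and for $\tilde E_6$; as you correctly anticipate, this case analysis is where essentially all the work lies, and your proposal is a faithful blueprint rather than a proof.
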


\subsection{} We first prove the Proposition for classical groups, except for small rank cases: $\tilde A_2$, $\tilde A_3$, $\tilde B_3$, $\tilde C_2$, $\tilde C_3$, $\tilde D_4$.

If $K$ is not connected, then a straightforward case-by-case analysis shows that we may choose $\l$ such that either $\l$ is non-minute on $\Phi_{\uK}$ or $\l$ is noncentral on at least two of the connected components of $K$ that are not both isomorphic to $A_1$. This contradicts Proposition \ref{mini-k}.

Now we assume that $K$ is connected. By the proof of Corollary \ref{special}, the corresponding parahoric subgroup $\brK$ is not a special maximal parahoric subgroup. Since $K$ is a maximal $\s$-stable proper subset of $\tSS$, we are in one of the following cases (up to isomorphism):

\begin{itemize}
\item $G$ is of type ${}^2 \tilde A_{n-1}$ (for $n \ge 5$), and $K$ is obtained from $\tSS$ by removing a $\s$-orbit which is of the form $A_2$. Now $K$ is of type $A_{n-2}$ and we can assume $K=\{1, 2, \cdots, n-2\}$. Since $\mu$ is non-minute, that is, $\mu \notin \{\o_1^\vee, \o_{n-1}^\vee\}$, we can choose a $K$-dominant $\l\in W_0\mu$ such that $p_{\uK}(\l) \notin \{\o_{1, K}^\vee, \o_{n-2, K}^\vee\}$ and $\l$ is not central on $K$, that is, Prop.~\ref{mini-k} (2) fails.

\item $G$ is of type ${}^2 \tilde B_n$ (for $n \ge 4$) and $K=\tSS-\{0, 1\}$ is of type $B_{n-1}$. Since $\mu \neq \o^\vee_1$, we can choose $\l$ such that $p_{\uK}(\l) \neq \o_{2, K}$ and hence Prop.~\ref{mini-k} (2) fails.

\item $G$ is of type $\tilde B_n$ or ${}^2 \tilde B_n$ (for $n \ge 4$) and $K=\tSS-\{n\}$ is of type $D_{n-1}$. Again, since $\mu \neq \o^\vee_1$, we can choose $\l$ such that $\l \neq \o_{n-1, K}^\vee$ and hence Prop.~\ref{mini-k} (2) fails.

\item $G$ is of type ${}^2 \tilde C_n$ (for $n \ge 3$) and $K=\tSS-\{0, n\}$ is of type $A_{n-1}$. We can choose $\l$ such that $p_{\uK}(\l) \neq 0$. If~\ref{mini-k} (2) holds, then $p_{\uK}(\l) \in \{\o_{1, K}^\vee, \o_{n-1, K}^\vee\}$. Without loss of generality, we assume $p_{\uK}(\l)=\o_2^\vee$. Take $(\G, \ui)=(\{2, \dots, \lfloor \frac{n}{2} \rfloor \}, \{2\})$. Then $(\l, \G, \ui)$ is a permissible triple. Let $v$ be a fixed point of $t^\l c_{\G, \ui} \s$. By Lemma \ref{formula}, one computes that $\<v_K, \th_{\uK}\>=\<v_K, \ua_{n-1}\>=1$. Therefore, Prop.~\ref{mini-k} (1) fails since the multiplicity of $\ua_{n-1}$ is strictly smaller than the multiplicity of $\a_{n-1}$ in $\th$.

\item $G$ is of type ${}^4 \tilde D_n$ (for $n \ge 5$) and $K=\tSS-\{0, 1, n-1, n\}$ is of type $A_{n-3}$. Then Prop.~\ref{mini-k} (1) fails similarly as the ${}^2 \tilde C_n$ case above.

\item $G$ is of type ${}^2\tilde D''_n$ (for $n \ge 5$) and $K=\tSS-\{0, n\}$ is of type $A_{n-1}$. Again, Prop.~\ref{mini-k} (1) fails similarly as the ${}^2 \tilde C_n$ case above.

\item $G$ is of type ${}^2\tilde D_n$ or ${}^2\tilde D_n'$ (for $n \ge 5$) and $K=\tSS-\{0, 1\}$ is of type $D_{n-1}$. Since $\mu \neq \o^\vee_1$, we can choose $\l$ such that $\l \neq \o_{2, K}^\vee$ and hence Prop.~\ref{mini-k} (2) fails.
\end{itemize}

\subsection{} The basic idea for the small rank classical groups and the exceptional groups is similar, but involves more case-by-case analysis and direct constructions of the permissible triples that do not satisfy (FC)$_\spadesuit$. We provide the details for type $\tilde A_3$, type $\tilde D_4$ and type $\tilde E_6$.

\subsubsection{Type $\tilde A_3$}

If $\s \in \{\t_1, \t_1\i\}$, then $\s$ acts transitively on $\tSS$ and hence $K=\emptyset$. Assume (FC) holds for all elements attached to a permissible triple $(\l, \emptyset, \emptyset)$. Then each element of $\Adm(\{\mu\})$ satisfies the fixed point condition. So $B(G, \{\mu\})$ consists of a single element. By \cite[6. 11]{kottwitz-isoII}, $\mu \in \{\o_1^\vee, \o_{n-1}^\vee\}$, contradicting the assumption that $\mu$ is non-minute.

If $\s=\t_1^2$, we can take $K=\{1, 3\}$. If $\mu$ is non-minuscule, we can take $\l$ such that $\<\l, \a_1\> \ge 2$ and Prop.~\ref{mini-k} (3) fails for the permissible triple $(\l, \{1\}, \{1\})$. If $\mu=\o_1^\vee$, then (FC) fails for $(\l, \G \ui)=(s_1(\mu), \{2\}, \{2\})$ via direct computation.

If $\s=\varsigma_0$, then $\mu$ is non-minuscule since $\mu$ is non-minute. So we can assume $\<\l, \a_i\> \ge 2$ for some $j \in K$. If $K=\{1, 0, 3\}$, then Prop.~\ref{mini-k} (2) fails. Assume $K=\{0, 2\}$. Let $(\l, \G, \ui)$ be a permissible triple and let $v$ be a fixed point of $t^\l c_{\G, \ui} \s$. One computes that $\<v_K, \th_{\uK}\> \ge 1$ since $\<\l, \a_i\> \ge 2$. If Prop.~\ref{mini-k} (1) holds, then $\<v_K, \th_{\uK}\> = 1$, that is, $(\l, \a_j)=2$ and $\<\l, \a_{j'}\>=0$, where $j'$ is the other element of $K$ different from $j$. In this case, $(\G, \ui)=(\{j\}, \{j\})$. By the ``Moreover'' part of Prop.~\ref{mini-k} (1), we see that $\s(v^K)=v^K$ and hence $\l\in V_K$. So $\mu=-\ua_0^\vee$. Now we take $(\l, \G, \ui)=(s_2 s_1(\mu), \{0, 2\}, \{0, 2\})$. Again we deduce that $\l \in V_K$, which is impossible.

Assume $\s=\t_1 \varsigma_0$. If $\mu$ is non-minuscule, (FC) fails for some permissible triple similarly as in the $\s=\varsigma_0$ case. It remains to consider the case $\mu=\o_2^\vee$. Assume $K=\{0, 1\}$. Then (FC) fails for $(\l, \G, \ui)=(s_1 s_0(\mu), \{1\}, \{1\})$ via direct computation.

\subsubsection{Type $\tilde D_4$.} Let $(\l, \G, \ui)$ be a permissible triple such that $\l$ is noncentral on $\Phi_{\uK}$ and let $v$ be a fixed point of $t^\l c_{\G, \ui}$.

If $\s$ is of order $4$. Then $\{0, 1, 3, 4\}$ is a $\s$-orbit. Suppose $K=\{2\}$ and (FC) holds for $(\l, \G, \ui)$. Then $\<\l, \a_2\> \ge 1$ and $\<v, \a_2^\vee\>=\<v_K, \a_2^\vee\>=\frac{1}{2} \<\l, \a_2\> \le 1$. If the equality holds, then Prop.~\ref{mini-k} (1) fails since the multiplicity of $\a_2$ in $\th$ is two. So $\<\l, \a_2\>=1$ and hence $\<v^K, \a_j^\vee\>=\frac{1}{4}$ for $j \in \{1,3, 4\}$. Now $\s(v^K)=v^K$ and hence $\l \in V_K$, which is impossible. Suppose $K=\{0, 1, 3, 4\}$. Then Prop.~\ref{mini-k} (3) fails directly.

If $\s$ is of order $3$, we may assume $\{1, 3, 4\}$ is a $\s$-orbit.  If $K$ equals $\{1, 3, 4\}$ or $\{0, 1, 3, 4\}$, we can assume further that $\l$ is noncentral on $\{1, 3, 4\}$, then Prop.~\ref{mini-k} (3) fails directly. Assume $K=\{0, 2\}$ and (FC) holds holds for $(\l, \G, \ui)$. Then $\l$ is minuscule (or minute) on $K$ by Prop.~\ref{mini-k} (2). One computes by Lemma \ref{formula} that $\<v_K, \a_2^\vee\>=\<v_K, \a_0^\vee\>=\frac{1}{3}$. Since the multiplicity of $\a_2$ in $\th$ is two, we have by Prop.~\ref{mini-k} (1) that $\<v, \a_j\>=0$ and hence $\<v^K, \a_j^\vee\>=\frac{1}{3}$ for $j \in \{1, 3, 4\}$. So $\s(v^K)=v^K$ and hence $\l \in V_K$, which is impossible. The case $K=\{2\}$ can be handled similarly.

\subsubsection{Type $\tilde E_6$}
Case (1): $\s=\id$.

If $K=\tSS-\{j\}$ for $j \in \{0, 1, 6\}$, then Prop.~\ref{mini-k} (2) fails by Corollary \ref{special}.

If $K=\tSS-\{j\}$ for $j \in \{2, 3, 5\}$, we can assume $j=2$ and claim that $\l$ such that $\l$ is non-minute for $K'=\tSS-\{0, 2\}$ and hence Prop.~\ref{mini-k} (2) fails. Indeed, if $\l$ is minute for $K'$, that is, $p_{\uK}(\l) \in \{\o_{1, K'}^\vee, \o_{6, K'}^\vee\}$, then $\<\l, \a_2\> \le -1$ since $\mu \neq 0$. Then the $K$-dominant cocharacter in the $W_K$-orbit of $s_2(\l)$ is non-minute for $K'$ and the claim is verified.

If $K=\tSS-\{4\}$, then, by a similar argument as in $K=\tSS-\{2\}$, we can assume $\l$ is noncentral on at least two connected components of $K$ and Prop.~\ref{mini-k} (3) fails.

Case (2): $\s=\varsigma_0$.

If $K$ equals $K=\tSS-\{0\}$ or $K=\tSS-\{2\}$ or $K=\tSS-\{4\}$, the arguments are similar as in Case (1).

If $K=\tSS-\{1, 6\}$, we show that we can take $\l$ such that $\l$ is non-minute for $K$ and hence Prop.~\ref{mini-k} (2) fails. Assume $\l$ is minute for $K$. Notice that one of $\{1, 6\}$, say $1$, satisfies $\<\l, \a_1\> \le 1$ since $\mu \neq 0$. Then the $K$-dominant cocharacter in the $W_K$-orbit of $s_1(\l)$ is non-minute for $K$.

If $K=\tSS-\{3, 5\}$, then Prop.~\ref{mini-k} (2) or (3) fails since we can assume $\l$ is either non-minute on $(\Phi_{\uK}, p(\s)|_{\Phi_{\uK}})$ or noncentral on both of the two $\s$-components of $K$.

Case (3): $\s=\t_1$.

If $K$ equals $\tSS-\{0, 1, 6\}$ or $\tSS-\{2, 3, 5\}$, we can assume $\l$ is noncentral on $\Phi_{\uK}$. Then Prop.~\ref{mini-k} (2)/(3) fails since $\l$ is non-minute on $K$.

If $K=\tSS-\{4\}$, then Prop.~\ref{mini-k} (3) fails similarly as the $K=\tSS-\{4\}$ case in Case (1).

\section{Shimura varieties}
\label{sec:shimura}

\subsection{Setup}
\label{shimura-setup}
Let $({\bf G}, \{h\})$ be a Shimura datum, fix a prime number $p$, and let ${\bf K}={\bf K}^p{\bf K}_p$  be an open compact subgroup of ${\mathbf G}(\BA_f)$, where ${\mathbf K}^p \subseteq {\mathbf G}(\BA^p_f)$ is a sufficiently small open compact subgroup, and ${\bf K}_p \subseteq {\bf G}(\BQ_p)$ is a parahoric subgroup. Let $\mathrm{Sh}_{\mathbf K}=\mathrm{Sh}({\mathbf G}, \{h\})_{\mathbf K}$ be the corresponding Shimura variety. It is a quasi-projective variety defined over the Shimura field $\bE$.
Let $O_E$ be the ring of integers of the completion $E$ of $\bE$ at a place ${\bf p}$ above $p$.

We assume that the axioms in the paper \cite{HR} by Rapoport and the second-named author are satisfied. In particular, we dispose of an integral model ${\mathbf S}_{\mathbf K}$ of the Shimura variety over $O_E$.
Let $Sh_{\bK}={\mathbf S}_{\bK}\times_{\Spec O_{E}}\Spec  \kappa_E$ be the special fiber of ${\mathbf S}_{\bK}$.

\begin{remark}
The axioms of~\cite{HR} are known to hold in the unramified PEL case for $p > 2$, and for odd ramified unitary groups. See the joint work of the second-named author and Zhou \cite{HZ}. Most of the axioms are also verified for Shimura varieties of Hodge type by Zhou \cite{Zhou}.
\end{remark}

We write $F=\BQ_p$, $G = {\mathbf G}\otimes_{\BQ}\BQ_p$, and $\CK = {\bf K}_p$ to make the connection with our previous setting. After making some group-theoretic choices as in Section \ref{sec:group-theoretic-setup}, $\CK$ corresponds to a subset $K\subseteq \tSS$, and we will write $Sh_K$ instead of $Sh_{\bK}$ below.  We often identify $Sh_K$ with its set of $\kk$-valued points $Sh_K(\kk)$.
From the Shimura datum, we obtain a conjugacy class $\{ \mu\}$ of cocharacters (we follow the conventions of \cite{HR}). Many of the axioms are related to the following commutative diagram:
\begin{equation}\label{diagram-hr-axioms}
\begin{gathered}
\xymatrix{ & & \brK \backslash G(\breve\BQ_p)/\brK \\  Sh_K  \ar[r]^-{\Upsilon_K} \ar@/^1pc/[urr]^{ \lambda_K } \ar@/_1pc/[drr]_{ \delta_K } & G(\breve\BQ_p)/\brK_\sigma \ar[ur]_{\ell_K} \ar[dr]^{d_K} & \\ & & B(G)}.
\end{gathered}
\end{equation}
Here by $\brK_\s$ we indicate that $\brK$ acts by $\s$-conjugation.

We then have
\begin{align*}
& \Upsilon_K(Sh_K)=\cup_{w \in \Adm(\{\mu\})} \brK w \brK/\brK_\sigma\\
& \l_K(Sh_K)=\brK \backslash \cup_{w \in \Adm(\{\mu\})} \brK w \brK/\brK=W_K \backslash \Adm^K(\{\mu\})/W_K,\\
& \d_K(Sh_K)=B(G, \{\mu\}).
\end{align*}

For any $[b] \in B(G, \{\mu\})$, the \emph{Newton stratum} $\mathit S_{K, [b]}$ is defined to be the fiber of $\d_K$ over $[b]$.

For any $w \in W_K \backslash \Adm^K(\{\mu\})/W_K$, the \emph{Kottwitz-Rapoport stratum (KR stratum)} $KR_{K, w}$ is defined to be the fiber of $\l_K$ over $w$.

There is another stratification of $Sh_K$ introduced in \cite{HR}. For any $w \in \Adm(\{\mu\}) \cap {}^K \tW$, we set
\[
EKOR_{K, w}=\Upsilon_K \i(\brK_\sigma (\brI w \brI)/\brK_\sigma)
\]
and call it the {\it Ekedahl-Kottwitz-Oort-Rapoport stratum (EKOR-stratum)} of $Sh_K $ attached to $w \in {}^K \tW$. The EKOR stratification is finer than the KR stratification. If $G$ is unramified and $K$ is hyperspecial, the EKOR stratification coincides with the Ekedahl-Oort stratification in \cite{Vi}.

\subsection{Shimura varieties, RZ spaces and affine Deligne-Lusztig varieties}
Note that although no affine Deligne-Lusztig varieties appear in the commutative diagram in the previous section, using the restriction of the Lang map $G(\breve{\BQ}_p) \rightarrow G(\breve \BQ_p)/\brK_\s$, $g\mapsto g^{-1}b\s(g)$, to $X(\mu, b)_K$, we obtain a bijection
\[
\JJ_b\backslash X(\mu, b)_K(\kk) \isoarrow
d_K^{-1}([b]) \cap \ell_K^{-1}(\Adm^K(\{\mu\})).
\]
In particular, $\Upsilon_K$ restricts to a map from the Newton stratum for $b$ to $\JJ_b\backslash X(\mu, b)_K(\kk)$:
\begin{equation}\label{NewtonStratumVsADLV}
\Upsilon_K(\d_K\i([b])) = d_K\i([b])\cap \Im\Upsilon_K = d_K\i([b])\cap\ell_K\i(\Adm^K(\{\mu\}))
= \JJ_b\backslash X(\mu, b)_K(\kk).
\end{equation}

Another (but similar) way to relate Newton strata to affine Deligne-Lusztig varieties is via Rapoport-Zink spaces. In the following discussion, we always use the ``flat RZ space'' (whose special fibre is described, by the coherence conjecture as proved by Zhu, by the $\mu$-admissible set), rather than the ``naive RZ space''. By the uniformization theorem (by Rapoport-Zink in the PEL case; by Kim~\cite{kim} in the case of Shimura varieties of Hodge type), the Rapoport-Zink space attached to the pertaining data is related in an explicit way to the basic locus of the Shimura variety. Cf. \cite{Vollaard}, \cite{Vollaard-Wedhorn} for an example of how to make this explicit.

For the non-basic Newton strata, the connection is more complicated: To describe the full Newton stratum, the leaves in the sense of Oort have to be taken into account as well. In the case of a hyperspecial level structure, the Newton stratum has been shown to have an ``almost-product structure'', i.e., there is a surjective finite-to-finite correspondence between the Newton stratum and the product of the central leaf and a truncated RZ space. See \cite{Mantovan:AlmostProduct}, \cite{Hamacher:AlmostProduct}, \cite{CaraianiScholze}.

Consider the following condition which can be thought of as an analogue of (part of) the axioms in \cite{HR}, in the setting of RZ spaces:

\bigskip
($\Diamond$) \hspace{.5cm}
There is an isomorphism of perfect schemes
\[
\CM(G, \mu, b)_{K, \kk}^{p^{-\infty}} \cong X(\mu, b)_K.
\]
\bigskip

In \cite[Prop. 0.4]{Zhu}, this is proved in the case of hyperspecial level structure. We expect that the arguments there can be extended to the case of RZ spaces attached to a PEL-datum (using the coherence conjecture proved by Zhu which ensures that the special fiber of the RZ space ``corresponds to'' the admissible set figuring in the definition of $X(\mu, b)_K$), and that things can be arranged such that via the uniformization theorem of Rapoport and Zink, one obtains a commutative diagram relating the maps ($\Diamond$) and the map from the Newton stratum for $b$ to $\JJ_b\backslash X(\mu, b)_K$ induced by $\Upsilon_K$.

\subsection{Hodge-Newton decomposition for RZ spaces}
\label{subsec:HN-for-RZ}

To make a precise statement, let us fix the following notation: Assume that $(G, b, \mu)$ arises from a PEL-datum, fix a level structure $K\subseteq \tSS$, $\s(K)=K$, and let $\CM=\CM(G, \mu, b)_K$ be the corresponding RZ space.

For a semi-standard Levi subgroup $M$, a conjugacy class $\{\mu_M\}$ inducing $\{\mu\}$ and an element $b_M\in [b]\cap M(\brF)$, we have a closed immersion
\[
\CM(M, \mu_M, b_M)_{K_M} \rightarrow \CM(G, \mu, b)_K
\]
(after base change to the ring of integers of the compositum of the reflex fields involved). See~\cite[Example 5.2 (iii)]{RV}. When regarded as a moduli space of $p$-divisible groups with additional structure, the image of this immersion is the locus where one has a product decomposition of the type ``specified by $M$''. Note that in~\cite{RV}, it is assumed that the framing object has a product decomposition, i.e., lies in the image of the map; this means that the closed immersion above should be thought of as the composition of the map in \cite{RV} and the automorphism of $\CM$ given by the change of framing object, analogously to the map~\eqref{map-from-levi}. Similarly as for affine Deligne-Lusztig varieties, while the map itself depends on these choices, the image does not.

We will require below that the condition ($\Diamond$) is satisfied for $G$ and $M$, and that these isomorphisms give rise to a commutative diagram with the map in~\eqref{map-from-levi}. This is true in the case of~\cite[Prop.~0.4]{Zhu}.

From the Hodge-Newton decomposition for affine Deligne-Lusztig varieties, Theorem~\ref{HN-dec}, we obtain

\begin{theorem}
In the above situation, assume that ($\Diamond$) holds, compatibly with passing to Levi subgroups, and the conditions in Section~\ref{HN-setup} hold, i.e., the pair $(\mu, b)$ is HN decomposable with respect to some Levi subgroup $M$. Then we have a decomposition
\[
\CM(G, \mu, b)^{\rm red}_{K, \kk} \cong \bigsqcup_{P'=M'N'\in \fkP^\s / W^\s_K } \Im\left(\CM(M', \mu_{P'}, b_{P'})_{K, \kk} \rightarrow \CM(G, \mu, b)_{K, \kk}\right)^{\rm red}.
\]
as a disjoint union of open and closed subschemes.
\end{theorem}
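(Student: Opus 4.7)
The plan is to transport the Hodge-Newton decomposition from the affine Deligne-Lusztig side (Theorem~\ref{HN-dec}) to the Rapoport-Zink side via the identification provided by hypothesis~($\Diamond$). First I would invoke~($\Diamond$) both for the ambient triple $(G,\mu,b)$ and for each of the Levi triples $(M',\mu_{P'},b_{P'})$ that appear, obtaining isomorphisms
\[
\CM(G,\mu,b)_{K,\kk}^{p^{-\infty}}\cong X(\mu,b)_K,\qquad
\CM(M',\mu_{P'},b_{P'})_{K_{M'},\kk}^{p^{-\infty}}\cong X^{M'}(\mu_{P'},b_{P'})_{K_{M'}}.
\]
By the assumed compatibility of~($\Diamond$) with passage to Levi subgroups, these isomorphisms fit into a commutative square whose upper horizontal arrow is the closed immersion $\CM(M',\mu_{P'},b_{P'})_{K_{M'},\kk}\to\CM(G,\mu,b)_{K,\kk}$ (after perfection) and whose lower horizontal arrow is the map $\phi_{P',K}$ of~\eqref{map-from-levi}.

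Next I would apply Theorem~\ref{HN-dec}, which gives the disjoint decomposition
\[
X(\mu,b)_K=\bigsqcup_{P'=M'N'\in\fkP^\s/W_K^\s}\Im(\phi_{P',K})
\]
into open and closed subsets. Transporting this decomposition across the commutative squares above yields the corresponding disjoint decomposition of $\CM(G,\mu,b)_{K,\kk}^{p^{-\infty}}$ into the images of the perfections of the Levi RZ spaces. The identification of these images with the perfections of $\Im\bigl(\CM(M',\mu_{P'},b_{P'})_{K_{M'},\kk}\to\CM(G,\mu,b)_{K,\kk}\bigr)^{\rm red}$ follows from the fact that perfection preserves images of closed immersions and commutes with the formation of reduced subschemes.

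Finally I would descend from the perfection back to the reduced scheme itself. Since perfection is a universal homeomorphism, it induces a bijection between the set of open-and-closed subschemes of a perfect scheme and those of its reduction; thus the disjoint decomposition of the perfection in the previous paragraph corresponds uniquely to a disjoint decomposition of $\CM(G,\mu,b)_{K,\kk}^{\rm red}$ into open and closed subschemes of the stated form. The main obstacle, and the place where care is required, is the bookkeeping verifying that the closed-immersion structure, the framing choices for $b_{P'}$, and the identification of index sets $\fkP^\s/W_K^\s$ on both sides match up coherently under~($\Diamond$); once this is done, the result follows directly from Theorem~\ref{HN-dec}.
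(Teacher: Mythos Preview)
Your proposal is correct and follows essentially the same approach as the paper: use ($\Diamond$) and its compatibility with Levi subgroups to identify the (perfections of the) RZ spaces with the corresponding affine Deligne-Lusztig varieties, apply Theorem~\ref{HN-dec} on that side, and observe that surjectivity, disjointness, and the open-and-closed property can all be checked after passing to the perfection. Your explicit remark that perfection is a universal homeomorphism, and your attention to matching the framing choices and index sets, are a bit more detailed than the paper's terse argument, but the strategy is the same.
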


\begin{proof}
The morphisms described above induce a morphism from the union of the RZ spaces attached to the Levi subgroups to the space on the left hand side. The properties that this is a surjection, that the union is disjoint and that the subsets are open and closed can be checked after passing to the perfection. There the desired statement follows directly from Theorem~\ref{HN-dec}, using property ($\Diamond$).
\end{proof}

We expect that the result also holds for RZ spaces of Hodge type; to establish this, one needs to define the closed immersions of spaces coming from Levi subgroups, and check property ($\Diamond$). Furthermore, it seems reasonable to hope that this result holds not only on the level of the underlying reduced subschemes, but also on the level of the corresponding adic spaces, i.e., for the generic fibres. Results of this type have been proved by several people, cf.~Mantovan~\cite{mantovan}, Shen~\cite{Shen}, Hong~\cite{hong}, \cite{hong2}. All the cited papers deal with hyperspecial level structure only, however.

\subsection{The basic Newton stratum} Recall that we denote by $[b_0]$ the unique basic $\sigma$-conjugacy class in $B(G, \{\mu\})$.

\begin{definition}\label{def:EKORtype}
We say that the basic locus $\mathit S_{K, [b_0]}$ is of EKOR type if it is a union of EKOR strata.
\end{definition}

\subsection{The main theorem for Shimura varieties}

\begin{theorem}\label{main'}
If $(G, \{\mu\}, \brK)$ arises from a Shimura datum as in Section~\ref{shimura-setup}, and $G$ is quasi-simple over $F$, then the conditions (1) to (5) in Theorem~\ref{main} are equivalent to the following equivalent conditions:
\begin{enumerate}[label=(\arabic*\,$'$), start=3]
\item
All non-basic Newton strata consist of only finitely many leaves, i.e., in the notation of \cite{HR}, as recalled in Section~\ref{shimura-setup}: For all non-basic $[b]\in B(G, \{\mu\})$,
\[
\Upsilon_K(\delta_K^{-1}([b])) \text{\ is a finite set;}
\]
\item
Any EKOR stratum of $Sh_K$ is contained in a single Newton stratum;
\item
The basic locus $\mathit S_{K, [b_0]}$ is of EKOR type (Def.~\ref{def:EKORtype}).
\end{enumerate}
In particular, the validity of these conditions only depends on the choice of $(G, \{\mu\})$ and is independent of the choice of the parahoric subgroup $\CK$.
\end{theorem}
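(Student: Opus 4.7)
The plan is to establish each equivalence (i)\,$\Leftrightarrow$\,(i$'$) for $i\in\{3,4,5\}$ separately, without invoking Theorem~\ref{main}. The central bridge between the Shimura-theoretic and group-theoretic sides is the bijection
\[
\Upsilon_K(\delta_K^{-1}([b])) \cong \JJ_b \backslash X(\mu, b)_K(\kk)
\]
from~(\ref{NewtonStratumVsADLV}), combined with the surjectivity of $\Upsilon_K$ onto $\bigcup_{w\in\Adm(\{\mu\})}\brK w\brK/\brK_\s$ (an HR axiom), the fine decomposition $X(\mu,b)_K = \bigsqcup_{w\in\EO{K}} X_{K,w}(b)$ of Section~\ref{fine}, and earlier results on straight elements, $\supp_\s$, and fixed points.

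For (3)\,$\Leftrightarrow$\,(3$'$), I would translate the finiteness of $\Upsilon_K(\delta_K^{-1}([b]))$ into finiteness of $\JJ_b\backslash X(\mu,b)_K(\kk)$ via~(\ref{NewtonStratumVsADLV}), then use that $\EO{K}$ is finite together with Proposition~\ref{zero-diml-adlv}: a fine stratum $X_w(b)$ is zero-dimensional exactly when $\JJ_b$ acts transitively on it, which (since $\JJ_b$-orbits in $X(\mu,b)_K$ are discrete) gives $\dim X(\mu,b)_K = 0 \Leftrightarrow \JJ_b\backslash X(\mu,b)_K$ is finite. Applying this to all non-basic $[b]$ yields the equivalence.

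For (4)\,$\Leftrightarrow$\,(4$'$), the observation is that an EKOR stratum $EKOR_{K,w}$ meets the Newton stratum $\mathit S_{K,[b]}$ precisely when $X_w(b)\neq\emptyset$, which again follows by applying~(\ref{NewtonStratumVsADLV}) stratum-by-stratum together with surjectivity of $\Upsilon_K$. Hence saying every EKOR stratum lies in a single Newton stratum translates verbatim to condition (4). The equivalence (5)\,$\Leftrightarrow$\,(5$'$) is the most substantive: the basic locus $\mathit S_{K,[b_0]}$ is of EKOR type iff every EKOR stratum it meets is contained in it, which by the same correspondence means that for every $w\in\EO{K}$ with $X_w(b_0)\neq\emptyset$ we have $X_w(b)=\emptyset$ for all other $[b]$, i.e.\ $\brI w\brI \subseteq [b_0]$. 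By Proposition~\ref{finite} this is equivalent to $\kappa(w)=\kappa(b_0)$ together with $W_{\supp_\s(w)}$ finite; the first condition is automatic for $w\in\Adm(\{\mu\})$ since $\kappa$ is constant on $B(G,\{\mu\})$, and by Lemma~\ref{fix} the second is equivalent to $w\circ\s$ fixing a point in $\bar{\mathbf a}$, which is exactly condition (5) of Theorem~\ref{main}.

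The main obstacle I expect is bookkeeping at the interface between the Shimura variety and the group-theoretic side: one must be careful that ``EKOR stratum is non-empty'' and ``$X_w(b)\neq\emptyset$ for some $[b]\in B(G,\{\mu\})$'' correspond cleanly, which relies on the HR axiom that $\Upsilon_K$ surjects onto the admissible union of $\brK$-double cosets. Once this compatibility is checked, each of the three equivalences is a short consequence of material already developed in earlier sections, so no new geometric input is needed; the final independence statement in the theorem then follows from the corresponding statement in Theorem~\ref{main}.
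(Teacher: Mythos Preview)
Your proposal is correct and follows essentially the same approach as the paper: each equivalence $(i)\Leftrightarrow(i')$ is proved separately via the key lemma that $X_w(b)\neq\emptyset$ if and only if $EKOR_{K,w}\cap S_{K,[b]}\neq\emptyset$, together with the bijection~(\ref{NewtonStratumVsADLV}). Two minor remarks on presentation: for $(3)\Leftrightarrow(3')$ the paper invokes the finiteness result of~\cite{RZ:indag} rather than arguing purely via Proposition~\ref{zero-diml-adlv}, but your route through that proposition works equally well (using $\dim X(\mu,b)_K=\max_{w\in\EO{K}}\dim X_w(b)$ from \cite[Prop.~6.20]{HR}); and for $(5)\Leftrightarrow(5')$ you correctly identify Proposition~\ref{finite} and Lemma~\ref{fix} as the bridge between ``$\brI w\brI\subseteq[b_0]$'' and ``$w\circ\s$ has a fixed point in $\bar{\aa}$'', a step the paper leaves implicit when it says both equivalences ``follow immediately'' from the lemma.
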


We prove the theorem by proving that each of the conditions listed above is equivalent to the group theoretic version, i.e., the condition in Theorem~\ref{main} with the corresponding label.

\subsection{Newton strata versus leaves: (3) $\Longleftrightarrow$ (3$'$)}
\label{subsec:3-equiv-3S}

In view of~\eqref{NewtonStratumVsADLV}, the equivalence of (3) and (3$'$) follows from the following

\begin{proposition}
The space $X(\mu, b)_K$ has dimension $0$ if and only if the set $\JJ_b\backslash X(\mu, b)_K$ is finite.
\end{proposition}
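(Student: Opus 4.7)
The plan is to combine the fine decomposition $X(\mu, b)_K = \bigsqcup_{w \in \EO{K}} X_{K, w}(b)$ recalled in Section~\ref{fine} with the formula $\dim X(\mu, b)_K = \max_{w \in \EO{K}} \dim X_w(b)$ from \cite[Prop.~6.20]{HR}, together with the equivalence of the three conditions in Proposition~\ref{zero-diml-adlv}.

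First I would note that the fine decomposition is $\JJ_b$-stable: for $j \in \JJ_b$ the relation $jb = b\s(j)$ gives $(jg)^{-1} b \s(jg) = g^{-1} b \s(g)$, so the defining condition of each piece is invariant under the $\JJ_b$-action. Consequently
\[
\JJ_b \backslash X(\mu, b)_K \; = \; \bigsqcup_{w \in \EO{K}} \JJ_b \backslash X_{K, w}(b),
\]
and since $\EO{K}$ is finite, finiteness of the left-hand side reduces to finiteness of each factor on the right.

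For the direction $\dim X(\mu, b)_K = 0 \Rightarrow \JJ_b \backslash X(\mu, b)_K$ finite, I would argue as follows: by \cite[Prop.~6.20]{HR}, $\dim X_w(b) = 0$ for every $w \in \EO{K}$ with $X_w(b) \neq \emptyset$; by Proposition~\ref{zero-diml-adlv} this forces $\JJ_b$ to act transitively on each such $X_w(b)$; and pushing transitivity along the surjective, $\JJ_b$-equivariant projection $X_w(b) \twoheadrightarrow X_{K, w}(b)$, the $\JJ_b$-action on $X_{K, w}(b)$ is transitive too. Hence each $\JJ_b \backslash X_{K, w}(b)$ is a single point (or empty), and the disjoint union is finite.

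For the converse, I would invoke the fact recorded in the proof of the implication $(2) \Rightarrow (1)$ of Proposition~\ref{zero-diml-adlv}, namely that $\JJ_b$, viewed as a sub(perfect) scheme of the loop group $LG$ over $\kk$, has dimension zero. Therefore every $\JJ_b$-orbit on $X(\mu, b)_K$ is zero-dimensional, and if $\JJ_b \backslash X(\mu, b)_K$ is finite then $X(\mu, b)_K$ is a finite union of zero-dimensional subsets, hence itself zero-dimensional. I do not expect any serious obstacle: the nontrivial input is already bundled into Proposition~\ref{zero-diml-adlv} and \cite[Prop.~6.20]{HR}, and what remains is formal manipulation with the $\JJ_b$-equivariant fine decomposition.
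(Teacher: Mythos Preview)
Your proposal is correct. Both directions work, and the converse direction is essentially the same as the paper's: you invoke, just as the paper does via the reference to Proposition~\ref{zero-diml-adlv}, that $\JJ_b$ sits in $LG$ as a zero-dimensional sub(ind-)scheme, so a surjection from finitely many copies of $\JJ_b$ forces the target to be zero-dimensional. The paper phrases this as ``every $\JJ_b$-orbit intersects a given double coset $\brK w\brK/\brK$ in only finitely many points,'' but the content is identical.

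The forward direction, however, is genuinely different. The paper observes that $\dim X(\mu,b)_K=0$ means each Schubert-cell intersection $X(\mu,b)_K\cap\brK w\brK/\brK$ is finite, and then invokes the finiteness theorem of Rapoport--Zink \cite{RZ:indag} to conclude that $\JJ_b\backslash X(\mu,b)_K$ is finite. You instead use the fine decomposition together with \cite[Prop.~6.20]{HR} and Proposition~\ref{zero-diml-adlv}: once each $X_w(b)$ for $w\in\EO{K}$ is zero-dimensional, Proposition~\ref{zero-diml-adlv} gives transitivity of $\JJ_b$ on $X_w(b)$, hence on its image $X_{K,w}(b)$, and the finite index set $\EO{K}$ finishes the job. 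Your route is more self-contained within the paper---it avoids the external input from \cite{RZ:indag} and instead leverages results (Proposition~\ref{zero-diml-adlv}, the fine decomposition) already established here. The paper's route is shorter to state but relies on a substantial outside finiteness result.
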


\begin{proof}

First note that $\dim X(\mu, b)_K=0$ if and only if for all $w$, $X(\mu, b)_K \cap \brK w \brK/\brK$ is finite. If this holds, then the main result of \cite{RZ:indag} shows that $\JJ_b\backslash X(\mu, b)_K(\kk)$ is a finite set. For the converse, the assumption says that $X(\mu, b)_K(\kk)$ is a finite union of $\JJ_b$-orbits. But every $\JJ_b$-orbit intersects a given double coset $\brK w \brK/\brK$ in only finitely many points: In fact, $\JJ_b$ is the set of $F$-points of an algebraic group whose $\brF$-points are by definition a subgroup of $G(\brF)$, and the action of $\JJ_b$ is the action induced by this inclusion. Let us denote this algebraic group by $\mathbf J$ just in this proof: $\JJ_b = \mathbf J(F)$, $\mathbf J(\brF)\subseteq G(\brF)$. (More precisely, $\mathbf J$ is the inner form of a Levi subgroup of $G$.) We can then identify $\mathbf J(F')$ with a subgroup of $G(F')$ for some finite unramified extension $F'/F$, and hence --- passing to the loop groups --- the action of $\mathbf J(\brF)$ on $G(\brF)/\brK$ is defined over the residue class field of $F'$, a finite extension of the residue class field of $F$. This implies that all points in a fixed $\JJ_b$-orbit inside $G(\brF)/\brK$ are fixed by some finite power of Frobenius. But since $\brK w \brK/\brK$ is a finite-dimensional variety over a finite field, it contains only finitely many such points.
% Cf.~Prop.~\ref{zero-diml-adlv}.

Alternatively, one could phrase the proof in terms of the stack quotient $[\JJ_b\backslash X(\mu, b)_K]$.
\end{proof}

\subsection{Compatibility of EKOR and Newton stratifications: (4) $\Longleftrightarrow$ (4$'$), and (5) $\Longleftrightarrow$ (5$'$)}
\label{subsec:4-equiv-4S}
\label{subsec:5-equiv-5S}

Both equivalences (4) $\Longleftrightarrow$ (4$'$), and (5) $\Longleftrightarrow$ (5$'$) follow immediately from the following lemma. Note that for (5) we use the ``technical'' version in Thm.~\ref{main} rather than the informal version stated in the introduction; cf.~Section~\ref{subsec:4-implies-5} for the connection.

\begin{lemma}
Let $w\in \Adm(\{\mu\})\cap {}^K\tW$, and let $[b]\in B(G,\mu)$. Then $X_w(b) \ne \emptyset$ if and only if the EKOR stratum $EKOR_{K, w}$ and the Newton stratum $S_{K, [b]}$ have non-empty intersection.
\end{lemma}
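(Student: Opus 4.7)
The plan is to unwind the definitions of both sides using the commutative diagram \eqref{diagram-hr-axioms}, and verify that both conditions translate to the same statement: ``the $\s$-conjugacy class $[b]$ meets the Iwahori double coset $\brI w \brI$ in $G(\brF)$.''

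First, I would handle the right-hand side. Since $\d_K = d_K\circ \Upsilon_K$, the intersection $EKOR_{K,w}\cap S_{K,[b]}$ is non-empty if and only if the set $\bigl(\brK_\s(\brI w \brI)/\brK_\s\bigr) \cap d_K^{-1}([b])$ is non-empty \emph{and} intersects the image of $\Upsilon_K$. The second requirement is automatic: by the EKOR decomposition of $\Upsilon_K(Sh_K) = \bigsqcup_{w'\in\Adm(\{\mu\})\cap{}^K\tW}\brK_\s(\brI w'\brI)/\brK_\s$ recalled in Section~\ref{shimura-setup}, and the hypothesis $w\in\Adm(\{\mu\})\cap{}^K\tW$, the whole set $\brK_\s(\brI w\brI)/\brK_\s$ lies in the image of $\Upsilon_K$. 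So the condition reduces to $\bigl(\brK_\s(\brI w \brI)/\brK_\s\bigr)\cap d_K^{-1}([b]) \neq \emptyset$.

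Next, I would unwind this further. By construction of $d_K$, an element $g\brK_\s\in G(\brF)/\brK_\s$ lies in $d_K^{-1}([b])$ precisely when $g$ lies in the $\s$-conjugacy class $[b]$ in $G(\brF)$. Therefore, an element of $\brK_\s(\brI w\brI)/\brK_\s$ belongs to $d_K^{-1}([b])$ iff there exist $k\in\brK$ and $h\in\brI w\brI$ with $khk^{-1}_\s\in[b]$, where $k^{-1}_\s=\s(k)^{-1}$ denotes $\s$-conjugation. Since $h$ is then also $\s$-conjugate to $b$ (via $k^{-1}$), this is equivalent to $\brI w\brI \cap [b]\neq\emptyset$. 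Conversely, any $h\in\brI w\brI\cap[b]$ gives a coset $h\brK_\s$ in the desired intersection.

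Finally, by the very definition of affine Deligne--Lusztig varieties, $X_w(b)\neq\emptyset$ is equivalent to the existence of $g\in G(\brF)$ with $g^{-1}b\s(g)\in\brI w\brI$, i.e., to $\brI w\brI\cap[b]\neq\emptyset$. Combining the three steps yields the lemma. There is no real obstacle here: the proof is a diagram chase using the axioms of \cite{HR} (specifically, the surjection of $\Upsilon_K$ onto the union of $\brK w\brK/\brK_\s$ and the relation $\d_K=d_K\circ\Upsilon_K$), together with the elementary observation that $\s$-conjugation by elements of $\brK$ preserves the property of lying in a given Iwahori double coset only up to passing through $\brK_\s(\brI w\brI)$, which is precisely the fattening appearing in the definition of an EKOR stratum.
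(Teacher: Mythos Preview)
Your proof is correct and follows essentially the same approach as the paper's own argument: both reduce each side of the equivalence to the single condition $\brI w\brI\cap[b]\neq\emptyset$, using the commutativity $\d_K=d_K\circ\Upsilon_K$ and the inclusion $\brK_\s(\brI w\brI)/\brK_\s\subseteq\Im\Upsilon_K$ coming from $w\in\Adm(\{\mu\})\cap{}^K\tW$. The only difference is the order of presentation (you start from the EKOR/Newton side, the paper starts from $X_w(b)$), which is immaterial.
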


\begin{proof}
This is a straight-forward consequence of the axioms on the Shimura variety, compare especially the commutative diagram~\eqref{diagram-hr-axioms}: We have $X_w(b)\ne\emptyset$ if and only if $\brI w\brI\cap [b]\ne \emptyset$. This can be rephrased by saying that $\brK_\s(\brI w\brI)/\brK_\s \cap d_K\i([b])\ne \emptyset$. Since $\brK_\s(\brI w\brI)/\brK_\s\subseteq \Im\Upsilon_K$, this is equivalent to the condition that $\Upsilon_K\i(\brK_\s(\brI w\brI)/\brK_\s) \cap \d_K\i([b])\ne\emptyset$, as desired.
\end{proof}

\end{document}